
\documentclass[12pt]{amsart}%
\usepackage{amsfonts}
\usepackage{amsmath}
\usepackage{amssymb}
\usepackage{graphicx}%
\setcounter{MaxMatrixCols}{30}
\providecommand{\U}[1]{\protect\rule{.1in}{.1in}}
\newtheorem{theorem}{Theorem}
\theoremstyle{plain}

\newtheorem{corollary}{Corollary}

\newtheorem{example}{Example}

\newtheorem{lemma}{Lemma}

\newtheorem{proposition}{Proposition}
\newtheorem{remark}{Remark}

\numberwithin{equation}{section}
\numberwithin{equation}{section}
\numberwithin{theorem}{section}
\numberwithin{lemma}{section}
\numberwithin{remark}{section}
\numberwithin{example}{section}
\numberwithin{proposition}{section}
\numberwithin{definition}{section}
\numberwithin{corollary}{section}
\textwidth=17.5cm \textheight=23.5cm
\hoffset=-2cm \voffset=-1cm
\begin{document}
\title[Algebraic spectral theory]{Algebraic spectral theory and Serre multiplicity formula}
\author{Anar Dosi (Dosiev)}
\address{Middle East Technical University Northern Cyprus Campus, Guzelyurt, KKTC,
Mersin 10, Turkey}
\email{(dosiev@yahoo.com), (dosiev@metu.edu.tr)}
\date{April 8, 2021}
\subjclass[2000]{ Primary 47A60, 13D03; Secondary 13D40, 46H30}
\keywords{Taylor spectrum, Noetherian modules, integral extensions, Samuel polynomial,
Serre's formula}

\begin{abstract}
The present paper is devoted to an algebraic treatment of the joint spectral
theory within the framework of Noetherian modules over an algebra finite
extension of an algebraically closed field. We prove the spectral mapping
theorem and analyze the index of tuples in purely algebraic case. The index
function over tuples from the coordinate ring of a variety is naturally
extended up to a numerical Tor-polynomial. Based on Serre's multiplicity
formula, we deduce that Tor-polynomial is just the Samuel polynomial of the
local algebra.

\end{abstract}
\maketitle

\section{Introduction}

The joint spectral theory plays a central role in operator theory and the
complex analytic geometry. Its origin goes back to Gelfand's commutative
Banach algebras and their representations. The joint spectral theory was
mainly developed within the context of holomorphic functional calculus problem
by J. Taylor \cite{Tay1}, \cite{TayGF}. An interesting link to the complex
analytic geometry was found by M. Putinar \cite{Put}, \cite{PutE} by
constructing Taylor's holomorphic functional calculus within the context of
Stein spaces. Taylor's spectral theory has a strong homological background
developed independently by J. L. Taylor in \cite{TayGF} and A. Ya. Helemskii
in \cite{Hel}, that allows to have a further generalization of the theory in
the noncommutative setting. That played a crucial role in foundations of
noncommutative complex analytic geometry (see \cite{Dizv}, \cite{DIZV},
\cite{DComA} and \cite{Pir3}).

It is well known that many key results from the complex analytic geometry have
their analogs in schemes such as the fundamental theorem of Serre on vanishing
\cite[3.3.7]{Harts}. More interesting sight is to have a scheme-theoretic
analog of the joint spectra found in \cite{DCRM11} and \cite{DMMJ} that
indicates to the fundamental nature of the joint spectral theory. Furthermore,
Taylor's multivariable functional calculus has a scheme version suggested in
\cite{DMMJ}. Namely, Putinar spectrum can be defined for a module over a
scheme and it plays the same central role in the functional calculus problem
over Noetherian schemes as it does in the complex analytic geometry. Actually,
in the affine case the same spectrum occurred in \cite{NY1} and \cite{NY2}
based on the result of A. Neeman \cite{Ne} that there is a bijection between
subsets of $\operatorname{Spec}\left(  R\right)  $ and localizing
subcategories of the derived category of complexes of $R$-modules. Taylor's
functional calculus through derived categories was proposed in \cite{PirM} by
A. Yu. Pirkovskii, which is a complex analytic version of Neeman's result.

In the paper we intend to develop a spectral theory in purely algebraic case
with all its key properties and their links to algebraic geometry. As a basic
tool we are exploiting many results from commutative algebra. Actually, some
key results of commutative algebra can be retreated from point of view the
joint spectral theory. For example, associated primes of a Noetherian module
play the role of eigenvalues whereas support primes are spectral values. That
approach inherits Koszul homology groups of an algebraic variety and the
related index. In the case of a variety the index is reduced to the
multiplicity from the local theory \cite{Se}, and we use the multiplicity
formula of Serre for calculation of Koszul homology groups.

Fix a commutative $k$-algebra $R/k$ with an $n$-tuple $x$ in $R$, and an
$R$-module $M$. It turns out that $x$ is a family of commuting linear
transformations acting on the $k$-vector space $M$, and we have their
\textit{Taylor spectrum} $\sigma\left(  x,M\right)  $ to be the set of those
$a\in\mathbb{A}^{n}$ such that the Koszul complex $\operatorname{Kos}\left(
x-a,M\right)  $ fails to be exact. If the homology groups $H_{p}\left(
x-a,M\right)  $ of $\operatorname{Kos}\left(  x-a,M\right)  $ are finite
dimensional $k$-vector spaces we have the index $i\left(  x-a\right)
=\sum_{p=0}^{n}\left(  -1\right)  ^{p}\dim_{k}\left(  H_{p}\left(
x-a,M\right)  \right)  $ of the tuple $x-a$. The last homology group
$H_{n}\left(  x-a,M\right)  $ responds to the submodule of all joint
eigenvectors in $M$ whenever $a$ is a joint eigenvalue of the tuple $x$. The
set of all eigenvalues of $x$ is called \textit{the point spectrum}
$\sigma_{\operatorname{p}}\left(  x,M\right)  $ of $x$. The central result of
Section \ref{Sec1} is the spectral mapping formula
\begin{equation}
\sigma\left(  p\left(  x\right)  ,M\right)  =p\left(  \sigma\left(
x,M\right)  \right)  \label{1}%
\end{equation}
for all tuples $x$ and $p\left(  x\right)  $ (a polynomial tuple in $x$) from
$R$ whenever $R/k$ is an algebra finite extension of an algebraically closed
field $k$ and $M$ is a Noetherian $R$-module. Moreover, $i\left(  x\right)
<\infty$ for an $n$-tuple $x$ from $R$ whenever $R^{\prime}=k\left[  x\right]
\subseteq R$ is an integral extension. In this case, $i\left(  x\right)
=i\left(  y\right)  $ for every $n$-tuple $y$ from the subalgebra $R^{\prime}$
generating the same maximal ideal $\left\langle x\right\rangle \subseteq
R^{\prime}$. It is an analog of the index stability result from analysis.

In Section \ref{Sec2}, we analyze the general case of a scheme $\left(
\mathfrak{X},\mathcal{O}_{\mathfrak{X}}\right)  $ and a module $M$ over the
global sections $R=\Gamma\left(  \mathfrak{X},\mathcal{O}_{\mathfrak{X}%
}\right)  $. The spectrum $\sigma\left(  \mathfrak{X},M\right)  $ of the
$R$-module $M$ over the scheme $\left(  \mathfrak{X},\mathcal{O}%
_{\mathfrak{X}}\right)  $ is defined as the complement to the set of those
$x\in\mathfrak{X}$ such that $\operatorname{Tor}_{i}^{R}\left(  \mathcal{O}%
_{\mathfrak{X}}\left(  U\right)  ,M\right)  =0$, $i\geq0$ for an affine
neighborhood $U$ of $x$. This spectrum was introduced in \cite{DMMJ} as a
scheme-theoretic analog of Putinar spectrum of analytic sheaves \cite{Put}. In
the case of an affine scheme $\mathfrak{X=}\operatorname{Spec}\left(
R\right)  $ the spectrum $\sigma\left(  \mathfrak{X},M\right)  $ is reduced to
the closure of the support $\operatorname{Supp}_{R}\left(  M\right)  $ of the
$R$-module $M$. If $M$ is a finitely generated $R$-module then we come up with
the support $\operatorname{Supp}_{R}\left(  M\right)  $ rather than its
closure. The point spectrum $\sigma_{\operatorname{p}}\left(  \mathfrak{X}%
,M\right)  $ is defined to be the set $\operatorname{Ass}_{R}\left(  M\right)
$ of all associated primes of $M$. The spectral mapping theorem for modules
over schemes was proposed in \cite{DMMJ}. Its affine version states that
\begin{equation}
\sigma\left(  \mathfrak{Y},M\right)  =f\left(  \sigma\left(  \mathfrak{X}%
,M\right)  \right)  ^{-} \label{2}%
\end{equation}
whenever $\mathfrak{X}=\operatorname{Spec}\left(  R\right)  $ and
$\mathfrak{Y}=\operatorname{Spec}\left(  R^{\prime}\right)  $ are affine
schemes, $f=\varphi^{\ast}:\mathfrak{X}\rightarrow\mathfrak{Y}$ is a morphism
responding to a ring map $\varphi:R^{\prime}\rightarrow R$, and $M\in
R$-$\operatorname{mod}$, which is $R^{\prime}$-module through $\varphi$
either. For a ring extension $\iota:R^{\prime}\subseteq R$ and a finitely
generated $R$-module $M$ we obtain that $\iota^{\ast}\left(
\operatorname{Supp}_{R}\left(  M\right)  \right)  $ is dense in
$\operatorname{Supp}_{R^{\prime}}\left(  M\right)  $. In the case of the point
spectrum we prove more valuable equality without the related closure in
(\ref{2}) (being so weak, the closure of Zariski topology covers up too much).
If $\iota:R^{\prime}\subseteq R$ is a ring extension with Noetherian $R$, and
$M\in R$-$\operatorname{mod}$, then (see Theorem \ref{propEx1})
\[
\sigma_{\operatorname{p}}\left(  \mathfrak{X}^{\prime},M\right)  =\iota^{\ast
}\left(  \sigma_{\operatorname{p}}\left(  \mathfrak{X},M\right)  \right)  .
\]
Notice that similar result for the spectrum (or support) is not true (see
below Remark \ref{rem00}). But that is the case if $R^{\prime}\subseteq R$ is
integral and we come with classics Krull-Cohen-Seidenberg Theory \cite[Ch.
14]{AK}, \cite[5.2]{BurCA}.

The spectrum over a scheme and Taylor spectrum linked with each other when
$R=k\left[  x\right]  $ is an algebra finite extension. Namely, $\mathfrak{X=}%
\operatorname{Spec}\left(  R\right)  \subseteq\mathbb{A}_{k}^{n}$ up to a
homeomorphism, $\sigma\left(  \mathfrak{X},M\right)  =\sigma\left(
\mathbb{A}_{k}^{n},M\right)  $, $\sigma_{\operatorname{p}}\left(
\mathfrak{X},M\right)  =\sigma_{\operatorname{p}}\left(  \mathbb{A}_{k}%
^{n},M\right)  $, and $\sigma_{\operatorname{p}}\left(  x,M\right)
=\sigma_{\operatorname{p}}\left(  \mathbb{A}_{k}^{n},M\right)  \cap
\mathbb{A}^{n}$. If $M$ is a Noetherian $R$-module then $\sigma\left(
x,M\right)  =\sigma\left(  \mathbb{A}_{k}^{n},M\right)  \cap\mathbb{A}^{n}$
and it is a nonempty closed subset of $\mathbb{A}^{n}$ whose closure in the
scheme $\mathbb{A}_{k}^{n}$ is reduced to $\sigma\left(  \mathfrak{X}%
,M\right)  $. If $k\left[  p\left(  x\right)  \right]  \subseteq R$ is
integral then%
\[
\sigma_{\operatorname{p}}\left(  p\left(  x\right)  ,M\right)  =p\left(
\sigma_{\operatorname{p}}\left(  x,M\right)  \right)  .
\]
Actually, (\ref{1}) and (\ref{2}) can be driven from the classics whenever
$k\left[  p\left(  x\right)  \right]  \subseteq R$ is integral and $M$ is
Noetherian. But the spectral mapping formula (\ref{1}) is much stronger than
classics, which is true for all tuples. Thus Taylor spectrum with all its
properties have an independent value.

In Section \ref{Sec3} we consider the case of $M=R$ of the coordinate ring $R$
of a variety $Y\subseteq\mathbb{A}^{n}$. In this case, the tuple $x\subseteq
R$ consists of the coordinate functions and $\sigma\left(  x,R\right)  =Y$
whereas $\sigma_{\operatorname{p}}\left(  x,R\right)  =\varnothing$. In the
case of an algebraic set $Y$ the point spectrum $\sigma_{\operatorname{p}%
}\left(  x,R\right)  $ is the set of all isolated points of $Y$. For every
point $a\in\mathbb{A}^{n}$ we have $i\left(  x-a\right)  =0$ due to Serre's
multiplicity formula from the local algebra. For a singular point $a\in Y$ the
dimension of $H_{1}\left(  x-a,R\right)  $ could get high integers depending
on the depth of singularity (see below Lemma \ref{lemMinfi}). The link between
the index and the dimension of $Y$ is obtained through the numerical
polynomial $p:\mathbb{Z}\rightarrow\mathbb{Z}$, $p\left(  r\right)
=\sum_{i=1}^{n}\left(  -1\right)  ^{i}\dim_{k}\left(  \operatorname{Tor}%
_{i}^{P}\left(  P/\left\langle X_{1},\ldots,X_{n}\right\rangle ^{r},R\right)
\right)  $ called the $\operatorname{Tor}$-polynomial, which is reduced to
Hilbert-Samuel polynomial of the localization $R_{\left\langle
x-a\right\rangle }$.

Finally, I wish to thank G. G. Amosov, O. Yu. Aristov, B. Bilich and A. Yu.
Pirkovskii for their interest to the paper, useful discussions and to draw my
attention to the papers \cite{NY1} and \cite{NY2}.

\section{The projection property of spectrum\label{Sec1}}

In the present section we prove the projection property of spectrum based on
homology groups of the Koszul complex. The technical back up is the homology
of commutative rings related to Koszul homology groups.

\subsection{Lemma Bourbaki}

Let $R$ be a (unital) commutative ring, $K_{i}\in R$-$\operatorname{mod}$,
$i=0,1$, and let $\alpha:K_{1}\rightarrow K_{0}$ be an $R$-linear map (module
morphism). The chain complex $0\leftarrow K_{0}\overset{\alpha}{\longleftarrow
}K_{1}$ is denoted by $\mathcal{K}$. If $\mathcal{C}$ is a chain complex
$0\leftarrow C_{0}\overset{d_{0}}{\longleftarrow}C_{1}\overset{d_{1}%
}{\longleftarrow}\cdots$ in $R$-$\operatorname{mod}$ then $\mathcal{K\otimes
}_{R}\mathcal{C}$ is the following complex
\[%
\begin{array}
[c]{ccccccc}
&  & \left(  \mathcal{K\otimes}_{R}\mathcal{C}\right)  _{p-1} &  & \left(
\mathcal{K\otimes}_{R}\mathcal{C}\right)  _{p} &  & \\
&  & \parallel &  & \parallel &  & \\
\cdots & \overset{\partial_{p-2}}{\longleftarrow} & \left(  K_{0}%
\mathcal{\otimes}_{R}C_{p-1}\right)  \oplus\left(  K_{1}\mathcal{\otimes}%
_{R}C_{p-2}\right)  & \overset{\partial_{p-1}}{\longleftarrow} & \left(
K_{0}\mathcal{\otimes}_{R}C_{p}\right)  \oplus\left(  K_{1}\mathcal{\otimes
}_{R}C_{p-1}\right)  & \overset{\partial_{p}}{\longleftarrow} & \cdots
\end{array}
\]
with the differential
\begin{equation}
\partial_{p-1}\left(  z_{0,p},z_{1,p-1}\right)  =\left(  \left(  1\otimes
d_{p-1}\right)  \left(  z_{0,p}\right)  +\left(  -1\right)  ^{p-1}\left(
\alpha\otimes1\right)  \left(  z_{1,p-1}\right)  ,\left(  1\otimes
d_{p-2}\right)  \left(  z_{1,p-1}\right)  \right)  , \label{dif}%
\end{equation}
where $z_{i,j}\in K_{i}\mathcal{\otimes}_{R}C_{j}$. The canonical embedding
$i_{p}$ and projection $\pi_{p}$ define the exact sequence%
\[
0\leftarrow K_{1}\mathcal{\otimes}_{R}C_{p-1}\overset{\pi_{p}}{\longleftarrow
}\left(  K_{0}\mathcal{\otimes}_{R}C_{p}\right)  \oplus\left(  K_{1}%
\mathcal{\otimes}_{R}C_{p-1}\right)  \overset{i_{p}}{\longleftarrow}%
K_{0}\mathcal{\otimes}_{R}C_{p}\leftarrow0
\]
which splits. Since the diagram
\[%
\begin{array}
[c]{ccccccc}
&  & 0 &  & 0 &  & \\
&  & \downarrow &  & \downarrow &  & \\
\cdots & \overset{1\otimes d_{p-2}}{\longleftarrow} & K_{0}\mathcal{\otimes
}_{R}C_{p-1} & \overset{1\otimes d_{p-1}}{\longleftarrow} & K_{0}%
\mathcal{\otimes}_{R}C_{p} & \overset{1\otimes d_{p}}{\longleftarrow} &
\cdots\\
&  & \downarrow^{i_{p-1}} &  & \downarrow^{i_{p}} &  & \\
\cdots & \overset{\partial_{p-2}}{\longleftarrow} & \left(  \mathcal{K\otimes
}_{R}\mathcal{C}\right)  _{p-1} & \overset{\partial_{p-1}}{\longleftarrow} &
\left(  \mathcal{K\otimes}_{R}\mathcal{C}\right)  _{p} & \overset{\partial
_{p}}{\longleftarrow} & \cdots\\
&  & \downarrow^{\pi_{p-1}} &  & \downarrow^{\pi_{p}} &  & \\
\cdots & \overset{1\otimes d_{p-3}}{\longleftarrow} & K_{1}\mathcal{\otimes
}_{R}C_{p-2} & \overset{1\otimes d_{p-2}}{\longleftarrow} & K_{1}%
\mathcal{\otimes}_{R}C_{p-1} & \overset{1\otimes d_{p-1}}{\longleftarrow} &
\cdots\\
&  & \downarrow &  & \downarrow &  & \\
&  & 0 &  & 0 &  &
\end{array}
\]
commutes, there is an exact sequence
\[
0\leftarrow K_{1}\mathcal{\otimes}_{R}\mathcal{C}\overset{\pi}{\longleftarrow
}\mathcal{K\otimes}_{R}\mathcal{C}\overset{i}{\longleftarrow}K_{0}%
\mathcal{\otimes}_{R}\mathcal{C}\leftarrow0
\]
of complexes with $\deg\left(  \pi\right)  =-1$ and $\deg\left(  i\right)
=0$. The sequence in turn generates a long exact sequence of homology groups
\begin{equation}
\cdots\longleftarrow H_{p-1}\left(  K_{0}\mathcal{\otimes}_{R}\mathcal{C}%
\right)  \overset{\delta_{p-1}}{\longleftarrow}H_{p-1}\left(  K_{1}%
\mathcal{\otimes}_{R}\mathcal{C}\right)  \leftarrow H_{p}\left(
\mathcal{K\otimes}_{R}\mathcal{C}\right)  \leftarrow H_{p}\left(
K_{0}\mathcal{\otimes}_{R}\mathcal{C}\right)  \overset{\delta_{p}%
}{\longleftarrow}\cdots\label{h}%
\end{equation}
with the connecting morphisms $\delta_{p}$, $p\geq0$. If $\mathcal{K}$ is flat
(that is, $K_{i}$ are flat modules) then $H_{p}\left(  K_{i}\mathcal{\otimes
}_{R}\mathcal{C}\right)  =K_{i}\otimes_{R}H_{p}\left(  \mathcal{C}\right)  $
for all $i$ and $p$.

\begin{lemma}
\label{lemB1}The morphism $\delta_{p-1}:H_{p-1}\left(  K_{1}\mathcal{\otimes
}_{R}\mathcal{C}\right)  \rightarrow H_{p-1}\left(  K_{0}\mathcal{\otimes}%
_{R}\mathcal{C}\right)  $ is acting by the rule $\delta_{p-1}\left(
\omega^{\sim}\right)  =\left(  -1\right)  ^{p-1}\left(  \left(  \alpha
\otimes1\right)  \omega\right)  ^{\sim}$ for all $\omega\in\ker\left(
1\otimes d_{p-2}\right)  $, that is, $\delta_{p-1}=\left(  -1\right)
^{p-1}\left(  \alpha\otimes1\right)  ^{\sim}$ for all $p$. In the case of a
flat complex $\mathcal{K}$ the morphism $\delta_{p-1}$ is reduced to the
following morphism $\delta_{p-1}:K_{1}\otimes_{R}H_{p-1}\left(  \mathcal{C}%
\right)  \rightarrow K_{0}\otimes_{R}H_{p-1}\left(  \mathcal{C}\right)  $,
$\delta_{p-1}=\left(  -1\right)  ^{p-1}\alpha\otimes1$.
\end{lemma}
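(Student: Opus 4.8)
The plan is to trace the connecting morphism $\delta_{p-1}$ through the snake-lemma construction applied to the short exact sequence of complexes $0\leftarrow K_{1}\otimes_{R}\mathcal{C}\overset{\pi}{\longleftarrow}\mathcal{K}\otimes_{R}\mathcal{C}\overset{i}{\longleftarrow}K_{0}\otimes_{R}\mathcal{C}\leftarrow0$, using the explicit formula \eqref{dif} for the differential $\partial$. First I would take a cycle $\omega\in K_{1}\otimes_{R}C_{p-1}$ with $(1\otimes d_{p-2})(\omega)=0$, representing a class $\omega^{\sim}\in H_{p-1}(K_{1}\otimes_{R}\mathcal{C})$. Since $\pi$ is the (split) projection, a preimage of $\omega$ in $(\mathcal{K}\otimes_{R}\mathcal{C})_{p}=\left(K_{0}\otimes_{R}C_{p}\right)\oplus\left(K_{1}\otimes_{R}C_{p-1}\right)$ is simply $(0,\omega)$. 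Applying the differential, formula \eqref{dif} gives $\partial_{p-1}(0,\omega)=\left((-1)^{p-1}(\alpha\otimes1)(\omega),\,(1\otimes d_{p-2})(\omega)\right)=\left((-1)^{p-1}(\alpha\otimes1)(\omega),0\right)$, which indeed lies in the image of $i_{p-1}:K_{0}\otimes_{R}C_{p-1}\to(\mathcal{K}\otimes_{R}\mathcal{C})_{p-1}$, confirming the snake-lemma bookkeeping. By definition of the connecting map, $\delta_{p-1}(\omega^{\sim})$ is the class of $(-1)^{p-1}(\alpha\otimes1)(\omega)$ in $H_{p-1}(K_{0}\otimes_{R}\mathcal{C})$, which is exactly the asserted formula $\delta_{p-1}=(-1)^{p-1}(\alpha\otimes1)^{\sim}$.

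Next I would check well-definedness independently (it is automatic from the snake lemma, but worth noting): $(\alpha\otimes1)(\omega)$ is a cycle in $K_{0}\otimes_{R}\mathcal{C}$ because $(1\otimes d_{p-2})(\alpha\otimes1)(\omega)=(\alpha\otimes1)(1\otimes d_{p-2})(\omega)=0$ by commutativity of $\alpha\otimes1$ with $1\otimes d$, and if $\omega=(1\otimes d_{p-1})(\eta)$ is a boundary then $(\alpha\otimes1)(\omega)=(1\otimes d_{p-1})(\alpha\otimes1)(\eta)$ is again a boundary. So the induced map $(\alpha\otimes1)^{\sim}:H_{p-1}(K_{1}\otimes_{R}\mathcal{C})\to H_{p-1}(K_{0}\otimes_{R}\mathcal{C})$ makes sense and $\delta_{p-1}$ equals $(-1)^{p-1}$ times it.

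For the flat case, I would invoke the identification $H_{p}(K_{i}\otimes_{R}\mathcal{C})=K_{i}\otimes_{R}H_{p}(\mathcal{C})$ already recorded in the paragraph preceding the lemma (valid since $K_{i}$ flat means $K_{i}\otimes_{R}-$ is exact, hence commutes with taking homology). Under this identification the map $(\alpha\otimes1)^{\sim}$ on homology is identified with $\alpha\otimes1:K_{1}\otimes_{R}H_{p-1}(\mathcal{C})\to K_{0}\otimes_{R}H_{p-1}(\mathcal{C})$, because a class $\omega^{\sim}$ with $\omega\in\ker(1\otimes d_{p-2})$ corresponds to an element of $K_{1}\otimes_{R}H_{p-1}(\mathcal{C})$ and $(\alpha\otimes1)(\omega)$ tracks the same element pushed forward by $\alpha$. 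Hence $\delta_{p-1}=(-1)^{p-1}\alpha\otimes1$ in this setting.

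The only real subtlety — not an obstacle so much as a place to be careful — is the sign. The factor $(-1)^{p-1}$ comes precisely from the $(-1)^{p-1}$ in \eqref{dif} in front of the $(\alpha\otimes1)$-term, and one must make sure the degree conventions ($\deg\pi=-1$, $\deg i=0$) are tracked consistently so that the connecting morphism $H_{p-1}(K_{1}\otimes_{R}\mathcal{C})\to H_{p-1}(K_{0}\otimes_{R}\mathcal{C})$ lands in the right degree; this is why the formula \eqref{dif} was written with that sign in the first place. Everything else is the standard diagram chase.
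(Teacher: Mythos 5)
Your proposal is correct and follows essentially the same route as the paper: lift $\omega$ to $(0,\omega)$, apply the explicit differential \eqref{dif} to get $\bigl((-1)^{p-1}(\alpha\otimes 1)(\omega),0\bigr)$, and read off the connecting map; for the flat case the paper simply writes out the identification $H_{p-1}(K_i\otimes_R\mathcal{C})=K_i\otimes_R H_{p-1}(\mathcal{C})$ on elementary tensors rather than citing it abstractly, but the content is identical.
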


\begin{proof}
Take $\omega\in\ker\left(  1\otimes d_{p-2}\right)  $. Then $\left(
0,\omega\right)  \in\left(  K_{0}\mathcal{\otimes}_{R}C_{p}\right)
\oplus\left(  K_{1}\mathcal{\otimes}_{R}C_{p-1}\right)  =\left(
\mathcal{K\otimes}_{R}\mathcal{C}\right)  _{p}$ and
\[
\partial_{p-1}\left(  0,\omega\right)  =\left(  \left(  -1\right)
^{p-1}\left(  \alpha\otimes1\right)  \left(  \omega\right)  ,0\right)
=i_{p-1}\left(  \left(  -1\right)  ^{p-1}\left(  \alpha\otimes1\right)
\left(  \omega\right)  \right)
\]
(see (\ref{dif})). Hence $\left(  -1\right)  ^{p-1}\left(  \alpha
\otimes1\right)  \left(  \omega\right)  \in\ker\left(  1\otimes d_{p-2}%
\right)  $ and $\delta_{p-1}\left(  \omega^{\sim}\right)  =\left(  -1\right)
^{p-1}\left(  \left(  \alpha\otimes1\right)  \omega\right)  ^{\sim}$.

Finally, if $\mathcal{K}$ is flat and $\omega=x_{1}\otimes y_{p-1}$ with
$x_{1}\in K_{1}$, $y_{p-1}\in\ker\left(  d_{p-2}\right)  $ then
\[
\delta_{p-1}\left(  x_{1}\otimes y_{p-1}^{\sim}\right)  =\delta_{p-1}\left(
\omega^{\sim}\right)  =\left(  -1\right)  ^{p-1}\left(  \alpha\left(
x_{1}\right)  \otimes y_{p-1}\right)  ^{\sim}=\left(  -1\right)  ^{p-1}%
\alpha\left(  x_{1}\right)  \otimes y_{p-1}^{\sim},
\]
that is, $\delta_{p-1}=\left(  -1\right)  ^{p-1}\alpha\otimes1$.
\end{proof}

The following key assertion is taken from \cite[9.5, Lemma 3 ]{BurHA}.

\begin{lemma}
\label{lemB2}(N. Bourbaki) If $\alpha:K_{1}\rightarrow K_{0}$ is a morphism of
flat $R$-modules and $\mathcal{C}$ is a complex in $R$-$\operatorname{mod}$
then the long homology sequence generates the following short exact sequences
\[
0\leftarrow\ker\left(  \alpha\right)  \otimes_{R}H_{p-1}\left(  \mathcal{C}%
\right)  \longleftarrow H_{p}\left(  \mathcal{K\otimes}_{R}\mathcal{C}\right)
\longleftarrow\operatorname{coker}\left(  \alpha\right)  \otimes_{R}%
H_{p}\left(  \mathcal{C}\right)  \leftarrow0
\]
of $R$-modules, $p\geq1$.
\end{lemma}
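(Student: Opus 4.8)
The plan is to extract the short exact sequences directly from the long exact homology sequence (\ref{h}) by computing the maps at each spot, using Lemma \ref{lemB1} for the connecting morphisms and flatness of $\mathcal{K}$ for the rest. Since $K_0$ and $K_1$ are flat, we have $H_p(K_i\otimes_R\mathcal{C})=K_i\otimes_R H_p(\mathcal{C})$, so (\ref{h}) becomes a long exact sequence
\[
\cdots\leftarrow K_0\otimes_R H_{p-1}(\mathcal{C})\xleftarrow{\delta_{p-1}} K_1\otimes_R H_{p-1}(\mathcal{C})\leftarrow H_p(\mathcal{K}\otimes_R\mathcal{C})\leftarrow K_0\otimes_R H_p(\mathcal{C})\xleftarrow{\delta_p}\cdots
\]
and by the second part of Lemma \ref{lemB1} the connecting map $\delta_{p-1}$ is, up to the sign $(-1)^{p-1}$, just $\alpha\otimes 1$ on $K_1\otimes_R H_{p-1}(\mathcal{C})$.

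First I would break the long exact sequence at each term $H_p(\mathcal{K}\otimes_R\mathcal{C})$: exactness gives a short exact sequence
\[
0\leftarrow\operatorname{coker}(\delta_p)\leftarrow H_p(\mathcal{K}\otimes_R\mathcal{C})\leftarrow\ker(\delta_{p-1})\leftarrow 0,
\]
where on the right $\delta_{p-1}:K_1\otimes_R H_{p-1}(\mathcal{C})\to K_0\otimes_R H_{p-1}(\mathcal{C})$ and on the left $\delta_p:K_1\otimes_R H_p(\mathcal{C})\to K_0\otimes_R H_p(\mathcal{C})$. Next I would identify these kernel and cokernel terms. Since $\alpha:K_1\to K_0$ is a map of flat modules, tensoring the exact sequence $0\to\ker(\alpha)\to K_1\xrightarrow{\alpha}K_0\to\operatorname{coker}(\alpha)\to0$ — or rather applying $-\otimes_R H_{p-1}(\mathcal{C})$ after splitting it into $0\to\ker(\alpha)\to K_1\to\operatorname{im}(\alpha)\to0$ and $0\to\operatorname{im}(\alpha)\to K_0\to\operatorname{coker}(\alpha)\to0$ — and using that $K_0,K_1$ (hence $\operatorname{im}(\alpha)$, being an intermediate term one must be slightly careful with) are flat, one gets $\operatorname{coker}(\alpha\otimes 1)=\operatorname{coker}(\alpha)\otimes_R H_p(\mathcal{C})$ and $\ker(\alpha\otimes 1)=\ker(\alpha)\otimes_R H_{p-1}(\mathcal{C})$. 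Multiplication by the unit $(-1)^{p-1}$ does not change kernel or cokernel, so $\ker(\delta_{p-1})=\ker(\alpha)\otimes_R H_{p-1}(\mathcal{C})$ and $\operatorname{coker}(\delta_p)=\operatorname{coker}(\alpha)\otimes_R H_p(\mathcal{C})$. Substituting these into the displayed short exact sequence yields exactly
\[
0\leftarrow\ker(\alpha)\otimes_R H_{p-1}(\mathcal{C})\leftarrow H_p(\mathcal{K}\otimes_R\mathcal{C})\leftarrow\operatorname{coker}(\alpha)\otimes_R H_p(\mathcal{C})\leftarrow 0
\]
for $p\geq 1$, as claimed.

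The main obstacle is the computation $\ker(\alpha\otimes 1)=\ker(\alpha)\otimes_R H_{p-1}(\mathcal{C})$: tensoring is only right exact, so one cannot simply apply $-\otimes H_{p-1}(\mathcal{C})$ to $0\to\ker(\alpha)\to K_1\to\operatorname{im}(\alpha)\to 0$ and read off a short exact sequence on the left. The point to exploit is that $\operatorname{im}(\alpha)$ need not be flat, but $K_0$ is, and $K_0/\operatorname{im}(\alpha)=\operatorname{coker}(\alpha)$; more robustly, flatness of $K_1$ gives $\operatorname{Tor}_1^R(K_1,N)=0$ for every $N$, and a short diagram chase (or the standard fact that for a flat module $K_1$ the sequence $0\to\ker(\alpha)\otimes N\to K_1\otimes N\to K_0\otimes N$ stays exact whenever $\alpha$ factors through flat modules appropriately) handles it — concretely one invokes that $\operatorname{Tor}_1^R(\operatorname{coker}(\alpha),N)$ vanishes when it is squeezed between flats, or simply cites that $K_1\otimes_R-$ is exact and the snake lemma applied to the obvious ladder identifies the kernel. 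I would present this flatness bookkeeping carefully but briefly, since everything else is a formal consequence of (\ref{h}) and Lemma \ref{lemB1}.
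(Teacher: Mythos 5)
Your route is the same one the paper itself takes: break the long exact homology sequence (\ref{h}), use Lemma \ref{lemB1} to identify the connecting map as $\pm\alpha\otimes 1$, and read off the two corners at $H_p(\mathcal{K}\otimes_R\mathcal{C})$. That much is sound and gives, for $p\geq 1$, the short exact sequence
\[
0\leftarrow\ker\!\left(\alpha\otimes 1_{H_{p-1}(\mathcal{C})}\right)\leftarrow H_p(\mathcal{K}\otimes_R\mathcal{C})\leftarrow\operatorname{coker}\!\left(\alpha\otimes 1_{H_{p}(\mathcal{C})}\right)\leftarrow 0,
\]
and right exactness of $\otimes$ identifies $\operatorname{coker}(\alpha\otimes 1)=\operatorname{coker}(\alpha)\otimes_R H_p(\mathcal{C})$, so the cokernel corner is fine.

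The gap is exactly the step you flagged, and the fix you propose does not work. The identification $\ker(\alpha\otimes 1_N)=\ker(\alpha)\otimes_R N$ is \emph{false} for flat $K_0,K_1$ in general. Take $R=\mathbb{Z}$, $K_0=K_1=\mathbb{Z}$, $\alpha$ multiplication by $2$, $N=\mathbb{Z}/2$: then $\ker(\alpha)=0$, so $\ker(\alpha)\otimes_R N=0$, while $\alpha\otimes 1_N$ is the zero map on $\mathbb{Z}/2$, so $\ker(\alpha\otimes 1_N)=\mathbb{Z}/2$. In the same example $\operatorname{coker}(\alpha)=\mathbb{Z}/2$ and $\operatorname{Tor}_1^{\mathbb{Z}}(\mathbb{Z}/2,\mathbb{Z}/2)=\mathbb{Z}/2\neq 0$, so the claim you lean on --- that $\operatorname{Tor}_1^R(\operatorname{coker}(\alpha),N)$ vanishes because $\operatorname{coker}(\alpha)$ is ``squeezed between flats'' --- is also false: being a quotient of a flat module does not make a module flat. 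In general the natural map $\ker(\alpha)\otimes_R N\to\ker(\alpha\otimes 1_N)$ has kernel $\operatorname{Tor}_2^R(\operatorname{coker}(\alpha),N)$ and cokernel $\operatorname{Tor}_1^R(\operatorname{coker}(\alpha),N)$, and neither need vanish.

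To be fair, the paper's own proof makes the identical slide (``Using again the flatness, we deduce that $\ker((-1)^{p-1}\alpha\otimes 1)=\ker(\alpha)\otimes_R H_{p-1}(\mathcal{C})$''), so you have not introduced a new error; you have faithfully reproduced an existing one and, to your credit, noticed it was nontrivial. What is actually correct, and what the paper actually needs in Corollary \ref{corB} and Lemma \ref{lemB3}, is the displayed sequence above with $\ker(\alpha\otimes 1_{H_{p-1}(\mathcal{C})})$ on the left rather than $\ker(\alpha)\otimes_R H_{p-1}(\mathcal{C})$. In the only application made ($K_0=K_1=R$, $\alpha$ multiplication by $x\in R$), $R\otimes_R H_{p-1}(\mathcal{C})=H_{p-1}(\mathcal{C})$ and $\alpha\otimes 1$ is just the action of $x$, so the left corner is $\ker(x\,|\,H_{p-1}(\mathcal{C}))$ directly, with no mention of $\ker(\alpha)$ at all. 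So either restate the lemma with $\ker(\alpha\otimes 1_{H_{p-1}(\mathcal{C})})$ in place of $\ker(\alpha)\otimes_R H_{p-1}(\mathcal{C})$, or add a hypothesis under which the two agree (for instance $\operatorname{coker}(\alpha)$ flat, which is what a K\"unneth-type argument really needs); do not assert that $\operatorname{Tor}_1(\operatorname{coker}(\alpha),-)$ is zero, because it is not.
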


\begin{proof}
Using the long homology sequence (\ref{h}) and Lemma \ref{lemB1}, we obtain
the following exact sequence
\[
K_{0}\otimes_{R}H_{p-1}\left(  \mathcal{C}\right)  \overset{\left(  -1\right)
^{p-1}\alpha\otimes1}{\longleftarrow}K_{1}\otimes_{R}H_{p-1}\left(
\mathcal{C}\right)  \leftarrow H_{p}\left(  \mathcal{K\otimes}_{R}%
\mathcal{C}\right)  \leftarrow K_{0}\otimes_{R}H_{p-1}\left(  \mathcal{C}%
\right)  \overset{\left(  -1\right)  ^{p}\alpha\otimes1}{\longleftarrow}%
K_{1}\otimes_{R}H_{p}\left(  \mathcal{C}\right)
\]
of $R$-modules. Using again the flatness, we deduce that $\ker\left(  \left(
-1\right)  ^{p-1}\alpha\otimes1\right)  =\ker\left(  \alpha\right)
\otimes_{R}H_{p-1}\left(  \mathcal{C}\right)  $ and $\operatorname{coker}%
\left(  \left(  -1\right)  ^{p}\alpha\otimes1\right)  =\operatorname{coker}%
\left(  \alpha\right)  \otimes_{R}H_{p}\left(  \mathcal{C}\right)  $. The rest
is clear.
\end{proof}

As a practical use of Lemma \ref{lemB2}, let us consider the case of
$K_{0}=K_{1}=R$ and $\alpha=x$ is an element of the ring $R$, which is acting
over all $R$-modules as a multiplication operator. Moreover, $K_{i}\otimes
_{R}H_{p}\left(  \mathcal{C}\right)  =H_{p}\left(  \mathcal{C}\right)  $ and
$\alpha\otimes1$ is the same action $x:H_{p}\left(  \mathcal{C}\right)
\rightarrow H_{p}\left(  \mathcal{C}\right)  $ of $x$ over the $R$-module
$H_{p}\left(  \mathcal{C}\right)  $ denoted by $x|H_{p}\left(  \mathcal{C}%
\right)  $. The complex $\mathcal{K\otimes}_{R}\mathcal{C}$ is reduced to the
cone $\operatorname{Con}\left(  x,\mathcal{C}\right)  $ of the morphism
$x:\mathcal{C}\longrightarrow\mathcal{C}$. Namely, $\operatorname{Con}\left(
x,\mathcal{C}\right)  $ is the following complex
\[
\cdots\longleftarrow C_{p-1}\oplus C_{p-2}\overset{\partial_{p-1}%
}{\longleftarrow}C_{p}\oplus C_{p-1}\leftarrow\cdots
\]
with the differential $\partial_{p-1}\left(  c_{p},c_{p-1}\right)  =\left(
d_{p-1}\left(  c_{p}\right)  +\left(  -1\right)  ^{p-1}xc_{p-1},d_{p-2}\left(
c_{p-1}\right)  \right)  $.

\begin{corollary}
\label{corB}If $x\in R$ and $\mathcal{C}$ is a complex of $R$-modules then the
following canonical sequences
\[
0\leftarrow\ker\left(  x|H_{p-1}\left(  \mathcal{C}\right)  \right)
\longleftarrow H_{p}\left(  x,\mathcal{C}\right)  \longleftarrow
\operatorname{coker}\left(  x|H_{p}\left(  \mathcal{C}\right)  \right)
\leftarrow0
\]
are exact, where $H_{p}\left(  x,\mathcal{C}\right)  $ are homology groups of
the cone $\operatorname{Con}\left(  x,\mathcal{C}\right)  $.
\end{corollary}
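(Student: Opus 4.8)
The plan is to obtain Corollary~\ref{corB} as a direct specialization of Lemma~\ref{lemB2}. First I would record the identification $\operatorname{Con}(x,\mathcal{C})=\mathcal{K}\otimes_{R}\mathcal{C}$ for the one-step complex $\mathcal{K}:0\leftarrow R\xleftarrow{x}R$ with $K_{0}=K_{1}=R$ and $\alpha$ equal to multiplication by $x$: comparing the displayed differential of $\operatorname{Con}(x,\mathcal{C})$ with the differential (\ref{dif}), the two complexes literally coincide term by term and map by map, since $(K_{0}\otimes_{R}C_{p})\oplus(K_{1}\otimes_{R}C_{p-1})=C_{p}\oplus C_{p-1}$ canonically and $\alpha\otimes 1$ becomes multiplication by $x$ on $C_{p-1}$.

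Next I would invoke flatness: $K_{0}=K_{1}=R$ are free, hence flat $R$-modules, so Lemma~\ref{lemB2} applies verbatim and gives the short exact sequences
\[
0\leftarrow\ker(\alpha)\otimes_{R}H_{p-1}(\mathcal{C})\longleftarrow H_{p}(\mathcal{K}\otimes_{R}\mathcal{C})\longleftarrow\operatorname{coker}(\alpha)\otimes_{R}H_{p}(\mathcal{C})\leftarrow 0
\]
for $p\geq 1$. It then remains only to rewrite the two outer terms. Since $\alpha:R\to R$ is multiplication by $x$, for any $R$-module $N$ the canonical isomorphism $R\otimes_{R}N\cong N$ carries $\alpha\otimes 1_{N}$ to the multiplication map $x|N:N\to N$; applying this with $N=H_{p-1}(\mathcal{C})$ and right-exactness of $\otimes$ (to commute $\operatorname{coker}$ past the tensor with the flat, in fact free, module $R$, and likewise $\ker$ past the flat module) yields $\ker(\alpha)\otimes_{R}H_{p-1}(\mathcal{C})\cong\ker(x|H_{p-1}(\mathcal{C}))$ and $\operatorname{coker}(\alpha)\otimes_{R}H_{p}(\mathcal{C})\cong\operatorname{coker}(x|H_{p}(\mathcal{C}))$. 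Substituting these identifications, and recalling that $H_{p}(x,\mathcal{C})$ was defined as $H_{p}(\operatorname{Con}(x,\mathcal{C}))=H_{p}(\mathcal{K}\otimes_{R}\mathcal{C})$, produces exactly the asserted sequences.

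I do not expect a genuine obstacle here; the only points requiring care are the bookkeeping of signs and the naturality of the isomorphism $R\otimes_{R}N\cong N$ with respect to multiplication by $x$, so that the resulting sequence maps are the \emph{canonical} ones claimed in the statement rather than merely abstractly exact. The sign $(-1)^{p-1}$ appearing in $\delta_{p-1}$ via Lemma~\ref{lemB1} is harmless, since kernels and cokernels are insensitive to rescaling by a unit, which is precisely the observation already used at the end of the proof of Lemma~\ref{lemB2}.
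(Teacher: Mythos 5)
Your proof takes exactly the paper's route: identify $\operatorname{Con}(x,\mathcal{C})$ with $\mathcal{K}\otimes_{R}\mathcal{C}$ for $K_{0}=K_{1}=R$ and $\alpha=x$, apply the Bourbaki Lemma~\ref{lemB2} (legitimate since $R$ is flat over itself), and rewrite the end terms as $\ker\left(x|H_{p-1}\left(\mathcal{C}\right)\right)$ and $\operatorname{coker}\left(x|H_{p}\left(\mathcal{C}\right)\right)$, which is precisely what the paper's proof does. One small caveat on your justification of the kernel term: the isomorphism $\ker(\alpha)\otimes_{R}H\cong\ker(\alpha\otimes 1_{H})$ does not follow from flatness of $R$ (it would require flatness of $H$ and can fail in general), but this is harmless here because what the long exact sequence underlying Lemma~\ref{lemB2} directly produces is $\ker\left(\alpha\otimes 1_{H_{p-1}(\mathcal{C})}\right)$, and under the canonical identification $R\otimes_{R}N\cong N$ the map $\alpha\otimes 1_{N}$ is literally multiplication by $x$, so the kernel and cokernel in question are $\ker\left(x|H_{p-1}\left(\mathcal{C}\right)\right)$ and $\operatorname{coker}\left(x|H_{p}\left(\mathcal{C}\right)\right)$ on the nose.
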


\begin{proof}
Since $\ker\left(  \alpha\right)  \otimes_{R}H_{p-1}\left(  \mathcal{C}%
\right)  =\ker\left(  \alpha\otimes1\right)  =\ker\left(  x|H_{p-1}\left(
\mathcal{C}\right)  \right)  $ and $\operatorname{coker}\left(  \alpha\right)
\otimes_{R}H_{p}\left(  \mathcal{C}\right)  =\operatorname{coker}\left(
\alpha\otimes1\right)  =\operatorname{coker}\left(  x|H_{p}\left(
\mathcal{C}\right)  \right)  $, the result follows from Lemma \ref{lemB2}.
\end{proof}

In particular, $H_{p}\left(  x,\mathcal{C}\right)  =0$ iff $x:H_{p-1}\left(
\mathcal{C}\right)  \rightarrow H_{p-1}\left(  \mathcal{C}\right)  $ is an
injection and $x:H_{p}\left(  \mathcal{C}\right)  \rightarrow H_{p}\left(
\mathcal{C}\right)  $ is a surjection.

\subsection{Koszul homology groups}

Fix a field $k$, $R/k$ a commutative $k$-algebra, $x=\left(  x_{1}%
,\ldots,x_{n}\right)  $ an $n$-tuple in $R$, and let $M$ be an $R$-module,
which in turn is a $k$-vector space with an $n$-tuple $x$ of mutually
commuting linear transformations acting on it. If it is necessary one can
replace $R$ by an algebra finite extension $k\left[  x\right]  $ of the field
$k$. For every point $a$ from the affine space $\mathbb{A}^{n}$ we have the
tuple $x-a=\left(  x_{1}-a_{1},\ldots,x_{n}-a_{n}\right)  $ on $M$, which is
turn defines the Koszul complex $\operatorname{Kos}\left(  x-a,M\right)  $:%
\[
0\leftarrow M\overset{\partial_{0}}{\longleftarrow}M\otimes_{k}k^{n}%
\overset{\partial_{1}}{\longleftarrow}\cdots\overset{\partial_{p-2}%
}{\longleftarrow}M\otimes_{k}\wedge^{p-1}k^{n}\overset{\partial_{p-1}%
}{\longleftarrow}M\otimes_{k}\wedge^{p}k^{n}\overset{\partial_{p}%
}{\longleftarrow}\cdots\overset{\partial_{n-1}}{\longleftarrow}M\leftarrow0
\]
in $R$-$\operatorname{mod}$ with the differential
\[
\partial_{p-1}\left(  m\otimes v_{p}\right)  =\sum_{s=1}^{p}\left(  -1\right)
^{s+1}\left(  x_{i_{s}}-a_{i_{s}}\right)  m\otimes v_{p,s},
\]
where $m\in M$, $v_{p}=e_{i_{1}}\wedge\ldots\wedge e_{i_{p}}$, $v_{p,s}%
=e_{i_{1}}\wedge\ldots\wedge\widehat{e_{i_{s}}}\wedge\ldots\wedge e_{i_{p}}$
(the notation $\widehat{e_{i_{s}}}$ stands for skipping $e_{i_{s}}$ from the
$p$-vector) and $\left(  e_{1},\ldots,e_{n}\right)  $ is the standard basis
for $k^{n}$. The homology groups of $\operatorname{Kos}\left(  x-a,M\right)  $
are denoted by $H_{p}\left(  x-a,M\right)  $, $p\geq0$, which are $R$-modules.
We put
\[
i\left(  x\right)  =\sum_{p=0}^{n}\left(  -1\right)  ^{p+1}\dim_{k}\left(
H_{p}\left(  x,M\right)  \right)
\]
to be \textit{the index of the tuple} $x$ whenever $\dim_{k}\left(
H_{p}\left(  x,M\right)  \right)  <\infty$ for all $p$. In the latter case we
write $i\left(  x\right)  <\infty$. Recall that the index $i\left(  t\right)
$ of a single $k$-linear transformation $t:M\rightarrow M$ is given by
$i\left(  t\right)  =\dim_{k}\ker\left(  t\right)  -\dim_{k}%
\operatorname{coker}\left(  t\right)  $ whenever both dimensions are finite.
If $\dim_{k}\left(  M\right)  <\infty$ then $i\left(  t\right)  =0$ for every
$t$. The index of tuples of bounded linear operators acting on a Banach space
is a subject of Fredholm theory from analysis \cite{Gur}, \cite{FainF}.

\begin{lemma}
\label{lemB3}If $x^{\prime}=\left(  x_{1},\ldots,x_{n-1}\right)  $ then there
is an exact sequence
\[
0\leftarrow\ker\left(  x_{n}|H_{p-1}\left(  x^{\prime},M\right)  \right)
\longleftarrow H_{p}\left(  x,M\right)  \longleftarrow\operatorname{coker}%
\left(  x_{n}|H_{p}\left(  x^{\prime},M\right)  \right)  \leftarrow0
\]
of $R$-modules. In particular, if $H_{p}\left(  x,M\right)  \neq0$ then
$H_{p}\left(  x^{\prime},M\right)  \neq0$ or $H_{p-1}\left(  x^{\prime
},M\right)  \neq0$. Moreover, if $i\left(  x\right)  <\infty$ then $i\left(
x_{n}|H_{p}\left(  x^{\prime},M\right)  \right)  <\infty$ for every $p$, and
$i\left(  x\right)  =\sum_{p=0}^{n-1}\left(  -1\right)  ^{p}i\left(
x_{n}|H_{p}\left(  x^{\prime},M\right)  \right)  $. If $i\left(  x^{\prime
}\right)  <\infty$ then $i\left(  x\right)  =0$.
\end{lemma}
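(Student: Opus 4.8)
The plan is to identify the Koszul complex of the full tuple with the mapping cone of multiplication by $x_n$ on the Koszul complex of $x'$, and then feed this into Corollary \ref{corB}.

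First I will set up the identification $\operatorname{Kos}(x,M)=\operatorname{Con}(x_n,\mathcal{C})$, where $\mathcal{C}=\operatorname{Kos}(x',M)$. The splitting $\wedge^{p}k^{n}=\wedge^{p}k^{n-1}\oplus\bigl(\wedge^{p-1}k^{n-1}\wedge e_{n}\bigr)$ gives $M\otimes_{k}\wedge^{p}k^{n}=C_{p}\oplus C_{p-1}$, matching $\operatorname{Con}(x_n,\mathcal{C})_{p}=C_{p}\oplus C_{p-1}$. Inspecting the Koszul differential $\partial_{p-1}$: on the summand $C_{p}$ it restricts to $d_{p-1}$ (no component hits $C_{p-2}\wedge e_n$ since $e_n$ is absent), while on $v\otimes e_{n}\in C_{p-1}\wedge e_n$ the term deleting $e_{n}$ (the index $s=p$ term) produces $(-1)^{p+1}x_{n}$ landing in the $C_{p-1}$ part, and the remaining terms produce $d_{p-2}$ landing in $C_{p-2}\wedge e_{n}$. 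Since $(-1)^{p+1}=(-1)^{p-1}$, this is precisely the cone differential displayed before Corollary \ref{corB}, so $H_{p}(x,M)=H_{p}(x_{n},\mathcal{C})$ for all $p$. This sign bookkeeping is the only slightly delicate point; everything else is formal.

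Now Corollary \ref{corB}, applied to the complex $\mathcal{C}=\operatorname{Kos}(x',M)$ and the element $x_{n}\in R$, yields the asserted short exact sequence for every $p$ (for $p=0$ the first term is $0=\ker(x_n|H_{-1}(x',M))$ and one recovers $H_{0}(x,M)=\operatorname{coker}(x_{n}|H_{0}(x',M))$; for $p=n$ the third term vanishes and one recovers $H_{n}(x,M)=\ker(x_{n}|H_{n-1}(x',M))$). The "in particular" clause is immediate from exactness: if both $\ker(x_{n}|H_{p-1}(x',M))$ and $\operatorname{coker}(x_{n}|H_{p}(x',M))$ were zero then $H_{p}(x,M)=0$; a nonzero kernel forces $H_{p-1}(x',M)\neq0$ and a nonzero cokernel forces $H_{p}(x',M)\neq0$.

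For the index claims, assume $i(x)<\infty$. The exact sequence gives $\dim_{k}\ker(x_{n}|H_{p-1}(x',M))\leq\dim_{k}H_{p}(x,M)<\infty$ and $\dim_{k}\operatorname{coker}(x_{n}|H_{p}(x',M))\leq\dim_{k}H_{p}(x,M)<\infty$, so $i(x_{n}|H_{p}(x',M))$ is defined and finite for every $p$, and $\dim_{k}H_{p}(x,M)=\dim_{k}\ker(x_{n}|H_{p-1}(x',M))+\dim_{k}\operatorname{coker}(x_{n}|H_{p}(x',M))$. Substituting into $i(x)=\sum_{p=0}^{n}(-1)^{p+1}\dim_{k}H_{p}(x,M)$, reindexing the kernel terms by $p\mapsto p+1$, and using $H_{p}(x',M)=0$ for $p<0$ and $p>n-1$, the kernel and cokernel contributions assemble into $\sum_{p=0}^{n-1}(-1)^{p}\bigl(\dim_{k}\ker(x_{n}|H_{p}(x',M))-\dim_{k}\operatorname{coker}(x_{n}|H_{p}(x',M))\bigr)=\sum_{p=0}^{n-1}(-1)^{p}i(x_{n}|H_{p}(x',M))$. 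Finally, if $i(x')<\infty$ then each $H_{p}(x',M)$ is finite-dimensional over $k$; then $x_{n}|H_{p}(x',M)$ is an endomorphism of a finite-dimensional space, so $i(x_{n}|H_{p}(x',M))=0$, while $\dim_{k}H_{p}(x,M)\leq\dim_{k}H_{p-1}(x',M)+\dim_{k}H_{p}(x',M)<\infty$, whence $i(x)<\infty$ and $i(x)=0$.
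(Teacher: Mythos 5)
Your proof is correct and follows essentially the same route as the paper: identify $\operatorname{Kos}(x,M)$ with the cone $\operatorname{Con}(x_n,\operatorname{Kos}(x',M))$ via the splitting $\wedge^p k^n=\wedge^p k^{n-1}\oplus(\wedge^{p-1}k^{n-1}\wedge e_n)$, apply Corollary \ref{corB}, and then use the resulting short exact sequences to compute the index. The sign computation and the dimension bookkeeping for the index formula match what the paper does.
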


\begin{proof}
Put $\mathcal{C=}\operatorname{Kos}\left(  x^{\prime},M\right)  $ to be a
complex of $R$-modules. Then $x_{n}$ defines an endomorphism of $\mathcal{C}$,
which in turn generates the cone $\operatorname{Con}\left(  x_{n}%
,\mathcal{C}\right)  $. The fact $\operatorname{Con}\left(  x_{n}%
,\mathcal{C}\right)  =\operatorname{Kos}\left(  x,M\right)  $ is know even in
the noncommutative case (see \cite[Lemma 1.5]{Fain}). One needs to use the
vector space isomorphism $\wedge^{p}k^{n-1}\oplus\wedge^{p-1}k^{n-1}%
\rightarrow\wedge^{p}k^{n}$, $\left(  v_{1},v_{2}\right)  \mapsto v_{1}%
+v_{2}\wedge e_{n}$, which generates $P$-module isomorphism $C_{p}\oplus
C_{p-1}\rightarrow M\otimes_{k}\wedge^{p}k^{n}$ for every $p$. If
$m_{p}=m\otimes v_{p}\in M\otimes_{k}\wedge^{p}k^{n}$ with $j_{p}\neq n$, then
$m_{p}$ is identified with an element $\left(  c_{p},0\right)  $ of
$C_{p}\oplus C_{p-1}$ and $\partial\left(  c_{p},0\right)  =\left(
\partial^{\prime}\left(  c_{p}\right)  ,0\right)  $, where $\partial^{\prime}$
is the differential of $\mathcal{C}$. If $j_{p}=n$ then $m_{p}$ is identified
with $\left(  0,c_{p-1}\right)  $, where $c_{p-1}=m\otimes v_{p-1}\in C_{p-1}%
$, $v_{p-1}\wedge e_{n}=v_{p}$. Moreover,
\begin{align*}
\partial\left(  0,c_{p-1}\right)   &  =\partial\left(  m_{p}\right)
=\sum_{s=1}^{p-1}\left(  -1\right)  ^{s+1}x_{i_{s}}m\otimes v_{p-1,s}\wedge
e_{n}+\left(  -1\right)  ^{p+1}x_{n}m\otimes v_{p-1}\\
&  =\partial^{\prime}\left(  c_{p-1}\right)  \wedge e_{n}+\left(  -1\right)
^{p+1}x_{n}c_{p-1}=\left(  \left(  -1\right)  ^{p+1}x_{n}c_{p-1}%
,\partial^{\prime}\left(  c_{p-1}\right)  \right)  .
\end{align*}
Hence $\partial\left(  c_{p},c_{p-1}\right)  =\left(  \partial^{\prime}\left(
c_{p}\right)  +\left(  -1\right)  ^{p-1}x_{n}c_{p-1},\partial^{\prime}\left(
c_{p-1}\right)  \right)  $ for all $\left(  c_{p},c_{p-1}\right)  \in
C_{p}\oplus C_{p-1}$, which means that $\partial$ is the morphism of
$\operatorname{Con}\left(  x_{n},\mathcal{C}\right)  $. It remains to use
Corollary \ref{corB}.

Finally, consider the case of $i\left(  x\right)  <\infty$. Since $\dim
_{k}\left(  H_{p}\left(  x,M\right)  \right)  <\infty$ for every $p$, it
follows that $\dim_{k}\ker\left(  x_{n}|H_{p-1}\left(  x^{\prime},M\right)
\right)  <\infty$ and $\dim_{k}\operatorname{coker}\left(  x_{n}|H_{p}\left(
x^{\prime},M\right)  \right)  <\infty$ for every $p$, which means that
$i\left(  x_{n}|H_{p}\left(  x^{\prime},M\right)  \right)  <\infty$ for every
$p$. Then
\begin{align*}
i\left(  x\right)   &  =\sum_{p=0}^{n}\left(  -1\right)  ^{p+1}\left(
\dim_{k}\ker\left(  x_{n}|H_{p-1}\left(  x^{\prime},M\right)  \right)
+\dim_{k}\operatorname{coker}\left(  x_{n}|H_{p}\left(  x^{\prime},M\right)
\right)  \right) \\
&  =\sum_{p=0}^{n}\left(  -1\right)  ^{p+1}\dim_{k}\operatorname{coker}\left(
x_{n}|H_{p}\left(  x^{\prime},M\right)  \right)  +\left(  -1\right)  ^{p}%
\dim_{k}\ker\left(  x_{n}|H_{p}\left(  x^{\prime},M\right)  \right) \\
&  =\sum_{p=0}^{n}\left(  -1\right)  ^{p}i\left(  x_{n}|H_{p}\left(
x^{\prime},M\right)  \right)  ,
\end{align*}
that is, $i\left(  x\right)  =\sum_{p=0}^{n-1}\left(  -1\right)  ^{p}i\left(
x_{n}|H_{p}\left(  x^{\prime},M\right)  \right)  $ (for $H_{n}\left(
x^{\prime},M\right)  =0$). In particular, if $i\left(  x^{\prime}\right)
<\infty$ then $\dim_{k}\left(  H_{p}\left(  x^{\prime},M\right)  \right)
<\infty$ and $i\left(  x_{n}|H_{p}\left(  x^{\prime},M\right)  \right)  =0$
for all $p$. Therefore $i\left(  x\right)  =0$.
\end{proof}

The Taylor spectrum $\sigma\left(  x,M\right)  $ of the operator tuple $x$ on
$M$ is defined to be a subset of $\mathbb{A}^{n}$ of those $a$ such that
$\operatorname{Kos}\left(  x-a,M\right)  $ is not exact. The point spectrum
$\sigma_{\operatorname{p}}\left(  x,M\right)  $ consists of those
$a\in\mathbb{A}^{n}$ such that $H_{n}\left(  x-a,M\right)  \neq0$. Taking into
account $H_{n}\left(  x-a,M\right)  =\ker\partial_{n-1}$, we conclude that
$a\in\sigma_{\operatorname{p}}\left(  x,M\right)  $ iff there is a nonzero
$m\in M$ such that $x_{i}m=a_{i}m$ for all $i$, that is, $a$ turns out to be a
joint eigenvalue and $m$ is the related joint eigenvector (see \cite{CTag}).

\begin{corollary}
\label{corProj1}Let $R/k$ be an algebra, $M\in R$-mod, $x$ an $n$-tuple from
$R$. Then $\pi\left(  \sigma\left(  x,M\right)  \right)  \subseteq
\sigma\left(  x^{\prime},M\right)  $, where $\pi:\mathbb{A}^{n}\rightarrow
\mathbb{A}^{n-1}$ is the canonical projection onto first $n-1$ coordinates.
\end{corollary}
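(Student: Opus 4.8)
The plan is to unpack what membership in $\sigma(x,M)$ and $\sigma(x',M)$ means in terms of Koszul homology and then invoke Lemma \ref{lemB3}. Recall $a \in \sigma(x,M)$ iff $\operatorname{Kos}(x-a,M)$ is not exact, i.e.\ $H_p(x-a,M) \neq 0$ for some $p$. We want to show $\pi(a) \in \sigma(x',M)$, that is, $\operatorname{Kos}(x'-\pi(a),M)$ is not exact, i.e.\ $H_p(x'-\pi(a),M) \neq 0$ for some $p$.

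The key observation is that the tuple $(x-a)' = (x_1 - a_1, \ldots, x_{n-1} - a_{n-1})$ appearing when we apply Lemma \ref{lemB3} to the shifted tuple $x - a$ is exactly $x' - \pi(a)$, where $x' = (x_1,\ldots,x_{n-1})$ and $\pi(a) = (a_1,\ldots,a_{n-1})$. So first I would fix $a \in \sigma(x,M)$ and let $p$ be such that $H_p(x-a,M) \neq 0$. Then I would apply Lemma \ref{lemB3} with the tuple $x-a$ in place of $x$: the "last coordinate" is $x_n - a_n$, and the short exact sequence
\[
0 \leftarrow \ker\bigl(x_n - a_n \mid H_{p-1}(x'-\pi(a),M)\bigr) \longleftarrow H_p(x-a,M) \longleftarrow \operatorname{coker}\bigl(x_n - a_n \mid H_p(x'-\pi(a),M)\bigr) \leftarrow 0
\]
together with the stated consequence "if $H_p(x,M) \neq 0$ then $H_p(x',M) \neq 0$ or $H_{p-1}(x',M) \neq 0$" (applied to $x-a$) immediately gives $H_p(x'-\pi(a),M) \neq 0$ or $H_{p-1}(x'-\pi(a),M) \neq 0$. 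Either way, $\operatorname{Kos}(x'-\pi(a),M)$ fails to be exact, so $\pi(a) \in \sigma(x',M)$.

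Thus the proof is essentially a one-line deduction from Lemma \ref{lemB3} once the bookkeeping about shifts is in place; I would write it as: take $a \in \sigma(x,M)$, pick $p$ with $H_p(x-a,M) \neq 0$, apply Lemma \ref{lemB3} to $x-a$ noting $(x-a)' = x' - \pi(a)$, conclude $H_p(x'-\pi(a),M) \neq 0$ or $H_{p-1}(x'-\pi(a),M) \neq 0$, hence $\pi(a) \in \sigma(x',M)$, so $\pi(\sigma(x,M)) \subseteq \sigma(x',M)$.

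There is no real obstacle here — the only thing to be careful about is the indexing when $p = 0$ (the term $H_{-1}$ should be read as zero, so in that case $H_0(x-a,M) \neq 0$ forces $H_0(x'-\pi(a),M) \neq 0$ via the surjection onto the cokernel, which is consistent with Corollary \ref{corB}) and making explicit that translating the whole tuple by $a$ does not affect the applicability of Lemma \ref{lemB3}, since that lemma is stated for an arbitrary $n$-tuple from $R$ and $x - a$ is again such a tuple. So the write-up amounts to invoking Lemma \ref{lemB3} with the translated tuple and reading off the conclusion.
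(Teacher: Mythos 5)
Your proof is correct and is essentially identical to the paper's: both take $a \in \sigma(x,M)$, pick $p$ with $H_p(x-a,M)\neq 0$, apply Lemma \ref{lemB3} to the translated tuple $x-a$ (whose truncation is $x'-\pi(a)$), and conclude from the short exact sequence that $H_p$ or $H_{p-1}$ of $\operatorname{Kos}(x'-\pi(a),M)$ is nonzero. Your extra remark on the $p=0$ edge case is a sensible clarification but not a deviation from the argument.
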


\begin{proof}
If $a\in\sigma\left(  x,M\right)  $ then $H_{p}\left(  x-a,M\right)  \neq0$
for some $p$. By Lemma \ref{lemB3}, either $H_{p}\left(  x^{\prime}-a^{\prime
},M\right)  \neq0$ or $H_{p-1}\left(  x^{\prime}-a^{\prime},M\right)  \neq0$,
where $a^{\prime}=\pi\left(  a\right)  $. Whence $a^{\prime}\in\sigma\left(
x^{\prime},M\right)  $.
\end{proof}

Put $M_{i}=M/\left\langle x_{1},\ldots,x_{i}\right\rangle M$, $i\geq1$. Recall
\cite[9.6]{BurHA}, \cite[Ch. 23]{AK} that if $M_{n}\neq\left\{  0\right\}  $
and $x_{i}\notin\operatorname{zdiv}\left(  M_{i-1}\right)  $ (zero-divisors)
for all $i$, then $x$ is called a regular sequence. In the case of a nonzero
Noetherian module $M$ and $x\subseteq\operatorname{rad}\left(  M\right)  $ the
condition $M_{n}\neq\left\{  0\right\}  $ is satisfied automatically. Indeed,
otherwise $\left\langle x\right\rangle M=M$ implies that $\left(  1+a\right)
M=\left\{  0\right\}  $ for some $a\in\left\langle x\right\rangle $ (Nakayama
Lemma), which means that $1+a\in\operatorname{Ann}\left(  M\right)
\subseteq\operatorname{rad}\left(  M\right)  $ or $1\in\operatorname{rad}%
\left(  M\right)  $, a contradiction.

\begin{corollary}
\label{corRH}Let $R/k$ be a $k$-algebra, $M\in R$-$\operatorname{mod}$, and
let $x=\left(  x_{1},\ldots,x_{n}\right)  \subseteq R$ be a regular sequence
for $M$. Then $H_{i}\left(  x,M\right)  =0$ for all $i>0$.
\end{corollary}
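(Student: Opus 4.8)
The plan is to prove $H_i(x,M)=0$ for $i>0$ by induction on $n$, using Lemma \ref{lemB3} as the engine. The base case $n=1$ just says: if $x_1$ is a nonzerodivisor on $M$ (and $M_1=M/x_1M$ is irrelevant for the vanishing), then $H_1(x_1,M)=\ker(x_1\mid M)=0$, which is immediate since $H_1(x_1,M)$ for the two-term Koszul complex $0\leftarrow M\xleftarrow{x_1}M\leftarrow 0$ is exactly $\ker(x_1)$.

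For the inductive step, write $x'=(x_1,\dots,x_{n-1})$. By definition of a regular sequence, $x'$ is a regular sequence for $M$ (the conditions $x_i\notin\operatorname{zdiv}(M_{i-1})$ for $i\le n-1$ are inherited; note $M_{n-1}\neq 0$ since $M_{n-1}\twoheadrightarrow M_n\neq 0$). So the induction hypothesis gives $H_i(x',M)=0$ for all $i>0$. Now apply Lemma \ref{lemB3}: for each $p\ge 1$ there is an exact sequence
\[
0\leftarrow\ker\!\left(x_n\mid H_{p-1}(x',M)\right)\longleftarrow H_p(x,M)\longleftarrow\operatorname{coker}\!\left(x_n\mid H_p(x',M)\right)\leftarrow 0.
\]
For $p\ge 2$, both $H_{p-1}(x',M)=0$ and $H_p(x',M)=0$, so $H_p(x,M)=0$. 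The only surviving case is $p=1$, where the sequence reads
\[
0\leftarrow\ker\!\left(x_n\mid H_0(x',M)\right)\longleftarrow H_1(x,M)\longleftarrow\operatorname{coker}\!\left(x_n\mid H_1(x',M)\right)\leftarrow 0,
\]
and since $H_1(x',M)=0$ the right-hand term vanishes, giving $H_1(x,M)\cong\ker(x_n\mid H_0(x',M))$. Finally $H_0(x',M)=M/\langle x_1,\dots,x_{n-1}\rangle M=M_{n-1}$, and the regular-sequence hypothesis says precisely $x_n\notin\operatorname{zdiv}(M_{n-1})$, i.e. $x_n\mid M_{n-1}$ is injective, so $\ker(x_n\mid M_{n-1})=0$ and hence $H_1(x,M)=0$.

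I do not expect a genuine obstacle here; the one point requiring a line of care is that the inductive hypothesis actually applies, i.e. that $x'$ is itself a regular sequence for $M$ — in particular that $M_{n-1}\neq\{0\}$, which follows because $0\neq M_n = M_{n-1}/x_n M_{n-1}$ forces $M_{n-1}\neq 0$. Everything else is a direct bookkeeping on the exact sequences of Lemma \ref{lemB3}, with the identification $H_0(x',M)=M_{n-1}$ being the standard description of the bottom Koszul homology.
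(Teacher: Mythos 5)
Your proof is correct and follows essentially the same route as the paper: induction on $n$, using Lemma \ref{lemB3} to kill $H_p(x,M)$ for $p\geq 2$ and then identifying $H_1(x,M)$ with $\ker\bigl(x_n\mid M/\langle x'\rangle M\bigr)$, which vanishes by the regularity hypothesis. You supply a bit more bookkeeping (the base case and the check that $M_{n-1}\neq 0$) than the paper spells out, but the argument is the same.
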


\begin{proof}
We proceed by induction on $n$. If $n=1$ the result follows. In the general
case, we put $x^{\prime}=\left(  x_{1},\ldots,x_{n-1}\right)  $, which is
regular either. By induction hypothesis, $H_{i}\left(  x^{\prime},M\right)
=0$ for all $i>0$. By Lemma \ref{lemB3}, $H_{p}\left(  x,M\right)  =0$ for all
$p\geq2$. Moreover, $H_{1}\left(  x,M\right)  =\ker\left(  x_{n}|H_{0}\left(
x^{\prime},M\right)  \right)  $ and $H_{0}\left(  x^{\prime},M\right)
=M/\left\langle x^{\prime}\right\rangle M$. But $x_{n}:M/\left\langle
x^{\prime}\right\rangle M\rightarrow M/\left\langle x^{\prime}\right\rangle M$
is injective by assumption, therefore $H_{1}\left(  x,M\right)  =0$. Whence
$H_{i}\left(  x,M\right)  =0$ for all $i>0$.
\end{proof}

In the case of a local ring $R$ and a Noetherian module $M$ the statement of
Corollary \ref{corRH} turns out to be a criteria for regularity of a tuple
(see \cite{Dkth} for regularity in the noncommutative setting). One can easily
seen that the tuple $X=\left(  X_{1},\ldots,X_{n}\right)  $ is a regular
sequence for the polynomial algebra $P=k\left[  X\right]  $ to be a
$P$-module. By Corollary \ref{corRH}, $H_{i}\left(  X,P\right)  =0$ for all
$i>0$, and $H_{0}\left(  X,P\right)  =P/\mathfrak{t}=k$, where $\mathfrak{t=}%
\left\langle X\right\rangle $ is the maximal ideal of $P$. Thus $0\leftarrow
k\longleftarrow\operatorname{Kos}\left(  X,P\right)  $ is exact or
$\operatorname{Kos}\left(  X,P\right)  $ provides a free $P$-module resolution
for the $P$-module $k$. In particular, for every $P$-module $M$ we obtain that
$\operatorname{Kos}\left(  X,P\right)  \otimes_{P}M=\operatorname{Kos}\left(
x,M\right)  $ and $\operatorname{Tor}_{i}^{P}\left(  k,M\right)  =H_{i}\left(
\operatorname{Kos}\left(  X,P\right)  \otimes_{P}M\right)  =H_{i}\left(
x,M\right)  $, where $x$ is the $n$-tuple of linear transformations on $M$
given by $X$-action on $M$ (see also below Subsection \ref{SubsecAVS}).

Let $R/k$ be a $k$-algebra, $R^{\prime}=k\left[  x\right]  \subseteq R$ a
subalgebra with an $n$-tuple $x$. An $m$-tuple $y$ from $R^{\prime}$ is said
to be \textit{related to} $x$ if $x$ and $y$ generate the same (maximal) ideal
$\left\langle x\right\rangle =\left\langle y\right\rangle $ in $R^{\prime}$.
In particular, $x$ is always related to $x$ itself.

The following assertion is well know \cite{Tay1}. Below we provide its
modified (a bit) version with the detailed proof.

\begin{lemma}
\label{lemB4}If $R/k$ is an algebra, $x$ an $n$-tuple in $R$ and $y$ is an
$m$-tuple related to $x$, then the $k\left[  x\right]  $-module structure of
$H_{p}\left(  y,M\right)  $ is reduced to its $k$-vector space structure for
every $p\geq0$. In particular, the action of a polynomial $p\left(  x\right)
\in R$ on $H_{p}\left(  y,M\right)  $ is just the multiplication by $p\left(
0\right)  $ operator.
\end{lemma}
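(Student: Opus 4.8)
The plan is to show that the ideal $\langle y\rangle = \langle x\rangle$ of $R' = k[x]$ kills every Koszul homology module $H_p(y,M)$, so the $R'$-module structure factors through $R'/\langle x\rangle$. Since $\langle x\rangle$ is a maximal ideal of $R'$ with residue field $k$ (because $R'/\langle x\rangle \cong k$ via evaluation: $x_i \mapsto 0$), any $R'$-module annihilated by $\langle x\rangle$ is just a $k$-vector space, and the action of a polynomial $p(x) \in R'$ on it is multiplication by the scalar $p(0)$, which is exactly the last assertion.

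So the heart of the matter is: every $x_i$ annihilates $H_p(y,M)$. I would argue this in two steps. First, I claim each component $y_j$ of the tuple $y$ annihilates $H_p(y,M)$. This is the standard fact that the Koszul homology $H_*(y,M)$ is annihilated by the ideal $\langle y\rangle$ generated by the entries of the tuple; it follows from the classical homotopy argument (multiplication by $y_j$ on the Koszul complex $\operatorname{Kos}(y,M)$ is chain-homotopic to zero via contraction with the dual basis vector $e_j^*$), or one can extract it from the machinery already set up: writing $\operatorname{Kos}(y,M) = \operatorname{Con}(y_m, \operatorname{Kos}(y',M))$ as in the proof of Lemma \ref{lemB3} and using Corollary \ref{corB}, one sees inductively that $y_m$ acts as zero on $H_p(y,M)$ since it sits between a $\ker$ on which $y_m$ acts as $0$ and a $\operatorname{coker}$ of multiplication by $y_m$; symmetry in the entries (Koszul homology is independent of the ordering of the tuple up to isomorphism) then gives it for every $y_j$. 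Second, because $y$ is related to $x$, we have $\langle y \rangle = \langle x \rangle$ in $R'$, so each $x_i \in \langle y\rangle$ is an $R'$-linear combination of the $y_j$; since each $y_j$ annihilates $H_p(y,M)$ and annihilation by a set of elements is preserved under taking the ideal they generate, $x_i$ annihilates $H_p(y,M)$ too. Hence $\langle x\rangle \subseteq \operatorname{Ann}_{R'}(H_p(y,M))$.

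Having established $\langle x \rangle H_p(y,M) = 0$, the $R'$-action on $H_p(y,M)$ descends to an $R'/\langle x\rangle = k$-action, i.e. coincides with the underlying $k$-vector space structure; and for $p(x) \in R'$, writing $p(x) = p(0) + q$ with $q \in \langle x\rangle$, the action of $p(x)$ on $H_p(y,M)$ equals the action of $p(0) \in k$, namely multiplication by the scalar $p(0)$.

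I do not anticipate a serious obstacle; the only point requiring a little care is the claim that Koszul homology is annihilated by the generating ideal of the tuple and is insensitive to the ordering of the tuple's entries — both are classical, but if one wants to stay self-contained it is cleanest to invoke the homotopy formula $y_j \cdot \mathrm{id} \simeq 0$ on $\operatorname{Kos}(y,M)$ directly rather than routing through Corollary \ref{corB} and a symmetry argument. Once that lemma is in hand, the reduction to $k$-vector space structure and the identification of the $p(x)$-action with multiplication by $p(0)$ are immediate.
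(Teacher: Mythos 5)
Your proposal is correct and takes essentially the same route as the paper: the paper proves precisely the homotopy identity $\partial_p\gamma_i + \gamma_i\partial_{p-1} = y_i$, where $\gamma_i$ is exterior multiplication by $e_i$ (in the paper's homological convention the chain homotopy is wedging, not contraction, since the Koszul differential itself is the contraction --- a purely terminological point), concludes $\langle y\rangle H_p(y,M)=0$, and then invokes $\langle x\rangle = \langle y\rangle$. One caution on your proposed alternative: the exact sequence from Corollary \ref{corB} only shows the middle term $H_p(y,M)$ is killed by $y_m^2$, since $y_m$ pushes a middle element into the image of the $\ker$-term (where a second application of $y_m$ kills it), and does not by itself give $y_m H_p(y,M)=0$; so that route as stated does not substitute for the homotopy argument, and you were right to flag the direct homotopy as the cleaner, self-contained path.
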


\begin{proof}
For every $i$ let us define the $P$-linear map
\[
\gamma_{i}:M\otimes_{k}\wedge^{p}k^{m}\rightarrow M\otimes_{k}\wedge
^{p+1}k^{m},\quad\gamma_{i}\left(  u_{p}\right)  =u\otimes\left(  e_{i}\wedge
v_{p}\right)  ,
\]
where $u_{p}=u\otimes v_{p}$, $e_{i}\wedge v_{p}=\left(  -1\right)
^{s-1}e_{i_{1}}\wedge\ldots\wedge e_{i}\wedge\ldots\wedge e_{i_{p}}=\left(
-1\right)  ^{s-1}v_{p+1}$ whenever $v_{p}=e_{i_{1}}\wedge\ldots\wedge
e_{i_{p}}$ and $i_{1}<\cdots<i_{s-1}<i<i_{s}<\cdots<i_{p}$ for some $s$. Then
\begin{align*}
\partial_{p}\gamma_{i}\left(  u_{p}\right)   &  =\left(  -1\right)  ^{s-1}%
\sum_{k<s}\left(  -1\right)  ^{k+1}y_{i_{k}}u\otimes v_{p+1,k}+x_{i}u\otimes
v_{p}+\left(  -1\right)  ^{s-1}\sum_{k\geq s}\left(  -1\right)  ^{k}y_{i_{k}%
}u\otimes v_{p+1,k}\\
&  =-\sum_{k<s}\left(  -1\right)  ^{k+1}y_{i_{k}}u\otimes e_{i}\wedge
v_{p,k}+\sum_{k\geq s}\left(  -1\right)  ^{k}y_{i_{k}}u\otimes e_{i}\wedge
v_{p,k}+x_{i}m\otimes v_{p}\\
&  =-\gamma_{i}\partial_{p-1}\left(  u_{p}\right)  +y_{i}u_{p}.
\end{align*}
If $u_{p}\in\ker\left(  \partial_{p-1}\right)  $ then $y_{i}m_{p}=\partial
_{p}\gamma_{i}\left(  u_{p}\right)  \in\operatorname{im}\left(  \partial
_{p}\right)  $, which means that the action of $y_{i}$ over $H_{p}\left(
y,M\right)  $ is trivial, that is, $yH_{p}\left(  y,M\right)  =\left\{
0\right\}  $. But $H_{p}\left(  y,M\right)  $ is an $R$-module, therefore
$\left\langle y\right\rangle H_{p}\left(  y,M\right)  =\left\{  0\right\}  $.
Taking into account that $\left\langle x\right\rangle =\left\langle
y\right\rangle $ in $R^{\prime}$, we deduce that $\left\langle x\right\rangle
H_{p}\left(  y,M\right)  =\left\{  0\right\}  $.

Finally, take $p\left(  x\right)  \in R^{\prime}$. Then $p\left(  x\right)
=p\left(  0\right)  +g$, $g\in\left\langle x\right\rangle $ and $p\left(
x\right)  H_{p}\left(  y,M\right)  =p\left(  0\right)  H_{p}\left(
y,M\right)  $.
\end{proof}

Since $R^{\prime}=k\left[  x\right]  =k\left[  x-a\right]  $ and $\left\langle
x-a\right\rangle \subseteq R^{\prime}$ is a maximal ideal, we conclude that
$k\left[  x\right]  $-module structure of $H_{p}\left(  x-a,M\right)  $ is
reduced to its $k$-vector space structure for every $p\geq0$. Actually, the
action of a polynomial $p\left(  x\right)  \in R$ on $H_{p}\left(
x-a,M\right)  $ is just the multiplication by $p\left(  a\right)  $ operator.

Again consider $R^{\prime}=k\left[  x\right]  \subseteq R$ a subalgebra with
an $n$-tuple $x$, and an $m$-tuple $y$ from $R^{\prime}$ related to $x$. If
$p\left(  x\right)  \in R^{\prime}$ then $z=\left(  y,p\left(  x\right)
\right)  $ is an $m+1$-tuple in $R$.

\begin{proposition}
\label{propB1}If $b=\left(  0,\lambda\right)  \in\sigma\left(  z,M\right)  $
for some $\lambda\in k$, then $0\in\sigma\left(  y,M\right)  $ and
$\lambda=p\left(  0\right)  $. Moreover, $\dim_{k}\left(  H_{p}\left(
z-b,M\right)  \right)  =\dim_{k}H_{p-1}\left(  y,M\right)  +\dim_{k}%
H_{p}\left(  y,M\right)  $ for all $p$, and $i\left(  z-b\right)  =0$ whenever
$i\left(  y\right)  <\infty$.
\end{proposition}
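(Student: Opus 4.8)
The plan is to view $\operatorname{Kos}(z-b,M)$ as the cone of the operator $p(x)-\lambda$ acting on the complex $\mathcal{C}=\operatorname{Kos}(y,M)$, exactly as in the proof of Lemma \ref{lemB3}: since $z=(y,p(x))$ and $b=(0,\lambda)$, the last coordinate contributes the endomorphism $p(x)-\lambda$ of $\mathcal{C}$, so $\operatorname{Con}(p(x)-\lambda,\mathcal{C})=\operatorname{Kos}(z-b,M)$ and Corollary \ref{corB} gives short exact sequences
\[
0\leftarrow\ker\bigl((p(x)-\lambda)\,|\,H_{p-1}(y,M)\bigr)\longleftarrow H_{p}(z-b,M)\longleftarrow\operatorname{coker}\bigl((p(x)-\lambda)\,|\,H_{p}(y,M)\bigr)\leftarrow0
\]
for all $p$. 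The point is now to identify the operator $p(x)-\lambda$ on each $H_{p}(y,M)$, and here Lemma \ref{lemB4} does the work: since $y$ is related to $x$ in $R^{\prime}=k[x]$, the $k[x]$-module structure of $H_{p}(y,M)$ collapses to its $k$-vector space structure, and the action of the polynomial $p(x)\in R$ on $H_{p}(y,M)$ is just multiplication by the scalar $p(0)$. Hence $p(x)-\lambda$ acts on $H_{p}(y,M)$ as multiplication by $p(0)-\lambda$.

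**Drawing the conclusions.** If $b\in\sigma(z,M)$, some $H_{p}(z-b,M)\neq0$, so by the short exact sequence either $\ker\bigl((p(0)-\lambda)\,|\,H_{p-1}(y,M)\bigr)\neq0$ or $\operatorname{coker}\bigl((p(0)-\lambda)\,|\,H_{p}(y,M)\bigr)\neq0$. A nonzero scalar acting on a $k$-vector space is an isomorphism, so multiplication by $p(0)-\lambda$ is injective and surjective on each $H_{q}(y,M)$ unless $p(0)-\lambda=0$; therefore $\lambda=p(0)$, and then $p(x)-\lambda$ is the zero operator on every $H_{q}(y,M)$. With $\lambda=p(0)$ the short exact sequences become
\[
0\leftarrow H_{p-1}(y,M)\longleftarrow H_{p}(z-b,M)\longleftarrow H_{p}(y,M)\leftarrow0,
\]
which, being sequences of $k$-vector spaces, split and give $\dim_{k}H_{p}(z-b,M)=\dim_{k}H_{p-1}(y,M)+\dim_{k}H_{p}(y,M)$ for all $p$; in particular some $H_{p}(y,M)\neq0$ (otherwise every $H_{p}(z-b,M)=0$), so $0\in\sigma(y,M)$.

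**The index computation.** Assume $i(y)<\infty$, i.e. $\dim_{k}H_{p}(y,M)<\infty$ for all $p$. Then each $\dim_{k}H_{p}(z-b,M)<\infty$ too, so $i(z-b)$ is defined, and using the sign convention $i(w)=\sum_{p}(-1)^{p+1}\dim_{k}H_{p}(w,M)$ together with the dimension formula,
\[
i(z-b)=\sum_{p}(-1)^{p+1}\bigl(\dim_{k}H_{p-1}(y,M)+\dim_{k}H_{p}(y,M)\bigr)=-\,i(y)+i(y)=0,
\]
the first sum reindexing to $-i(y)$ after the shift $p\mapsto p-1$ absorbs a sign. This gives $i(z-b)=0$.

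**Main obstacle.** The only genuinely delicate point is making sure the cone identification is legitimate in the present multi-index bookkeeping — namely that appending $p(x)$ as the $(m+1)$st entry of $z$ really produces $\operatorname{Con}(p(x)-\lambda,\operatorname{Kos}(y,M))$ with the correct signs, so that Corollary \ref{corB} applies verbatim. This is exactly the computation already carried out in the proof of Lemma \ref{lemB3} (with $x_{n}$ there replaced by $p(x)$ here and $\mathcal{C}=\operatorname{Kos}(y,M)$), so it can be invoked rather than redone. Everything after that is the scalar-action reduction from Lemma \ref{lemB4} and elementary linear algebra over $k$.
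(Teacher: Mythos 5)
Your proposal is correct and follows essentially the same route as the paper: identify $\operatorname{Kos}(z-b,M)$ with the cone of $p(x)-\lambda$ over $\operatorname{Kos}(y,M)$, invoke Corollary \ref{corB}/Lemma \ref{lemB3} for the short exact sequences, reduce the action of $p(x)-\lambda$ on homology to multiplication by the scalar $p(0)-\lambda$ via Lemma \ref{lemB4}, and read off $\lambda=p(0)$, the dimension formula, and the telescoping index. The only cosmetic difference is that the paper obtains $0\in\sigma(y,M)$ up front via Corollary \ref{corProj1} while you deduce it from the split exact sequences afterward; this is a harmless reordering.
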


\begin{proof}
Suppose $b\in\sigma\left(  z,M\right)  $ and $H_{j}\left(  z-b,M\right)
\neq0$ for some $j$. Using Corollary \ref{corProj1}, we derive that
$0\in\sigma\left(  y,M\right)  $. By Lemma \ref{lemB3}, the following
sequence
\[
0\leftarrow\ker\left(  p\left(  x\right)  -\lambda|H_{j-1}\left(  y,M\right)
\right)  \longleftarrow H_{j}\left(  z-b,M\right)  \longleftarrow
\operatorname{coker}\left(  p\left(  x\right)  -\lambda|H_{j}\left(
y,M\right)  \right)  \leftarrow0
\]
turns out to be exact. By Lemma \ref{lemB4}, $p\left(  x\right)  H_{p}\left(
y,M\right)  =p\left(  0\right)  H_{p}\left(  y,M\right)  $ for every $p\left(
x\right)  \in R^{\prime}$. Thus the actions of $p\left(  x\right)  -\lambda$
over the homology groups $H_{i}\left(  y,M\right)  $ are reduced to the
constant multiplication operators by $p\left(  0\right)  -\lambda$. If
$\lambda\neq p\left(  0\right)  $ then $p\left(  x\right)  -\lambda$ defines
an invertible action over homology groups. Therefore the corners of the exact
sequence are vanishing, which means that $H_{j}\left(  z-b,M\right)  =0$, a
contradiction. Hence $\lambda=p\left(  0\right)  $. Using again Lemma
\ref{lemB3}, for every $p$ we obtain the following
\[
0\leftarrow H_{p-1}\left(  y,M\right)  \longleftarrow H_{p}\left(
z-b,M\right)  \longleftarrow H_{p}\left(  y,M\right)  \leftarrow0
\]
exact sequence. In particular, $\dim_{k}\left(  H_{p}\left(  z-b,M\right)
\right)  =\dim_{k}H_{p-1}\left(  y,M\right)  +\dim_{k}H_{p}\left(  y,M\right)
$ whenever $i\left(  y\right)  <\infty$. Finally,
\begin{align*}
i\left(  z-b\right)   &  =\sum_{p=0}^{n+1}\left(  -1\right)  ^{p+1}\dim
_{k}\left(  H_{p}\left(  z-b,M\right)  \right) \\
&  =\sum_{p=0}^{n+1}\left(  -1\right)  ^{p+1}\dim_{k}H_{p-1}\left(
y,M\right)  +\left(  -1\right)  ^{p+1}\dim_{k}H_{p}\left(  y,M\right)  =0,
\end{align*}
that is, $i\left(  z-b\right)  =0$.
\end{proof}

\begin{remark}
\label{remSw}The assertion just proven is equally true for the tuple
$z=\left(  p\left(  x\right)  ,y\right)  $ swapped. Namely, if $b=\left(
\lambda,0\right)  \in\sigma\left(  z,M\right)  $ for some $\lambda\in k$, then
$0\in\sigma\left(  y,M\right)  $ and $\lambda=p\left(  0\right)  $.
\end{remark}

\begin{corollary}
\label{corBB1}If $w=\left(  x,p\left(  x\right)  \right)  $ is an $n+1$-tuple
in $R$ and $c=\left(  a,\lambda\right)  \in\sigma\left(  w,M\right)  $ for
some $a\in\mathbb{A}^{n}$ and $\lambda\in k$, then $a\in\sigma\left(
x,M\right)  $ and $\lambda=p\left(  a\right)  $.
\end{corollary}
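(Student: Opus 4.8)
The plan is to deduce the claim from Proposition~\ref{propB1} after translating the first block of coordinates by $a$. Set $x' = x - a$; then $R' := k\left[  x'\right]  = k\left[  x\right]  $, and $x'$ is an $n$-tuple from $R'$ which is related to $x'$ itself. Put $q\left(  t\right)  = p\left(  t + a\right)  $, so that $p\left(  x\right)  = q\left(  x'\right)  $ is an element of $R'$ with $q\left(  0\right)  = p\left(  a\right)  $. With these notations the $n+1$-tuple $z := \left(  x', q\left(  x'\right)  \right)  $ lies in $R$, and
\[
w - c = \bigl(  x - a,\, p\left(  x\right)  - \lambda\bigr)  = z - b, \qquad b := \left(  0, \lambda\right)  \in\mathbb{A}^{n+1},
\]
so that $\operatorname{Kos}\left(  w - c, M\right)  $ and $\operatorname{Kos}\left(  z - b, M\right)  $ are literally the same complex. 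In particular $c \in\sigma\left(  w, M\right)  $ forces $b \in\sigma\left(  z, M\right)  $.

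Next I would apply Proposition~\ref{propB1} with $R'$, $x'$, the related tuple $y = x'$, and the polynomial $q$ playing the roles of $R' = k\left[  x\right]  $, $x$, $y$, $p$ respectively. Since $b = \left(  0, \lambda\right)  \in\sigma\left(  z, M\right)  $, the proposition yields $0 \in\sigma\left(  x', M\right)  $ and $\lambda = q\left(  0\right)  = p\left(  a\right)  $. Finally, $0 \in\sigma\left(  x', M\right)  $ says precisely that $\operatorname{Kos}\bigl(  \left(  x - a\right)  - 0, M\bigr)  = \operatorname{Kos}\left(  x - a, M\right)  $ is not exact, that is, $a \in\sigma\left(  x, M\right)  $. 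This completes the argument.

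I do not expect a genuine obstacle: all of the homological content already sits inside Proposition~\ref{propB1} (through Lemmas~\ref{lemB3} and~\ref{lemB4}), and what remains is only careful bookkeeping of the translation --- checking that $p\left(  x\right)  = q\left(  x - a\right)  $ as an element of $R$, that $k\left[  x\right]  = k\left[  x - a\right]  $ so that the hypotheses of Proposition~\ref{propB1} apply verbatim to $x'$, and that the two Koszul complexes coincide so that the two spectra correspond. Since $p\left(  x\right)  $ is the last coordinate of $w$ there is no reordering to do, so Remark~\ref{remSw} is not needed here.
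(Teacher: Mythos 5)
Your argument is precisely the paper's proof: translate the first block by $a$, set $q(t)=p(t+a)$, observe that $\operatorname{Kos}(w-c,M)=\operatorname{Kos}(z-b,M)$, and invoke Proposition~\ref{propB1} at the origin for the shifted tuple. The notation differs slightly ($x'$ instead of the paper's $y$), but the decomposition and the key lemma used are the same.
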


\begin{proof}
Put $y=x-a$, $R^{\prime}=k\left[  y\right]  $, $z=\left(  y,q\left(  y\right)
\right)  $ and $b=\left(  0,\lambda\right)  $, where $q\left(  y\right)
=p\left(  y+a\right)  $. Then $z-b=\left(  y,q\left(  y\right)  -\lambda
\right)  =w-c$, which means that $b\in\sigma\left(  z,M\right)  $. It remains
to use Proposition \ref{propB1} for $y$ instead of $x$. Then $0\in
\sigma\left(  y,M\right)  $ (or $a\in\sigma\left(  x,M\right)  $) and
$\lambda=q\left(  0\right)  =p\left(  a\right)  $.
\end{proof}

Now let us define the following one-to-one function
\[
f_{n}:\mathbb{Z}_{+}^{n+1}\rightarrow\mathbb{Z}_{+}^{n+2},\tau_{n}\left(
d_{0},\ldots,d_{n}\right)  =\left(  d_{0},d_{0}+d_{1},d_{1}+d_{2}%
,\ldots,d_{n-1}+d_{n},d_{n}\right)
\]
that generates Fibonacci numbers by increasing $n+1$-tuples of nonnegative
integers. If $x$ is an $n$-tuple from $R$ with $i\left(  x\right)  <\infty$
then $d\left(  x\right)  =\left(  d_{0},\ldots,d_{n}\right)  \in\mathbb{Z}%
_{+}^{n+1}$ is a tuple of dimensions $d_{p}=\dim_{k}H_{p}\left(  x,M\right)
$, $0\leq p\leq n$.

\begin{corollary}
\label{corind1}Let $R/k$ be an algebra, $R^{\prime}=k\left[  x\right]
\subseteq R$ a subalgebra with an $n$-tuple $x$, and related to $x$ an
$m$-tuple $y$. If $\left(  0,\lambda\right)  \in\sigma\left(  \left(
x,y\right)  ,M\right)  $ and both $i\left(  x\right)  $ and $i\left(
y\right)  $ are finite, then $\lambda=0$, $d\left(  x\right)  =f_{n-m-1}\ldots
f_{m}\left(  d\left(  y\right)  \right)  $ and $i\left(  x\right)
=\delta_{nm}i\left(  y\right)  $ whenever $m\leq n$.
\end{corollary}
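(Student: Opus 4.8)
The plan is to compare the Koszul homologies of the three tuples $x$, $y$ and $(x,y)$ through one elementary observation: appending to a tuple related to $x$ an element of $\mathfrak{m}:=\langle x\rangle=\langle y\rangle$ just doubles the homology with a degree shift. First I would record the auxiliary fact: if $t$ is a tuple from $R'$ related to $x$ and $g\in\mathfrak{m}$, then $(t,g)$ is again a tuple related to $x$, and, by Lemma \ref{lemB4}, $\mathfrak{m}$ annihilates each $H_p(t,M)$, so $g$ acts as $0$ on $H_p(t,M)$; applying Lemma \ref{lemB3} (equivalently Corollary \ref{corB}) to the pair $(t,g)$ produces the short exact sequences $0\leftarrow H_{p-1}(t,M)\leftarrow H_p((t,g),M)\leftarrow H_p(t,M)\leftarrow 0$, that is $\dim_k H_p((t,g),M)=\dim_k H_{p-1}(t,M)+\dim_k H_p(t,M)$ for all $p$, or $d((t,g))=f_\ell(d(t))$ with $\ell$ the length of $t$; in particular $i(t)<\infty$ forces $i((t,g))<\infty$. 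Because $(t,g)$ is again related to $x$, this step iterates, and it applies in every configuration used below since $x$ itself, $y$, and each entry of $y$ all live inside $\mathfrak{m}$.

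To get $\lambda=0$ I would write $y_j=p_j(x)$ with $p_j\in k[X_1,\dots,X_n]$ and $p_j(0)=0$ (possible since $y_j\in\langle y\rangle=\langle x\rangle$), and peel the coordinates $y_m,y_{m-1},\dots,y_1$ off $(x,y)$ one at a time via Corollary \ref{corBB1}, applied at the $j$-th step with base tuple $(x,y_1,\dots,y_{j-1})$ and the polynomial $p_j$, which involves only the $x$-variables. Each application forces the peeled coordinate of $b$ to equal $p_j(0)=0$ and leaves the truncated point in the spectrum of the shorter tuple; after $m$ steps we reach $0\in\sigma(x,M)$ and $\lambda=0$.

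For the dimension identity I would apply the auxiliary fact twice. Appending $y_1,\dots,y_m$ to $x$ gives $d((x,y))=(f_{n+m-1}\circ\cdots\circ f_n)(d(x))$; appending $x_1,\dots,x_n$ to $y$ gives $d((y,x))=(f_{n+m-1}\circ\cdots\circ f_m)(d(y))$. Koszul homology is invariant under permutations of the defining tuple, so the two left-hand sides coincide, and $f_{n+m-1}\circ\cdots\circ f_m=(f_{n+m-1}\circ\cdots\circ f_n)\circ(f_{n-1}\circ\cdots\circ f_m)$. Each $f_j$ is injective — from $f_j(d_0,\dots,d_j)$ one reads off $d_0$ as the first entry and then recovers $d_1,d_2,\dots$ successively — hence so is $F:=f_{n+m-1}\circ\cdots\circ f_n$, and cancelling $F$ yields $d(x)=(f_{n-1}\circ\cdots\circ f_m)(d(y))$, the composite of the $n-m$ maps $f_m,\dots,f_{n-1}$ (the empty composite, hence $d(x)=d(y)$, when $m=n$). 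Finally a one-line telescoping shows $\sum_p(-1)^{p}e_{p}=0$ whenever $(e_p)=f_j(d)$ for some $d$; so if $m<n$ at least one $f_j$ occurs in the composite and $i(x)=0$, while if $m=n$ then $d(x)=d(y)$ and $i(x)=i(y)$, and in both cases $i(x)=\delta_{nm}i(y)$.

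The only genuinely delicate part is the bookkeeping in the first two paragraphs: one must verify at each peeling step that the removed element really is a polynomial in the current base tuple with vanishing constant term, and that appending an element of $\mathfrak{m}$ to a tuple related to $x$ again yields a tuple related to $x$, so that the auxiliary fact iterates in both directions; the injectivity of the $f_j$ and the telescoping are routine.
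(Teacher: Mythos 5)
Your proof is correct and follows essentially the same route as the paper: it unrolls Proposition \ref{propB1} into the one-step ``append $g\in\mathfrak{m}$, double the homology with a degree shift'' observation, iterates it in both directions from $(x,y)$, invokes permutation invariance of Koszul homology (as in Remark \ref{remSw}), and cancels via injectivity of the $f_j$, exactly as the paper does via Corollary \ref{corBB1}, Proposition \ref{propB1}, and the one-to-one property. One small remark: your index count $f_{n-1}\circ\cdots\circ f_m$ (a composite of $n-m$ maps) is the dimensionally consistent one, and the paper's printed subscript $f_{n-m-1}$ appears to be a typo for $f_{n-1}$.
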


\begin{proof}
Put $z=\left(  x,y\right)  $ to be $n+m$-tuple from $R$. Note that $y$ is a
tuple of polynomials from $R^{\prime}$ such that $y\subseteq\left\langle
x\right\rangle \subseteq R^{\prime}$, that is, $y\left(  0\right)  =0$. By
Corollary \ref{corBB1}, $0\in\sigma\left(  x,M\right)  $ and $\lambda=y\left(
0\right)  =0\in\sigma\left(  y,M\right)  $ (see Corollary \ref{corProj1}).
Thus $0\in\sigma\left(  z,M\right)  $ and $d\left(  z\right)  =f_{n+m-1}\ldots
f_{n}\left(  d\left(  x\right)  \right)  $ thanks Proposition \ref{propB1}. By
symmetry (see Remark \ref{remSw}), we deduce $d\left(  z\right)
=f_{n+m-1}\ldots f_{m}\left(  d\left(  y\right)  \right)  $ either. If $m\leq
n$ then $d\left(  x\right)  =f_{n-m-1}\ldots f_{m}\left(  d\left(  y\right)
\right)  $ due to the property to be a one-to-one function. If $n>m$ then
$i\left(  x\right)  =0$ (see Proposition \ref{propB1}). But if $m=n$ then
$d\left(  x\right)  =d\left(  y\right)  $ and $i\left(  x\right)  =i\left(
y\right)  $.
\end{proof}

\begin{remark}
\label{remIE}If $R^{\prime}=k\left[  x\right]  =k\left[  y\right]  $ for an
$m$-tuple $y\subseteq\left\langle x\right\rangle $ then $y$ is a tuple related
to $x$. Indeed, $x_{i}=q_{i}\left(  y\right)  \in k\left[  y\right]
=R^{\prime}$ for every $i$. It follows that $x_{i}=q_{i}\left(  0\right)
+h_{i}\left(  y\right)  $, $h_{i}\left(  y\right)  \in\left\langle
y\right\rangle \subseteq\left\langle x\right\rangle $. Hence $q_{i}\left(
0\right)  =0$, that is, $x_{i}\in\left\langle y\right\rangle $. Thus
$\left\langle x\right\rangle =\left\langle y\right\rangle $ in $R^{\prime}$.
\end{remark}

\subsection{Noetherian modules and triangular actions}

As above we have an algebra $R/k$ and an $R$-module $M$ with an $n$-tuple $x$
from $R$. If $R$ is Noetherian and $M$ is a finitely generated $R$-module then
it turns out to be Noetherian automatically. Conversely, a Noetherian
$R$-module has a finitely many generators. For brevity we say that $M$ is a
Noetherian $R$-module with an acting tuple $x$ on $M$. Since $M\otimes
_{k}\wedge^{i}k^{n}$ is a Noetherian $R$-module, it follows that so is
$H_{i}\left(  x,M\right)  $ for every $i$.

\begin{lemma}
\label{lemNP0}Let $R=k\left[  x\right]  $ be an algebra finite extension with
an $n$-tuple $x$, $\mathfrak{m=}\left\langle x\right\rangle \subseteq R$ a
maximal ideal, and let $M$ be an $R$-module. Then $H_{i}\left(  x,M\right)
_{\mathfrak{m}}=H_{i}\left(  x,M\right)  $ for all $i\geq0$. If $M$ is
Noetherian and $H_{p}\left(  x,M\right)  =0$ for some $p$, then $H_{j}\left(
x,M\right)  =0$ for all $j\geq p$.
\end{lemma}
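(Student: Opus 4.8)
The goal is Lemma~\ref{lemNP0}, which has two parts: the localization statement $H_i(x,M)_{\mathfrak{m}} = H_i(x,M)$, and the vanishing-propagation statement for Noetherian $M$.

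\textbf{Plan for the localization part.} The key observation is Lemma~\ref{lemB4} (with $y = x$ related to itself): the $R$-module structure on each $H_i(x,M)$ is reduced to its $k$-vector space structure, with any polynomial $p(x)$ acting as multiplication by the scalar $p(0)$. In particular, every element of $\mathfrak{m} = \langle x\rangle$ annihilates $H_i(x,M)$, so $H_i(x,M)$ is an $R/\mathfrak{m} = k$-module. Now localizing at $\mathfrak{m}$: for an $R$-module annihilated by $\mathfrak{m}$, every element of $R \setminus \mathfrak{m}$ acts as a nonzero scalar (its image in $R/\mathfrak{m} = k$ is nonzero since $k$ is a field, using that $R/k$ is an algebra finite extension so $R/\mathfrak{m} = k$), hence already acts invertibly. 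Therefore the localization map $H_i(x,M) \to H_i(x,M)_{\mathfrak{m}}$ is an isomorphism. I would state this cleanly as a one-line consequence of Lemma~\ref{lemB4}.

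\textbf{Plan for the vanishing-propagation part.} Here I would induct on $n$ (the length of the tuple), using the fundamental exact sequence from Lemma~\ref{lemB3}:
\[
0 \leftarrow \ker\!\left(x_n \mid H_{p-1}(x',M)\right) \leftarrow H_p(x,M) \leftarrow \operatorname{coker}\!\left(x_n \mid H_p(x',M)\right) \leftarrow 0,
\]
where $x' = (x_1,\ldots,x_{n-1})$. Suppose $H_p(x,M) = 0$ for some $p$. The right-hand term gives $\operatorname{coker}(x_n \mid H_p(x',M)) = 0$, i.e. $x_n H_p(x',M) = H_p(x',M)$; but $H_p(x',M)$ is a Noetherian $R$-module and (by Lemma~\ref{lemB4} applied to $x'$, or directly) $x_n$ acts as the scalar $0$ on it, hence $H_p(x',M) = 0$ — wait, more carefully: $x_n$ acts on $H_p(x',M)$ as multiplication by $x_n \in \langle x'\rangle$? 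Not necessarily, since $x_n \notin k[x']$ in general. The correct argument: $x_n H_p(x',M) = H_p(x',M)$ with $H_p(x',M)$ finitely generated over $R$; but $H_p(x',M)$ is actually a $k[x']/\langle x'\rangle = k$-module by Lemma~\ref{lemB4}, so it is a finite-dimensional $k$-vector space, and $x_n$ acting surjectively on a finite-dimensional space acts bijectively. Then from the shifted instance of the Lemma~\ref{lemB3} sequence (replacing $p$ by $p+1$), the right-hand term is $\operatorname{coker}(x_n \mid H_{p+1}(x',M))$ and the left-hand term is $\ker(x_n \mid H_p(x',M)) = 0$ (bijectivity just established), so $H_{p+1}(x,M) \cong \operatorname{coker}(x_n \mid H_{p+1}(x',M))$. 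By the induction hypothesis applied to $x'$ — once we know $H_p(x',M)$ vanishes — we would get $H_j(x',M) = 0$ for all $j \geq p$, hence $H_{j+1}(x,M) = 0$ for all $j \geq p$ as well, and combined with $H_p(x,M) = 0$ this finishes the claim. So the real engine is: reduce "$H_p(x,M)=0$" to "$H_p(x',M)=0$", then invoke induction.

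\textbf{Nailing the reduction $H_p(x,M) = 0 \Rightarrow H_p(x',M) = 0$.} From the Lemma~\ref{lemB3} sequence at index $p$: $H_p(x,M) = 0$ forces both $\ker(x_n \mid H_{p-1}(x',M)) = 0$ and $\operatorname{coker}(x_n \mid H_p(x',M)) = 0$. The latter says $x_n$ is surjective on the finite-dimensional $k$-space $H_p(x',M)$, hence bijective, hence $\ker(x_n \mid H_p(x',M)) = 0$ too; plug this into the Lemma~\ref{lemB3} sequence at index $p+1$ to get $H_{p+1}(x,M) \cong \operatorname{coker}(x_n \mid H_{p+1}(x',M))$. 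Hmm — this still does not directly give $H_p(x',M) = 0$. The cleanest route is instead: first establish, by downward reasoning, that $H_j(x,M) = 0$ for all $j \geq p$ directly. Indeed suppose $H_p(x,M) = 0$. I claim $x_n$ is bijective on $H_j(x',M)$ for all $j \geq p$, by descending on the fact that these are finite-dimensional and using the exact sequences: from $H_p(x,M)=0$ we get $x_n$ surjective (hence bijective) on $H_p(x',M)$; this makes the map $H_{p+1}(x,M) \to \operatorname{coker}(x_n \mid H_{p+1}(x',M))$ an isomorphism; but I have not yet shown $H_{p+1}(x,M)=0$. I think the honest structure is a \emph{simultaneous} induction on $n$ and a standard fact: for the Koszul complex over a \emph{local} ring (here $R_{\mathfrak{m}}$, and by the first part localization changes nothing), vanishing of one Koszul homology forces vanishing of all higher ones — this is the classical "rigidity" of Koszul homology. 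So I would: (i) use part one to pass to the local ring $R_{\mathfrak{m}}$ where $M_{\mathfrak{m}}$ is Noetherian and $x \subseteq \mathfrak{m} R_{\mathfrak{m}} = \operatorname{rad}(R_{\mathfrak{m}})$; (ii) cite or reprove the standard fact (e.g. via Nakayama on the cokernel term, exactly as in Corollary~\ref{corRH}'s circle of ideas) that over a Noetherian local ring, $H_p(x,M) = 0$ implies $x_n$ bijective on $H_p(x',M)$ and then implies $H_{p+1}(x,M) = 0$, iterating upward. The main obstacle is precisely pinning down this rigidity step so it chains cleanly: the right lemma to invoke is that a finitely generated module over a Noetherian local ring on which a radical element acts surjectively is zero (Nakayama), which kills $H_p(x',M)$ once we know $x_n$ surjective on it — but $x_n$ surjective on $H_p(x',M)$ needs $\operatorname{coker}(x_n \mid H_p(x',M)) = 0$, which is exactly what $H_p(x,M) = 0$ gives via Lemma~\ref{lemB3}. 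Then $H_p(x',M) = 0$, induction on $n$ gives $H_j(x',M) = 0$ for $j \geq p$, and Lemma~\ref{lemB3} again gives $H_j(x,M) = 0$ for $j \geq p$. That is the argument; the subtlety I would be most careful about is whether $x_n \in \operatorname{rad}(H_p(x',M))$-type hypotheses hold after localization, which they do since $x \subseteq \mathfrak{m}$ and we have localized at $\mathfrak{m}$.
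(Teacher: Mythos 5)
Your final argument is correct and is exactly the paper's: use Lemma~\ref{lemB4} for the localization statement, pass to the local ring $R_{\mathfrak{m}}$ via the first part, and then induct on the length $n$ of the tuple using the Lemma~\ref{lemB3} exact sequence together with Nakayama in $R_{\mathfrak{m}}$ to kill $H_p(x',M)$ from $\operatorname{coker}(x_n\mid H_p(x',M))=0$; the paper carries out precisely this reduction. One false step in the middle should be struck before the write-up: the claim that Lemma~\ref{lemB4} makes $H_p(x',M)$ a finite-dimensional $k$-vector space is wrong. Lemma~\ref{lemB4} applied to the $(n-1)$-tuple $x'$ only says the $k[x']$-module structure on $H_p(x',M)$ factors through $k[x']/\langle x'\rangle=k$; it does not control the $R$-module structure or the $k$-dimension. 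For instance with $R=k[x_1,x_2]$, $M=R$, $x'=(x_1)$, one has $H_0(x',M)=R/\langle x_1\rangle\cong k[x_2]$, which is infinite-dimensional over $k$. So surjectivity of $x_n$ cannot be upgraded to bijectivity by a counting argument; Nakayama over the local ring (with $x_n/1\in\mathfrak{m}R_{\mathfrak{m}}=\operatorname{rad}(R_{\mathfrak{m}})$) is the correct and necessary tool, and that is what you ultimately invoke.
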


\begin{proof}
By Lemma \ref{lemB4}, the action of every $p\left(  x\right)  \notin%
\mathfrak{m}$ on $H_{i}\left(  x,M\right)  $ is reduced to the diagonal
operator $p\left(  0\right)  $, which is invertible. Therefore $H_{i}\left(
x,M\right)  _{\mathfrak{m}}=H_{i}\left(  x,M\right)  $ (see \cite[12.1,
12.4]{AK}).

Now assume that $M$ is a Noetherian $R$-module and prove that $H_{p}\left(
x,M\right)  =0$ implies that $H_{j}\left(  x,M\right)  =0$ for all $j\geq p$.
If $R$ is a local $k$-algebra with its maximal ideal $\mathfrak{m}$ and
$x\subseteq\mathfrak{m}$ then the result follows thanks to Lemma \ref{lemB3}.
Namely, we proceed by induction on $n$ for a nonzero Noetherian $R$-module
$M$. Since $M/x_{1}M\neq0$ (Nakayama lemma), the result follows in the case of
$n=1$. Put $x^{\prime}=\left(  x_{1},\ldots,x_{n-1}\right)  $ and suppose
$H_{p}\left(  x,M\right)  =0$. By Lemma \ref{lemB3}, we obtain that
$\operatorname{coker}\left(  x_{n}|H_{p}\left(  x^{\prime},M\right)  \right)
=0$. Since $H_{p}\left(  x^{\prime},M\right)  $ is a Noetherian $R$-module and
$x_{n}\in\operatorname{rad}R$, it follows that $H_{p}\left(  x^{\prime
},M\right)  /x_{n}H_{p}\left(  x^{\prime},M\right)  \neq0$ whenever
$H_{p}\left(  x^{\prime},M\right)  \neq0$ (Nakayama lemma). Hence
$H_{p}\left(  x^{\prime},M\right)  =0$, and $H_{j}\left(  x^{\prime},M\right)
=0$, $j\geq p$ by induction hypothesis. Using again Lemma \ref{lemB3}, we
obtain the exact sequence
\[
0\leftarrow\ker\left(  x_{n}|H_{j}\left(  x^{\prime},M\right)  \right)
\longleftarrow H_{j+1}\left(  x,M\right)  \longleftarrow\operatorname{coker}%
\left(  x_{n}|H_{j+1}\left(  x^{\prime},M\right)  \right)  \leftarrow0
\]
for every $j\geq p$. It follows that $H_{j+1}\left(  x,M\right)  =0$ for all
$j\geq p$. Optionally one can use \cite[Theorem 17.6]{E}.

In the general case, $M_{\mathfrak{m}}$ is a Noetherian $R_{\mathfrak{m}}%
$-module and $H_{i}\left(  x,M\right)  =H_{i}\left(  x,M\right)
_{\mathfrak{m}}=H_{i}\left(  x/1,M_{\mathfrak{m}}\right)  $ (the localization
is an exact functor) $i\geq0$, where $x/1$ is the tuple representing $\left(
x_{1}/1,\ldots,x_{n}/1\right)  \subseteq\mathfrak{m}R_{\mathfrak{m}%
}=\operatorname{rad}R_{\mathfrak{m}}$. If $H_{p}\left(  x,M\right)  =0$ then
$H_{p}\left(  x/1,M_{\mathfrak{m}}\right)  =0$ and we come up with the local
$k$-algebra case $R_{\mathfrak{m}}$. Based on the fact just proven, we
conclude that $H_{j}\left(  x/1,M_{\mathfrak{m}}\right)  =0$ for all $j\geq
p$. Hence $H_{j}\left(  x,M\right)  =0$ for all $j\geq p$.
\end{proof}

Now we analyze tuples $y$ from an algebra finite extension $R/k$ which admit
finite index. Everywhere below we assume that $k$ is an algebraically closed
field. A tuple $y$ whose extension $k\left[  y\right]  \subseteq k\left[
x\right]  $ is integral plays a key role in this manner.

\begin{theorem}
\label{thNP11}Let $k\subseteq R^{\prime}\subseteq R$ be a ring extensions of
the field $k$ such that $R/k$ is an algebra finite extension and $R/R^{\prime
}$ is integral. If $M$ is a Noetherian $R$-module then $i\left(  y\right)
<\infty$ whenever $y$ is a tuple in $R$ generating a maximal ideal in
$R^{\prime}$. In this case, the $R$-module structure on every $H_{p}\left(
y,M\right)  $ is triangularizable (or it is a semilocal $R$-module) whereas
its $R^{\prime}$-module structure is diagonalizable.
\end{theorem}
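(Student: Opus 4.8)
The strategy is to reduce everything to the local case at the maximal ideals of $R^{\prime}$ lying over $\mathfrak{m}^{\prime}=\langle y\rangle\subseteq R^{\prime}$, and then run an induction on the length of the tuple $y$ using Lemma \ref{lemB3} together with the finiteness of certain quotient modules coming from integrality. First I would note that since $R/R^{\prime}$ is integral and $R^{\prime}/k$ is itself an algebra finite extension (being a subalgebra of the algebra finite extension $R/k$, hence a finitely generated $k$-algebra by the Artin--Tate lemma applied to the integral extension), $R^{\prime}/k$ is Noetherian and $R$ is a finitely generated $R^{\prime}$-module. Thus $M$, being a Noetherian (hence finitely generated) $R$-module, is a finitely generated $R^{\prime}$-module as well, i.e. a Noetherian $R^{\prime}$-module. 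The maximal ideal $\mathfrak{m}^{\prime}=\langle y\rangle$ of $R^{\prime}$ has only finitely many primes of $R$ lying over it (they are maximal by Cohen--Seidenberg / going-up, and there are finitely many since $R\otimes_{R^{\prime}}R^{\prime}/\mathfrak{m}^{\prime}=R/\mathfrak{m}^{\prime}R$ is a finite-dimensional $k$-algebra, $k$ algebraically closed). Call them $\mathfrak{m}_{1},\ldots,\mathfrak{m}_{r}$.

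The key finiteness input is: $R/\mathfrak{m}^{\prime}R$ is a finite-dimensional $k$-vector space, so $M/\mathfrak{m}^{\prime}M = M/\langle y\rangle M$ is a finitely generated module over the Artinian ring $R/\mathfrak{m}^{\prime}R$, hence $\dim_{k}(M/\langle y\rangle M)<\infty$. Since $H_{0}(y,M)=M/\langle y\rangle M$ this already bounds $H_{0}$. To handle the higher $H_{p}(y,M)$, I would proceed by induction on the number $m$ of entries of $y$, as in Corollary \ref{corRH} and Lemma \ref{lemNP0}: writing $y^{\prime}$ for the first $m-1$ entries, Lemma \ref{lemB3} gives exact sequences
\[
0\leftarrow\ker\!\left(y_{m}\mid H_{p-1}(y^{\prime},M)\right)\longleftarrow H_{p}(y,M)\longleftarrow\operatorname{coker}\!\left(y_{m}\mid H_{p}(y^{\prime},M)\right)\leftarrow0.
\]
However, the $y^{\prime}$ of an intermediate step need not generate a maximal ideal in $R^{\prime}$, so the clean inductive hypothesis "$\dim_{k}H_{p}(y^{\prime},M)<\infty$" is not available. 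The right replacement is to localize: by Lemma \ref{lemNP0} (or directly, since $\langle y\rangle$ is maximal and the $p(x)\notin\langle y\rangle$ act invertibly on Koszul homology by Lemma \ref{lemB4}), $H_{p}(y,M)=H_{p}(y,M)_{\mathfrak{m}^{\prime}}$, and by the exactness of localization $H_{p}(y,M)_{\mathfrak{m}^{\prime}}=H_{p}(y/1, M_{\mathfrak{m}^{\prime}})$. Now $R_{\mathfrak{m}^{\prime}}:=R\otimes_{R^{\prime}}R^{\prime}_{\mathfrak{m}^{\prime}}$ is a semilocal ring, finite over the local ring $R^{\prime}_{\mathfrak{m}^{\prime}}$ with $\operatorname{rad}=\mathfrak{m}_{1}\cdots\mathfrak{m}_{r}$ (localized), and $y$ lies in this radical. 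Over this semilocal base one may run the Nakayama-type argument of Lemma \ref{lemNP0}: $M_{\mathfrak{m}^{\prime}}$ is a Noetherian module over a semilocal ring with $y\subseteq\operatorname{rad}$, so a standard induction on $m$ shows every $H_{p}(y/1,M_{\mathfrak{m}^{\prime}})$ is a finitely generated module killed by a power of $\operatorname{rad}$, hence of finite $k$-dimension because $R_{\mathfrak{m}^{\prime}}/\operatorname{rad}^{N}$ is finite over $R^{\prime}_{\mathfrak{m}^{\prime}}/(\mathfrak{m}^{\prime})^{N}$ which is a finite-dimensional $k$-algebra (the residue field of $R^{\prime}_{\mathfrak{m}^{\prime}}$ is $k$ and $(\mathfrak{m}^{\prime})^{N}/(\mathfrak{m}^{\prime})^{N+1}$ is a finite-dimensional $k$-space by Noetherianness). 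This gives $i(y)<\infty$.

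For the structural claims: each $H_{p}(y,M)$ is a Noetherian $R$-module (a subquotient of $M\otimes_{k}\wedge^{p}k^{m}$) annihilated by a power of $\mathfrak{m}_{1}\cdots\mathfrak{m}_{r}$ — indeed it equals its own localization at $\mathfrak{m}^{\prime}$ and is finite-dimensional over $k$, so only the primes $\mathfrak{m}_{1},\ldots,\mathfrak{m}_{r}$ occur in its support, whence $\operatorname{rad}(\operatorname{Ann}_{R}H_{p}(y,M))=\mathfrak{m}_{1}\cap\cdots\cap\mathfrak{m}_{r}$ and the module decomposes as $\bigoplus_{i}H_{p}(y,M)_{\mathfrak{m}_{i}}$ with each summand a finite-length module over the local ring $R_{\mathfrak{m}_{i}}$; choosing a composition series realizes the $R$-action in upper-triangular form with the $r$ distinct characters $R\to R/\mathfrak{m}_{i}=k$ on the diagonal — this is the meaning of "triangularizable / semilocal $R$-module." Restricting scalars to $R^{\prime}$, every element of $\mathfrak{m}^{\prime}=\langle y\rangle$ acts as $0$ on $H_{p}(y,M)$ (this is exactly the content of Lemma \ref{lemB4}: $yH_{p}(y,M)=0$, hence $\langle y\rangle H_{p}(y,M)=0$), so $H_{p}(y,M)=H_{p}(y,M)/\mathfrak{m}^{\prime}H_{p}(y,M)$ is a module over $R^{\prime}/\mathfrak{m}^{\prime}=k$, i.e. the $R^{\prime}$-action is diagonal (scalar) — in fact every $p(x)\in R^{\prime}$ acts by $p(0)$. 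The main obstacle I anticipate is the bookkeeping in the inductive step over the semilocal ring $R_{\mathfrak{m}^{\prime}}$: one must be careful that the intermediate tuples $y^{\prime}$ still consist of elements of $\operatorname{rad}R_{\mathfrak{m}^{\prime}}$ and that the Nakayama argument applies to the (possibly non-local but semilocal) ring, together with verifying that finite generation plus annihilation by $\operatorname{rad}^{N}$ genuinely forces finite $k$-dimension — this last point is where the integrality of $R/R^{\prime}$ and the algebraically-closedness of $k$ (giving residue fields equal to $k$) are really used.
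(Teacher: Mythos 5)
Your proof reaches the right conclusion and the structural-claims paragraph matches the paper closely, but the main finiteness step $i(y)<\infty$ takes an unnecessary and only half-justified detour. You worry that an induction on the length of $y$ via Lemma \ref{lemB3} fails because intermediate truncations $y'$ need not generate maximal ideals of $R'$, and you substitute a localization at $\mathfrak{m}'$ followed by a ``standard induction'' over the semilocal ring $R_{\mathfrak{m}'}$ whose content you never actually spell out (you even flag this as ``the main obstacle''). No such induction is needed at all. The one fact that does all the work is one you already invoke later, in your structural-claims paragraph: by Lemma \ref{lemB4} (equivalently, the standard Koszul-homology fact) one has $\left\langle y\right\rangle H_{p}\left(  y,M\right)  =0$ for every $p$, with no induction on tuple length. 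Since you have already shown $M$ is a Noetherian $R'$-module, each $H_{p}\left( y,M\right)$ is a Noetherian $R'$-module (a subquotient of $M\otimes_{k}\wedge^{p}k^{m}$). Since $\langle y\rangle$ annihilates it and $R'/\langle y\rangle=k$ (Zariski Nullstellensatz, using that $k$ is algebraically closed and $R'/k$ is algebra-finite), every $R'$-submodule is just a $k$-subspace; Noetherianness then forces $\dim_{k}H_{p}\left( y,M\right)<\infty$. This is exactly what the paper does: write $R'=k[y']$, translate so that $\langle y\rangle=\langle y'\rangle$, and apply Lemma \ref{lemB4} directly. Your localization and semilocal-radical bookkeeping are a dead end that obscures this one-line reduction, and as written the ``standard induction'' that they hinge on is a genuine gap: it is never demonstrated, and there is no reason to expect it to be simpler than the direct argument it replaces. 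The remainder of your proposal --- identifying the support with the finitely many maximal ideals of $R$ lying over $\mathfrak{m}'$, taking a composition series to obtain the triangular $R$-action, and observing that the $R'$-action factors through $R'/\mathfrak{m}'=k$ hence is scalar --- is correct and is essentially the paper's argument.
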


\begin{proof}
Since $R/R^{\prime}$ is algebra finite and integral, it follows that $R$ is
module finite over $R^{\prime}$ \cite[10.18]{AK}. But $M$ is module finite
over $R$, therefore $M$ is module finite over $R^{\prime}$ (see \cite[10.16]%
{AK}). Moreover, $R^{\prime}/k$ is an algebra finite extension by Artin-Tate
\cite[16.17]{AK}, and it is Noetherian (Hilbert Basis). In particular, $M$ is
a Noetherian $R^{\prime}$-module either, and so are all homology groups
$H_{p}\left(  y,M\right)  $ for all tuples $y$ in $R^{\prime}$. Assume that
$y$ is an $m$-tuple generating a maximal ideal $\left\langle y\right\rangle $
in $R^{\prime}$. But $R^{\prime}=k\left[  y^{\prime}\right]  $ for a certain
$s$-tuple $y^{\prime}\subseteq R^{\prime}$, and the maximal ideals of
$R^{\prime}$ respond to points from $\mathbb{A}^{s}$ by Zariski
Nullstellensatz \cite[15.4]{AK}. In particular, $\left\langle y\right\rangle
=\left\langle y^{\prime}-a^{\prime}\right\rangle $ for some $a^{\prime}%
\in\mathbb{A}^{s}$. Thus we can assume that $R^{\prime}=k\left[  y^{\prime
}\right]  $ and $\left\langle y\right\rangle =\left\langle y^{\prime
}\right\rangle $, which means that $y$ is a tuple related to $y^{\prime}$. By
Lemma \ref{lemB4}, the $R^{\prime}$-module structure of $H_{p}\left(
y,M\right)  $ is diagonalizable, and it is just the $k$-vector space
structure. In particular, every ascending chain of vector subspaces in
$H_{p}\left(  y,M\right)  $ turns out to be a chain of $R^{\prime}$-submodules
which has to stabilize being a Noetherian $R^{\prime}$-module. Hence
$H_{p}\left(  y,M\right)  $ is a finite dimensional $k$-vector space, and
$i\left(  y\right)  <\infty$.

But $H_{p}\left(  y,M\right)  $ has also Noetherian $R$-module structure on.
In particular, $H_{p}\left(  y,M\right)  $ has the finite length as an
$R$-module, which means that there is a Jordan-H\"{o}lder series with the
quotients $R/\mathfrak{m}_{j}$ for some maximal ideals $\mathfrak{m}%
_{j}\subseteq R$. Thus $H_{p}\left(  y,M\right)  $ is a semilocal $R$-module,
that is, $\operatorname{Supp}_{R}\left(  H_{p}\left(  y,M\right)  \right)  $
is finite. As above $R=k\left[  x\right]  $ for some $n$-tuple $x$, and
maximal ideals respond to points from $\mathbb{A}^{n}$. Hence there is a chain
$0=V_{s}\varsubsetneq V_{s-1}\varsubsetneq\cdots\varsubsetneq V_{0}%
=H_{p}\left(  y,M\right)  $ of $R$-submodules such that $V_{j}/V_{j+1}%
=R/\mathfrak{m}_{j}=k$ with $\mathfrak{m}_{j}=\left\langle x-b^{\left(
j\right)  }\right\rangle $, $b^{\left(  j\right)  }\in\mathbb{A}^{n}$, $1\leq
j\leq s$. Thus $s=\dim_{k}\left(  H_{p}\left(  x,M\right)  \right)  $ and
\[
\operatorname{Supp}_{R}\left(  H_{p}\left(  y,M\right)  \right)
=\operatorname{Ass}_{R}\left(  H_{p}\left(  y,M\right)  \right)
=\operatorname{Max}_{R}\left(  H_{p}\left(  y,M\right)  \right)  =\left\{
\mathfrak{m}_{1},\ldots,\mathfrak{m}_{s}\right\}
\]
with their multiplicities (see \cite[19.4]{AK}). Based on the chain we can
easily construct a basis $\omega=\left(  \omega_{1},\ldots,\omega_{s}\right)
$ for $H_{p}\left(  y,M\right)  $ such that
\[
x_{i}|H_{p}\left(  y,M\right)  =\left[
\begin{array}
[c]{ccc}%
b_{i}^{\left(  1\right)  } &  & 0\\
& \ddots & \\
\ast &  & b_{i}^{\left(  s\right)  }%
\end{array}
\right]
\]
for all $i$, $1\leq i\leq n$ with respect to $\omega$. In particular,%
\[
p\left(  x\right)  |H_{p}\left(  y,M\right)  =\left[
\begin{array}
[c]{ccc}%
p\left(  b^{\left(  1\right)  }\right)  &  & 0\\
& \ddots & \\
\ast &  & p\left(  b^{\left(  s\right)  }\right)
\end{array}
\right]
\]
for every $p\left(  x\right)  \in R$, which means that $R$-action on
$H_{p}\left(  y,M\right)  $ is triangularizable.
\end{proof}

\begin{corollary}
\label{corkey1}Let $R/k$ be an algebra finite extension, and let
$\mathfrak{a\varsubsetneq}R$ be an ideal. The following assertions are
equivalent: $\left(  i\right)  $ there is a tuple $y$ generating
$\mathfrak{a}$ such that $k\left[  y\right]  \subseteq R$ is integral;
$\left(  ii\right)  $ there is a tuple $y$ generating $\mathfrak{a}$ such that
$i\left(  y\right)  <\infty$ for every Noetherian $R$-module $M$ and the
$R$-module structure on every $H_{p}\left(  y,M\right)  $ is triangularizable;
$\left(  iii\right)  $ the radical $\sqrt{\mathfrak{a}}$ is a finite
intersection of maximal ideals of $R$.
\end{corollary}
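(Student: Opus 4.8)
The plan is to prove the cycle of implications $(i)\Rightarrow(ii)\Rightarrow(iii)\Rightarrow(i)$, using Theorem \ref{thNP11} for the first step, elementary length/support considerations for the second, and the Artin--Tate/Nullstellensatz machinery together with a normalization-type construction for the last.

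\textbf{The implication $(i)\Rightarrow(ii)$.} Suppose $y$ generates $\mathfrak{a}$ and $R'=k[y]\subseteq R$ is integral. Since $R'\subseteq R$ is integral and $R/k$ is algebra finite, the hypotheses of Theorem \ref{thNP11} are met (with $R^{\prime}$ in place of the intermediate ring; note $y$ generates the maximal ideal $\mathfrak{a}\cap R'=\langle y\rangle$ in $R'$ — here one should check $\mathfrak{a}=\langle y\rangle R$ forces $\langle y\rangle$ to be maximal in $R'$, which follows because $R'/\langle y\rangle\hookrightarrow R/\mathfrak{a}$ is integral and $R/\mathfrak{a}\neq 0$, so $\langle y\rangle$ is maximal whenever $\mathfrak{a}$ is, and if $\mathfrak{a}$ is not maximal one replaces $R$ by $R/\mathfrak{a}$... actually one must be a little careful, but the real content is just an application of the theorem). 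Then Theorem \ref{thNP11} directly gives $i(y)<\infty$ for every Noetherian $R$-module $M$ and the triangularizability of the $R$-action on each $H_p(y,M)$.

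\textbf{The implication $(ii)\Rightarrow(iii)$.} Take the tuple $y$ from $(ii)$ and apply the hypothesis to the particular Noetherian module $M=R/\mathfrak{a}$. Since $X$ is a regular sequence for $P=k[X]$ and $\operatorname{Kos}(X,P)$ resolves $k$, one has (as recalled in the excerpt after Corollary \ref{corRH}) that $H_0(y,R/\mathfrak{a})=R/(\mathfrak{a}+\langle y\rangle R)=R/\mathfrak{a}$, since $y$ generates $\mathfrak{a}$; so $H_0(y,R/\mathfrak{a})=R/\mathfrak{a}$ is finite dimensional over $k$ because $i(y)<\infty$. A finite-dimensional commutative $k$-algebra is Artinian, hence a finite product of local Artinian rings, and therefore $\sqrt{\mathfrak{a}}=\operatorname{rad}(R/\mathfrak{a})$ pulled back to $R$ is a finite intersection of maximal ideals of $R$. (Equivalently: $\operatorname{Supp}_R(R/\mathfrak{a})=V(\mathfrak{a})$ is finite and consists of maximal ideals, so $\sqrt{\mathfrak a}$ is their intersection.)

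\textbf{The implication $(iii)\Rightarrow(i)$.} This is the step I expect to be the main obstacle, since it requires producing a tuple, not just verifying a property. Assume $\sqrt{\mathfrak{a}}=\mathfrak{m}_1\cap\cdots\cap\mathfrak{m}_r$ with $\mathfrak m_j$ maximal. Then $R/\mathfrak{a}$ has Krull dimension $0$, hence is a finite-dimensional $k$-algebra (an algebra finite extension of $k$ of dimension $0$ is module finite over $k$ by Noether normalization / the Nullstellensatz). Now choose any tuple $y=(y_1,\dots,y_m)$ generating $\mathfrak{a}$ in $R$; I claim $k[y]\subseteq R$ is integral. Indeed $R/\langle y\rangle R=R/\mathfrak{a}$ is finite over $k$, so each generator of $R$ as a $k$-algebra satisfies a polynomial relation modulo $\mathfrak{a}=\langle y\rangle R$, i.e. is integral over $k[y]$ modulo the ideal generated by $y$; lifting these relations expresses each $k$-algebra generator of $R$ as integral over $k[y]$ (a power of it lies in $k[y]+\langle y\rangle R$, and one iterates, or more cleanly: $R$ is a finitely generated $k[y]$-algebra and $R\otimes_{k[y]}k[y]/\langle y\rangle = R/\mathfrak a$ is finite over $k[y]/\langle y\rangle=k$, so by the fiber/Nakayama-type criterion \cite[10.16, 10.18]{AK} — or graded Nakayama after passing to the associated graded — $R$ is module finite over $k[y]$, hence integral). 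The honest work here is the lifting argument showing module-finiteness of $R$ over $k[y]$ from finiteness of the fiber; I would invoke the same commutative-algebra references already used in the proof of Theorem \ref{thNP11} (\cite[10.16]{AK}, \cite[10.18]{AK}) and the Nullstellensatz \cite[15.4]{AK}. This closes the cycle.
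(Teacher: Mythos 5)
The implications $(i)\Rightarrow(ii)$ and $(ii)\Rightarrow(iii)$ are fine. For $(i)\Rightarrow(ii)$, the worry about whether $\langle y\rangle$ is maximal in $R'=k[y]$ is unnecessary: since $\mathfrak{a}\varsubsetneq R$ one has $\langle y\rangle_{R'}\varsubsetneq R'$, and the quotient $R'/\langle y\rangle_{R'}$ is a nonzero $k$-algebra in which all the algebra generators $y_i$ vanish, hence equals $k$; so $\langle y\rangle$ is automatically maximal in $R'$ and Theorem \ref{thNP11} applies verbatim. For $(ii)\Rightarrow(iii)$, your route via $M=R/\mathfrak{a}$ and the observation that $i(y)<\infty$ forces $\dim_k(R/\mathfrak{a})<\infty$ is correct and a touch more direct than the paper's (which uses $M=R$, $p=0$, and the semilocal structure of $H_{0}$). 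Neither of you actually needs the triangularizability hypothesis here.

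The proof of $(iii)\Rightarrow(i)$ has a genuine gap. You claim that \emph{any} tuple $y$ generating $\mathfrak{a}$ gives an integral extension $k[y]\subseteq R$, justified by a ``fiber finite $\Rightarrow$ module finite'' Nakayama-type step. That step is false: $k[y]$ is not local, and finiteness of the single fiber $R\otimes_{k[y]}k[y]/\langle y\rangle=R/\mathfrak{a}$ says nothing about the other fibers of $\operatorname{Spec}(R)\to\operatorname{Spec}(k[y])$. Concretely, take $R=k[X,Y]$, $\mathfrak{a}=\langle X,Y^{2}\rangle$, so $\sqrt{\mathfrak{a}}=\langle X,Y\rangle$ is maximal and $(iii)$ holds. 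The tuple $y=(X+XY,\,Y^{2})$ generates $\mathfrak{a}$, since $X=(1-Y)(X+XY)+XY^{2}$. But $k[y]=k\bigl[X(1+Y),Y^{2}\bigr]\subseteq R$ is not integral: the induced map on varieties $(a,b)\mapsto\bigl(a(1+b),b^{2}\bigr)$ collapses the whole line $\{Y=-1\}$ to the single point $(0,1)$, so the morphism has an infinite fiber and is not finite; hence $R$ is not module finite, and not integral, over $k[y]$. So the existential claim $(i)$ cannot be reached by picking an arbitrary generating tuple, and the parenthetical appeal to an associated-graded/graded-Nakayama argument does not repair this, because the associated graded at $\langle y\rangle$ only sees the fiber at one point.

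What is missing is exactly the construction the paper carries out: from $\dim_k(R/\sqrt{\mathfrak{a}})<\infty$, apply Cayley--Hamilton to the action of each algebra generator $x_i$ of $R$ on $R/\sqrt{\mathfrak{a}}$ to obtain monic $q_i\in k[t]$ with $q_i(x_i)\in\sqrt{\mathfrak{a}}$, hence monic $p_i$ with $y_i:=p_i(x_i)\in\mathfrak{a}$. Then each $x_i$ is integral over $k[y_1,\ldots,y_n]$, so $k[y]\subseteq R$ is integral, and one enlarges the tuple $y$ with finitely many additional generators of $\mathfrak{a}$ (this only enlarges $k[y]$ and so preserves integrality). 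The monic relations are precisely what your argument does not secure.
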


\begin{proof}
The implication $\left(  i\right)  \Rightarrow\left(  ii\right)  $ is due to
Theorem \ref{thNP11}. Let prove the implication $\left(  ii\right)
\Rightarrow\left(  iii\right)  $. Suppose that $\mathfrak{a=}\left\langle
y\right\rangle $ for some tuple $y$ such that $i\left(  y\right)  <\infty$ for
every Noetherian $R$-module $M$, and the $R$-module structure on every
$H_{p}\left(  y,M\right)  $ is triangularizable. By Lemma \ref{lemB4},
$k\left[  y\right]  $-module structure on $H_{p}\left(  y,M\right)  $ is
diagonalizable. In particular, $\mathfrak{a=}\left\langle y\right\rangle
\subseteq\operatorname{Ann}_{R}\left(  H_{p}\left(  y,M\right)  \right)  $,
where $\operatorname{Ann}_{R}\left(  H_{p}\left(  y,M\right)  \right)  $ is
the annihilator of the $R$-module $H_{p}\left(  y,M\right)  $. Taking into
account that $R$-module $H_{p}\left(  y,M\right)  $ is semilocal (or
triangularizable), we deduce that
\begin{align*}
\sqrt{\mathfrak{a}}  &  \subseteq\sqrt{\operatorname{Ann}_{R}\left(
H_{p}\left(  y,M\right)  \right)  }=\operatorname{nil}_{R}\left(  H_{p}\left(
y,M\right)  \right)  =\cap\operatorname{Supp}_{R}\left(  H_{p}\left(
y,M\right)  \right)  =\cap\operatorname{Ass}_{R}\left(  H_{p}\left(
y,M\right)  \right) \\
&  =\cap_{j=1}^{s}\mathfrak{m}_{j},
\end{align*}
that is, $\sqrt{\mathfrak{a}}\subseteq\cap_{j=1}^{s}\mathfrak{m}_{j}$ (see to
the proof of Theorem \ref{thNP11}, and \cite[13.6]{AK}). If $p=0$ and $M=R$,
then we obtain that $H_{0}\left(  y,R\right)  =R/\left\langle y\right\rangle
=R/\mathfrak{a}$ and $\cap_{j=1}^{s}\mathfrak{m}_{j}=\operatorname{nil}%
_{R}\left(  R/\mathfrak{a}\right)  =\sqrt{\mathfrak{a}}$.

Finally prove that $\left(  iii\right)  \Rightarrow\left(  i\right)  $.
Suppose that $\sqrt{\mathfrak{a}}=\cap_{j=1}^{s}\mathfrak{m}_{j}$ for some
maximal ideals $\left\{  \mathfrak{m}_{j}\right\}  $. It is a primary
decomposition and $\operatorname{Ass}\left(  R/\sqrt{\mathfrak{a}}\right)
\subseteq\left\{  \mathfrak{m}_{j}\right\}  \subseteq\operatorname{Max}\left(
R/\sqrt{\mathfrak{a}}\right)  $ (see \cite[18.17]{AK}). Since $R/\sqrt
{\mathfrak{a}}$ is Noetherian, we deduce that $\operatorname{Spec}\left(
R/\sqrt{\mathfrak{a}}\right)  =\operatorname{Max}\left(  R/\sqrt{\mathfrak{a}%
}\right)  $ (see \cite[17.14]{AK}) or the Krull dimension $\dim\left(
R/\sqrt{\mathfrak{a}}\right)  =0$. By Akizuki-Hopkins Theorem \cite[19.8]{AK},
we conclude that $R/\sqrt{\mathfrak{a}}$ is an Artinian ring. Using again
Zariski Nullstellensatz, we deduce that $\dim_{k}\left(  R/\sqrt{\mathfrak{a}%
}\right)  <\infty$ (the gaps of a Jordan-H\"{o}lder series are $R/\mathfrak{m}%
_{j}=k$). But $R=k\left[  x\right]  $ for some $n$-tuple $x$, and every
$x_{i}$ defines a linear transformation on $R/\sqrt{\mathfrak{a}}$. By
Cayley-Hamilton Theorem, $q_{i}\left(  x_{i}\right)  \in\sqrt{\mathfrak{a}}$
for a monic polynomial $q_{i}\in k\left[  t\right]  $. It follows that
$y_{i}=p_{i}\left(  x_{i}\right)  \in\mathfrak{a}$ for a monic polynomial
$p_{i}\in k\left[  t\right]  $, $1\leq i\leq n$. Put $y=\left(  y_{1}%
,\ldots,y_{n}\right)  $ and consider the subalgebra $R^{\prime}=k\left[
y\right]  \subseteq R$. For every $i$ we have $\left(  p_{i}-y_{i}\right)
\left(  x_{i}\right)  =0$, that is, $x_{i}$ is integral over $R^{\prime}$ and
$R=R^{\prime}\left[  x\right]  $. Hence $R$ is module finite over $R^{\prime}$
or $R^{\prime}\subseteq R$ is an integral extension \cite[10.18]{AK}. But $y$
can easily be extended up to generators of the ideal $\mathfrak{a}$. Just
consider $\mathfrak{b=}\left\langle y\right\rangle \subseteq\mathfrak{a}$ and
pick up generators from $\mathfrak{b/a}$, or just add up another generators of
$\mathfrak{a}$ to $y$ being a Noetherian ideal. Whence $\mathfrak{a=}%
\left\langle y\right\rangle $ and $k\left[  y\right]  \subseteq R$ is an
integral extension.
\end{proof}

Recall that if $M$ is a module over a $k$-algebra $R$ and $t\in R$, which
defines a linear transformation $t|M$ over the $k$-vector space $M$, then its
spectrum $\sigma\left(  t|M\right)  $ is defined to be a subset of those
$\lambda\in k$ such that $t-\lambda$ is not an invertible linear
transformation over $M$. Actually, it is just the Taylor spectrum
$\sigma\left(  t,M\right)  $ of the single tuple $t$ from the algebra $R$.
Note also that it is not expected that $t^{-1}\in R$ in the case of an
invertible linear transformation $t$ over $M$, that is, $0\notin\sigma\left(
t\right)  $ (see below Example \ref{ex2}).

\begin{proposition}
\label{propNP1}(Non voidness) Let $R/k$ be an algebra finite extension and $M$
a Noetherian $R$-module. Then $\sigma\left(  t|M\right)  \neq\varnothing$ for
every $t\in R$. For every chain $0=M_{0}\varsubsetneq M_{1}\varsubsetneq
\cdots\varsubsetneq M_{n-1}\varsubsetneq M_{n}=M$ of submodules with
$M_{i}/M_{i-1}=R/\mathfrak{p}_{i}$ and $\operatorname{Ass}\left(  M\right)
\subseteq\left\{  \mathfrak{p}_{1},\ldots,\mathfrak{p}_{n}\right\}
\subseteq\operatorname{Supp}\left(  M\right)  $, we have $\sigma\left(
t|M\right)  =\cup_{i}\sigma\left(  t|R/\mathfrak{p}_{i}\right)  $. In
particular, $\sigma\left(  t|M\right)  $ is either finite set or it is a dense
subset of $\mathbb{A}^{1}$.
\end{proposition}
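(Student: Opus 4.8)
The plan is to reduce everything to how $\sigma(t\,|\,{-})$ behaves in short exact sequences, and then to read off the non-voidness and the dichotomy. Recall that $\lambda\in\sigma(t\,|\,N)$ precisely when $t-\lambda\colon N\to N$ is not a bijective $k$-linear map, i.e. $\ker(t-\lambda\,|\,N)\neq 0$ or $N/(t-\lambda)N\neq 0$; indeed, for the complex $N$ concentrated in degree $0$ one has $H_{1}(t-\lambda,N)=\ker(t-\lambda\,|\,N)$ and $H_{0}(t-\lambda,N)=N/(t-\lambda)N$ by Corollary \ref{corB}. For a short exact sequence $0\to N'\to N\to N''\to 0$ of $R$-modules, multiplication by $t-\lambda$ is an endomorphism of it, so the snake lemma (equivalently, the Koszul long exact sequence for the single element $t-\lambda$) produces
\begin{equation*}
0\leftarrow N''/(t-\lambda)N''\leftarrow N/(t-\lambda)N\leftarrow N'/(t-\lambda)N'\overset{\partial}{\longleftarrow}\ker(t-\lambda\,|\,N'')\leftarrow\ker(t-\lambda\,|\,N)\leftarrow\ker(t-\lambda\,|\,N')\leftarrow 0 .
\end{equation*}
In particular, if $t-\lambda$ is bijective on both $N'$ and $N''$ then all four outer terms vanish and $t-\lambda$ is bijective on $N$.

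First I would settle $\sigma(t\,|\,M)\subseteq\bigcup_{i}\sigma(t\,|\,R/\mathfrak{p}_{i})$. If $\lambda$ avoids every $\sigma(t\,|\,R/\mathfrak{p}_{i})$, then, climbing the chain via the exact sequences $0\to M_{j-1}\to M_{j}\to R/\mathfrak{p}_{j}\to 0$ and using the displayed fact, $t-\lambda$ is bijective on each $M_{j}$, hence $\lambda\notin\sigma(t\,|\,M)$. Separately, non-voidness is immediate: assuming $M\neq 0$, since $R$ is a Jacobson ring and $\operatorname{Supp}(M)=V(\operatorname{Ann}_{R}M)$ is nonempty and closed, it contains a maximal ideal $\mathfrak{m}$; then $M/\mathfrak{m}M\neq 0$ by Nakayama, and with $c=(t\bmod\mathfrak{m})\in k=R/\mathfrak{m}$ we have $t-c\in\mathfrak{m}$, so $(t-c)M\subseteq\mathfrak{m}M\subsetneq M$ and $c\in\sigma(t\,|\,M)$.

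The reverse inclusion is the crux, and I expect the descent here to be the main obstacle. I would prove: if $t-\lambda$ is bijective on $M$ then it is bijective on every $R/\mathfrak{p}_{i}$, by induction on the length $n$ of the chain (the case $n=1$ being trivial). Feeding $0\to M_{n-1}\to M\to R/\mathfrak{p}_{n}\to 0$ into the six-term sequence, with $\ker(t-\lambda\,|\,M)=0=M/(t-\lambda)M$, gives $\ker(t-\lambda\,|\,M_{n-1})=0$, surjectivity of $t-\lambda$ on $R/\mathfrak{p}_{n}$, and an isomorphism $\ker(t-\lambda\,|\,R/\mathfrak{p}_{n})\cong M_{n-1}/(t-\lambda)M_{n-1}$. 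The key is that $R/\mathfrak{p}_{n}$ is an integral domain: multiplication by $t-\lambda$ on it is either the zero map — impossible, since it is surjective onto a nonzero module — or injective, so $t-\lambda$ is in fact bijective on $R/\mathfrak{p}_{n}$; then $M_{n-1}/(t-\lambda)M_{n-1}\cong 0$ and, together with $\ker(t-\lambda\,|\,M_{n-1})=0$, also $t-\lambda$ is bijective on $M_{n-1}$. Since $0\subsetneq M_{1}\subsetneq\cdots\subsetneq M_{n-1}$ is again a chain of submodules with prime cyclic quotients $R/\mathfrak{p}_{1},\dots,R/\mathfrak{p}_{n-1}$ having the same properties for $M_{n-1}$, the induction hypothesis finishes the argument and yields $\sigma(t\,|\,M)=\bigcup_{i}\sigma(t\,|\,R/\mathfrak{p}_{i})$.

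Finally, the dichotomy: apply the formula to a prime filtration of $M$ (one exists since $M$ is Noetherian) and analyze a single term $A:=R/\mathfrak{p}_{i}$, a finitely generated $k$-algebra domain, with $a:=(t\bmod\mathfrak{p}_{i})\in A$. If $a$ is a scalar, $a=c\in k$, then $t-\lambda$ acts on $A$ as $c-\lambda$ and $\sigma(t\,|\,A)=\{c\}$, a single point. Otherwise $a$ is transcendental over $k$ (an element of a domain that is algebraic over the algebraically closed $k$ lies in $k$), so $k[a]\cong k[T]$, $a-\lambda\neq 0$ always, and $\sigma(t\,|\,A)=\{\lambda\in k:\ a-\lambda\notin A^{\times}\}$ is exactly the image of the dominant, finite-type morphism $\operatorname{Spec}A\to\mathbb{A}^{1}=\operatorname{Spec}k[a]$; by Chevalley's theorem this image is constructible and dense, hence cofinite in $\mathbb{A}^{1}$, so $\sigma(t\,|\,A)$ is infinite and therefore dense. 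Thus each $\sigma(t\,|\,R/\mathfrak{p}_{i})$ is either a point or cofinite in $\mathbb{A}^{1}$; a finite union of such sets is finite when all of them are points, and otherwise contains a cofinite set and so is dense in $\mathbb{A}^{1}$.
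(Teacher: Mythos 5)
Your proof is correct, and it takes a genuinely different route from the paper at the two places where real work is required. For the key inclusion $\sigma(t|M)\supseteq\bigcup_i\sigma(t|R/\mathfrak{p}_i)$, the paper merely observes that $(t-\lambda)^{-1}$ is $R$-linear and declares that invertibility therefore passes to all the quotients $M_i/M_{i-1}$; as stated this is a gap, because an $R$-linear automorphism of $M$ need not preserve a given submodule $M_i$ (the restriction is automatically injective, but surjectivity onto $M_i$ requires an extra argument, e.g.\ that the ascending chain $M_i\subseteq(t-\lambda)^{-1}M_i\subseteq(t-\lambda)^{-2}M_i\subseteq\cdots$ stabilizes by Noetherianness). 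You sidestep this entirely with the six-term kernel--cokernel sequence along the filtration together with the observation that multiplication by $t-\lambda$ on a domain $R/\mathfrak{p}_i$ is either zero or injective, which cleanly forces bijectivity to descend step by step; this is more explicit and self-contained than the paper's sketch. For the dichotomy, you invoke Chevalley's theorem to conclude that each non-singleton $\sigma(t|R/\mathfrak{p}_i)$ is cofinite, whereas the paper instead uses that $\sigma(t|R/\mathfrak{p}_i)=t(Y)$ is an irreducible subset of $\mathbb{A}^1$, so its closure is a point or all of $\mathbb{A}^1$; both are valid, and your version delivers the slightly sharper ``finite or cofinite'' statement. The non-voidness argument agrees with the paper's (localize at a maximal ideal in $\operatorname{Supp}(M)$ and apply Nakayama); your ``$M/\mathfrak{m}M\neq 0$ by Nakayama'' should be read as Nakayama applied after passing to $R_{\mathfrak{m}}$, exactly as in the paper.
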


\begin{proof}
We can assume that $M\neq0$ and $R=k\left[  x\right]  $ for an $n$-tuple $x$.
Then $\operatorname{Supp}\left(  M\right)  \neq\varnothing$. Actually,
$\operatorname{Supp}\left(  M\right)  =V\left(  \operatorname{Ann}\left(
M\right)  \right)  $ (see \cite[13.4]{AK}) to be the set of all primes
containing the annihilator $\operatorname{Ann}\left(  M\right)  $ of $M$. In
particular, there is a maximal ideal $\mathfrak{m\subseteq}R$ from
$\operatorname{Supp}\left(  M\right)  $, that is, $M_{\mathfrak{m}}\neq0$. By
Zariski Nullstellensatz, $\mathfrak{m=}\left\langle x-a\right\rangle $ for
some $a\in\mathbb{A}^{n}$. Take $t=t\left(  x\right)  \in R$. Then $t-t\left(
a\right)  \in\mathfrak{m}$ and $\left\langle \left(  t/1\right)  -t\left(
a\right)  \right\rangle \subseteq\mathfrak{m}R_{\mathfrak{m}}%
=\operatorname{rad}\left(  R_{\mathfrak{m}}\right)  $. By Nakayama lemma,
$\left(  M/\left\langle t-t\left(  a\right)  \right\rangle M\right)
_{\mathfrak{m}}=M_{\mathfrak{m}}/\left\langle \left(  t/1\right)  -t\left(
a\right)  \right\rangle M_{\mathfrak{m}}\neq0$. It follows that
$M/\left\langle t-t\left(  a\right)  \right\rangle M\neq0$ or
$\operatorname{im}\left(  t-t\left(  a\right)  \right)  \neq M$, that is,
$t\left(  a\right)  \in\sigma\left(  t|M\right)  $.

Further, consider a chain $0=M_{0}\varsubsetneq M_{1}\varsubsetneq
\cdots\varsubsetneq M_{n-1}\varsubsetneq M_{n}=M$ of submodules with
$M_{i}/M_{i-1}=R/\mathfrak{p}_{i}$ and $\operatorname{Ass}\left(  M\right)
\subseteq\left\{  \mathfrak{p}_{1},\ldots,\mathfrak{p}_{n}\right\}
\subseteq\operatorname{Supp}\left(  M\right)  $ (see \cite[17.16]{AK}). If
$t-\lambda$ is invertible on $M$ then $\left(  t-\lambda\right)  ^{-1}$ is an
$R$-linear map too. Hence $t-\lambda$ is invertible iff so are all $\left(
t-\lambda\right)  |M_{i}/M_{i-1}$, that is, $\sigma\left(  t|M\right)
=\cup_{i}\sigma\left(  t|R/\mathfrak{p}_{i}\right)  $.

Finally, consider the case of $M=R/\mathfrak{p}$ for some prime $\mathfrak{p}%
$, which is the coordinate ring of the variety $Y=V\left(  \mathfrak{p}%
\right)  \cap\mathbb{A}^{n}\subseteq\mathbb{A}^{n}$. Then $\operatorname{zdiv}%
\left(  M\right)  =\left\{  0\right\}  $, and $\lambda\in\sigma\left(
t|M\right)  $ iff $t-\lambda\in\mathfrak{m}$ for some maximal ideal
$\mathfrak{m}$ containing $\mathfrak{p}$. By Zariski Nullstellensatz,
$\mathfrak{m=}\left\langle x-a\right\rangle $ for some $a\in Y$, that is,
$\lambda=t\left(  a\right)  $. Hence $\sigma\left(  t|M\right)  =t\left(
Y\right)  $ is an irreducible subset of $\mathbb{A}^{1}$ (see \cite[2.4.4]%
{BurCA}). But $\sigma\left(  t|M\right)  $ is irreducible iff so is its
closure $\sigma\left(  t|M\right)  ^{-}$ in $\mathbb{A}^{1}$ (see
\cite[2.4.2]{BurCA}). So $\sigma\left(  t|M\right)  ^{-}$ is either singleton
or $\sigma\left(  t|M\right)  ^{-}=\mathbb{A}^{1}$. But $\sigma\left(
t|M\right)  \neq\varnothing$, therefore $\sigma\left(  t|M\right)  $ is either
singleton or it is a dense irreducible subset of $\mathbb{A}^{1}$.
\end{proof}

\begin{example}
\label{ex1}As an example of the case $\sigma\left(  t|M\right)  =\mathbb{A}%
^{1}$ consider the action of $X$ on the PID $k\left[  X\right]  $. If
$\ell_{R}\left(  M\right)  <\infty$ then $\operatorname{Ass}\left(  M\right)
=\operatorname{Max}\left(  M\right)  $ and $M$ is a finite dimensional
$k$-vector space (see \cite[19.4]{AK}). In this case, $t$ is a
triangularizable whose diagonal entries consist of its finite spectrum
$\sigma\left(  t|M\right)  $.
\end{example}

\begin{example}
\label{ex2}The spectrum can be empty set unless $M$ is Noetherian. Put
$R=k\left[  X\right]  $ and $M=k\left(  X\right)  =\operatorname{Frac}\left(
R\right)  $. Then $M$ is an $R$-module and $X$ is acting as a multiplication
operator over $M$. For every $\lambda\in k$ we have $X-\lambda$ is invertible
and $\left(  X-\lambda\right)  ^{-1}$ is the multiplication operator on $M$ by
$1/\left(  X-\lambda\right)  $. Note also that $\left(  X-\lambda\right)
^{-1}\notin R$.
\end{example}

\begin{example}
\label{ex3}The spectrum $\sigma\left(  t|M\right)  $ can be an open subset of
$\mathbb{A}^{1}$. Let $M$ be the coordinate ring of the hyperbola $Y=\left\{
xy=1\right\}  $ in $\mathbb{A}^{2}$, and $t=x$. Then $\sigma\left(
t|M\right)  =t\left(  Y\right)  =\mathbb{A}^{1}-\left\{  0\right\}  $ is an
open dense subset of $\mathbb{A}^{1}$.
\end{example}

Now we can prove a key result of the present section.

\begin{theorem}
\label{thNP1}(The projection property) Let $R/k$ be an algebra finite
extension of the field $k$, $M$ a Noetherian $R$-module, $y=\left(
y_{1},\ldots,y_{m}\right)  $ an $m$-tuple in $R$, and let $y^{\prime}=\left(
y_{1},\ldots,y_{m-1}\right)  $. Then $\sigma\left(  y^{\prime},M\right)
=\pi\left(  \sigma\left(  y,M\right)  \right)  $, where $\pi:\mathbb{A}%
^{m}\rightarrow\mathbb{A}^{m-1}$ is the canonical projection onto first $m-1$ coordinates.
\end{theorem}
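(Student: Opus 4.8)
The plan is to prove the two inclusions of $\sigma(y^\prime,M)=\pi(\sigma(y,M))$ separately. The inclusion $\pi(\sigma(y,M))\subseteq\sigma(y^\prime,M)$ is already supplied by Corollary \ref{corProj1} (with the tuple $y$ in place of $x$), so no work is needed there. The substantive content is the reverse inclusion $\sigma(y^\prime,M)\subseteq\pi(\sigma(y,M))$: given $a^\prime\in\sigma(y^\prime,M)$, I must produce a scalar $\lambda\in\mathbb A^1$ with $(a^\prime,\lambda)\in\sigma(y,M)$. After translating by $a^\prime$ we may assume $a^\prime=0$, so $0\in\sigma(y^\prime,M)$, meaning $H_p(y^\prime,M)\neq 0$ for some $p$; fix the largest such $p$ (finiteness and eventual vanishing of Koszul homology make this legitimate). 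I want to find $\lambda$ so that $H_{p}(y-(0,\lambda),M)\neq 0$ or $H_{p+1}(y-(0,\lambda),M)\neq 0$.

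The key mechanism is Lemma \ref{lemB3}: with $x^\prime=y^\prime$ and $x_n=y_m$, it gives the exact sequence
\[
0\leftarrow\ker\bigl(y_m-\lambda\mid H_{p-1}(y^\prime,M)\bigr)\longleftarrow H_p(y-(0,\lambda),M)\longleftarrow\operatorname{coker}\bigl(y_m-\lambda\mid H_p(y^\prime,M)\bigr)\leftarrow 0
\]
and the analogous one with $p$ replaced by $p+1$. Because $p$ was chosen maximal, $H_{p+1}(y^\prime,M)=0$, so $H_{p+1}(y-(0,\lambda),M)\cong\ker(y_m-\lambda\mid H_p(y^\prime,M))$. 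Thus it suffices to find $\lambda$ for which the operator $y_m-\lambda$ acting on the (nonzero) $R$-module $N:=H_p(y^\prime,M)$ is either non-injective or non-surjective — equivalently $\lambda\in\sigma(y_m\mid N)$. Now $N=H_p(y^\prime,M)$ is a Noetherian $R$-module since $M$ is and $R$ is an algebra finite extension; therefore Proposition \ref{propNP1} (non-voidness of the single-element spectrum over a Noetherian module) guarantees $\sigma(y_m\mid N)\neq\varnothing$. Pick any $\lambda$ in it: then either $\ker(y_m-\lambda\mid N)\neq 0$, giving $H_{p+1}(y-(0,\lambda),M)\neq 0$, or $\operatorname{coker}(y_m-\lambda\mid N)\neq 0$, and feeding the latter into the level-$p$ sequence above gives $H_p(y-(0,\lambda),M)\neq 0$. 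Either way $(0,\lambda)\in\sigma(y,M)$ and $0=\pi(0,\lambda)\in\pi(\sigma(y,M))$, as required.

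The only point demanding care — and the place I'd expect the main obstacle — is justifying that $H_p(y^\prime,M)$ is a genuine Noetherian $R$-module so that Proposition \ref{propNP1} applies: this rests on $M\otimes_k\wedge^i k^m$ being Noetherian over $R$ (true since $R$ is an algebra finite extension of a field, hence Noetherian, and $M$ is finitely generated), so its subquotient $H_p(y^\prime,M)$ inherits the property. A secondary subtlety is the choice of maximal $p$ with $H_p(y^\prime,M)\neq 0$; this is available because for the integrally-closed/algebra-finite setup the Koszul homology is concentrated in finitely many degrees (indeed $H_j(y^\prime,M)=0$ for $j>m-1$ trivially, and Lemma \ref{lemNP0}-type vanishing is not even needed here since we only want \emph{some} nonzero degree and the largest one among the finitely many indices $0,\dots,m-1$). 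Everything else is a direct application of the long exact sequences already established, plus the translation reduction $k[y]=k[y-a^\prime]$ used repeatedly above.
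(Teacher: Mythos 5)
Your proof is correct and follows essentially the same route as the paper's: both reduce to showing that for $a'\in\sigma(y',M)$ the Noetherian $R$-module $H_p(y'-a',M)$ has nonempty single-operator spectrum under $y_m$ (Proposition \ref{propNP1}), then feed the non-invertible $y_m-\lambda$ into the two Lemma \ref{lemB3} exact sequences at degrees $p$ and $p+1$. The only cosmetic difference is your choice of the maximal $p$ with $H_p(y',M)\neq 0$, which the paper does not need since the cokernel/kernel dichotomy at any single $p$ already suffices.
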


\begin{proof}
The inclusion $\pi\left(  \sigma\left(  y,M\right)  \right)  \subseteq
\sigma\left(  y^{\prime},M\right)  $ holds in the general case thanks to
Corollary \ref{corProj1}. Conversely, take $a^{\prime}\in\sigma\left(
y^{\prime},M\right)  $. Then $H_{p}\left(  y^{\prime}-a^{\prime},M\right)
\neq0$ for some $p$. But $H_{p}\left(  y^{\prime}-a^{\prime},M\right)  $ is a
Noetherian $R$-module, and $y_{m}$ defines a linear transformation on it. By
Proposition \ref{propNP1}, $\sigma\left(  y_{m}|H_{p}\left(  y^{\prime
}-a^{\prime},M\right)  \right)  \neq\varnothing$, that is,
\[
y_{m}-a_{m}:H_{p}\left(  y^{\prime}-a^{\prime},M\right)  \rightarrow
H_{p}\left(  y^{\prime}-a^{\prime},M\right)
\]
is not invertible for some $a_{m}\in k$. If $\operatorname{coker}\left(
y_{m}-a_{m}|H_{p}\left(  y^{\prime}-a^{\prime},M\right)  \right)  \neq0$ then
using the following exact (Lemma \ref{lemB3}) sequence
\[
0\leftarrow\ker\left(  y_{m}-a_{m}|H_{p-1}\left(  y^{\prime}-a^{\prime
},M\right)  \right)  \longleftarrow H_{p}\left(  y-a,M\right)  \longleftarrow
\operatorname{coker}\left(  y_{m}-a_{m}|H_{p}\left(  y^{\prime}-a^{\prime
},M\right)  \right)  \leftarrow0,
\]
we conclude that $H_{p}\left(  y-a,M\right)  \neq0$. If $\ker\left(
y_{m}-a_{m}|H_{p}\left(  y^{\prime}-a^{\prime},M\right)  \right)  \neq0$ then
using the exact sequence
\[
0\leftarrow\ker\left(  y_{m}-a_{m}|H_{p}\left(  y^{\prime}-a^{\prime
},M\right)  \right)  \longleftarrow H_{p+1}\left(  y-a,M\right)
\longleftarrow\operatorname{coker}\left(  y_{m}-a_{m}|H_{p+1}\left(
y^{\prime}-a^{\prime},M\right)  \right)  \leftarrow0,
\]
we conclude that $H_{p+1}\left(  y-a,M\right)  \neq0$. Hence either
$H_{p}\left(  y-a,M\right)  \neq0$ or $H_{p+1}\left(  y-a,M\right)  \neq0$.
Anyway $a\in\sigma\left(  y,M\right)  $ and $\pi\left(  a\right)  =a^{\prime}$.
\end{proof}

\begin{corollary}
\label{corNE1}Let $R/k$ be an algebra finite extension of the field $k$ and
let $M$ be a Noetherian $R$-module. Then $\sigma\left(  y,M\right)
\neq\varnothing$ for every tuple $y$ in $R$.
\end{corollary}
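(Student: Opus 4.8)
The plan is to reduce the nonvoidness of $\sigma(y,M)$ for an arbitrary $m$-tuple $y$ to the single-variable nonvoidness already established in Proposition \ref{propNP1}, using the projection property (Theorem \ref{thNP1}) as the bridge. First I would argue by induction on the length $m$ of the tuple. The base case $m=1$ is exactly Proposition \ref{propNP1}: for $M\neq 0$ Noetherian over an algebra finite extension $R/k$, the single linear transformation $y_1|M$ has nonempty spectrum, so $\sigma(y_1,M)\neq\varnothing$. (If $M=0$ the statement is vacuous since all Koszul homology vanishes, so throughout we assume $M\neq 0$.)

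\medskip

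For the inductive step, suppose $m\geq 2$ and that the result holds for every $(m-1)$-tuple in $R$ acting on any Noetherian $R$-module. Apply the hypothesis to $y'=(y_1,\ldots,y_{m-1})$ to get $\sigma(y',M)\neq\varnothing$. By Theorem \ref{thNP1} we have $\sigma(y',M)=\pi(\sigma(y,M))$, where $\pi:\mathbb{A}^m\rightarrow\mathbb{A}^{m-1}$ is the projection onto the first $m-1$ coordinates. Since the image of $\sigma(y,M)$ under $\pi$ is nonempty, the set $\sigma(y,M)$ itself must be nonempty. This closes the induction and proves the corollary.

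\medskip

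The only point that requires care is making sure the hypotheses of Theorem \ref{thNP1} and of Proposition \ref{propNP1} are genuinely available at each stage: both require $R/k$ to be an algebra finite extension and the module in question to be Noetherian. The ambient ring $R$ and the module $M$ are fixed and satisfy these hypotheses by assumption, and the induction is carried out with this same fixed pair $(R,M)$ — only the length of the tuple changes — so no auxiliary modules or rings enter and there is nothing to verify beyond what is already in place. Thus I do not anticipate a real obstacle here; the entire content has been front-loaded into Proposition \ref{propNP1} (the single-operator case, which rests on Zariski's Nullstellensatz and Nakayama) and into the surjectivity half of the projection property in Theorem \ref{thNP1}.
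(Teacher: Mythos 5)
Your proof is correct and takes essentially the same route as the paper: the paper applies Theorem \ref{thNP1} (iterated implicitly) to project $\sigma(y,M)$ down to $\sigma(y_1|M)\subseteq\mathbb{A}^1$ and then invokes Proposition \ref{propNP1}, while you make the iteration explicit as an induction on the tuple length. The content is identical; your version just spells out what the paper compresses into the single equality $\pi_1(\sigma(y,M))=\sigma(y_1,M)$.
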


\begin{proof}
Indeed, using Theorem \ref{thNP1}, we conclude that $\pi_{1}\left(
\sigma\left(  y,M\right)  \right)  =\sigma\left(  y_{1},M\right)
=\sigma\left(  y_{1}|M\right)  $, where $\pi_{1}:\mathbb{A}^{m}\rightarrow
\mathbb{A}^{1}$ is the canonical projection onto the first coordinate. By
Proposition \ref{propNP1}, $\sigma\left(  y_{1}|M\right)  \neq\varnothing$.
Therefore $\sigma\left(  y,M\right)  \neq\varnothing$.
\end{proof}

Nonetheless the point spectrum $\sigma_{\operatorname{p}}\left(  y,M\right)  $
could be an empty set, which detects only zero-divisors in the module $M$.

\subsection{The spectral mapping property}

Finally, let us prove the main result for the present section. As above we fix
an algebra finite extension $R/k$, an $n$-tuple with $x=\left(  x_{1}%
,\ldots,x_{n}\right)  $ from $R$, and an $m$-tuple $p\left(  x\right)
=\left(  p_{1}\left(  x\right)  ,\ldots,p_{m}\left(  x\right)  \right)  $ from
the subalgebra $k\left[  x\right]  $. If $M$ is an $R$-module then $k\left[
p\left(  x\right)  \right]  \subseteq R$ is a subalgebra and $M$ has the
natural $k\left[  p\left(  x\right)  \right]  $-module structure. Moreover,
the tuple $p\left(  x\right)  $ defines a morphism mapping $p:\mathbb{A}%
^{n}\rightarrow\mathbb{A}^{m}$, $a\mapsto p\left(  a\right)  =\left(
p_{1}\left(  a\right)  ,\ldots,p_{m}\left(  a\right)  \right)  $.

\begin{theorem}
\label{thSMP}If $R/k$ is an algebra finite extension of the field $k$, $M$ a
Noetherian $R$-module, then $\sigma\left(  p\left(  x\right)  ,M\right)
=p\left(  \sigma\left(  x,M\right)  \right)  $ for all tuples $x$ and
$p\left(  x\right)  $ from $R$.
\end{theorem}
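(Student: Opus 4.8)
The plan is to prove the two inclusions separately, using the already-established machinery. The easy direction is $p\left(\sigma\left(x,M\right)\right)\subseteq\sigma\left(p\left(x\right),M\right)$. For this take $a\in\sigma\left(x,M\right)$ and set $b=p\left(a\right)$; I want to show $b\in\sigma\left(p\left(x\right),M\right)$. Consider the enlarged tuple $w=\left(x,p\left(x\right)\right)$, an $n+m$-tuple from $R$. Since $a\in\sigma\left(x,M\right)$ and the projection $\pi:\mathbb{A}^{n+m}\rightarrow\mathbb{A}^{n}$ onto the first $n$ coordinates satisfies $\sigma\left(x,M\right)=\pi\left(\sigma\left(w,M\right)\right)$ by Theorem \ref{thNP1} (applied iteratively, one coordinate at a time), there is a point $c=\left(a,\mu\right)\in\sigma\left(w,M\right)$ lying over $a$. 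By Corollary \ref{corBB1} (applied to each of the $m$ polynomials $p_{j}$, or rather its evident generalization to a tuple) we get $\mu=p\left(a\right)=b$, so $\left(a,b\right)\in\sigma\left(w,M\right)$. Now project onto the \emph{last} $m$ coordinates: again by the projection property (Theorem \ref{thNP1}) this projection sends $\sigma\left(w,M\right)$ onto $\sigma\left(p\left(x\right),M\right)$, hence $b\in\sigma\left(p\left(x\right),M\right)$.

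For the reverse inclusion $\sigma\left(p\left(x\right),M\right)\subseteq p\left(\sigma\left(x,M\right)\right)$, take $b\in\sigma\left(p\left(x\right),M\right)$; I must produce $a\in\sigma\left(x,M\right)$ with $p\left(a\right)=b$. Again work with $w=\left(x,p\left(x\right)\right)$ and project onto the last $m$ coordinates. By the projection property, $\sigma\left(p\left(x\right),M\right)$ is the image of $\sigma\left(w,M\right)$ under this projection, so there exists $c=\left(a,b\right)\in\sigma\left(w,M\right)$ for some $a\in\mathbb{A}^{n}$ lying over $b$. By Corollary \ref{corBB1} (in its tuple form) the membership $\left(a,b\right)\in\sigma\left(w,M\right)$ forces $a\in\sigma\left(x,M\right)$ and $b=p\left(a\right)$. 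This gives exactly $b\in p\left(\sigma\left(x,M\right)\right)$, completing the proof.

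The crux of the argument is really bookkeeping: both directions reduce to (i) the projection property Theorem \ref{thNP1}, which says spectra push forward exactly under coordinate projections, and (ii) the graph observation of Corollary \ref{corBB1}, which says that on the graph tuple $w=\left(x,p\left(x\right)\right)$ the extra coordinate is pinned to $p$ of the first coordinates. So $\sigma\left(w,M\right)$ is precisely the graph $\left\{\left(a,p\left(a\right)\right):a\in\sigma\left(x,M\right)\right\}$, and the two projections of this graph are $\sigma\left(x,M\right)$ and $p\left(\sigma\left(x,M\right)\right)$ respectively; since one of those projections also equals $\sigma\left(p\left(x\right),M\right)$ by Theorem \ref{thNP1}, we are done.

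The main obstacle I anticipate is the move from the single-polynomial statement of Corollary \ref{corBB1} to the $m$-polynomial tuple $p\left(x\right)=\left(p_{1}\left(x\right),\ldots,p_{m}\left(x\right)\right)$: one needs that if $\left(a,\lambda_{1},\ldots,\lambda_{m}\right)\in\sigma\left(\left(x,p_{1}\left(x\right),\ldots,p_{m}\left(x\right)\right),M\right)$ then $a\in\sigma\left(x,M\right)$ and $\lambda_{j}=p_{j}\left(a\right)$ for all $j$. This follows by inducting on $m$: strip off the last coordinate $p_{m}\left(x\right)$ using Corollary \ref{corBB1} (with the role of ``$x$'' played by the tuple $\left(x,p_{1}\left(x\right),\ldots,p_{m-1}\left(x\right)\right)$, whose generated subalgebra is still $k\left[x\right]$, so $p_{m}\left(x\right)$ is indeed a polynomial in it), deduce $\lambda_{m}=p_{m}\left(a\right)$ and that the truncated point lies in $\sigma\left(\left(x,p_{1}\left(x\right),\ldots,p_{m-1}\left(x\right)\right),M\right)$, then apply the inductive hypothesis. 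A small point to check along the way is that in Corollary \ref{corBB1} the subalgebra hypothesis $R^{\prime}=k\left[x\right]$ is preserved when we adjoin the polynomials $p_{1}\left(x\right),\ldots,p_{m-1}\left(x\right)$, which is immediate since these already lie in $k\left[x\right]$. Everything else is a direct invocation of Theorem \ref{thNP1}.
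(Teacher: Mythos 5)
Your proof is correct and follows essentially the same route as the paper's: both work with the graph tuple $w=(x,p(x))$, use Theorem \ref{thNP1} to identify $\sigma(x,M)$ and $\sigma(p(x),M)$ as the two coordinate projections of $\sigma(w,M)$, and use Corollary \ref{corBB1} to identify $\sigma(w,M)$ with the graph $\{(a,p(a)):a\in\sigma(x,M)\}$. The only difference is that you spell out the induction extending Corollary \ref{corBB1} from one extra polynomial to a tuple of $m$ polynomials, a step the paper leaves implicit; your extension is sound.
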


\begin{proof}
Consider the $n+m$-tuple $y=\left(  x,p\left(  x\right)  \right)  $ in $R$. By
Theorem \ref{thNP1}, $\sigma\left(  x,M\right)  =\pi_{n}\left(  \sigma\left(
y,M\right)  \right)  $, where $\pi_{n}:\mathbb{A}^{n+m}\rightarrow
\mathbb{A}^{n}$ is the canonical projection onto the first $n$ coordinates.
Take $\left(  a,b\right)  \in\sigma\left(  y,M\right)  $. By Corollary
\ref{corBB1}, $b_{j}=p_{j}\left(  a\right)  $ for all $j$, $1\leq j\leq m$,
that is, $b=p\left(  a\right)  $. Thus $\sigma\left(  y,M\right)  =\left\{
\left(  a,p\left(  a\right)  \right)  :a\in\sigma\left(  x,M\right)  \right\}
$. Using again Theorem \ref{thNP1}, we conclude that $\sigma\left(  p\left(
x\right)  ,M\right)  =\pi_{m}\left(  \sigma\left(  y,M\right)  \right)
=p\left(  \sigma\left(  x,M\right)  \right)  $, where $\pi_{m}:\mathbb{A}%
^{n+m}\rightarrow\mathbb{A}^{m}$ is the canonical projection onto the last $m$ coordinates.
\end{proof}

\begin{corollary}
If $R/k$ is an algebra finite extension of the field $k$, $M$ a Noetherian
$R$-module, then $\sigma\left(  p\left(  x\right)  |M\right)  =p\left(
\sigma\left(  x,M\right)  \right)  $ for every polynomial $p\left(  x\right)
\in R$.
\end{corollary}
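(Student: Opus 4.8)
The plan is to obtain this as the one-variable instance of the spectral mapping theorem just proved. The only preliminary point is the identification of the spectrum $\sigma\left(  p\left(  x\right)  |M\right)  $ of the single linear transformation $p\left(  x\right)  |M$ with the Taylor spectrum $\sigma\left(  p\left(  x\right)  ,M\right)  $ of the one-element tuple $p\left(  x\right)  $, which was recorded in the paragraph preceding Proposition~\ref{propNP1}. Concretely: for $\lambda\in k=\mathbb{A}^{1}$, membership $\lambda\in\sigma\left(  p\left(  x\right)  |M\right)  $ means that $p\left(  x\right)  -\lambda$ is not an invertible $k$-linear map on $M$, which is exactly the statement that the two-term Koszul complex $\operatorname{Kos}\left(  p\left(  x\right)  -\lambda,M\right)  :0\leftarrow M\overset{p\left(  x\right)  -\lambda}{\longleftarrow}M\leftarrow0$ fails to be exact, i.e. $\lambda\in\sigma\left(  p\left(  x\right)  ,M\right)  $. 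Hence $\sigma\left(  p\left(  x\right)  |M\right)  =\sigma\left(  p\left(  x\right)  ,M\right)  $.

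Given this, I would simply apply Theorem~\ref{thSMP} with $m=1$, taking the tuple $\left(  p_{1}\left(  x\right)  \right)  $ with $p_{1}\left(  x\right)  =p\left(  x\right)  $; the associated polynomial morphism is then the coordinate map $p:\mathbb{A}^{n}\rightarrow\mathbb{A}^{1}$, $a\mapsto p\left(  a\right)  $. Theorem~\ref{thSMP} gives $\sigma\left(  p\left(  x\right)  ,M\right)  =p\left(  \sigma\left(  x,M\right)  \right)  $, and combined with the identification of the previous paragraph this yields $\sigma\left(  p\left(  x\right)  |M\right)  =p\left(  \sigma\left(  x,M\right)  \right)  $, which is the assertion.

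There is no genuine obstacle to surmount here: all the substance is contained in Theorem~\ref{thSMP} (and, through it, in Theorem~\ref{thNP1} and Corollary~\ref{corBB1}). The corollary merely makes explicit that the classical ``spectral mapping for a single polynomial in an operator'' is the scalar case of the multivariable Taylor-spectrum statement, so the proof is a one-line specialization once the two notions of spectrum are matched up.
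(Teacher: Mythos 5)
Your proof is correct and matches the paper's intent exactly: the corollary is the $m=1$ case of Theorem~\ref{thSMP}, combined with the identification $\sigma\left(p\left(x\right)|M\right)=\sigma\left(p\left(x\right),M\right)$ already recorded in the paragraph preceding Proposition~\ref{propNP1}. The paper itself does not write out a proof, treating it as an immediate specialization, which is precisely what you have supplied.
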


\begin{corollary}
\label{corSMP1}Let $R/k$ be an algebra, $R^{\prime}=k\left[  x\right]
\subseteq R$ a $k$-subalgebra with an $n$-tuple $x$ such that $R/R^{\prime}$
is integral, and let $M$ be a Noetherian $R$-module. If $y$ is an $m$-tuple in
$R^{\prime}$ related to $x$ with $m\leq n$, and $0\in\sigma\left(  x,M\right)
$, then $d\left(  x\right)  =f_{n-m-1}\ldots f_{m}\left(  d\left(  y\right)
\right)  $ and $i\left(  x\right)  =\delta_{nm}i\left(  y\right)  $.
\end{corollary}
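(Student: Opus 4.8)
The plan is to reduce the statement to the tools already assembled, namely Corollary \ref{corind1} together with the finite-index result of Theorem \ref{thNP11}. First I would observe that, because $R/R^{\prime}$ is integral with $R^{\prime}=k\left[  x\right]  $, the hypotheses of Theorem \ref{thNP11} apply (with $R^{\prime}$ in place of the subring there), so that $i\left(  x\right)  <\infty$ for the $n$-tuple $x$, since $x$ generates the maximal ideal $\left\langle x\right\rangle \subseteq R^{\prime}$ when $0\in\sigma\left(  x,M\right)  $ forces $x$ to be (after a translation) a tuple generating a point of $\mathbb{A}^{n}$. Likewise, since $y$ is an $m$-tuple from $R^{\prime}$ related to $x$, it generates the same maximal ideal $\left\langle x\right\rangle =\left\langle y\right\rangle $ in $R^{\prime}$, and because $R^{\prime}$ is itself an algebra finite extension of $k$ and $M$ is module finite over $R^{\prime}$ (as shown inside the proof of Theorem \ref{thNP11}), $M$ is a Noetherian $R^{\prime}$-module; applying Theorem \ref{thNP11} once more, now to the integral extension $R^{\prime}/k\left[  y\right]$ (which is integral because $k\left[y\right] = k\left[x\right] = R^{\prime}$ by Remark \ref{remIE} or because $y$ is related to $x$), yields $i\left(  y\right)  <\infty$ as well.

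With both indices finite, the next step is to set up the situation of Corollary \ref{corind1}. I would translate so that the point in question is the origin: since $0\in\sigma\left(  x,M\right)$ and $y$ is related to $x$ we have $y\subseteq\left\langle x\right\rangle$, so $y\left(0\right)=0$, and by Corollary \ref{corBB1} (or directly by Corollary \ref{corProj1}) $0\in\sigma\left(  y,M\right)$ too. Then $\left(0,0\right)=\left(0,\lambda\right)$ with $\lambda=0$ lies in $\sigma\left(\left(x,y\right),M\right)$, and Corollary \ref{corind1} applies verbatim: it gives $d\left(  x\right)  =f_{n-m-1}\ldots f_{m}\left(  d\left(  y\right)  \right)$ and $i\left(  x\right)  =\delta_{nm}i\left(  y\right)$, which is exactly the assertion. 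So the corollary is essentially a packaging of Corollary \ref{corind1} once the finiteness of the two indices has been secured.

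The one point that needs a little care — and the place I would expect the only genuine friction — is verifying that the hypotheses of Theorem \ref{thNP11} are literally met in both invocations, in particular that $0\in\sigma\left(x,M\right)$ really lets us treat $x$ (respectively $y$) as a tuple generating a \emph{maximal} ideal of $R^{\prime}$. By Zariski Nullstellensatz the maximal ideals of $R^{\prime}=k\left[x\right]$ correspond to points of $\mathbb{A}^{n}$ (or $\mathbb{A}^{s}$ after choosing generators), and $\left\langle x\right\rangle$ is exactly the ideal of the origin, hence maximal; and $\left\langle y\right\rangle=\left\langle x\right\rangle$ because $y$ is related to $x$. Strictly speaking $0\in\sigma\left(x,M\right)$ is what guarantees the homology groups $H_{p}\left(x,M\right)$ are not all zero, so that the equality of $d$-tuples is not vacuous, but it is not needed for the finiteness itself; I would state this explicitly to avoid any circularity. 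Once that is in place, no further computation is required: everything else is a direct citation of Corollary \ref{corind1}.
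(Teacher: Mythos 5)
Your overall plan matches the paper: secure finiteness of the two indices from Theorem \ref{thNP11}, get the pair into the joint spectrum, and then quote Corollary \ref{corind1}. But there is a genuine gap in the middle step. You write: ``$0\in\sigma\left(y,M\right)$ too. Then $\left(0,0\right)=\left(0,\lambda\right)$ with $\lambda=0$ lies in $\sigma\left(\left(x,y\right),M\right)$.'' This inference is not valid. Knowing that $0$ lies in $\sigma\left(x,M\right)$ and that $0$ lies in $\sigma\left(y,M\right)$ does not, by itself, force $\left(0,0\right)$ to lie in the joint spectrum $\sigma\left(\left(x,y\right),M\right)$ --- in general the joint spectrum is a proper subset of the Cartesian product of the marginal spectra. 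Moreover, the results you invoke do not fill this gap: Corollary \ref{corBB1} \emph{assumes} that a given pair is already in the joint spectrum and only then identifies its last coordinate, and Corollary \ref{corProj1} only gives an inclusion $\pi\left(\sigma\left(x,M\right)\right)\subseteq\sigma\left(x^{\prime},M\right)$ for a subtuple, not a way to lift a point of the marginal spectrum into the joint spectrum.

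The paper closes this gap with a single clean application of Theorem \ref{thSMP}: the combined $\left(n+m\right)$-tuple $\left(x,y\right)$ is a polynomial tuple in $x$ (namely $a\mapsto\left(a,y\left(a\right)\right)$), so $\sigma\left(\left(x,y\right),M\right)=\left\{\left(a,y\left(a\right)\right):a\in\sigma\left(x,M\right)\right\}$, and in particular $0\in\sigma\left(x,M\right)$ immediately gives $\left(0,y\left(0\right)\right)=\left(0,0\right)\in\sigma\left(\left(x,y\right),M\right)$. Alternatively, one can use Theorem \ref{thNP1} (the projection property) to lift $0\in\sigma\left(x,M\right)$ to some $\left(0,\lambda\right)\in\sigma\left(\left(x,y\right),M\right)$ and then apply Corollary \ref{corBB1} to see $\lambda=y\left(0\right)=0$. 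Either of these repairs your argument. Two minor further remarks: your second invocation of Theorem \ref{thNP11} via the extension $R^{\prime}/k\left[y\right]$ is unnecessary and questionable (being ``related to $x$'' gives $\left\langle y\right\rangle=\left\langle x\right\rangle$, not $k\left[y\right]=R^{\prime}$); it suffices to apply Theorem \ref{thNP11} with the same extension $R^{\prime}\subseteq R$, since $y$ generates the maximal ideal $\left\langle x\right\rangle$ of $R^{\prime}$. And the worry about translations is moot: $\left\langle x\right\rangle$ is already a maximal ideal of $R^{\prime}=k\left[x\right]$, so no translation is needed.
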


\begin{proof}
By Theorem \ref{thSMP}, $\left(  0,\lambda\right)  \in\sigma\left(  \left(
x,y\right)  ,M\right)  $ for a certain $\lambda\in\mathbb{A}^{m}$. Actually,
$\lambda=y\left(  0\right)  =0$, and $0\in\sigma\left(  \left(  x,y\right)
,M\right)  $. Since $R^{\prime}=k\left[  x\right]  $ and $y$ generates the
maximal ideal $\left\langle x\right\rangle $ in $R^{\prime}$, we conclude that
$i\left(  x\right)  <\infty$ and $i\left(  y\right)  <\infty$ thanks to
Theorem \ref{thNP11}. It remains to use Corollary \ref{corind1}.
\end{proof}

\section{Spectra of a module over a scheme and extension property\label{Sec2}}

In the present section we review some key facts on spectrum of a module over a
scheme from \cite{DMMJ}, and use them regarding our framework of spectra. We
also introduce the point spectrum in the general setting and investigate its
invariance under integral extensions. Thereafter we switch to the case of the
affine scheme $\mathbb{A}_{k}^{n}$ over a field $k$.

\subsection{Spectra of a module over scheme\label{subsecSpMod}}

Let $\left(  \mathfrak{X},\mathcal{O}_{\mathfrak{X}}\right)  $ be a scheme
with the ring $R=\Gamma\left(  \mathfrak{X},\mathcal{O}_{\mathfrak{X}}\right)
$ of global sections, and let $M\in R$-$\operatorname{mod}$. We say that a
point $x\in\mathfrak{X}$ belongs to \textit{a resolvent set}
$\operatorname{res}\left(  \mathfrak{X},M\right)  $ if there is an affine
neighborhood $U$ of $x$ such that $\mathcal{O}_{\mathfrak{X}}\left(  U\right)
\perp_{R}M$, that is, $\operatorname{Tor}_{i}^{R}\left(  \mathcal{O}%
_{\mathfrak{X}}\left(  U\right)  ,M\right)  =0$ for all $i\geq0$. Since $U$ is
an affine neighborhood for all points close to $x$, it follows that
$\operatorname{res}\left(  \mathfrak{X},M\right)  $ is an open set, whose
complement set $\sigma\left(  \mathfrak{X},M\right)  $ is called \textit{the
spectrum of }$M$\textit{ over the scheme.} The property $\mathcal{O}%
_{\mathfrak{X}}\left(  U\right)  \perp_{R}M$ implies that $\mathcal{O}%
_{\mathfrak{X}}\left(  V\right)  \perp_{R}M$ for every open affine $V\subseteq
U$. Indeed, suppose $U=\operatorname{Spec}\left(  B\right)  $ and take a free
resolution
\[
\mathcal{P=}R\otimes G_{\circ},\quad0\leftarrow R\otimes G_{0}%
\overset{\partial_{0}}{\longleftarrow}R\otimes G_{1}\overset{\partial
_{1}}{\longleftarrow}\cdots
\]
of the module $M$, where $G_{j}$ are free abelian groups. Since $\mathcal{O}%
_{\mathfrak{X}}$ is a quasi-coherent sheaf, it follows that $\mathcal{O}%
_{\mathfrak{X}}|_{U}=\widetilde{B}$ with $\mathcal{O}_{\mathfrak{X}}\left(
U\right)  =B$, and $B\otimes_{R}\mathcal{P}=B\otimes G_{\circ}$, which is
exact iff so are $B_{x}\otimes G_{\circ}$, $x\in\operatorname{Spec}\left(
B\right)  $ (see \cite[Lemma 2.1]{DMMJ}). But $B_{x}=\mathcal{O}%
_{\mathfrak{X},x}$ for all $x\in U$. Thus $\mathcal{O}_{\mathfrak{X}}\left(
U\right)  \perp_{R}M$ is equivalent to $\mathcal{O}_{\mathfrak{X},x}\perp
_{R}M$ for all $x\in U$. In particular, $\mathcal{O}_{\mathfrak{X},x}\perp
_{R}M$ for all $x\in V$ imply that $\mathcal{O}_{\mathfrak{X}}\left(
V\right)  \perp_{R}M$.

If $\mathfrak{X}=\operatorname{Spec}\left(  R\right)  $ is an affine scheme,
then $\mathcal{O}_{\mathfrak{X},x}\perp_{R}M$ means that $R_{x}\otimes
_{R}\mathcal{P}$ is exact. But $R_{x}$ is a flat $R$-module, therefore the
sequence $0\leftarrow R_{x}\otimes_{R}M\leftarrow R_{x}\otimes_{R}\mathcal{P}$
remains exact. Hence $\mathcal{O}_{\mathfrak{X},x}\perp_{R}M$ iff $M_{x}=0$.
Thus $x\in\operatorname{res}\left(  \mathfrak{X},M\right)  $ iff $M_{y}=0$ for
all $y$ from a neighborhood of $x$. Since $\sigma\left(  \mathfrak{X}%
,M\right)  =\mathfrak{X}-\operatorname{res}\left(  \mathfrak{X},M\right)  $,
we derive that $x\in\sigma\left(  \mathfrak{X},M\right)  $ such that property
$M_{y}\neq0$ holds for some $y$ in every neighborhood of $x$, that is,
$\sigma\left(  \mathfrak{X},M\right)  $ is the closure of $\operatorname{Supp}%
\left(  M\right)  $ in $\mathfrak{X}$. Thus
\begin{equation}
\sigma\left(  \mathfrak{X},M\right)  =\operatorname{Supp}\left(  M\right)
^{-}\subseteq V\left(  \operatorname{Ann}\left(  M\right)  \right)  .
\label{cl}%
\end{equation}
If $M$ is a finitely generated $R$-module then $\operatorname{Supp}\left(
M\right)  $ is closed, $\operatorname{res}\left(  \mathfrak{X},M\right)
=\left\{  x\in\mathfrak{X}:R_{x}\perp_{R}M\right\}  =\left\{  x\in
\mathfrak{X}:M_{x}=0\right\}  $, that is,
\begin{equation}
\sigma\left(  \mathfrak{X},M\right)  =\operatorname{Supp}\left(  M\right)
=V\left(  \operatorname{Ann}\left(  M\right)  \right)  . \label{SV}%
\end{equation}
In the case of a general scheme the spectrum can be localized using an affine
covering. In particular, $\sigma\left(  \mathbb{P}_{k}^{r},M\right)
=\mathbb{P}_{k}^{r}$ whenever $M$ is a nonzero vector space over a field $k$
(see \cite{DMMJ}). The related spectral mapping formula was mentioned in
(\ref{2}).

\subsection{The point spectrum}

Let $\mathfrak{X}=\operatorname{Spec}\left(  R\right)  $ be an affine scheme
and let $M$ be an $R$-module. The set $\operatorname{Ass}_{R}\left(  M\right)
$ of all primes associated to $M$ is called \textit{the point spectrum}
$\sigma_{\operatorname{p}}\left(  \mathfrak{X},M\right)  $ \textit{of }$M$. If
$R$ is Noetherian then $\operatorname{Supp}\left(  M\right)  =\cup\left\{
V\left(  \mathfrak{p}\right)  :\mathfrak{p}\in\sigma_{\operatorname{p}}\left(
\mathfrak{X},M\right)  \right\}  $ and the minimal primes $\operatorname{Min}%
\left(  M\right)  $ from $\operatorname{Supp}\left(  M\right)  $ belong to
$\sigma_{\operatorname{p}}\left(  \mathfrak{X},M\right)  $ \cite[17.14]{AK}.
In particular, $\sigma_{\operatorname{p}}\left(  \mathfrak{X},M\right)  $ is
dense in $\operatorname{Supp}\left(  M\right)  $, which in turn implies that
$\sigma_{\operatorname{p}}\left(  \mathfrak{X},M\right)  ^{-}=\sigma\left(
\mathfrak{X},M\right)  $ in $\mathfrak{X}$ (see (\ref{cl})).

Now let $R^{\prime}\subseteq R$ be a ring extension, $\iota:R^{\prime
}\rightarrow R$ is the realted inclusion map, and $\mathfrak{X}^{\prime
}=\operatorname{Spec}\left(  R^{\prime}\right)  $ with the canonical mapping
$\iota^{\ast}:\mathfrak{X}\rightarrow\mathfrak{X}^{\prime}$, $\iota^{\ast
}\left(  \mathfrak{p}\right)  =\iota^{-1}\left(  \mathfrak{p}\right)
=\mathfrak{p\cap}R$. If $\mathfrak{p}^{\prime}\mathfrak{=p}\cap R=\iota^{\ast
}\left(  \mathfrak{p}\right)  $ we say that $\mathfrak{p}$ lies over the prime
$\mathfrak{p}^{\prime}$. Note also that $M$ has the canonical $R^{\prime}%
$-module structure along the embedding $\iota$. If $\mathfrak{a}%
=\operatorname{Ann}\left(  m\right)  $ is the annihilator of $m\in M$ in $R$
then $\mathfrak{a}^{\prime}\mathfrak{=a}\cap R^{\prime}=\operatorname{Ann}%
^{\prime}\left(  m\right)  $ is the annihilator of $m$ in $R^{\prime}$. It
follows that $\iota^{\ast}\left(  \sigma_{\operatorname{p}}\left(
\mathfrak{X},M\right)  \right)  \subseteq\sigma_{\operatorname{p}}\left(
\mathfrak{X}^{\prime},M\right)  $.

Notice also that $\iota^{\ast}\left(  \operatorname{Supp}\left(  M\right)
\right)  \subseteq\operatorname{Supp}^{\prime}\left(  M\right)  $, where
$\operatorname{Supp}^{\prime}\left(  M\right)  =\operatorname{Supp}%
_{R^{\prime}}\left(  M\right)  $. Indeed, if $M_{\mathfrak{p}}\neq\left\{
0\right\}  $ for some $\mathfrak{p}\in\mathfrak{X}$, then $sm\neq0$ for some
$m\in M$ and all $s\in R-\mathfrak{p}$. But $\mathfrak{p}^{\prime
}\mathfrak{=p\cap}R^{\prime}$, and for $s^{\prime}\in R^{\prime}$ we have
$s^{\prime}\in R^{\prime}-\mathfrak{p}^{\prime}$ iff $s^{\prime}\in
R-\mathfrak{p}$. Hence $s^{\prime}m\neq0$ for all $s^{\prime}\in R^{\prime
}-\mathfrak{p}^{\prime}$, which means that $m/1\neq0$ in $M_{\mathfrak{p}%
^{\prime}}$ or $\mathfrak{p}^{\prime}\in\operatorname{Supp}^{\prime}\left(
M\right)  $.

Using (\ref{cl}) and continuity of the map $\iota^{\ast}$, we deduce that
$\iota^{\ast}\left(  \sigma\left(  \mathfrak{X},M\right)  \right)
\subseteq\sigma\left(  \mathfrak{X}^{\prime},M\right)  $. Actually
$\iota^{\ast}\left(  \sigma\left(  \mathfrak{X},M\right)  \right)  $ is dense
in $\sigma\left(  \mathfrak{X}^{\prime},M\right)  $ due to the spectral
mapping property (\ref{2}).

\begin{theorem}
\label{propEx1}If $R^{\prime}\subseteq R$ is a ring extension and $M\in
R$-$\operatorname{mod}$ then $\sigma_{\operatorname{p}}\left(  \mathfrak{X}%
^{\prime},M\right)  =\iota^{\ast}\left(  \sigma_{\operatorname{p}}\left(
\mathfrak{X},M\right)  \right)  $ whenever $R$ is Noetherian.
\end{theorem}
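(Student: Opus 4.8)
The plan is to establish the nontrivial inclusion $\sigma_{\operatorname{p}}\left(\mathfrak{X}^{\prime},M\right)\subseteq\iota^{\ast}\left(\sigma_{\operatorname{p}}\left(\mathfrak{X},M\right)\right)$, since the reverse inclusion $\iota^{\ast}\left(\sigma_{\operatorname{p}}\left(\mathfrak{X},M\right)\right)\subseteq\sigma_{\operatorname{p}}\left(\mathfrak{X}^{\prime},M\right)$ was already observed before the statement. Unwinding the definitions, this amounts to a lifting property for associated primes: every $\mathfrak{p}^{\prime}\in\operatorname{Ass}_{R^{\prime}}\left(M\right)$ should be of the form $\mathfrak{p}\cap R^{\prime}$ for some $\mathfrak{p}\in\operatorname{Ass}_{R}\left(M\right)$. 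First I would fix $m\in M$ with $\mathfrak{p}^{\prime}=\operatorname{Ann}_{R^{\prime}}\left(m\right)$; since $\mathfrak{p}^{\prime}$ is proper, $m\neq0$. The reduction is then to replace $M$ by the cyclic $R$-submodule $N=Rm$, which is a Noetherian $R$-module, satisfies $\operatorname{Ann}_{R}\left(m\right)\cap R^{\prime}=\mathfrak{p}^{\prime}$, and obeys $\operatorname{Ass}_{R}\left(N\right)\subseteq\operatorname{Ass}_{R}\left(M\right)$ because $N\subseteq M$.

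Next I would localize along the multiplicative set $S=R^{\prime}\setminus\mathfrak{p}^{\prime}$, regarded as a multiplicative subset of $R$. One checks that $S^{-1}N$ is a nonzero finitely generated $S^{-1}R$-module: it is generated by $m/1$, and $m/1\neq0$ since a relation $sm=0$ with $s\in S$ would force $s\in\operatorname{Ann}_{R^{\prime}}\left(m\right)=\mathfrak{p}^{\prime}$, a contradiction. As $R$ is Noetherian, so is $S^{-1}R$, whence $\operatorname{Ass}_{S^{-1}R}\left(S^{-1}N\right)\neq\varnothing$. By the standard description of associated primes under localization over a Noetherian ring, any element of $\operatorname{Ass}_{S^{-1}R}\left(S^{-1}N\right)$ is $S^{-1}\mathfrak{p}$ for some $\mathfrak{p}\in\operatorname{Ass}_{R}\left(N\right)$ with $\mathfrak{p}\cap S=\varnothing$. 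This produces $\mathfrak{p}\in\operatorname{Ass}_{R}\left(M\right)$ with $\mathfrak{p}\cap R^{\prime}\subseteq\mathfrak{p}^{\prime}$, the inclusion being exactly the condition $\mathfrak{p}\cap S=\varnothing$.

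It remains to upgrade this to the equality $\mathfrak{p}\cap R^{\prime}=\mathfrak{p}^{\prime}$. Writing $\mathfrak{p}=\operatorname{Ann}_{R}\left(n\right)$ for some $n\in N=Rm$, say $n=rm$, and taking $a^{\prime}\in\mathfrak{p}^{\prime}=\operatorname{Ann}_{R^{\prime}}\left(m\right)$, commutativity gives $a^{\prime}n=r\left(a^{\prime}m\right)=0$, so $a^{\prime}\in\mathfrak{p}$; hence $\mathfrak{p}^{\prime}\subseteq\mathfrak{p}\cap R^{\prime}$, and together with the reverse inclusion from the previous step this yields $\iota^{\ast}\left(\mathfrak{p}\right)=\mathfrak{p}\cap R^{\prime}=\mathfrak{p}^{\prime}$. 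Combining with the already established $\iota^{\ast}\left(\sigma_{\operatorname{p}}\left(\mathfrak{X},M\right)\right)\subseteq\sigma_{\operatorname{p}}\left(\mathfrak{X}^{\prime},M\right)$ proves the theorem.

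The one genuinely delicate point is the middle step: it is not enough to produce \emph{some} prime of $R$ lying over $\mathfrak{p}^{\prime}$ — one must guarantee it lies in $\operatorname{Ass}_{R}\left(M\right)$, and it is precisely the localization, combined with the behaviour of $\operatorname{Ass}$ under localization over a Noetherian ring, that secures this (the Noetherian hypothesis on $R$ is used exactly here, while $M$ need not be finitely generated). As an alternative one can bypass the localization formula by noting that $\operatorname{Spec}\left(R/\operatorname{Ann}_{R}\left(m\right)\right)=\bigcup_{\mathfrak{q}}V\left(\mathfrak{q}\right)$ over $\mathfrak{q}\in\operatorname{Min}\left(N\right)\subseteq\operatorname{Ass}_{R}\left(N\right)$, whose images in $\mathfrak{X}^{\prime}$ have closures $V\left(\mathfrak{q}\cap R^{\prime}\right)$ jointly covering the irreducible set $V\left(\mathfrak{p}^{\prime}\right)$, forcing $\mathfrak{q}\cap R^{\prime}=\mathfrak{p}^{\prime}$ for one of them.
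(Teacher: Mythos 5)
Your proof is correct, and it takes a genuinely different route from the paper's. Both arguments localize at $S=R^{\prime}\setminus\mathfrak{p}^{\prime}$ and both use the Noetherian hypothesis on $R$ to guarantee the existence of associated primes, but the way the inclusion $\mathfrak{p}\cap R^{\prime}=\mathfrak{p}^{\prime}$ is secured differs substantially. The paper works with the full localized module $M_{\mathfrak{q}}$ and runs a maximal-annihilator argument from scratch: it considers the family of annihilators $\operatorname{Ann}(n)$ in $R_{\mathfrak{q}}$ that restrict to the maximal ideal $\mathfrak{q}R^{\prime}_{\mathfrak{q}}$, picks a maximal such ideal (Noetherianity), and re-proves that this maximal annihilator is prime, using the fact that $R^{\prime}_{\mathfrak{q}}$ is local to see that the constraint is preserved when passing from $n$ to $xn$. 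You instead perform the key reduction to the cyclic module $N=Rm$ at the outset. The benefit is twofold: (a) you can invoke the off-the-shelf localization formula for $\operatorname{Ass}$ over a Noetherian ring to obtain a candidate $\mathfrak{p}\in\operatorname{Ass}_{R}\left(N\right)\subseteq\operatorname{Ass}_{R}\left(M\right)$ with $\mathfrak{p}\cap R^{\prime}\subseteq\mathfrak{p}^{\prime}$, bypassing the bespoke maximality argument; and (b) cyclicity of $N$ makes the remaining inclusion automatic, since every associated prime of $N$ is the annihilator of some $n=rm$ and hence contains $\operatorname{Ann}_{R^{\prime}}\left(m\right)=\mathfrak{p}^{\prime}$ by commutativity. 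This is arguably cleaner and more modular, whereas the paper's treatment is more self-contained. Your sketched alternative — decomposing $V\left(\operatorname{Ann}_{R}\left(m\right)\right)$ into minimal primes, intersecting with $R^{\prime}$, and using irreducibility of $V\left(\mathfrak{p}^{\prime}\right)$ — also works, exploiting finiteness of $\operatorname{Min}\left(N\right)$ and the identity $\sqrt{\operatorname{Ann}_{R}\left(m\right)}\cap R^{\prime}=\mathfrak{p}^{\prime}$, and shows that one can even avoid the localization formula for $\operatorname{Ass}$ entirely once the cyclic reduction is made.
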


\begin{proof}
First note that $M\neq\left\{  0\right\}  $ iff $\sigma_{\operatorname{p}%
}\left(  \mathfrak{X},M\right)  \neq\varnothing$ (see \cite[17.10]{AK}), and
the result follows in the case of $\sigma_{\operatorname{p}}\left(
\mathfrak{X},M\right)  =\varnothing$. Thus we can assume that $M\neq\left\{
0\right\}  $, and take $\mathfrak{q}\in\sigma_{\operatorname{p}}\left(
\mathfrak{X}^{\prime},M\right)  $. Then $\iota_{\mathfrak{p}}:R_{\mathfrak{q}%
}^{\prime}\rightarrow R_{\mathfrak{q}}$ is a ring extension and
$M_{\mathfrak{q}}=R_{\mathfrak{q}}^{\prime}\otimes_{R^{\prime}}%
M=R_{\mathfrak{q}}^{\prime}\otimes_{R^{\prime}}R\otimes_{R}M=R_{\mathfrak{q}%
}\otimes_{R}M$ is an $R_{\mathfrak{q}}$-module either, where $R_{\mathfrak{q}%
}=\iota\left(  R^{\prime}-\mathfrak{q}\right)  ^{-1}R$. Moreover,
$\mathfrak{q}R_{\mathfrak{q}}^{\prime}\in\operatorname{Ass}_{R_{\mathfrak{q}%
}^{\prime}}\left(  M_{\mathfrak{q}}\right)  $ \cite[17.8]{AK}, say
$\mathfrak{q}R_{\mathfrak{q}}^{\prime}=\operatorname{Ann}^{\prime}\left(
m/1\right)  $ for some $m/1\in M_{\mathfrak{q}}\backslash\left\{  0\right\}
$. If $\operatorname{Ann}\left(  m/1\right)  $ is the annihilator of $m/1$ in
$R_{\mathfrak{q}}$ then $\operatorname{Ann}\left(  m/1\right)  \cap
R_{\mathfrak{q}}^{\prime}=\operatorname{Ann}^{\prime}\left(  m/1\right)
=\mathfrak{q}R_{\mathfrak{q}}^{\prime}$. Pick%
\[
\mathfrak{S=}\left\{  \operatorname{Ann}\left(  n\right)  :n\in
M_{\mathfrak{q}},\operatorname{Ann}\left(  n\right)  \cap R_{\mathfrak{q}%
}^{\prime}=\mathfrak{q}R_{\mathfrak{q}}^{\prime}\right\}  ,
\]
which is a nonempty set of ideals of the ring $R_{\mathfrak{q}}$. Since $R$ is
Noetherian, so is $R_{\mathfrak{q}}$ and $\mathfrak{S}$ has a maximal element
$\mathfrak{l}=\operatorname{Ann}\left(  n\right)  $ for some $n\in
M_{\mathfrak{q}}$. If $x\in R_{\mathfrak{q}}-\mathfrak{l}$ then $xn\neq0$ (or
$1\notin\operatorname{Ann}\left(  xn\right)  $), $\mathfrak{l}\subseteq
\operatorname{Ann}\left(  xn\right)  \neq R_{\mathfrak{q}}$, which in turn
implies that
\[
\mathfrak{q}R_{\mathfrak{q}}^{\prime}=\operatorname{Ann}\left(  n\right)  \cap
R_{\mathfrak{q}}^{\prime}\subseteq\operatorname{Ann}\left(  xn\right)  \cap
R_{\mathfrak{q}}^{\prime}\subsetneqq R_{\mathfrak{q}}^{\prime}.
\]
But $R_{\mathfrak{q}}^{\prime}$ is local with its unique maximal ideal
$\mathfrak{q}R_{\mathfrak{q}}^{\prime}$, therefore $\mathfrak{q}%
R_{\mathfrak{q}}^{\prime}=\operatorname{Ann}\left(  xn\right)  \cap
R_{\mathfrak{q}}^{\prime}$ or $\operatorname{Ann}\left(  xn\right)
\in\mathfrak{S}$. It follows that $\mathfrak{l=}\operatorname{Ann}\left(
xn\right)  $ whenever $x\in R_{\mathfrak{q}}-\mathfrak{l}$. In particular, if
$xy\in\mathfrak{l}$ with $x\in R_{\mathfrak{q}}-\mathfrak{l}$ then $yxn=0$ or
$y\in\operatorname{Ann}\left(  xn\right)  =\mathfrak{l}$, which means that
$\mathfrak{l}$ is a prime or $\mathfrak{l}\in\operatorname{Ass}%
_{R_{\mathfrak{q}}}\left(  M_{\mathfrak{q}}\right)  $, and it is lying over
$\mathfrak{q}R_{\mathfrak{q}}^{\prime}$. But $\mathfrak{l}=\mathfrak{p}%
R_{\mathfrak{q}}$ for some $\mathfrak{p}\in\operatorname{Ass}_{R}\left(
M\right)  $ \cite[17.8]{AK}, \cite[4.1.5]{BurCA} with $\mathfrak{p}\cap\left(
R^{\prime}-\mathfrak{q}\right)  =\varnothing$ (or $\mathfrak{p}\cap R^{\prime
}\subseteq\mathfrak{q}$). Prove that $\mathfrak{p}\cap R^{\prime}%
=\mathfrak{q}$. Take $x^{\prime}\in\mathfrak{q}$. Then $x^{\prime}%
/1\in\mathfrak{q}R_{\mathfrak{q}}^{\prime}\subseteq\operatorname{Ann}\left(
n\right)  =\mathfrak{p}R_{\mathfrak{q}}$ or $x^{\prime}/1=x/t^{\prime}$ for
some $x\in\mathfrak{p}$ and $t^{\prime}\in R^{\prime}-\mathfrak{q}$. It
follows that $sx^{\prime}\in\mathfrak{p}$ for some $s\in R^{\prime
}-\mathfrak{q}$. Taking into account that $s\in R-\mathfrak{p}$, we conclude
that $x^{\prime}\in\mathfrak{p}$ or $x^{\prime}\in\mathfrak{p}\cap R^{\prime}%
$. Hence $\mathfrak{q=p}\cap R^{\prime}=\iota^{\ast}\left(  \mathfrak{p}%
\right)  $ and $\mathfrak{p}\in\sigma_{\operatorname{p}}\left(  \mathfrak{X}%
,M\right)  $.
\end{proof}

\begin{remark}
\label{rem00}Note that $\operatorname{Supp}^{\prime}\left(  M\right)  $ can be
much larger than $\iota^{\ast}\left(  \operatorname{Supp}\left(  M\right)
\right)  $. For example, put $R^{\prime}=\mathbb{Z\subset Q=}R$ and
$M=\mathbb{Q}$. Then $\sigma\left(  \mathfrak{X},M\right)
=\operatorname{Supp}\left(  M\right)  =\left\{  0\right\}  $ whereas
$\sigma\left(  \mathfrak{X}^{\prime},M\right)  =\operatorname{Supp}^{\prime
}\left(  M\right)  ^{-}=\operatorname{Spec}\left(  \mathbb{Z}\right)
=\mathfrak{X}^{\prime}$. But $\sigma_{\operatorname{p}}\left(  \mathfrak{X}%
^{\prime},M\right)  =\operatorname{Ass}_{\mathbb{Z}}\left(  \mathbb{Q}\right)
=\left\{  0\right\}  =\iota^{\ast}\left(  \left\{  0\right\}  \right)
=\iota^{\ast}\left(  \operatorname{Ass}_{\mathbb{Q}}\left(  \mathbb{Q}\right)
\right)  =\iota^{\ast}\left(  \sigma_{\operatorname{p}}\left(  \mathfrak{X}%
,M\right)  \right)  $, and it is dense in $\mathfrak{X}^{\prime}$.
\end{remark}

\begin{corollary}
\label{corEx1}Let $k$ be a field and let $k\subseteq R^{\prime}\subseteq R$ be
$k$-algebra extensions such that $R/k$ is algebra-finite. Then $\sigma
_{\operatorname{p}}\left(  \mathfrak{X}^{\prime},M\right)  =\iota^{\ast
}\left(  \sigma_{\operatorname{p}}\left(  \mathfrak{X},M\right)  \right)  $
for every $R$-module $M$.
\end{corollary}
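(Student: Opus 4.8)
The plan is to reduce the statement directly to Theorem \ref{propEx1}, whose only hypothesis beyond having a ring extension is that the larger ring be Noetherian. Since $R$ is a finitely generated algebra over the field $k$, and $k$ is trivially Noetherian, the Hilbert Basis Theorem shows that $R$ is a Noetherian ring: it is a quotient of a polynomial algebra $k\left[X_{1},\ldots,X_{n}\right]$ in finitely many variables, and quotients of Noetherian rings are Noetherian.

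First I would record that the inclusion $\iota\colon R^{\prime}\hookrightarrow R$ is a ring extension in the sense of Subsection \ref{subsecSpMod}, and that $M$, being an $R$-module, carries the canonical $R^{\prime}$-module structure along $\iota$, exactly as in the setup preceding Theorem \ref{propEx1}. Then I would simply invoke Theorem \ref{propEx1} with this data: since $R$ is Noetherian, it yields $\sigma_{\operatorname{p}}\left(\mathfrak{X}^{\prime},M\right)=\iota^{\ast}\left(\sigma_{\operatorname{p}}\left(\mathfrak{X},M\right)\right)$, which is precisely the assertion. No separate treatment of the cases $M=\left\{0\right\}$ or $\sigma_{\operatorname{p}}\left(\mathfrak{X},M\right)=\varnothing$ is needed, as that is already absorbed into Theorem \ref{propEx1}.

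The only point worth a remark is that $R^{\prime}$ itself need not be Noetherian here; a $k$-subalgebra of a finitely generated $k$-algebra can fail to be finitely generated, and Artin-Tate does not apply because $R$ is not assumed module-finite over $R^{\prime}$. But this causes no difficulty, since Theorem \ref{propEx1} imposes the Noetherian condition only on the bigger ring $R$: the argument there passes to the localization $R_{\mathfrak{q}}$ at a prime $\mathfrak{q}$ of $R^{\prime}$ and works with ascending chains of ideals of $R_{\mathfrak{q}}$, which inherits the Noetherian property from $R$. Hence there is essentially no obstacle, and the corollary is an immediate specialization of Theorem \ref{propEx1}.
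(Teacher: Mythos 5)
Your proof is correct and matches the paper's own argument exactly: note that $R$ is Noetherian by the Hilbert Basis Theorem (being a quotient of a polynomial ring over the field $k$) and then apply Theorem~\ref{propEx1}. The additional remark that $R'$ need not be Noetherian and that this causes no difficulty is a useful clarification but does not change the substance of the argument.
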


\begin{proof}
The ring $R$ is Noetherian by Hilbert Basis Theorem. It remains to use Theorem
\ref{propEx1}.
\end{proof}

\begin{corollary}
Let $R^{\prime}\subseteq R$ be a ring extension with Noetherian $R$, $M\in
R$-$\operatorname{mod}$ and let $Q\subseteq M$ be an $R$-submodule. If $Q$ is
$\mathfrak{p}$-primary then $Q$ is $\mathfrak{p}^{\prime}$-primary $R^{\prime
}$-submodule of $M$, where $\mathfrak{p}\in\mathfrak{X}$ and $\mathfrak{p}%
^{\prime}=\iota^{\ast}\left(  \mathfrak{p}\right)  $. If $M$ is a Noetherian
$R$-module with its submodule $Q$ then $Q$ is $\mathfrak{p}^{\prime}$-primary
$R^{\prime}$-submodule of $M$ iff $Q=Q_{1}\cap\cdots\cap Q_{r}$ admits an
irredundant primary decomposition in $R$-$\operatorname{mod}$ such that every
$Q_{j}$ is $\mathfrak{p}^{\prime}$-primary $R^{\prime}$-module.
\end{corollary}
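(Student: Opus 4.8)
The plan is to handle the two assertions separately: the first is the standard fact that contracting a primary submodule along $R'\subseteq R$ keeps it primary, and the equivalence then reduces, in one direction, to the associated-prime invariance of Theorem~\ref{propEx1}.

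For the opening statement I would argue directly from the definition. Assume $Q$ is $\mathfrak{p}$-primary in $R$-$\operatorname{mod}$, so $M/Q\neq0$ and $\mathfrak{p}=\sqrt{\operatorname{Ann}_R(M/Q)}$ is prime. Given $a'\in R'$ and $m\in M\setminus Q$ with $a'm\in Q$, primality over $R$ forces $a'\in\mathfrak{p}$. Since $\operatorname{Ann}_{R'}(M/Q)=\operatorname{Ann}_R(M/Q)\cap R'$ and radicals commute with contraction, $\sqrt{\operatorname{Ann}_{R'}(M/Q)}=\sqrt{\operatorname{Ann}_R(M/Q)}\cap R'=\mathfrak{p}\cap R'=\mathfrak{p}'$, which is prime as the contraction of a prime; hence $a'\in\mathfrak{p}'=\sqrt{\operatorname{Ann}_{R'}(M/Q)}$, so $Q$ is $\mathfrak{p}'$-primary over $R'$.

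For the ``if'' direction of the equivalence (which in fact does not use that $M$ is Noetherian) I would take the given decomposition $Q=Q_1\cap\cdots\cap Q_r$ with every $Q_j$ being $\mathfrak{p}'$-primary over $R'$. The embedding $M/Q\hookrightarrow\bigoplus_j M/Q_j$ gives $\operatorname{Ann}_{R'}(M/Q)=\bigcap_j\operatorname{Ann}_{R'}(M/Q_j)$, whence $\sqrt{\operatorname{Ann}_{R'}(M/Q)}=\bigcap_j\mathfrak{p}'=\mathfrak{p}'$ is prime. If $a'\in R'$ and $m\in M\setminus Q$ satisfy $a'm\in Q$, then $m\notin Q_{j_0}$ for some $j_0$ while $a'm\in Q_{j_0}$, so $a'\in\mathfrak{p}'$; and because \emph{every} $Q_j$ is $\mathfrak{p}'$-primary, $a'\in\mathfrak{p}'$ yields $a'^{\,n_j}M\subseteq Q_j$ for suitable $n_j$, so $a'^{\,n}M\subseteq Q$ with $n=\max_j n_j$, i.e. $a'\in\sqrt{\operatorname{Ann}_{R'}(M/Q)}$. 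Thus $Q$ is $\mathfrak{p}'$-primary over $R'$.

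For the ``only if'' direction I would use that $M$ Noetherian over $R$ makes $M/Q$ a nonzero Noetherian $R$-module, so by Lasker--Noether (\cite[Ch.~18]{AK}) it has an irredundant primary decomposition $Q=Q_1\cap\cdots\cap Q_r$ in $R$-$\operatorname{mod}$ with $Q_j$ being $\mathfrak{p}_j$-primary over $R$ and $\operatorname{Ass}_R(M/Q)=\{\mathfrak{p}_1,\ldots,\mathfrak{p}_r\}$. By the first statement each $Q_j$ is $(\mathfrak{p}_j\cap R')$-primary over $R'$, so everything comes down to checking $\mathfrak{p}_j\cap R'=\mathfrak{p}'$ for all $j$. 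Here I would apply Theorem~\ref{propEx1} to the Noetherian ring $R$ and the module $M/Q$, giving $\operatorname{Ass}_{R'}(M/Q)=\iota^{\ast}(\operatorname{Ass}_R(M/Q))=\{\mathfrak{p}_j\cap R':1\le j\le r\}$; on the other hand a $\mathfrak{p}'$-primary $R'$-module has only $\mathfrak{p}'$ as an associated prime (any $\mathfrak{q}=\operatorname{Ann}_{R'}(\bar m)$ with $\bar m\neq0$ consists of zero-divisors on $M/Q$, so $\mathfrak{q}\subseteq\sqrt{\operatorname{Ann}_{R'}(M/Q)}=\mathfrak{p}'$, while $\mathfrak{q}\supseteq\operatorname{Ann}_{R'}(M/Q)$ together with $\mathfrak{q}$ prime forces $\mathfrak{q}\supseteq\mathfrak{p}'$). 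Hence $\{\mathfrak{p}_j\cap R'\}=\{\mathfrak{p}'\}$, and each $Q_j$ is $\mathfrak{p}'$-primary over $R'$. The hard part is exactly this last identification: from radical computations alone one only gets $\mathfrak{p}'=\bigcap_j(\mathfrak{p}_j\cap R')$ with each term containing $\mathfrak{p}'$, which does not force equality term by term, so one genuinely has to bring in the associated-prime invariance of Theorem~\ref{propEx1} (equivalently, to observe that each $\mathfrak{p}_j\cap R'$ is itself an $R'$-associated prime of $M/Q$).
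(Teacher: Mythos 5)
Your argument is correct and rests on the same two pillars the paper uses: Lasker--Noether to produce an irredundant decomposition in the ``only if'' direction, and Theorem~\ref{propEx1} to identify $\operatorname{Ass}_{R'}(M/Q)$ with $\iota^{\ast}(\operatorname{Ass}_{R}(M/Q))$. The one substantive difference is in how primariness over $R'$ is handled. The paper simply takes $\operatorname{Ass}_{R'}(M/Q)=\{\mathfrak{p}'\}$ as the working characterization of ``$\mathfrak{p}'$-primary $R'$-submodule'' and never leaves the $\operatorname{Ass}$-language; you instead re-prove primariness in the classical sense (zero-divisors on $M/Q$ are nilpotent on $M/Q$), using the contractions $\operatorname{Ann}_{R'}(M/Q)=\operatorname{Ann}_R(M/Q)\cap R'$ and $\sqrt{\mathfrak{a}\cap R'}=\sqrt{\mathfrak{a}}\cap R'$, and the explicit bound $a'^{\,n}M\subseteq Q$ via the decomposition. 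This is slightly longer but more robust: since $R'$ is not assumed Noetherian, the equivalence between $\operatorname{Ass}_{R'}(M/Q)=\{\mathfrak{p}'\}$ and classical primariness is not automatic, and your version settles that point explicitly; your closing lemma, that a classically $\mathfrak{p}'$-primary $R'$-module has $\operatorname{Ass}_{R'}\subseteq\{\mathfrak{p}'\}$, together with nonemptiness via Theorem~\ref{propEx1}, closes the circle and reconciles the two viewpoints. Your observation that the ``if'' direction does not actually need $M$ Noetherian is also correct and is implicit, but not stated, in the paper's proof.
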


\begin{proof}
Suppose that $Q$ is a $\mathfrak{p}$-primary submodule, that is,
$\operatorname{Ass}_{R}\left(  M/Q\right)  =\left\{  \mathfrak{p}\right\}  $
or $\sigma_{\operatorname{p}}\left(  \mathfrak{X},M/Q\right)  =\left\{
\mathfrak{p}\right\}  $. Since $\mathfrak{p}^{\prime}=\iota^{\ast}\left(
\mathfrak{p}\right)  \in\iota^{\ast}\left(  \sigma_{\operatorname{p}}\left(
\mathfrak{X},M/Q\right)  \right)  \subseteq\sigma_{\operatorname{p}}\left(
\mathfrak{X}^{\prime},M/Q\right)  $, it follows that $\mathfrak{p}^{\prime}%
\in\operatorname{Ass}_{R^{\prime}}\left(  M/Q\right)  $. Conversely, if
$\mathfrak{q\in}\operatorname{Ass}_{R^{\prime}}\left(  M/Q\right)  $ then
\[
\mathfrak{q\in}\sigma_{\operatorname{p}}\left(  \mathfrak{X}^{\prime
},M/Q\right)  =\iota^{\ast}\left(  \sigma_{\operatorname{p}}\left(
\mathfrak{X},M/Q\right)  \right)  =\iota^{\ast}\left(  \operatorname{Ass}%
_{R}\left(  M/Q\right)  \right)  =\iota^{\ast}\left(  \left\{  \mathfrak{p}%
\right\}  \right)  =\left\{  \mathfrak{p}^{\prime}\right\}
\]
thanks to Theorem \ref{propEx1}, that is, $\mathfrak{q=p}^{\prime}$. Hence
$\operatorname{Ass}_{R^{\prime}}\left(  M/Q\right)  =\left\{  \mathfrak{p}%
^{\prime}\right\}  $, which means that $Q$ is a $\mathfrak{p}^{\prime}%
$-primary $R^{\prime}$-submodule of $M$.

Finally, assume that $M$ is a finitely generated $R$-module. By Lasker-Noether
Theorem \cite[18.19]{AK}, $Q=Q_{1}\cap\cdots\cap Q_{r}$ admits an irredundant
primary decomposition in $R$-$\operatorname{mod}$, where $Q_{j}$ is
$\mathfrak{p}_{j}$-primary. In this case, $\left\{  \mathfrak{p}_{1}%
,\ldots,\mathfrak{p}_{r}\right\}  $ are uniquely defined, in fact they are all
distinct primes of $\operatorname{Ass}_{R}\left(  M/Q\right)  $ (First
Uniqueness \cite[18.18]{AK}). In particular,%
\[
\operatorname{Ass}_{R^{\prime}}\left(  M/Q\right)  =\sigma_{\operatorname{p}%
}\left(  \mathfrak{X}^{\prime},M/Q\right)  =\iota^{\ast}\left(
\operatorname{Ass}_{R}\left(  M/Q\right)  \right)  =\iota^{\ast}\left(
\left\{  \mathfrak{p}_{1},\ldots,\mathfrak{p}_{r}\right\}  \right)
\]
by virtue of Theorem \ref{propEx1}. If $Q$ is $\mathfrak{p}^{\prime}$-primary
$R^{\prime}$-submodule then $\operatorname{Ass}_{R^{\prime}}\left(
M/Q\right)  =\left\{  \mathfrak{p}^{\prime}\right\}  $ and
\[
\operatorname{Ass}_{R^{\prime}}\left(  M/Q_{j}\right)  =\sigma
_{\operatorname{p}}\left(  \mathfrak{X}^{\prime},M/Q_{j}\right)  =\iota^{\ast
}\left(  \operatorname{Ass}_{R}\left(  M/Q_{j}\right)  \right)  =\iota^{\ast
}\left(  \left\{  \mathfrak{p}_{j}\right\}  \right)  =\left\{  \mathfrak{p}%
^{\prime}\right\}
\]
for every $j$. Thus every $Q_{j}$ is $\mathfrak{p}^{\prime}$-primary
$R^{\prime}$-module.

Conversely, suppose that $Q=Q_{1}\cap\cdots\cap Q_{r}$ admits an irredundant
primary decomposition in $R$-$\operatorname{mod}$ such that every $Q_{j}$ is
$\mathfrak{p}^{\prime}$-primary $R^{\prime}$-module. Then $\iota^{\ast}\left(
\left\{  \mathfrak{p}_{j}\right\}  \right)  =\left\{  \mathfrak{p}^{\prime
}\right\}  $ for every $j$, and $\operatorname{Ass}_{R^{\prime}}\left(
M/Q\right)  =\iota^{\ast}\left(  \left\{  \mathfrak{p}_{1},\ldots
,\mathfrak{p}_{r}\right\}  \right)  =\left\{  \mathfrak{p}^{\prime}\right\}
$, which means that $Q$ is a $\mathfrak{p}^{\prime}$-primary $R^{\prime}$-module.
\end{proof}

\subsection{Integral extensions}

Now assume that $R^{\prime}\subseteq R$ is an integral extension of rings and
$M\in R$-$\operatorname{mod}$. In this case, the mapping $\iota^{\ast
}:\mathfrak{X}\rightarrow\mathfrak{X}^{\prime}$ is surjective due to
Krull-Cohen-Seidenberg Theory \cite[Ch. 14]{AK}, \cite[5.2]{BurCA}.

\begin{proposition}
\label{propEx2}If $R^{\prime}\subseteq R$ is integral and $M\in R$%
-$\operatorname{mod}$ then $\operatorname{Supp}^{\prime}\left(  M\right)
=\iota^{\ast}\left(  \operatorname{Supp}\left(  M\right)  \right)  $.
\end{proposition}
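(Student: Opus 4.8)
The plan is as follows. One inclusion, $\iota^{\ast}(\operatorname{Supp}(M))\subseteq\operatorname{Supp}^{\prime}(M)$, is already available from the discussion preceding Theorem \ref{propEx1}, so the whole task is to establish $\operatorname{Supp}^{\prime}(M)\subseteq\iota^{\ast}(\operatorname{Supp}(M))$. The device I would use is localization at the base. Fix $\mathfrak{p}^{\prime}\in\operatorname{Supp}^{\prime}(M)$, put $S=R^{\prime}-\mathfrak{p}^{\prime}$, and pass to $B=S^{-1}R^{\prime}=R^{\prime}_{\mathfrak{p}^{\prime}}$ and $A=S^{-1}R$. Localization preserves integrality, so $B\subseteq A$ is again an integral extension; moreover $B$ is local with maximal ideal $\mathfrak{n}=\mathfrak{p}^{\prime}B$, and $S^{-1}M=M_{\mathfrak{p}^{\prime}}$ is a nonzero $A$-module precisely because $\mathfrak{p}^{\prime}\in\operatorname{Supp}^{\prime}(M)$.

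Next I would invoke that a nonzero module over any ring has a nonvanishing localization at some maximal ideal: choosing $0\neq u\in S^{-1}M$, the ideal $\operatorname{Ann}_{A}(u)$ is proper, hence lies in a maximal ideal $\mathfrak{m}$ of $A$, and then $u/1\neq0$ in $(S^{-1}M)_{\mathfrak{m}}$, so this localization is nonzero. Now I apply Krull-Cohen-Seidenberg to the integral extension $B\subseteq A$: since a domain integral over a field is a field (and conversely), a prime of $A$ is maximal iff it contracts to the maximal ideal of $B$; hence $\mathfrak{m}\cap B=\mathfrak{n}$. Letting $\mathfrak{p}=\mathfrak{m}\cap R$, which is automatically disjoint from $S$, we get $\mathfrak{m}=S^{-1}\mathfrak{p}$, and contracting further to $R^{\prime}$ gives $\mathfrak{p}\cap R^{\prime}=\mathfrak{n}\cap R^{\prime}=\mathfrak{p}^{\prime}$, i.e. $\iota^{\ast}(\mathfrak{p})=\mathfrak{p}^{\prime}$. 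By transitivity of localization $M_{\mathfrak{p}}=(S^{-1}M)_{\mathfrak{m}}\neq0$, so $\mathfrak{p}\in\operatorname{Supp}(M)$ and therefore $\mathfrak{p}^{\prime}\in\iota^{\ast}(\operatorname{Supp}(M))$, as required.

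The only non-bookkeeping ingredient is the Krull-Cohen-Seidenberg fact that in an integral extension maximality of a prime is detected on the base; the remainder (that $S^{-1}$ endows $M$ with an $A$-module structure with $S^{-1}M=M_{\mathfrak{p}^{\prime}}$, the identification $(S^{-1}M)_{S^{-1}\mathfrak{p}}=M_{\mathfrak{p}}$, and the correspondence between primes of $A$ and primes of $R$ meeting $S$ trivially) is routine, and no Noetherian hypothesis enters. I do not anticipate a genuine obstacle; the single point to handle with care is that one must land on a \emph{maximal} ideal of $A$ lying in $\operatorname{Supp}_{A}(S^{-1}M)$, not merely on some prime, which is exactly why the argument runs through a single element $u$ and its annihilator rather than through the nonemptiness of the support alone.
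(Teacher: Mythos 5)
Your argument is correct, and it reaches the same destination as the paper's proof but chooses a slightly different door from the Krull--Cohen--Seidenberg package. The paper fixes $\mathfrak{p}'\in\operatorname{Supp}'(M)$, picks $m\in M$ with $s'm\neq 0$ for all $s'\in R'-\mathfrak{p}'$, forms $\mathfrak{a}=\operatorname{Ann}_R(m)$, notes $\mathfrak{a}'=\mathfrak{a}\cap R'\subseteq\mathfrak{p}'$, and then applies Lying Over/Going Up to the induced integral extension $R'/\mathfrak{a}'\subseteq R/\mathfrak{a}$ to produce a prime $\mathfrak{p}\supseteq\mathfrak{a}$ with $\iota^{\ast}(\mathfrak{p})=\mathfrak{p}'$; since $\mathfrak{a}\subseteq\mathfrak{p}$ gives $sm\neq 0$ for all $s\in R-\mathfrak{p}$, one gets $\mathfrak{p}\in\operatorname{Supp}(M)$. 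You instead localize at the base, reducing to the integral extension $B=R'_{\mathfrak{p}'}\subseteq A=S^{-1}R$ with $B$ local, pick a maximal ideal $\mathfrak{m}$ of $A$ containing the annihilator of a nonzero $u\in S^{-1}M$, and invoke the Maximality criterion ($\mathfrak{m}$ maximal in $A$ iff $\mathfrak{m}\cap B$ maximal in $B$) to conclude $\mathfrak{m}$ lies over $\mathfrak{n}=\mathfrak{p}'B$, then contract and use transitivity of localization. Both proofs hinge on a single-element witness and a KCS lemma, so the underlying mechanism is the same; what differs is the reduction device (quotient by the annihilator ideal versus localization at the base prime) and the particular KCS statement that does the work (Lying Over for the integral quotient extension versus Maximality for the localized extension). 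Your version has the small advantage of isolating the one nontrivial input explicitly and stressing that no Noetherian hypothesis is used, while the paper's version is slightly shorter because it avoids the localization bookkeeping. Either is a perfectly acceptable proof.
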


\begin{proof}
The inclusion $\iota^{\ast}\left(  \operatorname{Supp}\left(  M\right)
\right)  \subseteq\operatorname{Supp}^{\prime}\left(  M\right)  $ was proved
above. Conversely, take $\mathfrak{p}^{\prime}\in\operatorname{Supp}^{\prime
}\left(  M\right)  $. Then $M_{\mathfrak{p}^{\prime}}\neq\left\{  0\right\}
$, which means that $s^{\prime}m\neq0$ for all $s^{\prime}\in R^{\prime
}-\mathfrak{p}^{\prime}$ and some $m\in M$. Put $\mathfrak{a=}%
\operatorname{Ann}\left(  m\right)  \subseteq R$ and $\mathfrak{a}^{\prime
}=\operatorname{Ann}^{\prime}\left(  m\right)  \subseteq R^{\prime}$ to be
ideals with $\mathfrak{a}^{\prime}=\mathfrak{a}\cap R^{\prime}$. Then
$\mathfrak{a}^{\prime}\subseteq\mathfrak{p}^{\prime}$, and using the key
property Going Up \cite[14.3]{AK}, we deduce that $\mathfrak{p}^{\prime}%
=\iota^{\ast}\left(  \mathfrak{p}\right)  $ for some $\mathfrak{p}%
\in\mathfrak{X}$ such that $\mathfrak{a}\subseteq\mathfrak{p}$. The latter
means that $sm\neq0$ for all $s\in R-\mathfrak{p}$, that is, $m/1\neq0$ in
$M_{\mathfrak{p}}$. Thus $\mathfrak{p}\in\operatorname{Supp}\left(  M\right)
$ and $\mathfrak{p}^{\prime}=\iota^{\ast}\left(  \mathfrak{p}\right)  \in
\iota^{\ast}\left(  \operatorname{Supp}\left(  M\right)  \right)  $.
\end{proof}

\begin{remark}
In the case of an integral algebra finite extension $R/R^{\prime}$ the set
$\left(  \iota^{\ast}\right)  ^{-1}\left(  \mathfrak{q}\right)  $ is finite
for every $\mathfrak{q}\in\operatorname{Supp}^{\prime}\left(  M\right)  $.
Indeed, since $R/R^{\prime}$ is module finite, it follows that so is
$R_{\mathfrak{q}}/R_{\mathfrak{q}}^{\prime}$, which in turn implies that $A/k$
is module finite either, where $A=R_{\mathfrak{q}}/\mathfrak{q}R_{\mathfrak{q}%
}$ and $k=R_{\mathfrak{q}}^{\prime}/\mathfrak{q}R_{\mathfrak{q}}^{\prime}$ is
the residue field of the local ring $R_{\mathfrak{q}}^{\prime}$. In
particular, $\dim_{k}\left(  A\right)  <\infty$, and $A$ turns out to be an
Artinian ring. By Akizuki-Hopkins Theorem \cite[19.8]{AK}, $\dim\left(
A\right)  =0$, $\operatorname{Spec}\left(  A\right)  $ consists of maximal
ideals and it is finite. But $\left(  \iota^{\ast}\right)  ^{-1}\left(
\mathfrak{q}\right)  $ is canonically identified with a subset of
$\operatorname{Spec}\left(  A\right)  $.
\end{remark}

\begin{corollary}
\label{corFD1}If $R^{\prime}\subseteq R$ is integral with Noetherian $R$ and
$M\in R$-$\operatorname{mod}$ with finite length $\ell_{R^{\prime}}\left(
M\right)  <\infty$ as an $R^{\prime}$-module then $\ell_{R}\left(  M\right)
<\infty$.
\end{corollary}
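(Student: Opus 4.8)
The plan is to reduce the statement to showing that the quotient ring $A=R/\operatorname{Ann}_{R}(M)$ is Artinian. First I would note that a module of finite length over $R^{\prime}$ is in particular finitely generated over $R^{\prime}$, hence finitely generated over $R$ (any $R^{\prime}$-generating set generates over $R$). Therefore $\operatorname{Supp}_{R}(M)=V\left(\operatorname{Ann}_{R}(M)\right)$ and $\operatorname{Supp}_{R^{\prime}}(M)=V\left(\operatorname{Ann}_{R^{\prime}}(M)\right)$ (see \cite[13.4]{AK}); moreover $\operatorname{Supp}_{R^{\prime}}(M)$ is a finite set of maximal ideals of $R^{\prime}$, since for a composition series $0=M_{0}\varsubsetneq\cdots\varsubsetneq M_{\ell}=M$ of $R^{\prime}$-modules one has $\operatorname{Supp}_{R^{\prime}}(M)=\bigcup_{i}\operatorname{Supp}_{R^{\prime}}\left(M_{i}/M_{i-1}\right)$, a finite union of singletons of maximal ideals.

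Next I would apply Proposition \ref{propEx2}: because $R^{\prime}\subseteq R$ is integral, $\operatorname{Supp}_{R^{\prime}}(M)=\iota^{\ast}\left(\operatorname{Supp}_{R}(M)\right)$, so every prime in $\operatorname{Supp}_{R}(M)$ lies over a maximal ideal of $R^{\prime}$; by incomparability from Krull--Cohen--Seidenberg theory \cite[Ch. 14]{AK} such a prime is itself maximal. Hence $V\left(\operatorname{Ann}_{R}(M)\right)=\operatorname{Supp}_{R}(M)$ consists only of maximal ideals, i.e. the Krull dimension of $A=R/\operatorname{Ann}_{R}(M)$ is zero. Since $R$ is Noetherian, so is $A$, and a Noetherian ring of dimension zero is Artinian by Akizuki--Hopkins \cite[19.8]{AK} (exactly as in the proof of Corollary \ref{corkey1}).

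Finally, $M$ is a finitely generated module over the Artinian ring $A$, so it is a quotient of some $A^{r}$ with $\ell_{A}\left(A^{r}\right)=r\,\ell_{A}(A)<\infty$, whence $\ell_{A}(M)<\infty$. Since the $A$-submodules of $M$ coincide with its $R$-submodules, $\ell_{R}(M)=\ell_{A}(M)<\infty$; the case $M=\left\{0\right\}$ being trivial.

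The step demanding the most care is the passage from ``finite length over $R^{\prime}$'' to ``$\operatorname{Supp}_{R}(M)$ consists of maximal ideals'', which combines the support-transfer Proposition \ref{propEx2} with the incomparability property of integral extensions; it is here that finite generation of $M$ over $R$ is really used, so that its support is closed and cut out by $\operatorname{Ann}_{R}(M)$. The remaining ingredients---Akizuki--Hopkins and the finiteness of length of a finitely generated module over an Artinian ring---are routine. One may also avoid Proposition \ref{propEx2} entirely: since $M$ is a faithful finite-length module over $S=R^{\prime}/\operatorname{Ann}_{R^{\prime}}(M)$, embedding $S\hookrightarrow M^{r}$ along a finite generating set shows $S$ is Artinian; then $A$ is a Noetherian ring integral over the Artinian ring $S$, so $\dim A=\dim S=0$, and one concludes as before.
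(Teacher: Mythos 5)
Your proof is correct and follows essentially the same route as the paper: both deduce that $\operatorname{Supp}_{R^{\prime}}(M)$ consists of maximal ideals (Jordan--H\"older), transfer this to $\operatorname{Supp}_{R}(M)$ via Proposition \ref{propEx2}, invoke a Krull--Cohen--Seidenberg property to conclude that the primes in $\operatorname{Supp}_{R}(M)$ are maximal, and then pass from finite generation over $R$ to finite length. The only cosmetic differences are that you invoke incomparability where the paper cites the ``Maximality'' statement of \cite[14.3]{AK} (equivalent here, after extending to a maximal ideal of $R$), and you phrase the final step through the Artinian quotient $A=R/\operatorname{Ann}_{R}(M)$ rather than citing \cite[19.4]{AK} directly.
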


\begin{proof}
Since $\ell_{R^{\prime}}\left(  M\right)  <\infty$, it follows that
$\operatorname{Supp}^{\prime}\left(  M\right)  $ consists of maximal ideals by
Jordan-H\H{o}lder Theorem \cite[19.3]{AK}. By Proposition \ref{propEx2},
$\operatorname{Supp}^{\prime}\left(  M\right)  =\iota^{\ast}\left(
\operatorname{Supp}\left(  M\right)  \right)  $ and $\operatorname{Supp}%
\left(  M\right)  $ consists of primes lying over the maximal ideals from
$\operatorname{Supp}^{\prime}\left(  M\right)  $. By Maximality \cite[14.3]%
{AK}, $\operatorname{Supp}\left(  M\right)  $ consists of maximal ideals of
$R$ either. But $M$ is a finitely generated $R^{\prime}$-module having finite
length $\ell_{R^{\prime}}\left(  M\right)  $, therefore so is $M$ as an
$R$-module. It follows that $\ell_{R}\left(  M\right)  <\infty$ (see
\cite[19.4 or 17.16]{AK}).
\end{proof}

\begin{remark}
\label{remFD1}Let $k$ be a field, $k\subseteq R^{\prime}\subseteq R$ algebra
extensions such that $R/k$ is algebra-finite and $R/R^{\prime}$ is integral,
and let $M\in R$-$\operatorname{mod}$. Then $\ell_{R}\left(  M\right)
<\infty$ iff $\ell_{R^{\prime}}\left(  M\right)  <\infty$. By Artin-Tate lemma
\cite[16.21]{AK}, $R^{\prime}/k$ is algebra-finite too. In particular,
$R^{\prime}$ is Noetherian by Hilbert Basis. If $\left\{  M_{i}:0\leq i\leq
n\right\}  $ is a Jordan-H\H{o}lder chain of submodules in $R^{\prime}%
$-$\operatorname{mod}$ (or $R$-$\operatorname{mod}$) then $M_{i}%
/M_{i-1}=R^{\prime}/\mathfrak{m}_{i}^{\prime}$ for some maximal ideals
$\mathfrak{m}_{i}^{\prime}$, $i\geq1$. But $k\subseteq R^{\prime}%
/\mathfrak{m}_{i}^{\prime}$ is an algebra finite extension, which is finite by
Zariski Nullstellensatz \cite[15.4]{AK}. It follows that $\dim_{k}\left(
M_{i}/M_{i-1}\right)  <\infty$ for every $i$. Hence $\dim_{k}\left(  M\right)
<\infty$. Thus $\ell_{R^{\prime}}\left(  M\right)  <\infty$ iff $\dim
_{k}\left(  M\right)  <\infty$, which in turn is equivalent to $\ell
_{R}\left(  M\right)  <\infty$.
\end{remark}

\subsection{Algebraic varieties and spectra\label{SubsecAVS}}

Now let $P=k\left[  X\right]  $ be the algebra of all polynomials in $n$
variables $X=\left(  X_{1},\ldots,X_{n}\right)  $ over the field $k$. The
$P$-bimodule $P\otimes_{k}P$ has the $n$-tuple $T=\left(  T_{1},\ldots
,T_{n}\right)  $ of mutually commuting operators $T_{i}=1\otimes X_{i}%
-X_{i}\otimes1$ acting on, and we have the Koszul complex $\operatorname{Kos}%
\left(  T,P\otimes_{k}P\right)  $ augmented with the multiplication
(bi)operator $\pi:P\otimes_{k}P\rightarrow P$ provides a free $P$-bimodule
resolution of the algebra $P$. Actually, $\operatorname{Kos}\left(
T,P\otimes_{k}P\right)  $ splits as a complex of $P$-bimodules. If $M\in
P$-$\operatorname{mod}$ with the actions $x_{i}\left(  m\right)  =X_{i}\cdot
m$, $1\leq i\leq n$, then derive that the complex $0\leftarrow P\otimes
_{P}M\overset{1\otimes\pi}{\longleftarrow}\operatorname{Kos}\left(
P\otimes_{k}P,T\right)  \otimes_{P}M$ remains exact. But $P\otimes_{P}M=M$ and
$\operatorname{Kos}\left(  T,P\otimes_{k}P\right)  \otimes_{P}%
M=\operatorname{Kos}\left(  t,P\otimes_{k}M\right)  $ with $t_{i}=1\otimes
x_{i}-X_{i}\otimes1$, and $1\otimes\pi$ is reduced to the homomorphism
$\pi_{M}:P\otimes_{k}M\rightarrow M$, $\pi_{M}\left(  r\otimes_{k}m\right)
=rm$. Thus $\mathcal{P=}\operatorname{Kos}\left(  t,P\otimes_{k}M\right)  $
provides a finite free resolution of the module $M$. If $R/k$ is an algebra
finite extension with $R=k\left[  x\right]  $ for an $n$-tuple $x$, then $R$
is a quotient of $P$ and $\operatorname{Kos}\left(  t,P\otimes_{k}R\right)  $
turns out to be a free $R$-module resolution of $R$ either. Therefore it
splits in $R$-$\operatorname{mod}$.

Consider the affine space $\mathfrak{X}=\mathbb{A}_{k}^{n}$ over $k$. Then
$\mathfrak{p}\in\operatorname{res}\left(  \mathfrak{X},M\right)  $ iff there
is an open affine neighborhood $U=\operatorname{Spec}\left(  B\right)  $ of
$\mathfrak{p}$ such that $B\perp_{P}M$. It means that
\[
0\leftarrow B\otimes_{P}M\longleftarrow B\otimes_{P}\mathcal{P=}%
\operatorname{Kos}\left(  t^{B},B\otimes_{k}M\right)
\]
with $t_{i}^{B}=1\otimes x_{i}-\left(  X_{i}|_{U}\right)  \otimes1$ is exact.
Put $k\left(  \mathfrak{p}\right)  =P_{\mathfrak{p}}/\mathfrak{p}%
P_{\mathfrak{p}}=\left(  P/\mathfrak{p}\right)  _{\mathfrak{p}}%
=\operatorname{Frac}\left(  P/\mathfrak{p}\right)  $ to be residue field of
$\mathfrak{p}$. Then $k\left(  \mathfrak{p}\right)  \otimes_{P}\mathcal{P=}%
\operatorname{Kos}\left(  t^{\mathfrak{p}},k\left(  \mathfrak{p}\right)
\otimes_{k}M\right)  $ with $t_{i}^{\mathfrak{p}}=1\otimes x_{i}-X_{i}\left(
\mathfrak{p}\right)  \otimes1$, $1\leq i\leq n$.

The following key result was proved in \cite{DMMJ}.

\begin{theorem}
\label{tcal1}Let $\mathfrak{X}=\mathbb{A}_{k}^{n}$ be the affine space over
$k$, $P=k\left[  X_{1},\ldots,X_{n}\right]  $ and let $M\in P$%
-$\operatorname{mod}$. If $\mathfrak{p}\in\operatorname{res}\left(
\mathfrak{X},M\right)  $ then $k\left(  \mathfrak{p}\right)  \perp_{P}M$, that
is, the complex $\operatorname{Kos}\left(  t^{\mathfrak{p}},k\left(
\mathfrak{p}\right)  \otimes_{k}M\right)  $ is exact. Thus
\[
\operatorname{res}\left(  \mathfrak{X},M\right)  \subseteq\left\{
\mathfrak{p}\in\mathfrak{X}:k\left(  \mathfrak{p}\right)  \perp_{P}M\right\}
\subseteq\left\{  \mathfrak{p}\in\mathfrak{X}:M_{\mathfrak{p}}=\mathfrak{m}%
_{\mathfrak{p}}M_{\mathfrak{p}}\right\}  ,
\]
where $\mathfrak{m}_{\mathfrak{p}}=\operatorname{rad}\left(  P_{\mathfrak{p}%
}\right)  $. If $M$ is a Noetherian $P$-module then $\operatorname{res}\left(
\mathfrak{X},M\right)  =\left\{  \mathfrak{p}\in\mathfrak{X}:k\left(
\mathfrak{p}\right)  \perp_{P}M\right\}  $.
\end{theorem}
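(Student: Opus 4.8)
The plan is to reduce the whole statement to bookkeeping around the single condition $M_{\mathfrak{p}}=0$, using that $\mathfrak{X}=\operatorname{Spec}(P)$ is affine and that $\mathcal{P}=\operatorname{Kos}(t,P\otimes_{k}M)$ is a free $P$-resolution of $M$ (Subsection~\ref{SubsecAVS}). First I would recall from Subsection~\ref{subsecSpMod} that, since each stalk $\mathcal{O}_{\mathfrak{X},\mathfrak{q}}=P_{\mathfrak{q}}$ is $P$-flat, the condition $\mathcal{O}_{\mathfrak{X}}(U)\perp_{P}M$ is equivalent to $M_{\mathfrak{q}}=0$ for every $\mathfrak{q}\in U$. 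Hence $\mathfrak{p}\in\operatorname{res}(\mathfrak{X},M)$ already forces $M_{\mathfrak{p}}=0$.

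Next, for the first assertion I would observe that $k(\mathfrak{p})=P_{\mathfrak{p}}/\mathfrak{m}_{\mathfrak{p}}$ is a $P_{\mathfrak{p}}$-module, so localizing $\mathcal{P}$ at $\mathfrak{p}$ (an exact operation, turning free $P$-modules into free $P_{\mathfrak{p}}$-modules) and tensoring with $k(\mathfrak{p})$ identifies $\operatorname{Tor}_{i}^{P}(k(\mathfrak{p}),M)$ with $\operatorname{Tor}_{i}^{P_{\mathfrak{p}}}(k(\mathfrak{p}),M_{\mathfrak{p}})$ for every $i$; the latter all vanish since $M_{\mathfrak{p}}=0$, so $k(\mathfrak{p})\perp_{P}M$. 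By the computation recorded just before the theorem, $k(\mathfrak{p})\otimes_{P}\mathcal{P}=\operatorname{Kos}(t^{\mathfrak{p}},k(\mathfrak{p})\otimes_{k}M)$, so exactness of that Koszul complex is exactly the statement $k(\mathfrak{p})\perp_{P}M$. The second inclusion is then immediate: $k(\mathfrak{p})\perp_{P}M$ gives in particular $k(\mathfrak{p})\otimes_{P}M=M_{\mathfrak{p}}/\mathfrak{m}_{\mathfrak{p}}M_{\mathfrak{p}}=0$, that is, $M_{\mathfrak{p}}=\mathfrak{m}_{\mathfrak{p}}M_{\mathfrak{p}}$.

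For the last claim I would bring in the Noetherian hypothesis: then $M_{\mathfrak{p}}$ is finitely generated over the Noetherian local ring $P_{\mathfrak{p}}$, so $M_{\mathfrak{p}}=\mathfrak{m}_{\mathfrak{p}}M_{\mathfrak{p}}$ forces $M_{\mathfrak{p}}=0$ by Nakayama's lemma. Hence $\mathfrak{p}\notin\operatorname{Supp}(M)=V(\operatorname{Ann}(M))$, so there is $f\in\operatorname{Ann}(M)\setminus\mathfrak{p}$; the distinguished affine open $D(f)=\operatorname{Spec}(P_{f})$ is a neighbourhood of $\mathfrak{p}$ on which $M$ vanishes ($M_{f}=0$, since $f$ kills $M$), hence $\mathcal{O}_{\mathfrak{X}}(D(f))=P_{f}\perp_{P}M$ and $\mathfrak{p}\in\operatorname{res}(\mathfrak{X},M)$. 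Together with the first inclusion this yields the asserted equality $\operatorname{res}(\mathfrak{X},M)=\{\mathfrak{p}\in\mathfrak{X}:k(\mathfrak{p})\perp_{P}M\}$.

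I do not expect a serious obstacle; the statement is "key" mainly because the Koszul model renders the condition $k(\mathfrak{p})\perp_{P}M$ explicit and ready for the functional-calculus applications. The one place demanding a little care is the passage $\operatorname{Tor}_{i}^{P}(k(\mathfrak{p}),M)=\operatorname{Tor}_{i}^{P_{\mathfrak{p}}}(k(\mathfrak{p}),M_{\mathfrak{p}})$, which rests on $k(\mathfrak{p})$ being already local at $\mathfrak{p}$ together with exactness of localization applied to the free resolution $\mathcal{P}$; likewise the reduction of $\mathcal{O}_{\mathfrak{X}}(U)\perp_{P}M$ to the pointwise vanishing of $M$ on $U$ rests on flatness of the localizations $P_{\mathfrak{q}}$.
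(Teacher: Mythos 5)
The paper does not actually prove Theorem~\ref{tcal1}; it is stated with the attribution ``proved in \cite{DMMJ},'' so there is no in-paper proof to compare against. Your argument, however, is correct and self-contained, and it uses exactly the ingredients the paper sets up in Subsections~\ref{subsecSpMod} and~\ref{SubsecAVS}: the reduction of $\mathcal{O}_{\mathfrak{X}}(U)\perp_{P}M$ to pointwise vanishing $M_{\mathfrak{q}}=0$ via flatness of the stalks, the identification $k(\mathfrak{p})\otimes_{P}\mathcal{P}=\operatorname{Kos}(t^{\mathfrak{p}},k(\mathfrak{p})\otimes_{k}M)$ from the Koszul resolution, and the flat base-change $\operatorname{Tor}_{i}^{P}(k(\mathfrak{p}),M)\cong\operatorname{Tor}_{i}^{P_{\mathfrak{p}}}(k(\mathfrak{p}),M_{\mathfrak{p}})$ (which is automatic here since $\operatorname{Tor}_{i}^{P}(k(\mathfrak{p}),M)$ is already a $k(\mathfrak{p})$-module and hence its own localization at $\mathfrak{p}$). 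The chain $\mathfrak{p}\in\operatorname{res}(\mathfrak{X},M)\Rightarrow M_{\mathfrak{p}}=0\Rightarrow k(\mathfrak{p})\perp_{P}M\Rightarrow M_{\mathfrak{p}}=\mathfrak{m}_{\mathfrak{p}}M_{\mathfrak{p}}$ gives the two inclusions, and Nakayama plus closedness of $\operatorname{Supp}(M)=V(\operatorname{Ann}(M))$ for finitely generated $M$ (choosing $f\in\operatorname{Ann}(M)\setminus\mathfrak{p}$ and the basic open $D(f)$) closes the loop under the Noetherian hypothesis. One stylistic remark: once you have observed that $\mathfrak{p}\in\operatorname{res}(\mathfrak{X},M)$ forces $M_{\mathfrak{p}}=0$, the passage through $\operatorname{Tor}$ is really only needed to recast $M_{\mathfrak{p}}=0$ in the advertised form $k(\mathfrak{p})\perp_{P}M$; the genuine content of the theorem is that, in the Noetherian case, the a priori weaker pointwise condition $k(\mathfrak{p})\perp_{P}M$ back-propagates to the open condition defining $\operatorname{res}(\mathfrak{X},M)$, and your Nakayama-plus-$D(f)$ step captures that cleanly.
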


If $\mathfrak{p=}\left\langle X-a\right\rangle $ is a maximal ideal
corresponding to a closed point $a\in\mathfrak{X}$ then $k\left(
\mathfrak{p}\right)  =\operatorname{Frac}\left(  P/\mathfrak{p}\right)
=k\mathfrak{\ }$and $k\left(  \mathfrak{p}\right)  \otimes_{P}\mathcal{P=}%
\operatorname{Kos}\left(  t^{\mathfrak{p}},M\right)  $ with $t_{i}%
^{\mathfrak{p}}=x_{i}-a_{i}$, $1\leq i\leq n$, that is, $t^{\mathfrak{p}}%
=x-a$. Thus if $M$ is a finitely generated $P$-module, then $\sigma\left(
\mathfrak{X},M\right)  =\operatorname{Supp}\left(  M\right)  $ (see
(\ref{SV})), and $a\in\operatorname{res}\left(  \mathfrak{X},M\right)  $ iff
$\operatorname{Kos}\left(  x-a,M\right)  $ is exact thanks to Theorem
\ref{tcal1}.

\begin{lemma}
\label{lemEq}Let $R/k$ be an algebra finite extension of the field $k$,
$\mathfrak{X=}\operatorname{Spec}\left(  R\right)  $, $M$ an $R$-module, and
let $R=k\left[  x\right]  $ for an $n$-tuple $x$. Then $M$ is a $P$-module,
$\mathfrak{X}\subseteq\mathbb{A}_{k}^{n}$ up to a homeomorphism,
$\operatorname{Supp}_{P}\left(  M\right)  =\operatorname{Supp}_{R}\left(
M\right)  $, $\sigma\left(  \mathfrak{X},M\right)  =\sigma\left(
\mathbb{A}_{k}^{n},M\right)  $, $\sigma_{\operatorname{p}}\left(
\mathfrak{X},M\right)  =\sigma_{\operatorname{p}}\left(  \mathbb{A}_{k}%
^{n},M\right)  $, and
\[
\sigma_{\operatorname{p}}\left(  x,M\right)  =\sigma_{\operatorname{p}}\left(
\mathbb{A}_{k}^{n},M\right)  \cap\mathbb{A}^{n}.
\]
If $M$ is a Noetherian module then
\[
\sigma\left(  x,M\right)  =\sigma\left(  \mathbb{A}_{k}^{n},M\right)
\cap\mathbb{A}^{n}%
\]
and it is a nonempty closed subset of $\mathbb{A}^{n}$ whose closure in the
scheme $\mathbb{A}_{k}^{n}$ is reduced to $\sigma\left(  \mathfrak{X}%
,M\right)  $.
\end{lemma}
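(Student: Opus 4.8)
The plan is to present $R$ as a quotient of the polynomial algebra $P=k[X_{1},\dots,X_{n}]$ and to transport all the constructions along the quotient map, invoking Theorem~\ref{tcal1} only for the Noetherian assertion. First I would fix the surjective $k$-algebra homomorphism $\varphi\colon P\to R$, $X_{i}\mapsto x_{i}$, with kernel $I$, so that $R\cong P/I$ and $M$ carries a $P$-module structure via $X_{i}\cdot m=x_{i}m$. The induced morphism $\varphi^{\ast}\colon\mathfrak{X}=\operatorname{Spec}(R)\to\operatorname{Spec}(P)=\mathbb{A}_{k}^{n}$ is a closed immersion, hence a homeomorphism of $\mathfrak{X}$ onto the closed subset $V(I)$; this is the asserted embedding $\mathfrak{X}\subseteq\mathbb{A}_{k}^{n}$.

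Next I would check that $\operatorname{Supp}_{P}(M)=\operatorname{Supp}_{R}(M)$ and $\operatorname{Ass}_{P}(M)=\operatorname{Ass}_{R}(M)$ under this identification. Since $I\cdot M=0$ we have $I\subseteq\operatorname{Ann}_{P}(M)$, so both sets are contained in $V(I)$; and for a prime $\mathfrak{p}\supseteq I$ with image $\bar{\mathfrak{p}}=\mathfrak{p}/I$, the multiplicative systems $P\setminus\mathfrak{p}$ and $R\setminus\bar{\mathfrak{p}}$ act on $M$ through the same operators, whence $M_{\mathfrak{p}}=M_{\bar{\mathfrak{p}}}$; likewise $\operatorname{Ann}_{P}(m)=\varphi^{-1}(\operatorname{Ann}_{R}(m))$ for every $m\in M$, which identifies the associated primes. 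Using $\sigma(\mathfrak{X},M)=\operatorname{Supp}_{R}(M)^{-}$ and $\sigma(\mathbb{A}_{k}^{n},M)=\operatorname{Supp}_{P}(M)^{-}$ from (\ref{cl}) together with $\operatorname{Supp}_{P}(M)\subseteq V(I)$, and $\sigma_{\operatorname{p}}=\operatorname{Ass}$ by definition, I obtain $\sigma(\mathfrak{X},M)=\sigma(\mathbb{A}_{k}^{n},M)$ and $\sigma_{\operatorname{p}}(\mathfrak{X},M)=\sigma_{\operatorname{p}}(\mathbb{A}_{k}^{n},M)$.

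For the Taylor point spectrum I would argue directly: $a\in\sigma_{\operatorname{p}}(x,M)$ iff there is a nonzero $m\in M$ with $(x_{i}-a_{i})m=0$ for all $i$, i.e.\ $\langle X-a\rangle\subseteq\operatorname{Ann}_{P}(m)\subsetneq P$; as $\langle X-a\rangle$ is maximal this forces $\operatorname{Ann}_{P}(m)=\langle X-a\rangle$, so the closed point $a$ lies in $\operatorname{Ass}_{P}(M)$, and the converse inclusion is immediate. Since closed points of $\mathbb{A}_{k}^{n}$ correspond to $\mathbb{A}^{n}$ by the Nullstellensatz, this gives $\sigma_{\operatorname{p}}(x,M)=\sigma_{\operatorname{p}}(\mathbb{A}_{k}^{n},M)\cap\mathbb{A}^{n}$.

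Finally, suppose $M$ is Noetherian; then it is finitely generated over $R$, hence over $P$, so $\mathcal{P}=\operatorname{Kos}(t,P\otimes_{k}M)$ is a finite free resolution and Theorem~\ref{tcal1} gives $\operatorname{res}(\mathbb{A}_{k}^{n},M)=\{\mathfrak{p}:k(\mathfrak{p})\perp_{P}M\}$. Specializing to a closed point $a$, where $k(\mathfrak{m}_{a})=k$ and $k(\mathfrak{m}_{a})\otimes_{P}\mathcal{P}=\operatorname{Kos}(x-a,M)$, one sees that $a\in\operatorname{res}(\mathbb{A}_{k}^{n},M)$ iff $\operatorname{Kos}(x-a,M)$ is exact iff $a\notin\sigma(x,M)$, hence $\sigma(x,M)=\sigma(\mathbb{A}_{k}^{n},M)\cap\mathbb{A}^{n}$. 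Nonemptiness is Corollary~\ref{corNE1} applied to the tuple $x$; closedness follows from $\sigma(\mathbb{A}_{k}^{n},M)=\operatorname{Supp}_{P}(M)=V(\operatorname{Ann}_{P}(M))$ by (\ref{SV}), cut down to $\mathbb{A}^{n}$; and since closed points are dense in every closed subset of $\mathbb{A}_{k}^{n}$, the closure of $\sigma(x,M)$ in the scheme $\mathbb{A}_{k}^{n}$ equals $\sigma(\mathbb{A}_{k}^{n},M)=\sigma(\mathfrak{X},M)$. The only step carrying genuine content is the appeal to Theorem~\ref{tcal1}, which converts exactness of the Koszul complex over $k$ into the resolvent condition over the scheme; the rest is bookkeeping along $P\to R$ and the Jacobson property of affine space.
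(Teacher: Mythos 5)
Your proposal follows the paper's own route essentially step for step: realize $R=P/I$, identify $\mathfrak{X}$ with $V(I)$ inside $\mathbb{A}_k^n$, transport support and associated primes across the closed immersion via localization, characterize the point spectrum through annihilators at closed points, and invoke Theorem~\ref{tcal1} plus Corollary~\ref{corNE1} for the Noetherian case with density of closed points closing the argument. The only cosmetic difference is that the paper carries out the localization identity $M_{\mathfrak p}=M_{\mathfrak p'}$ explicitly through $R_{\mathfrak p'}=P_{\mathfrak p}\otimes_P R$, which you compress into the informal remark that the two multiplicative systems act through the same operators; this is harmless and the content is identical.
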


\begin{proof}
Put $R=P/\mathfrak{a}$ for some ideal $\mathfrak{a\subseteq}P$. Then $M$ turns
out to be a $P$-module with the actions $X_{i}u=x_{i}u$, $u\in M$, $1\leq
i\leq n$ and $\mathfrak{a\subseteq}\operatorname{Ann}_{P}\left(  M\right)  $.
Note that $\mathfrak{X=}V\left(  \mathfrak{a}\right)  \subseteq
\operatorname{Spec}\left(  P\right)  =\mathbb{A}_{k}^{n}$ up to a
homeomorphism. Moreover, $M_{\mathfrak{p}}=M_{\mathfrak{p}^{\prime}}$ whenever
$\mathfrak{p\in}V\left(  \mathfrak{a}\right)  $ and $\mathfrak{p}^{\prime
}=\mathfrak{p/a}$. Indeed, $R_{\mathfrak{p}^{\prime}}=\left(  R-\mathfrak{p}%
^{\prime}\right)  ^{-1}R=\left(  P-\mathfrak{p}\right)  ^{-1}R=R_{\mathfrak{p}%
}=P_{\mathfrak{p}}\otimes_{P}R$ (see \cite[(11.15.1)]{AK}) and%
\[
M_{\mathfrak{p}^{\prime}}=R_{\mathfrak{p}^{\prime}}\otimes_{R}%
M=P_{\mathfrak{p}}\otimes_{P}R\otimes_{R}M=P_{\mathfrak{p}}\otimes
_{P}M=M_{\mathfrak{p}}.
\]
In particular, $\operatorname{Supp}_{P}\left(  M\right)  =\operatorname{Supp}%
_{R}\left(  M\right)  $. But $R/\mathfrak{p}^{\prime}=\left(  P/\mathfrak{a}%
\right)  /\left(  \mathfrak{p/a}\right)  =P/\mathfrak{p}$, which means that
$R/\mathfrak{p}^{\prime}\hookrightarrow M$ iff $P/\mathfrak{p}\hookrightarrow
M$. Thus $\operatorname{Ass}_{P}\left(  M\right)  =\operatorname{Ass}%
_{R}\left(  M\right)  $ (see \cite[17.3]{AK}) or $\sigma_{\operatorname{p}%
}\left(  \mathbb{A}_{k}^{n},M\right)  =\sigma_{\operatorname{p}}\left(
\mathfrak{X},M\right)  $. Using (\ref{cl}), we obtain that $\sigma\left(
\mathfrak{X},M\right)  =\operatorname{Supp}_{R}\left(  M\right)
^{-}=\operatorname{Supp}_{P}\left(  M\right)  ^{-}=\sigma\left(
\mathbb{A}_{k}^{n},M\right)  $.

The variety $\mathbb{A}^{n}$ is the set of all closed points in $\mathbb{A}%
_{k}^{n}$ obtained by means of the natural homeomorphism $\beta:\mathbb{A}%
^{n}\rightarrow\mathbb{A}_{k}^{n}$, $\beta\left(  a\right)  =\left\langle
X-a\right\rangle $ (see \cite[2.2.6]{Harts}) onto the closed points. Put
$\sigma_{c}\left(  \mathbb{A}_{k}^{n},M\right)  $ to be $\sigma\left(
\mathbb{A}_{k}^{n},M\right)  \cap\mathbb{A}^{n}$ or $\beta^{-1}\left(
\sigma\left(  \mathbb{A}_{k}^{n},M\right)  \right)  $. Since $\sigma\left(
\mathbb{A}_{k}^{n},M\right)  $ is closed, the set $\sigma_{c}\left(
\mathbb{A}_{k}^{n},M\right)  $ turns out to be a closed subset of
$\mathbb{A}^{n}$. If $\mathfrak{p}\in\sigma\left(  \mathbb{A}_{k}%
^{n},M\right)  $ then $V\left(  \mathfrak{p}\right)  =\overline{\left\{
\mathfrak{p}\right\}  }\subseteq\sigma\left(  \mathbb{A}_{k}^{n},M\right)  $
and $\mathfrak{m}\in\sigma_{c}\left(  \mathbb{A}_{k}^{n},M\right)  $ whenever
$\mathfrak{m}\in V\left(  \mathfrak{p}\right)  \cap\mathbb{A}^{n}$ is a
maximal ideal. By Hilbert Nullstellensatz \cite[15.7]{AK}, $\sigma\left(
\mathbb{A}_{k}^{n},M\right)  $ is the closure of $\sigma_{c}\left(
\mathbb{A}_{k}^{n},M\right)  $ in $\mathbb{A}_{k}^{n}$.

Notice that $\mathfrak{m\in}\sigma_{\operatorname{p}}\left(  \mathbb{A}%
_{k}^{n},M\right)  \cap\mathbb{A}^{n}$ iff $\mathfrak{m=}\left\langle
X-a\right\rangle =\operatorname{Ann}\left(  u\right)  $ for some point
$a\in\mathbb{A}^{n}$ and $u\neq0$ from $M$. The latter means that
$x_{i}u=a_{i}u$ for all $i$, which means that $a\in\sigma_{\operatorname{p}%
}\left(  x,M\right)  $ is a joint eigenvalue of $x$ with the related
eigenvector $u$.

Finally, assume that $M$ is a Noetherian $R$-module (or $P$-module). Based on
Theorem \ref{tcal1}, we conclude that $\operatorname{res}\left(
\mathbb{A}_{k}^{n},M\right)  \cap\mathbb{A}^{n}=\left\{  a\in\mathbb{A}%
^{n}:k\left(  \left\langle X-a\right\rangle \right)  \perp_{P}M\right\}  $ or
\[
\sigma_{c}\left(  \mathbb{A}_{k}^{n},M\right)  =\left\{  a\in\mathbb{A}%
^{n}:\operatorname{Kos}\left(  x-a,M\right)  \text{ is not exact}\right\}
=\sigma\left(  x,M\right)
\]
is the Taylor spectrum of the tuple $x$ on $M$. Hence $\sigma\left(
x,M\right)  $ is a closed subset of $\mathbb{A}^{n}$ and $\sigma\left(
x,M\right)  ^{-}=\sigma\left(  \mathbb{A}_{k}^{n},M\right)  =\sigma\left(
\mathfrak{X},M\right)  $. By Corollary \ref{corNE1}, $\sigma\left(
x,M\right)  \neq\varnothing$.
\end{proof}

\begin{corollary}
\label{corFLM1}Let $R/k$ be an algebra finite extension of the field $k$, $M$
an $R$-module with $\ell_{R}\left(  M\right)  <\infty$, and let $x$ be an
$n$-tuple in $R$ such that $k\left[  x\right]  \subseteq R$ is integral. Then
$\sigma_{\operatorname{p}}\left(  x,M\right)  =\sigma_{\operatorname{p}%
}\left(  \mathbb{A}_{k}^{n},M\right)  =\sigma\left(  \mathbb{A}_{k}%
^{n},M\right)  =\sigma\left(  x,M\right)  $ and the $k\left[  x\right]
$-module structure on $M$ is triangularizable. In this case, $i\left(
x-a\right)  =0$ for all $a\in\mathbb{A}^{n}$.
\end{corollary}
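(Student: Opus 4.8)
The strategy is to reduce every assertion to results already established, principally Corollary \ref{corkey1}, Theorem \ref{thNP11}, Lemma \ref{lemEq}, and Corollary \ref{corEx1}. First I would observe that $\ell_R(M)<\infty$ forces $\operatorname{Supp}_R(M)$ to consist of finitely many maximal ideals, hence $\sigma(\mathbb{A}_k^n,M)=\operatorname{Supp}_P(M)=\operatorname{Supp}_R(M)$ is a finite set of closed points by Lemma \ref{lemEq}, and this finite set is exactly $\sigma_{\operatorname{p}}(\mathbb{A}_k^n,M)=\operatorname{Ass}_R(M)$ because for a module of finite length the associated primes, the support primes, and the maximal ideals in the support all coincide (as used in the proof of Theorem \ref{thNP11}, via \cite[19.4]{AK}). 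Since these are all closed points, intersecting with $\mathbb{A}^n$ changes nothing, so $\sigma_{\operatorname{p}}(\mathbb{A}_k^n,M)=\sigma_{\operatorname{p}}(x,M)$ and $\sigma(\mathbb{A}_k^n,M)=\sigma(x,M)$ again by Lemma \ref{lemEq}. Chaining these equalities gives $\sigma_{\operatorname{p}}(x,M)=\sigma_{\operatorname{p}}(\mathbb{A}_k^n,M)=\sigma(\mathbb{A}_k^n,M)=\sigma(x,M)$.

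Next I would establish triangularizability of the $k[x]$-module structure on $M$. Writing $R'=k[x]\subseteq R$, the hypothesis that $k[x]\subseteq R$ is integral together with $R/k$ algebra-finite puts us in the situation of Theorem \ref{thNP11} (with the roles of $R'$ and the ambient algebra as there), and by Remark \ref{remFD1} the condition $\ell_R(M)<\infty$ is equivalent to $\ell_{R'}(M)<\infty$, equivalently $\dim_k(M)<\infty$. A Jordan--Hölder series for $M$ as an $R$-module has quotients $R/\mathfrak{m}_j=k$ with $\mathfrak{m}_j=\langle x-b^{(j)}\rangle$ by Zariski Nullstellensatz, so exactly the argument in the last paragraph of the proof of Theorem \ref{thNP11} produces a basis of $M$ in which each $x_i$ acts by a lower-triangular matrix with diagonal entries $b_i^{(j)}$; hence $p(x)$ acts triangularly for every $p(x)\in R$, i.e.\ the $k[x]$-module structure on $M$ is triangularizable.

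Finally, for the index I would invoke that $\dim_k(M)<\infty$ implies $i(t)=0$ for every single linear transformation $t$ on $M$, which is the observation recorded just before Lemma \ref{lemB3}. For a fixed point $a\in\mathbb{A}^n$, apply Lemma \ref{lemB3} with $x'=(x_1-a_1,\ldots,x_{n-1}-a_{n-1})$ and the last coordinate $x_n-a_n$: since $H_p(x'-a',M)$ is a subquotient of a finite-dimensional space and hence finite-dimensional, $i(x_n-a_n\mid H_p(x'-a',M))=0$ for every $p$, and the displayed formula $i(x-a)=\sum_{p=0}^{n-1}(-1)^p\,i(x_n-a_n\mid H_p(x'-a',M))$ from Lemma \ref{lemB3} gives $i(x-a)=0$ directly. (Equivalently, one could run induction on $n$ using the last clause of Lemma \ref{lemB3}, ``if $i(x')<\infty$ then $i(x)=0$''.)

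I do not expect a genuine obstacle here: every ingredient is already in place, and the only care needed is bookkeeping — making sure that ``finite length over $R$'' is correctly translated into ``finite $k$-dimension'' via Remark \ref{remFD1} so that the finiteness hypotheses of Theorem \ref{thNP11} and of the index formula in Lemma \ref{lemB3} are met, and checking that the finite support consists of closed points so that passing between $\mathbb{A}_k^n$ and $\mathbb{A}^n$ is harmless.
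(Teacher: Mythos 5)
Your argument is correct and follows essentially the same route as the paper: you reduce to $R'=k[x]$ via Remark \ref{remFD1}, identify the spectra using Lemma \ref{lemEq} and the fact that for a finite-length module the associated primes, support primes, and maximal ideals in the support coincide (\cite[19.4]{AK}), and deduce triangularizability from a Jordan--H\"older chain exactly as in the last paragraph of Theorem \ref{thNP11}. The one spot where you diverge is the index computation: the paper observes directly that $\operatorname{Kos}(x-a,M)$ is a complex of finite-dimensional $k$-vector spaces, so its Euler characteristic can be read off the chain level, giving $\dim_k(M)\sum_{j=0}^n(-1)^{j+1}\binom{n}{j}=0$; you instead invoke the index-recursion $i(x-a)=\sum_{p}(-1)^p\,i\bigl(x_n-a_n\mid H_p(x'-a',M)\bigr)$ from Lemma \ref{lemB3}, noting each single-operator index on a finite-dimensional space vanishes. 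Both are elementary and valid; the Euler-characteristic route is a touch more self-contained since it bypasses the decomposition of Lemma \ref{lemB3}, while yours makes the reduction to the single-variable case explicit. Either is fine.
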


\begin{proof}
Based on Corollary \ref{corFD1}, we can assume that $R=k\left[  x\right]  $
(see also Remark \ref{remFD1}). In this case, $\operatorname{Ass}\left(
M\right)  =\operatorname{Supp}\left(  M\right)  =\operatorname{Max}\left(
M\right)  \subseteq\mathbb{A}^{n}$ \cite[19.4]{AK}. It remains to use Lemma
\ref{lemEq}. Further, all the gaps of a Jordan-H\"{o}lder chain is $k$ by
Zariski Nullstellensatz. There is a $k$-basis $\omega=\left(  \omega
_{1},\ldots,\omega_{s}\right)  $ for $M$ such that $\operatorname{Ass}\left(
M\right)  =\left\{  a^{\left(  1\right)  },\ldots,a^{\left(  s\right)
}\right\}  $ and
\[
x=\left[
\begin{array}
[c]{ccc}%
a^{\left(  1\right)  } &  & 0\\
& \ddots & \\
\ast &  & a^{\left(  s\right)  }%
\end{array}
\right]  ,
\]
which means that the $k\left[  x\right]  $-module structure on $M$ is triangularizable.

Finally, for every $a\in\mathbb{A}^{n}$ the complex $\operatorname{Kos}\left(
x-a,M\right)  $ consists of finite dimensional $k$-vector space. It follows
that its index coincides with its Euler characteristics. Hence
\begin{align*}
i\left(  x-a\right)   &  =\sum_{j=0}^{n}\left(  -1\right)  ^{j+1}\dim
_{k}\left(  H_{j}\left(  x-a,M\right)  \right)  =\sum_{j=0}^{n}\left(
-1\right)  ^{j+1}\dim_{k}\left(  M\otimes_{k}\wedge^{j}k^{n}\right) \\
&  =\dim_{k}\left(  M\right)  \sum_{j=0}^{n}\left(  -1\right)  ^{j+1}%
\dbinom{n}{j}=0,
\end{align*}
that is, $i\left(  x-a\right)  =0$ for all $a\in\mathbb{A}^{n}$.
\end{proof}

The following reformulation of Theorem \ref{propEx1} results in spectral
mapping theorem for the joint point spectrum.

\begin{theorem}
\label{thsmtps}Let $R=k\left[  x\right]  $ be an algebra finite extension with
an $n$-tuple $x$, and let $M$ be an $R$-module. Then
\[
\sigma_{\operatorname{p}}\left(  p\left(  x\right)  ,M\right)  =p^{\ast
}\left(  \sigma_{\operatorname{p}}\left(  \mathbb{A}_{k}^{n},M\right)
\right)  \cap\mathbb{A}^{m}%
\]
for every $m$-tuple $p\left(  x\right)  $ from $R$. If $k\left[  p\left(
x\right)  \right]  \subseteq R$ is integral then $\sigma_{\operatorname{p}%
}\left(  p\left(  x\right)  ,M\right)  =p\left(  \sigma_{\operatorname{p}%
}\left(  x,M\right)  \right)  $ and $\sigma\left(  p\left(  x\right)
,M\right)  $ is a closed set.
\end{theorem}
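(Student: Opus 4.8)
The plan is to derive both assertions by assembling Theorem~\ref{propEx1} (invariance of the point spectrum under a ring extension) and Lemma~\ref{lemEq} (the comparison of the Taylor point spectrum with the scheme one). First I would put $R^{\prime}=k\left[  p\left(  x\right)  \right]  \subseteq R$, which is again an algebra finite extension of $k$, so $R$ is Noetherian by Hilbert Basis. Realizing $R=P/\mathfrak{a}$ with $P=k\left[  X_{1},\ldots,X_{n}\right]  $ and $R^{\prime}=k\left[  Y_{1},\ldots,Y_{m}\right]  /\mathfrak{b}$ via $Y_{j}\mapsto p_{j}\left(  x\right)  $, one has $\mathfrak{X}=\operatorname{Spec}\left(  R\right)  \cong V\left(  \mathfrak{a}\right)  \subseteq\mathbb{A}_{k}^{n}$ and $\mathfrak{X}^{\prime}=\operatorname{Spec}\left(  R^{\prime}\right)  \cong V\left(  \mathfrak{b}\right)  \subseteq\mathbb{A}_{k}^{m}$, and the composite $k\left[  Y\right]  \rightarrow P\rightarrow R$ factors as $k\left[  Y\right]  \twoheadrightarrow R^{\prime}\overset{\iota}{\hookrightarrow}R$. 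Consequently the scheme morphism $p^{\ast}:\mathbb{A}_{k}^{n}\rightarrow\mathbb{A}_{k}^{m}$ induced by the tuple $p\left(  x\right)  $, restricted to $\mathfrak{X}$, is nothing but $\iota^{\ast}:\mathfrak{X}\rightarrow\mathfrak{X}^{\prime}$ followed by the closed embedding $\mathfrak{X}^{\prime}\hookrightarrow\mathbb{A}_{k}^{m}$ (and on $V\left(  \mathfrak{a}\right)  $ it does not depend on the chosen polynomial lifts of the $p_{j}$, since $\operatorname{Ass}_{P}\left(  M\right)  \subseteq V\left(  \mathfrak{a}\right)  $). Now Theorem~\ref{propEx1} gives $\sigma_{\operatorname{p}}\left(  \mathfrak{X}^{\prime},M\right)  =\iota^{\ast}\left(  \sigma_{\operatorname{p}}\left(  \mathfrak{X},M\right)  \right)  $, while Lemma~\ref{lemEq}, applied to $R=k\left[  x\right]  $ and to $R^{\prime}=k\left[  p\left(  x\right)  \right]  $, gives $\sigma_{\operatorname{p}}\left(  \mathfrak{X},M\right)  =\sigma_{\operatorname{p}}\left(  \mathbb{A}_{k}^{n},M\right)  $, $\sigma_{\operatorname{p}}\left(  \mathfrak{X}^{\prime},M\right)  =\sigma_{\operatorname{p}}\left(  \mathbb{A}_{k}^{m},M\right)  $ and $\sigma_{\operatorname{p}}\left(  p\left(  x\right)  ,M\right)  =\sigma_{\operatorname{p}}\left(  \mathbb{A}_{k}^{m},M\right)  \cap\mathbb{A}^{m}$. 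Chaining these equalities yields $p^{\ast}\left(  \sigma_{\operatorname{p}}\left(  \mathbb{A}_{k}^{n},M\right)  \right)  \cap\mathbb{A}^{m}=\sigma_{\operatorname{p}}\left(  p\left(  x\right)  ,M\right)  $, which is the first assertion.

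Next I would treat the integral case $R^{\prime}=k\left[  p\left(  x\right)  \right]  \subseteq R$. The inclusion $p\left(  \sigma_{\operatorname{p}}\left(  x,M\right)  \right)  \subseteq\sigma_{\operatorname{p}}\left(  p\left(  x\right)  ,M\right)  $ is immediate from the first part and uses no hypothesis: if $a\in\sigma_{\operatorname{p}}\left(  x,M\right)  $ then $\left\langle X-a\right\rangle \in\operatorname{Ass}_{P}\left(  M\right)  =\sigma_{\operatorname{p}}\left(  \mathbb{A}_{k}^{n},M\right)  $, and $p^{\ast}\left(  \left\langle X-a\right\rangle \right)  =\left\langle Y-p\left(  a\right)  \right\rangle $ is a closed point, so $p\left(  a\right)  \in p^{\ast}\left(  \sigma_{\operatorname{p}}\left(  \mathbb{A}_{k}^{n},M\right)  \right)  \cap\mathbb{A}^{m}=\sigma_{\operatorname{p}}\left(  p\left(  x\right)  ,M\right)  $. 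For the reverse inclusion, let $b\in\sigma_{\operatorname{p}}\left(  p\left(  x\right)  ,M\right)  $; by the first part $\left\langle Y-b\right\rangle =p^{\ast}\left(  \mathfrak{p}\right)  $ for some $\mathfrak{p}\in\operatorname{Ass}_{P}\left(  M\right)  \subseteq V\left(  \mathfrak{a}\right)  $, so through the factorization above the prime of $R$ attached to $\mathfrak{p}$ lies over the maximal ideal of $R^{\prime}$ attached to $\left\langle Y-b\right\rangle $. By Maximality in the Krull--Cohen--Seidenberg theory \cite[14.3]{AK}, $\mathfrak{p}$ is then itself maximal, hence $\mathfrak{p}=\left\langle X-a\right\rangle $ for some $a\in\mathbb{A}^{n}$; thus $a\in\operatorname{Ass}_{P}\left(  M\right)  \cap\mathbb{A}^{n}=\sigma_{\operatorname{p}}\left(  x,M\right)  $ and $b=p\left(  a\right)  $. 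For the closedness of $\sigma\left(  p\left(  x\right)  ,M\right)  $, since $R/R^{\prime}$ is integral and $R/k$ is algebra finite, $R$ is module finite over $R^{\prime}$ \cite[10.18]{AK}, so a Noetherian $M$ is a Noetherian $R^{\prime}$-module; then Lemma~\ref{lemEq} applied to $R^{\prime}=k\left[  p\left(  x\right)  \right]  $ shows that $\sigma\left(  p\left(  x\right)  ,M\right)  =\sigma\left(  \mathbb{A}_{k}^{m},M\right)  \cap\mathbb{A}^{m}$, and $\sigma\left(  \mathbb{A}_{k}^{m},M\right)  =\operatorname{Supp}_{R^{\prime}}\left(  M\right)  ^{-}$ is closed in $\mathbb{A}_{k}^{m}$, so its trace on $\mathbb{A}^{m}$ is closed. (Equivalently one may note that $\iota^{\ast}$ is a closed map since $\iota$ is integral, so $\sigma\left(  \mathfrak{X}^{\prime},M\right)  =\iota^{\ast}\left(  \sigma\left(  \mathfrak{X},M\right)  \right)  $ is already closed by~(\ref{2}).)

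The main obstacle is the geometric bookkeeping in the first paragraph: one has to pin down carefully that the scheme morphism induced by the tuple $p\left(  x\right)  $ really does restrict on $\mathfrak{X}=\operatorname{Spec}\left(  R\right)  $ to $\iota^{\ast}$ composed with the closed embedding of $\mathfrak{X}^{\prime}$, and to be precise about which affine space each spectrum is being intersected with; once this is settled, the first equality is a formal concatenation of Theorem~\ref{propEx1} and Lemma~\ref{lemEq}. In the integral case the only genuinely new ingredient is Maximality, used to promote the relevant associated prime of $R$ to a closed point (this is exactly what lets one drop the closure that would otherwise intervene as in~(\ref{2})); the integrality hypothesis is essential here, and both the reverse inclusion and the closedness statement can fail without it, already for $R=k[x,y]$ with $M=R/\langle x\rangle$ and $p(x,y)=x$.
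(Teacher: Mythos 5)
Your proposal is correct and follows essentially the same route as the paper's own proof: it reduces the first equality to a formal chaining of Theorem~\ref{propEx1} with Lemma~\ref{lemEq} via the identification of $p^{\ast}|_{\mathfrak{X}}$ with $\iota^{\ast}$ composed with the closed embedding $\mathfrak{X}^{\prime}\hookrightarrow\mathbb{A}_{k}^{m}$, and in the integral case invokes Maximality from Krull--Cohen--Seidenberg to avoid the closure and then appeals again to Lemma~\ref{lemEq} (with $M$ module finite over $R^{\prime}$) for the closedness of the Taylor spectrum. The only minor difference is that you unpack the Maximality step at the level of an associated prime $\mathfrak{p}\in\operatorname{Ass}_{P}\left(  M\right)  $ rather than by the compact identity $\sigma_{\operatorname{p}}\left(  \mathfrak{X}^{\prime},M\right)  \cap\mathbb{A}^{m}=\iota^{\ast}\left(  \sigma_{\operatorname{p}}\left(  \mathfrak{X},M\right)  \cap\mathbb{A}^{n}\right)  $, but this is the same argument.
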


\begin{proof}
Put $\mathfrak{X=}\operatorname{Spec}\left(  R\right)  $, $R^{\prime}=k\left[
p\left(  x\right)  \right]  $ to be a subalgebra of $R$ generated by an
$m$-tuple $p\left(  x\right)  $ from $R$ with the inclusion map $\iota
:R^{\prime}\rightarrow R$, and $\iota^{\ast}:\mathfrak{X}\rightarrow
\mathfrak{X}^{\prime}$ is the natural map with $\mathfrak{X}^{\prime
}=\operatorname{Spec}\left(  R^{\prime}\right)  $. There are canonical
surjective maps $k\left[  X\right]  \rightarrow R$ and $k\left[  Y\right]
\rightarrow R^{\prime}$, where $Y$ is an $m$-tuple. In particular,
$\mathfrak{X\subseteq}\mathbb{A}_{k}^{n}$ and $\mathfrak{X}^{\prime}%
\subseteq\mathbb{A}_{k}^{m}$ up to a canonical homeomorphisms. We have the
polynomial ring map $p:k\left[  Y\right]  \rightarrow k\left[  X\right]  $,
$Y_{j}\mapsto p_{j}\left(  X\right)  $, $1\leq j\leq m$, which in turn
generates the map $p^{\ast}:\mathbb{A}_{k}^{n}\rightarrow\mathbb{A}_{k}^{m}$
of affine spaces. If $\mathfrak{m}_{a}=\left\langle X-a\right\rangle
\in\mathbb{A}^{n}$ then $p^{\ast}\left(  \mathfrak{m}_{a}\right)
=p^{-1}\left(  \mathfrak{m}_{a}\right)  \supseteq\left\langle Y-p\left(
a\right)  \right\rangle =\mathfrak{m}_{p\left(  a\right)  }$, for $p\left(
\left\langle Y-p\left(  a\right)  \right\rangle \right)  \subseteq\left\langle
p\left(  X\right)  -p\left(  a\right)  \right\rangle \subseteq\mathfrak{m}%
_{a}$. Since $\mathfrak{m}_{p\left(  a\right)  }\in\mathbb{A}^{m}$ is maximal,
we conclude that $p^{\ast}\left(  \mathfrak{m}_{a}\right)  =\mathfrak{m}%
_{p\left(  a\right)  }$. Thus $p^{\ast}\left(  \mathbb{A}^{n}\right)
\subseteq\mathbb{A}^{m}$ and $p^{\ast}:\mathbb{A}^{n}\rightarrow\mathbb{A}%
^{m}$, $a\mapsto p\left(  a\right)  $ is a polynomial morphism. Moreover,
$p^{\ast}\left(  \mathbb{A}_{k}^{n}\right)  \cap\mathbb{A}^{m}=p^{\ast}\left(
\mathbb{A}^{n}\right)  $. Indeed, if $p^{\ast}\left(  \mathfrak{q}\right)
=\mathfrak{m}_{b}$ for some $\mathfrak{q}\in\mathbb{A}_{k}^{n}$ and
$b\in\mathbb{A}^{m}$, then $\mathfrak{q\subseteq m}_{a}$ for some
$a\in\mathbb{A}^{n}$ and $\mathfrak{m}_{b}\subseteq p^{\ast}\left(
\mathfrak{m}_{a}\right)  =\mathfrak{m}_{p\left(  a\right)  }$, which implies
that $\mathfrak{m}_{b}=\mathfrak{m}_{p\left(  a\right)  }$ or $b=p\left(
a\right)  $.

Using the duality correspondence between affine schemes and rings, we obtain
the following commutative diagrams
\[%
\begin{array}
[c]{ccccccc}%
k\left[  Y\right]  & \overset{p}{\longrightarrow} & k\left[  X\right]  &  &
\mathbb{A}_{k}^{m} & \overset{p^{\ast}}{\longleftarrow} & \mathbb{A}_{k}^{n}\\
\downarrow &  & \downarrow & \Leftrightarrow & \uparrow &  & \uparrow\\
R^{\prime} & \overset{\iota}{\longrightarrow} & R &  & \mathfrak{X}^{\prime} &
\overset{\iota^{\ast}}{\longleftarrow} & \mathfrak{X}%
\end{array}
\]
supporting each other. Using Theorem \ref{propEx1} and Lemma \ref{lemEq}, we
deduce that
\begin{align*}
\sigma_{\operatorname{p}}\left(  p\left(  x\right)  ,M\right)   &
=\sigma_{\operatorname{p}}\left(  \mathbb{A}_{k}^{m},M\right)  \cap
\mathbb{A}^{m}=\sigma_{\operatorname{p}}\left(  \mathfrak{X}^{\prime
},M\right)  \cap\mathbb{A}^{m}=\iota^{\ast}\left(  \sigma_{\operatorname{p}%
}\left(  \mathfrak{X},M\right)  \right)  \cap\mathbb{A}^{m}\\
&  =p^{\ast}\left(  \sigma_{\operatorname{p}}\left(  \mathbb{A}_{k}%
^{n},M\right)  \right)  \cap\mathbb{A}^{m}.
\end{align*}
If $R^{\prime}\subseteq R$ is integral, then $\sigma_{\operatorname{p}}\left(
\mathfrak{X}^{\prime},M\right)  \cap\mathbb{A}^{m}=\iota^{\ast}\left(
\sigma_{\operatorname{p}}\left(  \mathfrak{X},M\right)  \cap\mathbb{A}%
^{n}\right)  $ by Maximality \cite[14.3]{AK} and Theorem \ref{propEx1}. Using
again Lemma \ref{lemEq}, we deduce that $\sigma_{\operatorname{p}}\left(
p\left(  x\right)  ,M\right)  =p^{\ast}\left(  \sigma_{\operatorname{p}%
}\left(  \mathbb{A}_{k}^{n},M\right)  \cap\mathbb{A}^{n}\right)  =p\left(
\sigma_{\operatorname{p}}\left(  x,M\right)  \right)  $.

Finally, prove that Taylor spectrum $\sigma\left(  p\left(  x\right)
,M\right)  $ is a closed set. Since $R/R^{\prime}$ is integral, $M$ is module
finite over $R^{\prime}$. By Lemma \ref{lemEq}, $\sigma\left(  p\left(
x\right)  ,M\right)  =\sigma\left(  \mathbb{A}_{k}^{m},M\right)
\cap\mathbb{A}^{m}$ is a closed set.
\end{proof}

\begin{remark}
The same argument from the proof of Theorem \ref{thsmtps}, and Proposition
\ref{propEx2}, result in the spectral mapping property $\sigma\left(  p\left(
x\right)  ,M\right)  =p\left(  \sigma\left(  x,M\right)  \right)  $ for Taylor
spectrum whenever $k\left[  p\left(  x\right)  \right]  \subseteq R$ is
integral and $M$ is Noetherian. But as we have seen above in Theorem
\ref{thSMP}, the formula holds in the general case of all tuples $p\left(
x\right)  $.
\end{remark}

If $\mathfrak{a\subseteq}P$ is a radical ideal with the related algebraic set
$Y=Z\left(  \mathfrak{a}\right)  =V\left(  \mathfrak{a}\right)  \cap
\mathbb{A}^{n}$ (that is $I\left(  Y\right)  =\mathfrak{a}$), then the
coordinate ring $R$ of $Y$ is the reduced to the Noetherian ring
$P/\mathfrak{a}$. Put $\mathfrak{Y=}\operatorname{Spec}\left(  R\right)  $. By
(\ref{SV}), we obtain that $\sigma\left(  \mathbb{A}_{k}^{n},R\right)
=\operatorname{Supp}\left(  R\right)  =V\left(  \mathfrak{a}\right)
=\mathfrak{Y}$ is the set of all subvarieties of $Y$, and by Lemma
\ref{lemEq}, $Y=\mathfrak{Y}\cap\mathbb{A}^{n}=\sigma\left(  x,R\right)  $ is
the Taylor spectrum of the operator tuple $x$ in $R$, which consists of all
coordinate functions. For a Noetherian $k\left[  x\right]  $-module $M$, we
obtain that the tuple $x$ on the module $M$ is triangularizable whose diagonal
entries are varieties (see \cite[4.4]{DMMJ}).

Let $p=\left(  p_{1},\ldots,p_{m}\right)  :Y\rightarrow\mathbb{A}^{m}$ be a
morphism given by means of a ring extension $R^{\prime}=k\left[  y\right]
\subseteq R$ with $y=p\left(  x\right)  $ to be an $m$-tuple. In particular,
we have the spectrum $\sigma\left(  \mathbb{A}_{k}^{m},R\right)  $ of the
$R^{\prime}$-module $R$. If $R/R^{\prime}$ is integral then $p^{\ast}$ turns
out to be a finite morphism (see \cite[2.3]{Harts}). Using (\ref{2}), we
deduce that $\sigma\left(  \mathbb{A}_{k}^{m},R\right)  \cap\mathbb{A}%
^{m}=p\left(  Y\right)  ^{-}$ (see \cite{DMMJ} for the details).

\begin{corollary}
\label{propASMT}If $Y\subseteq\mathbb{A}^{n}$ is an algebraic set over $k$
with its coordinate ring $R$, then $\sigma_{\operatorname{p}}\left(
x,R\right)  $ is the set $Y_{is}$ of all isolated points of $Y$. If
$p:Y\rightarrow\mathbb{A}^{m}$ is a finite morphism given by a ring extension
$k\left[  p\left(  x\right)  \right]  \subseteq R$, then $\sigma\left(
p\left(  x\right)  ,R\right)  =\sigma\left(  \mathbb{A}_{k}^{m},R\right)
\cap\mathbb{A}^{m}=p\left(  Y\right)  $ and $\sigma_{\operatorname{p}}\left(
p\left(  x\right)  ,R\right)  =p\left(  Y_{is}\right)  $.
\end{corollary}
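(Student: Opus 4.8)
The plan is to read off the three assertions from Lemma \ref{lemEq}, Theorem \ref{thsmtps}, and classical facts about reduced rings and integral extensions, handling the statements one at a time.

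First I would compute $\sigma_{\operatorname{p}}\left(x,R\right)$. By Lemma \ref{lemEq} it equals $\sigma_{\operatorname{p}}\left(\mathbb{A}_{k}^{n},R\right)\cap\mathbb{A}^{n}=\operatorname{Ass}_{R}\left(R\right)\cap\mathbb{A}^{n}$, the associated primes of $R$ that happen to be maximal ideals. Since the coordinate ring $R=P/I\left(Y\right)$ is reduced and Noetherian, it has no embedded primes, so $\operatorname{Ass}_{R}\left(R\right)=\operatorname{Min}\left(R\right)$, and these minimal primes correspond bijectively to the irreducible components of $Y$. A minimal prime is a maximal ideal exactly when the corresponding component is a single closed point $\left\{a\right\}$, and $\left\{a\right\}$ is an irreducible component of $Y$ iff $a\notin\overline{Y\setminus\left\{a\right\}}$, i.e. iff $Y\setminus\left\{a\right\}$ is closed in $Y$, i.e. iff $a$ is an isolated point. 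Hence $\sigma_{\operatorname{p}}\left(x,R\right)=Y_{is}$. (Equivalently and more concretely, $a\in\sigma_{\operatorname{p}}\left(x,R\right)$ means $H_{n}\left(x-a,R\right)\neq0$, i.e. some nonzero $r\in R$ satisfies $\left(x_{i}-a_{i}\right)r=0$ for all $i$; read as a regular function on $Y$ this says $r$ vanishes off $\left\{a\right\}$ without being identically zero, which is possible precisely when $a$ is isolated, in which case $Y=\left\{a\right\}\sqcup Y_{0}$ and $R$ splits off the factor $k$ corresponding to $\left\{a\right\}$.)

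Next I would handle $\sigma\left(p\left(x\right),R\right)$. Since $p$ is a finite morphism, $R^{\prime}=k\left[p\left(x\right)\right]\subseteq R$ is module finite, hence $R$ is a Noetherian $R^{\prime}$-module and $R^{\prime}/k$ is algebra finite; so the last line of Theorem \ref{thsmtps} (equivalently Lemma \ref{lemEq} applied to the algebra $R^{\prime}$ and the module $R$) gives $\sigma\left(p\left(x\right),R\right)=\sigma\left(\mathbb{A}_{k}^{m},R\right)\cap\mathbb{A}^{m}$, a closed set. Because $R$ contains $1$ it is a faithful $R^{\prime}$-module, so by (\ref{SV}) over $R^{\prime}$ we have $\sigma\left(\mathbb{A}_{k}^{m},R\right)=\operatorname{Supp}_{R^{\prime}}\left(R\right)=V\left(\operatorname{Ann}_{R^{\prime}}\left(R\right)\right)=\operatorname{Spec}\left(R^{\prime}\right)$, whose closed points form the algebraic set $Z\left(\ker\left(k\left[Y\right]\rightarrow R\right)\right)=\overline{p\left(Y\right)}$. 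The text preceding the statement already records $\sigma\left(\mathbb{A}_{k}^{m},R\right)\cap\mathbb{A}^{m}=p\left(Y\right)^{-}$ from (\ref{2}); since $R^{\prime}\subseteq R$ is integral, $\iota^{\ast}$ is surjective and carries closed points onto closed points by the Maximality property \cite[14.3]{AK} (equivalently, a finite morphism is closed), so $p\left(Y\right)^{-}=p\left(Y\right)$. Putting these together gives $\sigma\left(p\left(x\right),R\right)=\sigma\left(\mathbb{A}_{k}^{m},R\right)\cap\mathbb{A}^{m}=p\left(Y\right)$.

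The point spectrum of $p\left(x\right)$ is then immediate: $k\left[p\left(x\right)\right]\subseteq R$ being integral, Theorem \ref{thsmtps} gives $\sigma_{\operatorname{p}}\left(p\left(x\right),R\right)=p\left(\sigma_{\operatorname{p}}\left(x,R\right)\right)$, which equals $p\left(Y_{is}\right)$ by the first part. I expect the only genuinely delicate points to be the small piece of topology ``isolated point $\Leftrightarrow$ one-point irreducible component'' used in the first paragraph, and the removal of the closure $p\left(Y\right)^{-}=p\left(Y\right)$ in the second — the latter being exactly the closedness of finite morphisms, the former a routine argument with irredundant decompositions into irreducible components.
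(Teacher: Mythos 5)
Your proposal is correct, and it reaches each of the three claims, but the route is genuinely different from the paper's in two places, so a brief comparison is warranted. For the first claim the paper argues directly from the definition of an associated prime: $a\in\sigma_{\operatorname{p}}(x,R)$ means $\langle X-a\rangle g\subseteq\mathfrak{a}=I(Y)$ for some $g\notin\mathfrak{a}$, which is read geometrically as $Y\subseteq\{a\}\cup Z(g)$ and $Y\nsubseteq Z(g)$, and from there the equivalence with $a$ being isolated falls out of the fact that complements of hypersurfaces form a base of the Zariski topology (the converse direction uses $I(\{a\}\cup Z(g))\subseteq I(Y)$). You instead invoke the structure theory of a reduced Noetherian ring — $\operatorname{Ass}_R(R)=\operatorname{Min}(R)$, minimal primes correspond to irreducible components, a maximal $\operatorname{Ass}$ prime therefore corresponds to a one-point component, and one-point components are exactly the isolated points by irredundance. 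Both are correct; yours is more structural and arguably more transparent, whereas the paper's is more elementary and works entirely at the level of ideals and vanishing loci. Your parenthetical ``more concrete'' version is in fact very close in spirit to what the paper actually writes. For the second claim the paper establishes $\sigma(p(x),R)=p(Y)$ in one stroke by combining Lemma \ref{lemEq} with the spectral mapping theorem (Theorem \ref{thSMP}): $\sigma(p(x),R)=p(\sigma(x,R))=p(Y)$, with closedness of the left side already supplied by Lemma \ref{lemEq}, so the closedness of $p(Y)$ comes for free. You instead compute $\sigma(\mathbb{A}_k^m,R)=\operatorname{Supp}_{R'}(R)=\operatorname{Spec}(R')$ via faithfulness, pass to closed points, and then erase the closure $p(Y)^-=p(Y)$ by appealing to the fact that finite morphisms are closed (Maximality in Krull--Cohen--Seidenberg). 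That also works and is the argument one would make without the spectral mapping theorem in hand, but the paper's derivation is shorter given that Theorem \ref{thSMP} is already proved. The third claim you handle exactly as the paper does, via Theorem \ref{thsmtps}.
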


\begin{proof}
Note that $a\in\sigma_{\operatorname{p}}\left(  x,R\right)  $ iff
$\left\langle X-a\right\rangle g\left(  X\right)  \subseteq\mathfrak{a}$ for
some $g\left(  X\right)  \notin\mathfrak{a}$. It means that $Y=Z\left(
\mathfrak{a}\right)  \subseteq\left\{  a\right\}  \cup Z\left(  g\right)  $
and $Y\nsubseteq Z\left(  g\right)  $. Since $a\in Y$, we conclude that
$a\notin Z\left(  g\right)  $ and $\left\{  a\right\}  =Y\cap\left(
\mathbb{A}^{n}-Z\left(  g\right)  \right)  $ is open in $Y$, that is,
$\left\{  a\right\}  $ is an isolated point. Conversely, take $a\in Y_{is}$.
Since the complements to hypersurfaces from $\mathbb{A}^{n}$ is a topology
base in $\mathbb{A}^{n}$, it follows that $\left\{  a\right\}  =Y\cap\left(
\mathbb{A}^{n}-Z\left(  g\right)  \right)  $ for a certain hypersurface
$Z\left(  g\right)  $. Then $Y=Z\left(  \mathfrak{a}\right)  \subseteq\left\{
a\right\}  \cup Z\left(  g\right)  $, $Y\nsubseteqq Z\left(  g\right)  $ and%
\begin{align*}
\left\langle X-a\right\rangle g  &  \subseteq\left\langle X-a\right\rangle
\cap\sqrt{g}=I\left(  \left\{  a\right\}  \right)  \cap I\left(  Z\left(
g\right)  \right)  =I\left(  \left\{  a\right\}  \cup Z\left(  g\right)
\right) \\
&  \subseteq I\left(  Y\right)  =\sqrt{\mathfrak{a}}=\mathfrak{a}\text{,}%
\end{align*}
which means that $a\in\sigma_{\operatorname{p}}\left(  x,R\right)  $. Further,
if $p$ is a finite morphism then $R$ is a Noetherian $k\left[  p\left(
x\right)  \right]  $-module and $\sigma\left(  p\left(  x\right)  ,R\right)
=\sigma\left(  \mathbb{A}_{k}^{m},R\right)  \cap\mathbb{A}^{m}$ is closed (see
Lemma \ref{lemEq}). Using Lemma \ref{lemEq} and Theorem \ref{thSMP}, we derive
that $\sigma\left(  \mathbb{A}_{k}^{m},R\right)  \cap\mathbb{A}^{m}%
=\sigma\left(  p\left(  x\right)  ,R\right)  =p\left(  \sigma\left(
x,R\right)  \right)  =p\left(  Y\right)  $, and $\sigma_{\operatorname{p}%
}\left(  p\left(  x\right)  ,R\right)  =p\left(  \sigma_{\operatorname{p}%
}\left(  x,R\right)  \right)  =p\left(  Y_{is}\right)  $ thanks to Theorem
\ref{thsmtps}.
\end{proof}

\section{Koszul homology groups of a variety\label{Sec3}}

In the present section we focus on the case of a module $M$ which is the
coordinate ring $R$ of a variety $Y$, and investigate its Koszul homology
groups. We are targeting to find out a link between the dimensions of Koszul
homology groups of the standard tuple in $R$ and the dimension $d=\dim\left(
Y\right)  $.

\subsection{Multiplicity formula of Serre}

Let $P=k\left[  X\right]  $ be the polynomial algebra over the field $k$ with
$n$-tuple $X=\left(  X_{1},\ldots,X_{n}\right)  $, and put $h\left(  r\right)
=\dim_{k}\left(  \mathfrak{t}^{r}/\mathfrak{t}^{r+1}\right)  =\dbinom
{r+n-1}{n-1}$ to be the Hilbert polynomial ($\deg\left(  h\right)  =n-1$) of
the graded $P$-algebra $G^{\circ}P$ associated with the filtration $\left\{
\mathfrak{t}^{r}\right\}  $, where $\mathfrak{t=}\left\langle X\right\rangle
\subseteq P$ is the maximal ideal generated by $X$, and $\dbinom{z}{s}%
=\dfrac{1}{s!}z\left(  z-1\right)  \cdots\left(  z-s+1\right)  $ is the
binomial coefficient. If $\mathfrak{s}\left(  P,r\right)  =\dim_{k}\left(
P/\mathfrak{t}^{r}\right)  $ is the Samuel polynomial of the filtration then
$\mathfrak{s}\left(  P,r\right)  =\sum_{i=0}^{r-1}h\left(  i\right)
=\sum_{i=0}^{r-1}\dbinom{i+n-1}{n-1}=\dbinom{r+n-1}{n}$. If $M$ is a
$P$-module given by an $n$-tuple $x$ of $k$-linear maps on $M$, then
$P/\mathfrak{t}^{r}\otimes_{k}M=M^{\binom{r+n-1}{n}}$ and there are nilpotent
(shift) operators $X_{i}\otimes1$ acting on $M^{\binom{r+n-1}{n}}$. Put
$x_{i}^{\left(  r\right)  }=\left(  1\otimes x_{i}\right)  -X_{i}\otimes1$ and
$x^{\left(  r\right)  }=\left(  x_{1}^{\left(  r\right)  },\ldots
,x_{n}^{\left(  r\right)  }\right)  $ to be an $n$-tuple of mutually commuting
operators on the inflation $M^{\binom{r+n-1}{n}}$, which defines its
$P$-module structure. For every $a\in\mathbb{A}^{n}$ the $P$-module structure
on $M^{\binom{r+n-1}{n}}$ given by $x^{\left(  r\right)  }-a$ is the same
inflation of the $P$-module $M$ given by the tuple $x-a$. Just notice that
$x_{i}^{\left(  r\right)  }-a_{i}=1\otimes\left(  x_{i}-a_{i}\right)
-X_{i}\otimes1=\left(  x_{i}-a_{i}\right)  ^{\left(  r\right)  }$ for all $i$,
or $x^{\left(  r\right)  }-a=\left(  x-a\right)  ^{\left(  r\right)
}=1\otimes\left(  x-a\right)  -X\otimes1$. In this case one can also replace
$\mathfrak{t}$ by $\left\langle X-a\right\rangle $.

\begin{lemma}
\label{lemIM1}If $M$ is a Noetherian $P$-module then so is $M^{\binom
{r+n-1}{n}}$ and $\sigma\left(  x,M\right)  =\sigma\left(  x^{\left(
r\right)  },M^{\binom{r+n-1}{n}}\right)  $ for all $r$. Thus $i\left(  \left(
x-a\right)  ^{\left(  r\right)  }\right)  <\infty$ for all $a\in\mathbb{A}%
^{n}$ and $r\geq1$.
\end{lemma}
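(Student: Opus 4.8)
The plan is to reduce everything to the free resolution $\mathcal{P}=\operatorname{Kos}(t,P\otimes_{k}M)\to M$ of Subsection~\ref{SubsecAVS}, after first checking that $N:=M^{\binom{r+n-1}{n}}=P/\mathfrak{t}^{r}\otimes_{k}M$ is a Noetherian $P$-module for its $x^{(r)}$-action. For the latter I would let $k[Z_{1},\ldots,Z_{n}]$ be a polynomial ring acting on $M$ by $Z_{i}\mapsto x_{i}$, pick generators $m_{1},\ldots,m_{s}$ of $M$ over $k[Z]$, and observe that $P\otimes_{k}M=\sum_{j}S\cdot(1\otimes m_{j})$ is finitely generated over the Noetherian ring $S:=P\otimes_{k}k[Z]=k[X_{1},\ldots,X_{n},Z_{1},\ldots,Z_{n}]$; since $\mathfrak{t}^{r}\otimes_{k}M$ is an $S$-submodule, $N$ is a Noetherian $S$-module on which each $X_{i}\otimes1$ (the action of $X_{i}\in S$) is nilpotent of order $\le r$. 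Writing $W_{i}=Z_{i}-X_{i}$, so that $S=k[W_{1},\ldots,W_{n}][X_{1},\ldots,X_{n}]$ is a polynomial ring over $k[W]$ in the $X_{i}$, and reducing any $S$-combination of the $S$-generators of $N$ modulo $(X_{1}^{r},\ldots,X_{n}^{r})$, one gets finitely many elements $X^{\alpha}\nu_{l}$ (with $\alpha_{j}<r$) that generate $N$ over $k[W]$; as $W_{i}$ acts on $N$ precisely as $1\otimes x_{i}-X_{i}\otimes1=x_{i}^{(r)}$, this says $N$ is finitely generated, hence Noetherian, over $P$ via $x^{(r)}$.

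For the equality $\sigma(x,M)=\sigma(x^{(r)},N)$ I would first reduce to the point $a=0$: since $x^{(r)}-a=(x-a)^{(r)}$ and $N$ with this action is the inflation of $(M,x-a)$, it suffices to prove that $\operatorname{Kos}(x,M)$ is exact iff $\operatorname{Kos}(x^{(r)},N)$ is. The resolution $\mathcal{P}$ is a complex of free $P$-modules for the structure $X_{i}\mapsto X_{i}\otimes1$ on $P\otimes_{k}M$, with $P$-linear differentials assembled from $t_{i}=1\otimes x_{i}-X_{i}\otimes1$; applying $P/\mathfrak{t}^{r}\otimes_{P}-$ turns the $j$-th term into $N\otimes_{k}\wedge^{j}k^{n}$ and the differential into the one built from $\bar{t}_{i}(\bar{p}\otimes m)=\bar{p}\otimes x_{i}m-\overline{X_{i}p}\otimes m=x_{i}^{(r)}(\bar{p}\otimes m)$, i.e.\ $P/\mathfrak{t}^{r}\otimes_{P}\mathcal{P}=\operatorname{Kos}(x^{(r)},N)$, so $H_{p}(x^{(r)},N)=\operatorname{Tor}_{p}^{P}(P/\mathfrak{t}^{r},M)$ and likewise $H_{p}(x,M)=\operatorname{Tor}_{p}^{P}(P/\mathfrak{t},M)$. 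Now $M$ and $N$ are both Noetherian $P$-modules, so by Lemma~\ref{lemNP0} each Koszul complex is exact iff its degree-$0$ homology vanishes; since $H_{0}(x^{(r)},N)=M/\mathfrak{t}^{r}M$ and $H_{0}(x,M)=M/\mathfrak{t}M$, and $M=\mathfrak{t}M$ is equivalent to $M=\mathfrak{t}^{r}M$ (one implication since $\mathfrak{t}^{r}M\subseteq\mathfrak{t}M$, the other by iterating $M=\mathfrak{t}M$), the two exactness conditions coincide. Undoing the translation gives $\sigma(x,M)=\sigma(x^{(r)},N)$.

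For the last assertion I would argue as in Theorem~\ref{thNP11}: since $N$ is Noetherian over the subalgebra $R'=k[x^{(r)}]\subseteq\operatorname{End}_{k}(N)$, each $N\otimes_{k}\wedge^{p}k^{n}$ and hence each $H_{p}((x-a)^{(r)},N)$ is a Noetherian $R'$-module; by the $\gamma_{i}$-argument of Lemma~\ref{lemB4} every $x_{i}^{(r)}-a_{i}$ kills $H_{p}((x-a)^{(r)},N)$, so it is in fact a Noetherian module over $R'/\langle x^{(r)}-a\rangle$, which is a quotient of $k$, and therefore a finite-dimensional $k$-vector space. Hence $i((x-a)^{(r)})<\infty$ for all $a\in\mathbb{A}^{n}$ and $r\ge1$.

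The step I expect to be the main obstacle is the identification $P/\mathfrak{t}^{r}\otimes_{P}\mathcal{P}=\operatorname{Kos}(x^{(r)},N)$: one must keep the two $P$-module structures on $P\otimes_{k}M$ rigorously apart — the free one (via $X_{i}\otimes1$) making $\mathcal{P}$ a complex of free modules, versus the twisted tuple $t$ supplying the differentials — and check, signs included, that passing to $P/\mathfrak{t}^{r}$ reproduces exactly the Koszul complex of $x^{(r)}$ on $N$. Everything after that is dévissage, Nakayama, and the vanishing criterion of Lemma~\ref{lemNP0}.
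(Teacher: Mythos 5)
Your proof is correct, but it follows a genuinely different route from the paper's on two of the three steps. For Noetherianity of $N=P/\mathfrak{t}^{r}\otimes_{k}M$ the paper proceeds by induction on $r$ using the short exact sequence $0\leftarrow P/\mathfrak{t}^{r}\leftarrow P/\mathfrak{t}^{r+1}\leftarrow\mathfrak{t}^{r}/\mathfrak{t}^{r+1}\leftarrow0$ and the observation that on $\mathfrak{t}^{r}/\mathfrak{t}^{r+1}\otimes_{k}M$ the action $X_{i}\otimes1$ vanishes so one sees a finite direct sum of copies of $M$, whereas you realize $N$ as a finite module over the two-variable polynomial ring $S=k[X,Z]$ and then change variables to $W=Z-X$, cut by $(X_{1}^{r},\dots,X_{n}^{r})$, and read off $P$-module finiteness directly; both work, yours being perhaps more transparent about \emph{why} $x^{(r)}$-generation suffices. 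For the equality $\sigma(x,M)=\sigma(x^{(r)},N)$ the paper applies the Spectral Mapping Theorem \ref{thSMP} to the pair $(1\otimes x,X\otimes1)$, uses nilpotency of $X_{i}\otimes1$ to collapse the second component to $0$, and then notes $\operatorname{Kos}(1\otimes(x-a),N)=P/\mathfrak{t}^{r}\otimes_{k}\operatorname{Kos}(x-a,M)$; you instead compute $H_{p}(x^{(r)},N)=\operatorname{Tor}_{p}^{P}(P/\mathfrak{t}^{r},M)$ from the explicit free resolution $\mathcal{P}$ of Subsection~\ref{SubsecAVS}, invoke Lemma~\ref{lemNP0} to reduce exactness of either Koszul complex to vanishing of $H_{0}$, and compare $M/\mathfrak{t}M$ with $M/\mathfrak{t}^{r}M$. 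Your route has the merit of avoiding the spectral mapping theorem entirely, replacing it with the Tor identification and a Nakayama-type criterion, which makes the lemma more self-contained inside the homological framework of Section~\ref{Sec3}; the paper's version is shorter once Theorem~\ref{thSMP} is available. For the final finiteness assertion the paper simply cites Theorem~\ref{thNP11}; you re-derive the special case, which is harmless but unnecessary.
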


\begin{proof}
For every $r\geq1$ consider the following canonical exact sequence%
\begin{equation}
0\leftarrow P/\mathfrak{t}^{r}\longleftarrow P/\mathfrak{t}^{r+1}%
\longleftarrow\mathfrak{t}^{r}/\mathfrak{t}^{r+1}\leftarrow0, \label{sR}%
\end{equation}
in $P$-$\operatorname{mod}$. Notice that the $P$-module structure of
$\mathfrak{t}^{r}/\mathfrak{t}^{r+1}$ is reduced to its $k$-vector space one
with $\dim_{k}\left(  \mathfrak{t}^{r}/\mathfrak{t}^{r+1}\right)  =h\left(
r\right)  $. Using (\ref{sR}), we generate the following exact sequences
\[
0\leftarrow P/\mathfrak{t}^{r}\otimes_{k}M\longleftarrow P/\mathfrak{t}%
^{r+1}\otimes_{k}M\longleftarrow\mathfrak{t}^{r}/\mathfrak{t}^{r+1}\otimes
_{k}M\leftarrow0
\]
such that the $P$-module structure of $\mathfrak{t}^{r}/\mathfrak{t}%
^{r+1}\otimes_{k}M$ is diagonal and therefore it is Noetherian. By induction
on $r$ we deduce that $P/\mathfrak{t}^{r}\otimes_{k}M$ is Noetherian. By
Spectral Mapping Theorem \ref{thSMP}, we have
\begin{align*}
\sigma\left(  x^{\left(  r\right)  },M^{\binom{r+n-1}{n}}\right)   &
=\sigma\left(  1\otimes x-X\otimes1,M^{\binom{r+n-1}{n}}\right) \\
&  =\left\{  a-b:\left(  a,b\right)  \in\sigma\left(  \left(  1\otimes
x,X\otimes1\right)  ,M^{\binom{r+n-1}{n}}\right)  \right\}
\end{align*}
and $b\in\sigma\left(  X\otimes1,M^{\binom{r+n-1}{n}}\right)  $. For every
$a\in\sigma\left(  1\otimes x,M^{\binom{r+n-1}{n}}\right)  $ there is $b$ such
that $\left(  a,b\right)  \in\sigma\left(  \left(  1\otimes x,X\otimes
1\right)  ,M^{\binom{r+n-1}{n}}\right)  $ (see Theorem \ref{thNP1}). Moreover,
$b_{i}\in\sigma\left(  \left(  X_{i}\otimes1\right)  |M^{\binom{r+n-1}{n}%
}\right)  =\left\{  0\right\}  $, for $X_{i}\otimes1$ is nilpotent. Hence
$\sigma\left(  x^{\left(  r\right)  },M^{\binom{r+n-1}{n}}\right)
=\sigma\left(  1\otimes x,M^{\binom{r+n-1}{n}}\right)  $. Finally,%
\[
\operatorname{Kos}\left(  1\otimes\left(  x-a\right)  ,M^{\binom{r+n-1}{n}%
}\right)  =P/\mathfrak{t}^{r}\otimes_{k}\operatorname{Kos}\left(
x-a,M\right)  ,
\]
which is exact iff so is $\operatorname{Kos}\left(  x-a,M\right)  $, that is,
$\sigma\left(  1\otimes x,M^{\binom{r+n-1}{n}}\right)  =\sigma\left(
x,M\right)  $. It remains to use Theorem \ref{thNP11}.
\end{proof}

Let $A/k$ be a local Noetherian algebra with an $n$-tuple $x$ contained in the
maximal ideal of $A$, and let $M$ be a Noetherian $A$-module such that
$\ell\left(  M/xM\right)  <\infty$, that is, $x$ is a system of parameters for
$M$. The filtration $\left\{  \left\langle x\right\rangle ^{r}M\right\}  $ in
$M$ defines the Samuel polynomial $\mathfrak{s}\left(  M,r\right)
=\ell\left(  M/\left\langle x\right\rangle ^{r}M\right)  $ whose degree is at
most $n$, and $\mathfrak{s}\left(  M,r\right)  =e_{x,M}\left(  n\right)
\dfrac{r^{n}}{n!}+q\left(  r\right)  $ with $e_{x,M}\left(  n\right)
=\Delta^{n}\mathfrak{s}\left(  M,\circ\right)  $ ($n$th difference operator)
and $\deg\left(  q\right)  <n$. By Theorem \ref{thNP11}, $\ell\left(
H_{p}\left(  x,M\right)  \right)  =\dim_{k}\left(  H_{p}\left(  x,M\right)
\right)  $ for all $p$, and the Euler characteristic $\chi\left(  x,M\right)
=\sum_{p=0}^{n}\left(  -1\right)  ^{p}\ell\left(  H_{p}\left(  x,M\right)
\right)  $ from \cite[4.A.3]{Se} is reduced to the opposite index $-i\left(
x\right)  $ of the tuple $x$. The multiplicity formula of Serre \cite[4.A.3.
Theorem 1]{Se} is expressed in the following way
\begin{equation}
i\left(  x\right)  =-e_{x,M}\left(  n\right)  . \label{Se}%
\end{equation}
If $\dim\left(  M\right)  =n$ then $i\left(  x\right)  <0$, whereas $i\left(
x\right)  =0$ in the case of $\dim\left(  M\right)  <n$.

\subsection{The numerical Tor-polynomial}

Now let $\mathfrak{p}\subseteq P$ be a nonzero prime ideal of a variety
$Y\subseteq\mathbb{A}^{n}$ with its coordinate ring $R=P/\mathfrak{p}$. The
actions of $X_{i}$ on $R$ are denoted by $x_{i}$, and $x=\left(  x_{1}%
,\ldots,x_{n}\right)  $ is an operator tuple on $R$. Thus $R=k\left[
x\right]  $ is an algebra finite extension of $k$ which is a domain. In
particular, $\sigma_{\operatorname{p}}\left(  x,R\right)  =\varnothing$ or
$H_{n}\left(  x\right)  =0$. For every $r$ we have the inflated $P$-module
$R^{\binom{r+n-1}{n}}$ given by means of the $n$-tuple $x^{\left(  r\right)
}$. Using Lemma \ref{lemEq} and Lemma \ref{lemIM1}, we deduce that
\[
Y=\operatorname{Spec}\left(  R\right)  \cap\mathbb{A}^{n}=\sigma\left(
\mathbb{A}_{k}^{n},R\right)  \cap\mathbb{A}^{n}=\sigma\left(  x,R\right)
=\sigma\left(  x^{\left(  r\right)  },R^{\binom{r+n-1}{n}}\right)
\]
for all $r$. For every $a\in Y$ the homology groups $H_{i}\left(  x^{\left(
r\right)  }-a,R^{\binom{r+n-1}{n}}\right)  $ denoted by $H_{i}\left(
x^{\left(  r\right)  }-a\right)  $ are finite dimensional $k$-vector spaces
(Lemma \ref{lemIM1}). Put $d_{p}\left(  a^{\left(  r\right)  }\right)
=\dim_{k}\left(  H_{p}\left(  x^{\left(  r\right)  }-a\right)  \right)  $ and
define the tuple $d\left(  a^{\left(  r\right)  }\right)  =\left(
d_{p}\left(  a^{\left(  r\right)  }\right)  \right)  _{p}\in\mathbb{Z}%
_{+}^{n+1}$. In particular, $i\left(  x^{\left(  r\right)  }-a\right)
=\sum_{p=0}^{n}\left(  -1\right)  ^{p+1}d_{p}\left(  a^{\left(  r\right)
}\right)  $ and it defines the function $i_{Y}^{\left(  r\right)
}:Y\rightarrow\mathbb{Z}$, $a\mapsto i\left(  x^{\left(  r\right)  }-a\right)
$. We also put $i_{Y}$ instead of $i_{Y}^{\left(  1\right)  }$.

\begin{lemma}
\label{lemAp}If $Y\subseteq\mathbb{A}^{n}$ is a variety then $i_{Y}%
=-\delta_{nd}$, where $d=\dim\left(  Y\right)  $.
\end{lemma}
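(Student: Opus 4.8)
The plan is to localize the Koszul complex at each closed point of $Y$ and then read off the answer from Serre's multiplicity formula (\ref{Se}). Fix $a\in Y$ and let $\mathfrak{m}_{a}=\langle x-a\rangle\subseteq R$ be the corresponding maximal ideal; put $A=R_{\mathfrak{m}_{a}}$, a Noetherian local $k$-algebra with residue field $k$. The $n$-tuple $(x-a)/1$ lies in $\mathfrak{m}_{a}A$ and generates it, so $A/\langle(x-a)/1\rangle A=(R/\langle x-a\rangle)_{\mathfrak{m}_{a}}=k$ has finite length. Hence $(x-a)/1$ generates an $\mathfrak{m}_{a}A$-primary ideal, and Serre's setup applies verbatim to the data $A$, $M=A$, $(x-a)/1$.

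Next I would invoke Lemma \ref{lemNP0} with the maximal ideal $\mathfrak{m}_{a}$ and the module $R$, which gives $H_{p}(x-a,R)=H_{p}(x-a,R)_{\mathfrak{m}_{a}}=H_{p}((x-a)/1,A)$ for every $p$ (localization being exact). By Lemma \ref{lemIM1} (or directly: by Lemma \ref{lemB4} each $H_{p}(x-a,R)$ is killed by $\langle x-a\rangle$, hence is a finite-dimensional $k$-vector space) these groups are finite-dimensional, and $\dim_{k}H_{p}(x-a,R)=\ell_{A}(H_{p}((x-a)/1,A))$. Consequently
\[
i(x-a)=\sum_{p=0}^{n}(-1)^{p+1}\dim_{k}H_{p}(x-a,R)=\sum_{p=0}^{n}(-1)^{p+1}\ell_{A}\bigl(H_{p}((x-a)/1,A)\bigr)=i\bigl((x-a)/1,A\bigr),
\]
so the index over $R$ coincides with the index in the local setting where Serre's formula lives.

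Finally I would split on $d=\dim Y$. Since $\mathfrak{p}\neq 0$ we have $d=\dim R\le n-1<n$, so $-\delta_{nd}=0$ and it suffices to show $i(x-a)=0$. But $\dim A=\dim R_{\mathfrak{m}_{a}}\le\dim R<n$, hence the Samuel polynomial $\mathfrak{s}(A,r)=\ell(A/\langle(x-a)/1\rangle^{r}A)$ has degree $\dim A<n$, so $e_{(x-a)/1,A}(n)=\Delta^{n}\mathfrak{s}(A,\circ)=0$, and (\ref{Se}) gives $i((x-a)/1,A)=0$. Therefore $i(x-a)=0$ for every $a\in Y$, i.e. $i_{Y}\equiv 0=-\delta_{nd}$. (The same reduction also covers the degenerate case $\mathfrak{p}=0$, $Y=\mathbb{A}^{n}$, $d=n$: there $x-a$ is a regular sequence for $P$, so by Corollary \ref{corRH} $H_{p}(x-a,R)=0$ for $p\ge 1$ and $H_{0}(x-a,R)=k$, giving $i(x-a)=-1=-\delta_{nn}$, so the formula $i_{Y}=-\delta_{nd}$ holds verbatim for an arbitrary variety.)

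I do not expect a genuine obstacle: the argument is essentially bookkeeping on top of Lemma \ref{lemNP0} and the multiplicity formula (\ref{Se}). The only points that need a word of care are verifying the finite-colength hypothesis of Serre's theorem for $(x-a)/1$ — immediate, since the quotient ring is $k$ — and verifying $\dim A<n$, which is immediate from $\dim A\le\dim R$ together with $\dim R<n$ in the stated case $\mathfrak{p}\neq 0$.
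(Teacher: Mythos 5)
Your proposal is correct and follows essentially the same route as the paper: localize at the maximal ideal $\mathfrak{m}_{a}$ via Lemma \ref{lemNP0} to transfer the index to the local ring $R_{\mathfrak{m}_{a}}$, invoke Serre's multiplicity formula (\ref{Se}) and the degree bound on the Samuel polynomial to get $i(x-a)=0$ when $d<n$, and treat $Y=\mathbb{A}^{n}$ separately by observing that $x-a$ is a regular sequence so the Koszul complex resolves $k$. The only cosmetic difference is that the paper normalizes to $a=0$ and writes $i(x)=i(x/1)$ with the length/dimension identification left implicit, whereas you spell it out; the substance is identical.
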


\begin{proof}
If $Y=\mathbb{A}^{n}$ then $\operatorname{Kos}\left(  X-a,P\right)  $ is a
free $P$-module resolution for the $P$-module $k$ (see Corollary \ref{corRH}),
that is, $H_{0}\left(  X-a,P\right)  =k$ and $H_{i}\left(  X-a,P\right)  =0$,
$i\geq1$. In particular, $d_{0}\left(  a\right)  =1$, $d_{i}\left(  a\right)
=0$, $i\geq1$, and $i_{Y}\left(  a\right)  =-1$ for all $a\in Y$.

Now assume that $Y\subseteq\mathbb{A}^{n}$ is a variety of dimension $d<n$,
and $a=0\in Y$, which responds to the maximal ideal $\left\langle
x\right\rangle \subseteq R$. Then $R_{\left\langle x\right\rangle }$ is a
local Noetherian algebra with its maximal ideal $\mathfrak{m=}\left\langle
x\right\rangle R_{\left\langle x\right\rangle }$. By Lemma \ref{lemNP0}, we
have $H_{p}\left(  x,R\right)  =H_{p}\left(  x,R\right)  _{\left\langle
x\right\rangle }=H_{p}\left(  x/1,R_{\left\langle x\right\rangle }\right)  $
for all $p\geq0$, and $\mathfrak{m=}\left\langle x/1\right\rangle $ for the
$n$-tuple $x/1$. Thus $i\left(  x\right)  =i\left(  x/1\right)  $, $x/1$ is a
system of parameters for $R_{\left\langle x\right\rangle }$ and the Samuel
polynomial $\mathfrak{s}\left(  R_{\left\langle x\right\rangle },r\right)  $
has the degree $\dim\left(  R_{\left\langle x\right\rangle }\right)  $
\cite[Theorem 21.4]{AK}. But $\dim\left(  R_{\left\langle x\right\rangle
}\right)  =\dim\left(  R\right)  =d<n$. Using Serre's formula (\ref{Se}), we
conclude that $i\left(  x\right)  =0$.
\end{proof}

Similar result for the inflated $P$-module $R^{\binom{r+n-1}{n}}$ is not
trivial (at least is not straightforward), for its $P$-module structure is not
diagonal. The related result is proved below in Proposition \ref{propDH1}.

\begin{remark}
\label{remNSp}In the case of a nonsingular point $a=0\in Y$ of a variety
$Y\subseteq\mathbb{A}^{n}$ with $d<n$, one can skip Serre's formula. Namely,
$d=\dim_{k}\left(  \mathfrak{m}/\mathfrak{m}^{2}\right)  $ is the minimal
number of generators of the $R_{\left\langle x\right\rangle }$-module
$\mathfrak{m}$ \cite[10.9]{AK}. As above $\mathfrak{m=}\left\langle
x/1\right\rangle $ for the $n$-tuple $x/1$, and there is a new $d$-tuple
$y\subseteq\mathfrak{m}$ generating $\mathfrak{m}$ due to the regularity of
$R_{\left\langle x\right\rangle }$, that is, $y$ is a tuple related to $x/1$.
Note that $0\in Y=\sigma\left(  x,R\right)  $ or $H_{p}\left(  x,R\right)
\neq0$ for some $p$, which in turn implies that $0\in\sigma\left(
x/1,R_{\left\langle x\right\rangle }\right)  $. Using Corollary \ref{corSMP1},
we deduce that $i\left(  x\right)  =i\left(  x/1\right)  =\delta_{nd}i\left(
y\right)  =0$.
\end{remark}

\begin{remark}
\label{remSp}A singular point $a$ of a variety $Y$ is said to be
\textit{integral} if $x_{i}-a_{i}$ is integral over the subalgebra $k\left[
x^{\prime}-a^{\prime}\right]  \subseteq R$ with $i$-th term skipped tuple
$x^{\prime}$. Suppose that $a=0$, and say $x_{n}$ is integral over $R^{\prime
}=k\left[  x^{\prime}\right]  $ with $x^{\prime}=\left(  x_{1},\ldots
,x_{n-1}\right)  $. Then $R^{\prime}\subseteq R$ is integral and $i\left(
x^{\prime}\right)  <\infty$ thanks to Theorem \ref{thNP11}. Moreover,
$0\in\sigma\left(  x^{\prime},R\right)  $ thanks to Corollary \ref{corProj1}.
By Lemma \ref{lemB3}, $i\left(  x\right)  =0$. Again we can avoid (\ref{Se}).
\end{remark}

\begin{example}
\label{exPp}In the case of a point $Y=\left\{  a\right\}  $, we have $R=k$ and
$i=0$ thanks to Corollary \ref{corFLM1}. Actually, $\operatorname{Kos}\left(
x,k\right)  $ is the following complex
\[
0\leftarrow k\leftarrow k^{n}\leftarrow\cdots\leftarrow\wedge^{p}%
k^{n}\leftarrow\cdots\leftarrow k\leftarrow0
\]
with trivial morphisms, $H_{j}\left(  x,k\right)  =\wedge^{j}k^{n}$,
$d_{j}\left(  a\right)  =\dbinom{n}{j}$, $j\geq0$, and $i_{Y}=\sum_{j=0}%
^{n}\left(  -1\right)  ^{j+1}\dbinom{n}{j}=0$.
\end{example}

Now assume that $Y$ is a variety and $a=0\in Y$, that is,
$\mathfrak{p\subseteq}\left\langle X\right\rangle $. For every $r\geq1$
consider again the sequence (\ref{sR}). Notice that $\operatorname{Tor}%
_{i}^{P}\left(  \mathfrak{t}^{r}/\mathfrak{t}^{r+1},R\right)
=\operatorname{Tor}_{i}^{P}\left(  k,R\right)  ^{h\left(  r\right)  }%
=H_{i}\left(  x\right)  ^{h\left(  r\right)  }$ for all $i\geq0$. The functor
$\circ\otimes_{P}R$ generates the following long homology exact sequence
\begin{align*}
0  &  \leftarrow P/\mathfrak{t}^{r}\otimes_{P}R\leftarrow P/\mathfrak{t}%
^{r+1}\otimes_{P}R\leftarrow k^{h\left(  r\right)  }\leftarrow
\operatorname{Tor}_{1}^{P}\left(  P/\mathfrak{t}^{r},R\right)  \leftarrow
\operatorname{Tor}_{1}^{P}\left(  P/\mathfrak{t}^{r+1},R\right)  \leftarrow
H_{1}\left(  x\right)  ^{h\left(  r\right)  }\\
\cdots &  \leftarrow H_{i-1}\left(  x\right)  ^{h\left(  r\right)
}\longleftarrow\operatorname{Tor}_{i}^{P}\left(  P/\mathfrak{t}^{r},R\right)
\leftarrow\operatorname{Tor}_{i}^{P}\left(  P/\mathfrak{t}^{r+1},R\right)
\leftarrow H_{i}\left(  x\right)  ^{h\left(  r\right)  }\leftarrow\cdots
\end{align*}
But $P/\mathfrak{t}^{r}\otimes_{P}R=R/\mathfrak{t}^{r}R=R/\left\langle
x\right\rangle ^{r}$ and $0\leftarrow R/\left\langle x\right\rangle
^{r}\longleftarrow R/\left\langle x\right\rangle ^{r+1}\longleftarrow
\left\langle x\right\rangle ^{r}/\left\langle x\right\rangle ^{r+1}%
\leftarrow0$ is exact. Thus we come with the exact sequence
\[
0\leftarrow\left\langle x\right\rangle ^{r}/\left\langle x\right\rangle
^{r+1}\leftarrow k^{h\left(  r\right)  }\leftarrow\operatorname{Tor}_{1}%
^{P}\left(  P/\mathfrak{t}^{r},R\right)  \leftarrow\operatorname{Tor}_{1}%
^{P}\left(  P/\mathfrak{t}^{r+1},R\right)  \leftarrow H_{1}\left(  x\right)
^{h\left(  r\right)  }\leftarrow\cdots.
\]
Note that $\mathcal{P=}\operatorname{Kos}\left(  t,P\otimes_{k}R\right)  $ is
a free resolution of $R$, and $P/\mathfrak{t}^{r}\otimes_{P}\mathcal{P=}%
\operatorname{Kos}\left(  x^{\left(  r\right)  },P/\mathfrak{t}^{r}\otimes
_{k}R\right)  =\operatorname{Kos}\left(  x^{\left(  r\right)  },R^{\binom
{r+n-1}{n}}\right)  $ with $x^{\left(  r\right)  }=1\otimes x-X\otimes1$ (see
Lemma \ref{lemIM1}). In particular, $\operatorname{Tor}_{i}^{P}\left(
P/\mathfrak{t}^{r},R\right)  =H_{i}\left(  P/\mathfrak{t}^{r}\otimes
_{P}\mathcal{P}\right)  =H_{i}\left(  x^{\left(  r\right)  }\right)  $ for all
$i$ and $r$. Now we apply the localization functor $\circ\otimes
_{R}R_{\left\langle x\right\rangle }$ (or $\circ\otimes_{P}P_{\left\langle
X\right\rangle }$) at $\left\langle x\right\rangle $. By Lemma \ref{lemNP0},
we have%
\[
H_{i}\left(  x^{\left(  r\right)  }/1,R_{\left\langle x\right\rangle }%
^{\binom{r+n-1}{n}}\right)  =H_{i}\left(  x^{\left(  r\right)  }%
,R^{\binom{r+n-1}{n}}\right)  _{\left\langle x\right\rangle }=H_{i}\left(
x^{\left(  r\right)  },R^{\binom{r+n-1}{n}}\right)  =H_{i}\left(  x^{\left(
r\right)  }\right)
\]
for all $i$ and $r$. We obtain the following exact sequence
\begin{align}
0  &  \leftarrow\mathfrak{m}^{r}/\mathfrak{m}^{r+1}\leftarrow k^{h\left(
r\right)  }\leftarrow H_{1}\left(  x^{\left(  r\right)  }\right)  \leftarrow
H_{1}\left(  x^{\left(  r+1\right)  }\right)  \leftarrow H_{1}\left(
x\right)  ^{h\left(  r\right)  }\leftarrow\cdots\label{sq}\\
&  \leftarrow H_{n-1}\left(  x\right)  ^{h\left(  r\right)  }\leftarrow
H_{n}\left(  x^{\left(  r\right)  }\right)  \leftarrow H_{n}\left(  x^{\left(
r+1\right)  }\right)  \leftarrow H_{n}\left(  x\right)  ^{h\left(  r\right)
}=0,\nonumber
\end{align}
where $\mathfrak{m=}\left\langle x\right\rangle R_{\left\langle x\right\rangle
}$ is the maximal ideal of the local ring $R_{\left\langle x\right\rangle }$.

\begin{lemma}
\label{lemDH1}If $H_{j+1}\left(  x\right)  =0$ for some $j$, then
$H_{i}\left(  x^{\left(  r\right)  }\right)  =0$ for all $i>j$ and $r\geq1$,
and the sequence
\begin{align*}
0  &  \leftarrow\mathfrak{m}^{r}/\mathfrak{m}^{r+1}\leftarrow k^{h\left(
r\right)  }\leftarrow\cdots\leftarrow H_{i-1}\left(  x\right)  ^{h\left(
r\right)  }\leftarrow H_{i}\left(  x^{\left(  r\right)  }\right)  \leftarrow
H_{i}\left(  x^{\left(  r+1\right)  }\right)  \leftarrow H_{i}\left(
x\right)  ^{h\left(  r\right)  }\leftarrow\cdots\\
&  \leftarrow H_{j-1}\left(  x^{\left(  r\right)  }\right)  \leftarrow
H_{j}\left(  x^{\left(  r\right)  }\right)  \leftarrow H_{j}\left(  x\right)
^{h\left(  r\right)  }\leftarrow0
\end{align*}
remains exact. In particular, $H_{n}\left(  x^{\left(  r\right)  }\right)  =0$
for all $r$, and the sequence
\[
0\leftarrow\mathfrak{m}^{r}/\mathfrak{m}^{r+1}\leftarrow k^{h\left(  r\right)
}\leftarrow\cdots\leftarrow H_{n-1}\left(  x^{\left(  r\right)  }\right)
\leftarrow H_{n-1}\left(  x^{\left(  r+1\right)  }\right)  \leftarrow
H_{n-1}\left(  x\right)  ^{h\left(  r\right)  }\leftarrow0
\]
is exact.
\end{lemma}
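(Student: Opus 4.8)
The plan is to reduce the whole lemma to the single vanishing statement that $H_i(x^{(r)})=0$ for all $i>j$ and all $r\ge 1$. Once that is known, the asserted exactness is simply the truncation of the long homology sequence (\ref{sq}) to homological degrees $\le j$: every group of (\ref{sq}) sitting in degree $>j$, namely $H_i(x^{(r)})$, $H_i(x^{(r+1)})$ and $H_i(x)^{h(r)}$ with $i>j$, then vanishes, so the sequence breaks into exactly the stated finite exact piece, the tail $H_j(x)^{h(r)}\leftarrow H_{j+1}(x^{(r)})=0$ showing that $H_j(x)^{h(r)}$ injects further along. Throughout I use the identifications established just above, $\operatorname{Tor}_i^P(P/\mathfrak{t}^s,R)=H_i(x^{(s)})$ and $\operatorname{Tor}_i^P(\mathfrak{t}^r/\mathfrak{t}^{r+1},R)=\operatorname{Tor}_i^P(k,R)^{h(r)}=H_i(x)^{h(r)}$, together with the fact that $R^{\binom{r+n-1}{n}}$ is a Noetherian $P$-module (Lemma \ref{lemIM1}).

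To prove the vanishing I would first upgrade the hypothesis: since $a=0\in Y$, the ideal $\langle x\rangle$ is the maximal ideal of $R=k[x]$ attached to $0$ by Zariski Nullstellensatz, so Lemma \ref{lemNP0} applied to the Noetherian $R$-module $R$ gives $H_i(x)=0$ for all $i\ge j+1$. Then I would prove $H_i(x^{(r)})=0$ for all $i\ge j+1$ by induction on $r$. For $r=1$ one has $P/\mathfrak{t}=k$, on which each $X_i$ acts as $0$, hence $x^{(1)}=x$ and $\operatorname{Kos}(x^{(1)},R)=\operatorname{Kos}(x,R)$, so $H_i(x^{(1)})=H_i(x)=0$ for $i\ge j+1$. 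For the step $r\to r+1$, the sequence (\ref{sq}) supplies, for each fixed $i$, the exact three-term piece $H_i(x)^{h(r)}\to H_i(x^{(r+1)})\to H_i(x^{(r)})$; for $i\ge j+1$ the left term is $0$ by the upgraded hypothesis and the right term is $0$ by the inductive hypothesis, whence $H_i(x^{(r+1)})=0$. (Alternatively one may run the induction only for $i=j+1$, using the raw hypothesis $H_{j+1}(x)=0$, and then deduce $H_i(x^{(r)})=0$ for all $i\ge j+1$ by invoking Lemma \ref{lemNP0} for the Noetherian $P$-module $R^{\binom{r+n-1}{n}}$ and the tuple $X$, which generates the maximal ideal $\mathfrak{t}\subseteq P$ and whose Koszul homology on this module is $H_i(x^{(r)})$; this sidesteps the upgrade step.)

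For the ``in particular'' clause I would specialize to $j=n-1$: since $Y$ is a variety, $R$ is a domain and $\sigma_{\operatorname{p}}(x,R)=\varnothing$, i.e. $H_n(x)=0$, so the general case already proved yields $H_n(x^{(r)})=0$ for all $r$ and the displayed truncation of (\ref{sq}) at degree $n-1$ is exact. I do not expect a genuine obstacle here: all the homological input ($\operatorname{Tor}$ identifications, the long sequence (\ref{sq}), Lemma \ref{lemNP0}, Lemma \ref{lemIM1}) is already in place, and the only point deserving attention is the careful alignment of the terms of (\ref{sq}) so that the portion in degrees $>j$ is seen to consist exactly of the groups being killed.
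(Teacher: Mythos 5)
Your proposal is correct and follows essentially the same route as the paper: upgrade the hypothesis to $H_i(x)=0$ for $i>j$ via Lemma \ref{lemNP0}, then induct on $r$ using the three-term exact pieces of (\ref{sq}) to kill $H_i(x^{(r)})$ for $i>j$, and finally read off the truncated exact sequence; the ``in particular'' clause is obtained exactly as the paper obtains it from $H_n(x)=0$. Your alternative (inducting only at $i=j+1$ and then applying Lemma \ref{lemNP0} to the Noetherian $P$-module $R^{\binom{r+n-1}{n}}$) is a minor but valid streamlining that the paper does not use.
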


\begin{proof}
Suppose that $H_{j+1}\left(  x\right)  =0$ for some $j$. Prove that
$H_{i}\left(  x^{\left(  r\right)  }\right)  =0$ for all $i>j$, $r\geq1$ by
induction on $r$. By Lemma \ref{lemNP0}, $H_{i}\left(  x\right)  =0$ for all
$i>j$. If $r=1$, then we have $\operatorname{Tor}_{i}^{P}\left(
P/\mathfrak{t},R\right)  =\operatorname{Tor}_{i}^{P}\left(  k,R\right)
=H_{i}\left(  x\right)  $, $i\geq0$ and we come with the exact pieces
\[
\cdots\leftarrow H_{i}\left(  x\right)  \leftarrow H_{i}\left(  x^{\left(
2\right)  }\right)  \leftarrow H_{i}\left(  x\right)  ^{h\left(  r\right)
}\leftarrow\cdots.
\]
of the sequence (\ref{sq}) for $r=1$. If $i>j$ then $H_{i}\left(  x^{\left(
2\right)  }\right)  =0$. By induction hypothesis, $H_{i}\left(  x^{\left(
r\right)  }\right)  =0$ for all $i>j$. The exact part%
\[
\cdots\leftarrow H_{i-1}\left(  x\right)  ^{h\left(  r\right)  }\leftarrow
H_{i}\left(  x^{\left(  r\right)  }\right)  \leftarrow H_{i}\left(  x^{\left(
r+1\right)  }\right)  \leftarrow H_{i}\left(  x\right)  ^{h\left(  r\right)
}\leftarrow\cdots
\]
of the sequence (\ref{sq}) for $i>j$, implies that $H_{i}\left(  x^{\left(
j+1\right)  }\right)  =0$, and the long homology sequence terminates at
$H_{j}\left(  x\right)  ^{h\left(  r\right)  }$. Finally, since $H_{n}\left(
x\right)  =0$ we conclude that $H_{n}\left(  x^{\left(  r\right)  }\right)
=0$ for all $r\geq1$.
\end{proof}

Based on Lemma \ref{lemIM1}, we have $i\left(  x^{\left(  r\right)  }\right)
<\infty$ for all $r$, and we define the following function
\[
p:\mathbb{Z}\longrightarrow\mathbb{Z}\text{,\quad}p\left(  r\right)
=\sum_{i=1}^{n}\left(  -1\right)  ^{i+1}\dim_{k}\left(  \operatorname{Tor}%
_{i}^{P}\left(  P/\mathfrak{t}^{r},R\right)  \right)
\]
called as of $\operatorname{Tor}$-polynomial. Note that $-\dim_{k}\left(
H_{0}\left(  x^{\left(  r\right)  }\right)  \right)  +p\left(  r\right)
=i\left(  x^{\left(  r\right)  }\right)  $ for all $r$. Recall that
$R_{\left\langle x\right\rangle }$ is a local Noetherian algebra, and
$d=\dim\left(  Y\right)  =\dim\left(  R_{\left\langle x\right\rangle }\right)
=\deg\left(  h_{R}\right)  +1=\deg\mathfrak{s}\left(  R_{\left\langle
x\right\rangle },\circ\right)  \leq n$, where $h_{R}\left(  r\right)
=\dim_{k}\left(  \mathfrak{m}^{r}/\mathfrak{m}^{r+1}\right)  $ is the Hilbert
polynomial of $R_{\left\langle x\right\rangle }$ associated with $\left\{
\mathfrak{m}^{r}\right\}  $.

\begin{proposition}
\label{propDH1}The equality $p\left(  r\right)  =\left(  1-\delta_{nd}\right)
\mathfrak{s}\left(  R_{\left\langle x\right\rangle },r\right)  $ holds for all
$r$, that is, the function $p$ is a numerical polynomial of degree at most
$d$. In particular, $i_{Y}^{\left(  r\right)  }=\dbinom{r+n-1}{n}i_{Y}$ for
all $r$.
\end{proposition}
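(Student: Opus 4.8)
The plan is to reduce Proposition \ref{propDH1} to a single numerical identity and to prove that identity by induction on $r$ out of the long exact sequence (\ref{sq}). First I would note that $H_{0}(x^{(r)})=\operatorname{Tor}_{0}^{P}(P/\mathfrak{t}^{r},R)=R/\langle x\rangle^{r}$ is a finite length $R$-module whose composition factors are all equal to $k$, so $\dim_{k}H_{0}(x^{(r)})=\ell(R_{\langle x\rangle}/\mathfrak{m}^{r})=\mathfrak{s}(R_{\langle x\rangle},r)$. Since $-\dim_{k}H_{0}(x^{(r)})+p(r)=i(x^{(r)})$, both assertions are equivalent to the single formula
\[
p(r)=\mathfrak{s}(R_{\langle x\rangle},r)-\delta_{nd}\dbinom{r+n-1}{n}\qquad(r\geq1),
\]
which is what I would establish. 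Here I take $a=0\in Y$, i.e. $\mathfrak{p}\subseteq\langle X\rangle$; a general $a\in Y$ is treated verbatim after replacing $x$ by $x-a$ and $\mathfrak{t}$ by $\langle X-a\rangle$, the local ring $R_{\langle x-a\rangle}$ still having dimension $d$ because $Y$ is irreducible.

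For $r=1$ one has $P/\mathfrak{t}=k$, $\dbinom{n}{n}=1$ and $X\otimes1=0$, so $R^{\binom{1+n-1}{n}}=R$, $x^{(1)}=x$ and $H_{i}(x^{(1)})=H_{i}(x)$; as $H_{0}(x)=R/\langle x\rangle=k$ this gives $i(x)=-1+p(1)$, and Lemma \ref{lemAp} says $i(x)=i_{Y}(0)=-\delta_{nd}$, so $p(1)=1-\delta_{nd}$, which is the right side at $r=1$ (since $\mathfrak{s}(R_{\langle x\rangle},1)=1$). For the inductive step, fix $r\geq1$; by Lemma \ref{lemIM1} every term of (\ref{sq}) is a finite dimensional $k$-vector space, so the alternating sum of the $k$-dimensions along (\ref{sq}) vanishes. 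The terms of the shape $H_{i}(x)^{h(r)}=\operatorname{Tor}_{i}^{P}(\mathfrak{t}^{r}/\mathfrak{t}^{r+1},R)$, $0\leq i\leq n$, contribute $h(r)\sum_{i=0}^{n}(-1)^{i}\dim_{k}H_{i}(x)=-h(r)\,i(x)=\delta_{nd}\,h(r)$ (Lemma \ref{lemAp}), the leftmost term $\mathfrak{m}^{r}/\mathfrak{m}^{r+1}$ contributes $-\dim_{k}(\mathfrak{m}^{r}/\mathfrak{m}^{r+1})$, and the terms $H_{i}(x^{(r)}),H_{i}(x^{(r+1)})$, $1\leq i\leq n$, contribute $-p(r)+p(r+1)$; collecting these, the identity becomes the recursion
\[
p(r+1)-p(r)=\dim_{k}(\mathfrak{m}^{r}/\mathfrak{m}^{r+1})-\delta_{nd}\,h(r),\qquad h(r)=\dim_{k}(\mathfrak{t}^{r}/\mathfrak{t}^{r+1})=\dbinom{r+n-1}{n-1}.
\]

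To finish I would telescope: summing from $1$ to $r-1$ and using $p(1)=1-\delta_{nd}$, $\dim_{k}(\mathfrak{m}^{0}/\mathfrak{m})=h(0)=1$, $\sum_{s=0}^{r-1}\dim_{k}(\mathfrak{m}^{s}/\mathfrak{m}^{s+1})=\ell(R_{\langle x\rangle}/\mathfrak{m}^{r})=\mathfrak{s}(R_{\langle x\rangle},r)$ and $\sum_{s=0}^{r-1}h(s)=\dim_{k}(P/\mathfrak{t}^{r})=\dbinom{r+n-1}{n}$, I get exactly $p(r)=\mathfrak{s}(R_{\langle x\rangle},r)-\delta_{nd}\dbinom{r+n-1}{n}$. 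If $d<n$ this is $\mathfrak{s}(R_{\langle x\rangle},r)$; if $d=n$ then $\mathfrak{p}=0$, $R=P$, $R_{\langle x\rangle}=P_{\langle X\rangle}$ and $\mathfrak{s}(R_{\langle x\rangle},r)=\dbinom{r+n-1}{n}$, so $p(r)=0$; in either case $p(r)=(1-\delta_{nd})\mathfrak{s}(R_{\langle x\rangle},r)$, a numerical polynomial of degree at most $\dim R_{\langle x\rangle}=d$, and the identity then holds for all $r\in\mathbb{Z}$. Finally $i(x^{(r)})=p(r)-\mathfrak{s}(R_{\langle x\rangle},r)=-\delta_{nd}\dbinom{r+n-1}{n}=\dbinom{r+n-1}{n}i_{Y}(0)$ by Lemma \ref{lemAp}; running the same computation at each $a\in Y$ yields $i_{Y}^{(r)}=\dbinom{r+n-1}{n}i_{Y}$.

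The substance of the argument is the bookkeeping of the Euler characteristic of (\ref{sq}); the point worth keeping in mind is the one already flagged before the proposition, namely that $p(r)$ cannot be read off a Hilbert--Samuel polynomial directly, because the inflated $P$-module $R^{\binom{r+n-1}{n}}$ carries a non-diagonal $P$-structure. The induction via (\ref{sq}) bypasses this by reducing $p(r)$ to the diagonal case $r=1$ -- where Serre's formula (\ref{Se}) enters through Lemma \ref{lemAp} -- together with the Hilbert functions of $R_{\langle x\rangle}$ and $P_{\langle X\rangle}$. Alternatively, one could apply (\ref{Se}) directly to the regular local ring $A=P_{\langle X\rangle}$ and the finitely generated $A$-module $N_{r}=R_{\langle x\rangle}^{\binom{r+n-1}{n}}$ with structure map $X\mapsto x^{(r)}/1$, once one checks, via the $\mathfrak{t}$-adic filtration of $P/\mathfrak{t}^{r}$, that $\dim_{A}N_{r}=\dim R_{\langle x\rangle}=d$; for $d<n$ this forces $i(x^{(r)})=0$ immediately, and $d=n$ is again the trivial case $R=P$.
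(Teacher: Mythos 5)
Your proof is correct and follows essentially the same route as the paper's: you take the Euler characteristic of the exact sequence (\ref{sq}), arrive at the recursion $p(r+1)-p(r)=h_{R}(r)-\delta_{nd}\,h(r)$ via Lemma \ref{lemAp}, and telescope from $p(1)=1-\delta_{nd}$. The only cosmetic difference is that you carry the $\delta_{nd}$ factor uniformly through the computation rather than splitting into the cases $d<n$ and $Y=\mathbb{A}^{n}$ as the paper does.
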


\begin{proof}
Using the exact sequence from Lemma \ref{lemDH1} and its Euler
characteristics, we deduce that
\begin{align*}
0  &  =-h_{R}\left(  r\right)  +h\left(  r\right)  +\sum_{i=1}^{n-1}\left(
-1\right)  ^{i}\dim_{k}\left(  H_{i}\left(  x^{\left(  r\right)  }\right)
\right)  +\sum_{i=1}^{n-1}\left(  -1\right)  ^{i+1}\dim_{k}\left(
H_{i}\left(  x^{\left(  r+1\right)  }\right)  \right) \\
&  +h\left(  r\right)  \sum_{i=1}^{n-1}\left(  -1\right)  ^{i}\dim_{k}\left(
H_{i}\left(  x\right)  \right)  .
\end{align*}
Note that $H_{0}\left(  x\right)  =R/\left\langle x\right\rangle =k$,
$H_{n}\left(  x^{\left(  r\right)  }\right)  =0$, and $\sum_{i=0}^{n}\left(
-1\right)  ^{i}\dim_{k}\left(  H_{i}\left(  x\right)  \right)  =1-p\left(
1\right)  =-i\left(  x\right)  $. If $d<n$ then $i\left(  x\right)  =0$ thanks
to Lemma \ref{lemAp}. Thus $p:\mathbb{Z}\rightarrow\mathbb{Z}$ is a function
with the property
\[
p\left(  z+1\right)  -p\left(  z\right)  =h_{R}\left(  z\right)  ,
\]
that is, $p\left(  z+1\right)  -p\left(  z\right)  $ is a numerical polynomial
for large $z$. That means $p\left(  z\right)  \in\mathbb{Q}\left[  z\right]  $
is a numerical polynomial (see \cite[1.7.3]{Harts}) and $p\left(  r\right)
=\sum_{j=0}^{r-1}h_{R}\left(  j\right)  =\mathfrak{s}\left(  R_{\left\langle
x\right\rangle },r\right)  $ for all $r\geq1$. If $Y=\mathbb{A}^{n}$ then
$h_{R}\left(  r\right)  =h\left(  r\right)  $, $H_{i}\left(  x\right)  =0$,
$i\geq1$, and $i\left(  x\right)  =-1$ (see Lemma \ref{lemAp}). It follows
that $p\left(  r+1\right)  =p\left(  r\right)  $ for all $r\geq1$. But
$p\left(  1\right)  =i\left(  x\right)  +1=0$, therefore $p=0$.

Finally, taking into account that%
\[
H_{0}\left(  x^{\left(  r\right)  }\right)  =H_{0}\left(  x^{\left(  r\right)
}\right)  _{\left\langle x\right\rangle }=\operatorname{Tor}_{0}^{P}\left(
P/\mathfrak{t}^{r},R\right)  _{\left\langle x\right\rangle }=\left(
P/\mathfrak{t}^{r}\otimes_{P}R\right)  _{\left\langle x\right\rangle }=\left(
R/\left\langle x\right\rangle ^{r}\right)  _{\left\langle x\right\rangle
}=R_{\left\langle x\right\rangle }/\mathfrak{m}^{r},
\]
we deduce that $\dim_{k}\left(  H_{0}\left(  x^{\left(  r\right)  }\right)
\right)  =\dim_{k}\left(  R_{\left\langle x\right\rangle }/\mathfrak{m}%
^{r}\right)  =\mathfrak{s}\left(  R_{\left\langle x\right\rangle },r\right)
$. It follows that
\[
i\left(  x^{\left(  r\right)  }\right)  =-\dim_{k}\left(  H_{0}\left(
x^{\left(  r\right)  }\right)  \right)  +p\left(  r\right)  =-\mathfrak{s}%
\left(  R_{\left\langle x\right\rangle },r\right)  +\left(  1-\delta
_{nd}\right)  \mathfrak{s}\left(  R_{\left\langle x\right\rangle },r\right)
=-\delta_{nd}\mathfrak{s}\left(  R_{\left\langle x\right\rangle },r\right)  .
\]
Thus $i_{Y}^{\left(  r\right)  }=0$ whenever $d<n$, and $i_{Y}^{\left(
r\right)  }=-\mathfrak{s}\left(  P_{\left\langle X\right\rangle },r\right)
=-\mathfrak{s}\left(  P,r\right)  =-\dbinom{r+n-1}{n}$ for $Y=\mathbb{A}^{n}$.
\end{proof}

\begin{example}
If $d<n$ and $H_{2}\left(  x\right)  =0$, then $H_{i}\left(  x^{\left(
r\right)  }\right)  =0$ for all $i\geq2$, $r\geq1$ thanks to Lemma
\ref{lemDH1}. Then $p\left(  r\right)  =\dim_{k}\left(  H_{1}\left(
x^{\left(  r\right)  }\right)  \right)  $, $i\left(  x\right)  =-1+\dim
_{k}\left(  H_{1}\left(  x\right)  \right)  =0$ and $\dim_{k}\left(
H_{1}\left(  x^{\left(  r\right)  }\right)  \right)  =\mathfrak{s}\left(
R_{\left\langle x\right\rangle },r\right)  $, that is, $r\mapsto\dim
_{k}\left(  H_{1}\left(  x^{\left(  r\right)  }\right)  \right)  $ is a
polynomial of degree $d$. If $Y\subseteq\mathbb{A}^{2}$ is an irreducible
curve and $a\in Y$, then $d_{0}\left(  a\right)  =1$ and $d_{2}\left(
a\right)  =0$. By Lemma \ref{lemAp}, $i_{Y}\left(  a\right)  =0$, therefore
$d_{1}\left(  a\right)  =1$, and $r\mapsto\dim_{k}\left(  H_{1}\left(  \left(
x-a\right)  ^{\left(  r\right)  }\right)  \right)  $ is a linear polynomial.
\end{example}

\begin{remark}
The presence of nonzero $H_{1}\left(  x\right)  $ is well know. If
$H_{1}\left(  x\right)  =0$ then $R\overset{\partial_{0}}{\longleftarrow
}R\otimes k^{n}\overset{\partial_{1}}{\longleftarrow}R\otimes\wedge^{2}k^{n}$
is exact and so is its localization $R_{\left\langle x\right\rangle
}\overset{\partial_{0}}{\longleftarrow}R_{\left\langle x\right\rangle }\otimes
k^{n}\overset{\partial_{1}}{\longleftarrow}R_{\left\langle x\right\rangle
}\otimes\wedge^{2}k^{n}$, that is, $x/1\subseteq\operatorname{rad}\left(
R_{\left\langle x\right\rangle }\right)  $ with $H_{1}\left(  x/1\right)  =0$.
Using \cite[9.7, Theorem 1]{BurHA}, we conclude that $x/1$ is a regular
sequence in $R_{\left\langle x\right\rangle }$. In particular,
$\operatorname{depth}\left(  R_{\left\langle x\right\rangle }\right)  \geq n$.
But $\operatorname{depth}\left(  R_{\left\langle x\right\rangle }\right)
\leq\dim\left(  R_{\left\langle x\right\rangle }\right)  =d<n$, a contradiction.
\end{remark}

The evaluation map $a:P\rightarrow k$, $f\mapsto f\left(  a\right)  $ can be
lifted to a ring homomorphism $a:R\rightarrow k$, which is the quotient
mapping $R\rightarrow R/\left\langle x\right\rangle $. In particular, there
are $k$-linear maps $\wedge^{r}a:R\otimes_{k}\wedge^{r}k^{n}\rightarrow
\wedge^{r}k^{n}$, $r\geq1$, and $\left(  \wedge^{r}a\right)  \partial_{r-1}=0$
for all $r$, which in turn define $k$-linear maps $a^{\left(  r\right)
}:H_{r}\left(  x\right)  \rightarrow\wedge^{r}k^{n}$ on the homology groups.
Note that $H_{0}\left(  x\right)  =R/\operatorname{im}\left(  \partial
_{0}\right)  =k$, $a^{\left(  0\right)  }=1$, and $a^{\left(  1\right)
}:H_{1}\left(  x\right)  \rightarrow k^{n}$ is a $k$-linear map.

\subsection{Upper-triangular matrices}

Now let us analyze the first homology group $H_{1}\left(  x\right)  $. We
assume that $\operatorname{char}\left(  k\right)  \neq2$. Every $g=\sum
_{i<j}g_{ij}e_{i}\wedge e_{j}\in R\otimes_{k}\wedge^{2}k^{n}$ can be
represented by means of an upper triangular matrix $g=\left[  g_{ij}\right]
_{i,j}\in M_{n}\left(  R\right)  $ whose $i$-th column $C_{i}$ and $i$-th row
$R_{i}$ (they are $n$-tuples in $R$) have the shapes
\[%
\begin{array}
[c]{cccccc}
& g_{1i} &  &  &  & \\
& g_{2i} &  &  &  & \\
& \vdots &  &  &  & \\
& g_{i-1i} &  &  &  & \\
\cdots & 0 & g_{ii+1} & \cdots & g_{in-1} & g_{in}\\
& \vdots &  &  &  &
\end{array}
\]
for all $i>1$, and $C_{1}$, $R_{n}$ consist of zeros. For $n$-tuples $\alpha$
and $\beta$ from $R^{n}$ we write $\left\langle \alpha,\beta\right\rangle $
instead of the sum $\sum_{i=1}^{n}\alpha_{i}\beta_{i}$ in $R$. Note that
$\partial_{1}:R\otimes_{k}\wedge^{2}k^{n}\rightarrow R\otimes_{k}k^{n}$ is an
$R$-linear map acting by the rule%
\begin{equation}
\partial_{1}\left(  g\right)  =\sum_{i<j}x_{i}g_{ij}e_{j}-x_{j}g_{ij}%
e_{j}=\sum_{i=1}^{n}\left(  \sum_{s<i}x_{s}g_{si}-\sum_{i<s}x_{s}%
g_{is}\right)  e_{i}=\sum_{i=1}^{n}\left(  \left\langle C_{i},x\right\rangle
-\left\langle R_{i},x\right\rangle \right)  e_{i}. \label{D1F}%
\end{equation}
For example, if
\[
g=\left[
\begin{array}
[c]{ccccc}%
0 & \cdots & g_{j} & \cdots & g_{n}\\
0 & \cdots & 0 & \cdots & 0\\
&  & \vdots &  & \\
&  &  &  & \\
0 &  & 0 &  & 0
\end{array}
\right]  \text{ with }\sum_{k=j}^{n}x_{k}g_{k}=0
\]
for some $g_{j},\ldots,g_{n}$ then $C_{i}=0$, $1\leq i\leq j-1$, $R_{i}=0$,
$i>1$ and
\begin{align*}
\partial_{1}\left(  g\right)   &  =-\left\langle R_{1},x\right\rangle
e_{1}+\left\langle C_{j},x\right\rangle e_{j}+\cdots+\left\langle
C_{n},x\right\rangle e_{n}=-\left(  \sum_{k=j}^{n}x_{k}g_{k}\right)
e_{1}+x_{1}g_{j}e_{j}+\cdots+x_{1}g_{n}e_{n}\\
&  =x_{1}\sum_{k=j}^{n}g_{j}e_{j}.
\end{align*}
The following assertion is a special case of Lemma \ref{lemB4}. But for our
purposes we provide an independent proof in the case of $H_{1}\left(
x\right)  $ exploiting upper triangular matrices.

\begin{lemma}
\label{lemKZ2}The equality $\left\langle x\right\rangle H_{1}\left(  x\right)
=\left\{  0\right\}  $ holds.
\end{lemma}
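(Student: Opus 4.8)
The plan is to prove the stronger pointwise statement that each coordinate function $x_j$ annihilates $H_1(x)$; since $\langle x\rangle=\langle x_1,\dots,x_n\rangle$ and $H_1(x)$ is an $R$-module, this at once yields $\langle x\rangle H_1(x)=\{0\}$. So I would start from an arbitrary $1$-cycle, i.e.\ an $n$-tuple $\alpha=\sum_{i=1}^n\alpha_ie_i\in R\otimes_k k^n$ with $\partial_0(\alpha)=\sum_i x_i\alpha_i=0$, and produce, for each fixed $j$, an explicit $2$-chain $g\in R\otimes_k\wedge^2k^n$ with $\partial_1(g)=x_j\alpha$. This is nothing but the degree-one, $j$-th-variable instance of the contracting maps $\gamma_i$ from the proof of Lemma \ref{lemB4}, but I would carry it out in the upper-triangular-matrix bookkeeping of formula (\ref{D1F}).

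The matrix to take is $g=\sum_{s>j}\alpha_s\,e_j\wedge e_s-\sum_{s<j}\alpha_s\,e_s\wedge e_j$, the image of $\alpha$ under $u\mapsto e_j\wedge u$. As an upper triangular matrix this is the matrix whose $j$-th row is $(0,\dots,0,\alpha_{j+1},\dots,\alpha_n)$, whose $j$-th column is $(-\alpha_1,\dots,-\alpha_{j-1},0,\dots,0)$, and all of whose other entries vanish; in particular $g$ really is upper triangular, so (\ref{D1F}) applies verbatim. Reading off the rows $R_i$ and columns $C_i$ of $g$ gives: $C_j=-\sum_{s<j}\alpha_se_s$ and $R_j=\sum_{s>j}\alpha_se_s$; for $i>j$, $C_i=\alpha_ie_j$ and $R_i=0$; for $i<j$, $C_i=0$ and $R_i=-\alpha_ie_j$.

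Feeding this into $\partial_1(g)=\sum_i\bigl(\langle C_i,x\rangle-\langle R_i,x\rangle\bigr)e_i$ I would check, coefficient by coefficient, that every cross term for $i\ne j$ contributes exactly $x_j\alpha_i$ (the cases $i>j$ and $i<j$ both give $+x_j\alpha_i$, the second because of the sign in $R_i=-\alpha_ie_j$), while the coefficient of $e_j$ comes out as $-\sum_{s\ne j}x_s\alpha_s$, which by the cycle relation $\sum_sx_s\alpha_s=0$ equals $x_j\alpha_j$. Hence $\partial_1(g)=\sum_ix_j\alpha_ie_i=x_j\alpha$, so $x_j\alpha\in\operatorname{im}(\partial_1)$ and $x_j$ acts as zero on $H_1(x)$; letting $j$ range over $1,\dots,n$ completes the argument. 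The one place where care is needed is precisely this sign/index bookkeeping — keeping $e_j\wedge e_s=-e_s\wedge e_j$ straight so that the $i<j$ and $i>j$ cross terms line up with the same sign, and correctly identifying which entries of $g$ lie in row $j$ versus column $j$ — but there is no conceptual obstacle, which is exactly why this can serve as the promised self-contained substitute for Lemma \ref{lemB4}.
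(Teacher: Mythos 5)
Your proof is correct and is essentially the same argument the paper gives: the $2$-chain you write down, encoded as an upper-triangular matrix with $-\alpha_1,\dots,-\alpha_{j-1}$ in the $j$-th column and $\alpha_{j+1},\dots,\alpha_n$ in the $j$-th row, is exactly the matrix the paper uses (with $j$ there called $i$ and $\alpha_s$ called $g_s$), and the coefficient-by-coefficient verification via (\ref{D1F}), including the use of the cycle relation to handle the $e_j$-coefficient, follows the paper's computation step for step.
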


\begin{proof}
Take $\omega=\sum_{j=1}^{n}g_{j}e_{j}\in\ker\left(  \partial_{0}\right)  $ or
$\sum_{j=1}^{n}x_{j}g_{j}=0$ in $R$. Fix $i$ and prove that $x_{i}\omega
\in\operatorname{im}\left(  \partial_{1}\right)  $. First note that
$x_{i}g_{i}=-\sum_{j\neq i}x_{j}g_{j}$ and consider the matrix
\[
g=\left[
\begin{array}
[c]{cccccc}%
0 & \cdots & -g_{1} & 0 & \cdots & 0\\
&  & \vdots & \vdots &  & \vdots\\
&  & -g_{i-1} & 0 &  & 0\\
0 & \cdots & 0 & g_{i+1} & \cdots & g_{n}\\
&  & \vdots & \vdots &  & \vdots\\
&  & 0 & 0 &  & 0
\end{array}
\right]
\]
with nontrivial $i$-th column and $i$-th row. Since
\[
\left\langle C_{i},x\right\rangle -\left\langle R_{i},x\right\rangle
=-\sum_{j=1}^{i-1}x_{j}g_{j}-\sum_{j=i+1}^{n}x_{j}g_{j}=-\sum_{j\neq i}%
x_{j}g_{j}=x_{i}g_{i},
\]
we deduce using (\ref{D1F}) that
\begin{align*}
\partial_{1}\left(  g\right)   &  =-x_{i}g_{1}e_{1}-\cdots-x_{i}g_{i-1}%
e_{i-1}+\left(  \left\langle C_{i},x\right\rangle -\left\langle R_{i}%
,x\right\rangle \right)  e_{i}+x_{i}g_{i+1}e_{i+1}+\cdots+x_{i}g_{n}e_{n}\\
&  =x_{i}\left(  \sum_{j<i}g_{j}e_{j}+\sum_{j\geq i}g_{j}e_{j}\right)
=x_{i}\omega,
\end{align*}
that is, $x_{i}H_{1}\left(  x\right)  =\left\{  0\right\}  $.
\end{proof}

\subsection{Minimal generators}

Now let $F=\left\{  f_{1},\ldots,f_{p}\right\}  $ be a set of generators of
the ideal $\mathfrak{p}$. Since $\mathfrak{p\subseteq}\left\langle
X\right\rangle $ in $P$, it follows that every $f\in\mathfrak{p}$ has a
standard representation $f=X_{1}^{m_{1}}h_{1}+\cdots+X_{n}^{m_{n}}h_{n}$ with
$h_{j}\left(  a\right)  \neq0$. In particular, $f_{j}=\sum_{i}X_{i}^{m_{ij}%
}h_{ij}$ with $h_{ij}\left(  a\right)  \neq0$ for all $i,j$. For every $i$,
$1\leq i\leq n$, we put $m_{i}=\min\left\{  m_{ij}:1\leq j\leq p\right\}  $. A
generator $f\in F$ is said to be \textit{minimal} if $f=f_{j}$ and
$m_{i}=m_{ij}$ for some $i$. Every $f=\sum_{i}X_{i}^{m_{i}}h_{i}%
\in\mathfrak{p}$ associates in turn $\omega=\sum_{i=1}^{n}x_{i}^{m_{i}-1}%
h_{i}e_{i}\in R\otimes_{k}k^{n}$ such that $\partial_{0}\left(  \omega\right)
=\sum_{i=1}^{n}x_{i}^{m_{i}}h_{i}=0$ in $R=P/\mathfrak{p}$. In particular,
$\omega^{\sim}\in H_{1}\left(  x\right)  $.

\begin{lemma}
\label{lemMinf}If $f\in F$ is a minimal generator then $\omega^{\sim}\neq0$ in
$H_{1}\left(  x\right)  $, and $a^{\left(  1\right)  }\left(  \omega^{\sim
}\right)  \neq0$ whenever $m_{i}=1$.
\end{lemma}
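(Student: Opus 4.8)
To prove the first assertion I would show directly that the cycle $\omega$ does not lie in $\operatorname{im}\left(\partial_{1}\right)$, by a specialization argument, and then obtain the statement on $a^{\left(1\right)}$ from a one‑line evaluation at the point $a=0$. Since $f$ is minimal, fix a coordinate $i_{0}$ with $m_{i_{0}}=\min_{j}m_{i_{0}j}$, so that $m_{i_{0}}\le m_{i_{0}j}$ for every $j$. Suppose, towards a contradiction, that $\omega=\partial_{1}\left(g\right)$ for some upper triangular $g=\left[g_{k\ell}\right]\in M_{n}\left(R\right)$. Comparing $i_{0}$‑th coordinates in formula (\ref{D1F}) gives, in $R$,
\[
x_{i_{0}}^{m_{i_{0}}-1}h_{i_{0}}=\left\langle C_{i_{0}},x\right\rangle-\left\langle R_{i_{0}},x\right\rangle=\sum_{s<i_{0}}x_{s}g_{s,i_{0}}-\sum_{s>i_{0}}x_{s}g_{i_{0},s},
\]
and lifting each $g_{k\ell}$ to $\widetilde{g}_{k\ell}\in P$ we obtain $X_{i_{0}}^{m_{i_{0}}-1}h_{i_{0}}-\sum_{s<i_{0}}X_{s}\widetilde{g}_{s,i_{0}}+\sum_{s>i_{0}}X_{s}\widetilde{g}_{i_{0},s}\in\mathfrak{p}$.

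Next I would apply the $k$‑algebra epimorphism $\pi:P\rightarrow k\left[t\right]$ given by $X_{i_{0}}\mapsto t$ and $X_{\ell}\mapsto0$ for $\ell\neq i_{0}$. The two sums above are $P$‑linear combinations of the variables $X_{s}$ with $s\neq i_{0}$ — here the upper triangular shape is essential, the $i_{0}$‑th column of $g$ being supported in rows $<i_{0}$ and its $i_{0}$‑th row in columns $>i_{0}$ — hence both are annihilated by $\pi$. Writing $\overline{h}_{i}=\pi\left(h_{i}\right)$ and using $\overline{h}_{i_{0}}\left(0\right)=h_{i_{0}}\left(a\right)\neq0$, this forces $t^{m_{i_{0}}-1}\overline{h}_{i_{0}}\left(t\right)\in\pi\left(\mathfrak{p}\right)$. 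On the other hand $\pi\left(\mathfrak{p}\right)=\left(\pi\left(f_{1}\right),\ldots,\pi\left(f_{p}\right)\right)$ as an ideal of $k\left[t\right]$, and $\pi\left(f_{j}\right)=t^{m_{i_{0}j}}\overline{h}_{i_{0}j}\left(t\right)$ (or $0$) by the standard representation of $f_{j}$; since $m_{i_{0}j}\ge m_{i_{0}}$ for all $j$, every element of $\pi\left(\mathfrak{p}\right)$ is divisible by $t^{m_{i_{0}}}$. Thus $t^{m_{i_{0}}}\mid t^{m_{i_{0}}-1}\overline{h}_{i_{0}}\left(t\right)$, i.e. $t\mid\overline{h}_{i_{0}}\left(t\right)$, contradicting $\overline{h}_{i_{0}}\left(0\right)\neq0$. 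Therefore $\omega^{\sim}\neq0$ in $H_{1}\left(x\right)$.

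For the last assertion I would use that $a^{\left(1\right)}:H_{1}\left(x\right)\rightarrow k^{n}$ is induced by $\wedge^{1}a:R\otimes_{k}k^{n}\rightarrow k^{n}$, $r\otimes e_{i}\mapsto r\left(a\right)e_{i}$. As $\omega$ is a cycle,
\[
a^{\left(1\right)}\left(\omega^{\sim}\right)=\left(\wedge^{1}a\right)\left(\omega\right)=\sum_{i=1}^{n}\left(x_{i}^{m_{i}-1}h_{i}\right)\left(a\right)e_{i}=\sum_{m_{i}=1}h_{i}\left(a\right)e_{i},
\]
because $a=0$, so $\left(x_{i}^{m_{i}-1}h_{i}\right)\left(a\right)=h_{i}\left(a\right)$ when $m_{i}=1$ and $=0$ when $m_{i}\ge2$. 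Since each $h_{i}\left(a\right)\neq0$, the right‑hand side is nonzero whenever $m_{i}=1$ for some $i$, which gives $a^{\left(1\right)}\left(\omega^{\sim}\right)\neq0$ in that case.

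The main obstacle is the middle step: one must check that the column and row contributions in (\ref{D1F}) genuinely disappear under the specialization $\pi$, and recognize that minimality of $f$ is exactly the hypothesis that makes $t^{m_{i_{0}}}$ divide every $\pi\left(f_{j}\right)$ simultaneously. Everything else is routine bookkeeping.
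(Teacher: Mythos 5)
Your proof is correct and follows essentially the same route as the paper's. The core step — assuming $\omega=\partial_{1}(g)$, extracting the $i_{0}$-th coordinate identity via (\ref{D1F}), lifting to $P$, and reducing modulo $\langle X_{s}:s\neq i_{0}\rangle$ — is identical; the paper phrases the reduction as passing to the quotient ring $k[X_{i_{0}}]$ and expands a relation $\sum_{q}f_{q}l_{q}$ term by term, whereas you package it as the specialization $\pi:P\to k[t]$ and observe directly that $\pi(\mathfrak{p})\subseteq t^{m_{i_{0}}}k[t]$ because every $\pi(f_{q})=t^{m_{i_{0}q}}\overline{h}_{i_{0}q}$ has $m_{i_{0}q}\geq m_{i_{0}}$. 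That is a slightly cleaner way of saying the same thing. The evaluation argument for $a^{(1)}(\omega^{\sim})$ is also the paper's. One small imprecision: the sentence ``fix a coordinate $i_{0}$ with $m_{i_{0}}=\min_{j}m_{i_{0}j}$'' is vacuous (that equality is the definition of $m_{i_{0}}$, true for every index). What minimality of $f=f_{j_{0}}$ actually gives you, and what the argument needs, is an index $i_{0}$ with $m_{i_{0}j_{0}}=m_{i_{0}}$, so that the $i_{0}$-th component of $\omega$ really is $x_{i_{0}}^{m_{i_{0}}-1}h_{i_{0}}$ with the \emph{minimal} exponent; without that, $t^{m_{i_{0}j_{0}}-1}\overline{h}_{i_{0}}\in t^{m_{i_{0}}}k[t]$ would give no contradiction. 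You clearly use this in the divisibility step, so the fix is purely a matter of stating the choice of $i_{0}$ correctly.
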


\begin{proof}
Suppose $\omega^{\sim}=0$, which means that $\omega=\partial_{1}\left(
g\right)  $ for some upper triangular matrix $g\in M_{n}\left(  R\right)  $.
It follows that $\left\langle C_{i},x\right\rangle -\left\langle
R_{i},x\right\rangle =x_{i}^{m_{i}-1}h_{i}$, $1\leq i\leq n$ in $R$ or
\[
\sum_{s<i}X_{s}g_{si}-X_{i}^{m_{i}-1}h_{i}-\sum_{i<s}X_{s}g_{is}=\sum
_{q=1}^{p}f_{q}l_{q}=\sum_{q=1}^{p}\sum_{k=1}^{n}X_{k}^{m_{kq}}h_{kq}l_{q}%
\]
in $P$ for some $l_{q}\in P$. But $f=f_{j}$ for some $j$, therefore
\[
\sum_{s<i}X_{s}g_{si}-X_{i}^{m_{i}-1}h_{i}-\sum_{i<s}X_{s}g_{is}=\sum
_{k=1}^{n}X_{k}^{m_{k}}h_{k}l_{j}+\sum_{q\neq j}\sum_{k=1}^{n}X_{k}^{m_{kq}%
}h_{kq}l_{q}.
\]
By passing to the quotient algebra $P/\left\langle X_{1},\ldots,\widehat{X_{i}%
},\ldots,X_{n}\right\rangle =k\left[  X_{i}\right]  $, we obtain that
$-X_{i}^{m_{i}-1}h_{i}^{\sim}=X_{i}^{m_{i}}h_{i}^{\sim}l_{j}^{\sim}%
+\sum_{q\neq j}X_{i}^{m_{iq}}h_{iq}^{\sim}l_{q}^{\sim}$. But $m_{i}%
=\min\left\{  m_{iq}:1\leq q\leq p\right\}  $, therefore $m_{iq}-m_{i}\geq0$
and
\[
h_{i}^{\sim}=-X_{i}h_{i}^{\sim}l_{j}^{\sim}-\sum_{q\neq j}X_{i}^{m_{iq}%
-m_{i}+1}h_{iq}^{\sim}l_{q}^{\sim}\in\left\langle X_{i}\right\rangle .
\]
It follows that $h_{i}\left(  a\right)  =0$, a contradiction. Finally,
$a\left(  x_{i}^{m_{i}-1}h_{i}\right)  =h_{i}\left(  a\right)  \neq0$ whenever
$m_{i}=1$. Therefore $a^{\left(  1\right)  }\left(  \omega^{\sim}\right)
=\left(  a\left(  x_{k}^{m_{k}-1}h_{k}\right)  \right)  _{k}\neq0$ in $k^{n}$.
\end{proof}

\begin{remark}
In the case of an irreducible curve $Y\subseteq\mathbb{A}^{2}$ given by
$f=X_{1}^{n}h_{1}+X_{2}^{m}h_{2}\in k\left[  X_{1},X_{2}\right]  $,
$h_{j}\notin\left\langle X_{1},X_{2}\right\rangle $, $n,m\geq1$, we have
$H_{1}\left(  x\right)  =k\omega^{\sim}$ with $\omega=\left(  x_{2}^{m-1}%
h_{2},x_{1}^{n-1}h_{1}\right)  \in\ker\left(  \partial_{0}\right)  $ and
$a^{\left(  1\right)  }\left(  \omega^{\sim}\right)  =\left(  a\left(
x_{2}^{m-1}h_{2}\right)  ,a\left(  x_{1}^{n-1}h_{1}\right)  \right)  $, which
is not trivial whenever $m=1$ or $n=1$. The latter is equivalent to the
presence of a linear part of $f$, which means that $f$ is nonsingular at $a$.
Thus $a^{\left(  1\right)  }:H_{1}\left(  x-a\right)  \rightarrow k^{2}$ is a
nonzero linear map iff $f$ is nonsingular at $a$.
\end{remark}

For every $i$ we put $S_{i}=\left\{  j:m_{ij}=m_{i}\right\}  $ and define the
vector $v_{j}=\left(  v_{1j},v_{2j},\ldots,v_{nj}\right)  \in k^{n}$ with
\[
v_{ij}=\left\{
\begin{array}
[c]{ccc}%
h_{ij}\left(  a\right)  & \text{if} & j\in S_{i},\\
0 & \text{if} & j\notin S_{i}.
\end{array}
\right.
\]
Thus $v_{j}=0$ whenever $j\notin\cup_{i}S_{i}$. If $j\in S_{i}$ for some $i$,
then $m_{ij}=m_{i}$, $f_{j}$ turns out to be a minimal generator and
$v_{ij}=h_{ij}\left(  a\right)  \neq0$. Hence $v_{j}\neq0$ in $k^{n}$ iff
$f_{j}$ is a minimal generator. Suppose $f_{1},\ldots,f_{t}$ are minimal
generators in $F$. Then we have the related nonzero vectors $\left\{
v_{1},\ldots,v_{t}\right\}  $ from $k^{n}$, and related nonzero vectors
$\left\{  \omega_{1}^{\sim},\ldots,\omega_{t}^{\sim}\right\}  $ (see Lemma
\ref{lemMinf}) from $H_{1}\left(  x\right)  $.

\begin{lemma}
\label{lemMinfi}If $\left\{  v_{1},\ldots,v_{t}\right\}  $ is a linearly
independent set of vectors in $k^{n}$ then so is the set $\left\{  \omega
_{1}^{\sim},\ldots,\omega_{t}^{\sim}\right\}  $ and $\dim_{k}\left(
H_{1}\left(  x\right)  \right)  \geq t$.
\end{lemma}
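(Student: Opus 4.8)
The plan is to follow the pattern of the proof of Lemma \ref{lemMinf}, now applied to an arbitrary linear combination, and then to read off the coefficients from the vectors $v_{j}$. I would start from a hypothetical relation $\sum_{j=1}^{t}c_{j}\omega_{j}^{\sim}=0$ in $H_{1}\left(x\right)=\ker\left(\partial_{0}\right)/\operatorname{im}\left(\partial_{1}\right)$ with $c_{j}\in k$, the goal being to deduce $c_{1}=\cdots=c_{t}=0$. Such a relation means $\sum_{j=1}^{t}c_{j}\omega_{j}=\partial_{1}\left(g\right)$ in $R\otimes_{k}k^{n}$ for some upper triangular matrix $g=\left[g_{ij}\right]\in M_{n}\left(R\right)$. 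Recalling $\omega_{j}=\sum_{i}x_{i}^{m_{ij}-1}h_{ij}e_{i}$ and writing out the $i$-th coordinate by means of (\ref{D1F}), then lifting the resulting identity from $R$ to $P$, I would obtain, for each $i$, that
\[
\sum_{j=1}^{t}c_{j}X_{i}^{m_{ij}-1}h_{ij}-\sum_{s<i}X_{s}g_{si}+\sum_{i<s}X_{s}g_{is}\in\mathfrak{p},
\]
where now the $g_{ij}$ are chosen lifts in $P$ of the entries of $g$.

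The decisive step, exactly as in the proof of Lemma \ref{lemMinf}, is to pass to the quotient $P/\left\langle X_{1},\ldots,\widehat{X_{i}},\ldots,X_{n}\right\rangle =k\left[X_{i}\right]$, denoting by $\overline{h}$ the image of a polynomial $h$, and to track powers of $X_{i}$. All the terms $X_{s}g_{si}$ and $X_{s}g_{is}$ die, since $s\neq i$. Moreover, since $F$ generates $\mathfrak{p}$ and each $f_{q}=\sum_{k}X_{k}^{m_{kq}}h_{kq}$ maps to $X_{i}^{m_{iq}}\overline{h_{iq}}$ with $m_{iq}\geq m_{i}$, the image of $\mathfrak{p}$ in $k\left[X_{i}\right]$ is contained in $\left\langle X_{i}^{m_{i}}\right\rangle $. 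Hence $\sum_{j=1}^{t}c_{j}X_{i}^{m_{ij}-1}\overline{h_{ij}}\in\left\langle X_{i}^{m_{i}}\right\rangle $ in $k\left[X_{i}\right]$. For $j\notin S_{i}$ one has $m_{ij}>m_{i}$, so $X_{i}^{m_{ij}-1}\overline{h_{ij}}$ already lies in $\left\langle X_{i}^{m_{i}}\right\rangle $; deleting those summands leaves $X_{i}^{m_{i}-1}\sum_{j\in S_{i}}c_{j}\overline{h_{ij}}\in\left\langle X_{i}^{m_{i}}\right\rangle $ (here $S_{i}\subseteq\left\{1,\ldots,t\right\}$, since only minimal generators occur in it), whence $\sum_{j\in S_{i}}c_{j}\overline{h_{ij}}\in\left\langle X_{i}\right\rangle $, so its constant term vanishes: $\sum_{j\in S_{i}}c_{j}h_{ij}\left(a\right)=0$, using $a=0$. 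Since $v_{ij}=h_{ij}\left(a\right)$ for $j\in S_{i}$ and $v_{ij}=0$ otherwise, this is exactly $\sum_{j=1}^{t}c_{j}v_{ij}=0$.

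Letting $i$ run over $1,\ldots,n$ would then give $\sum_{j=1}^{t}c_{j}v_{j}=0$ in $k^{n}$, so the assumed linear independence of $\left\{v_{1},\ldots,v_{t}\right\}$ forces $c_{1}=\cdots=c_{t}=0$. Consequently $\left\{\omega_{1}^{\sim},\ldots,\omega_{t}^{\sim}\right\}$ is a linearly independent family in $H_{1}\left(x\right)$, and in particular $\dim_{k}\left(H_{1}\left(x\right)\right)\geq t$. The one place where care is really needed is the exponent bookkeeping after reducing modulo $\left\langle X_{1},\ldots,\widehat{X_{i}},\ldots,X_{n}\right\rangle $ — namely the containment of the image of $\mathfrak{p}$ in $\left\langle X_{i}^{m_{i}}\right\rangle $ — which is already the core of Lemma \ref{lemMinf}; the rest is routine linear algebra together with the description of $\operatorname{im}\left(\partial_{1}\right)$ via upper triangular matrices.
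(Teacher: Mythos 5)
Your proof is correct and follows essentially the same route as the paper's: starting from a hypothetical linear relation $\sum_j c_j\omega_j^\sim=0$, reading it as $\sum_j c_j\omega_j=\partial_1(g)$, lifting to $P$, and for each $i$ passing to $k[X_i]=P/\langle X_1,\ldots,\widehat{X_i},\ldots,X_n\rangle$ to conclude $\sum_{j\in S_i}c_j h_{ij}(a)=0$, i.e.\ $\sum_j c_j v_{ij}=0$. The only difference is presentational — you phrase the key step as ``the image of $\mathfrak{p}$ in $k[X_i]$ lies in $\langle X_i^{m_i}\rangle$'' rather than tracking the explicit combination $\sum_s f_s l_s$ as the paper does — and this is a clean and correct packaging of the same exponent bookkeeping.
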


\begin{proof}
Suppose that $\sum_{j=1}^{t}\lambda_{j}\omega_{j}^{\sim}=0$, that is,
$\sum_{i=1}^{n}\left(  \sum_{j=1}^{t}\lambda_{j}x_{i}^{m_{ij}-1}h_{ij}\right)
e_{i}=\partial_{1}\left(  g\right)  $ for some $g$. As in the proof of Lemma
\ref{lemMinf}, we derive that
\[
\sum_{s<i}X_{s}g_{si}-\sum_{j=1}^{t}X_{i}^{m_{ij}-1}\lambda_{j}h_{ij}%
-\sum_{i<s}X_{s}g_{is}=\sum_{s=1}^{p}X_{i}^{m_{is}}h_{is}l_{s}+\sum_{s=1}%
^{p}\sum_{k\neq i}X_{k}^{m_{ks}}h_{ks}l_{s}%
\]
for some $l_{s}\in P$. Again by passing to the quotient algebra $k\left[
X_{i}\right]  $, we obtain that $\sum_{j=1}^{t}X_{i}^{m_{ij}-1}\lambda
_{j}h_{ij}^{\sim}=-\sum_{s=1}^{p}X_{i}^{m_{is}}h_{is}^{\sim}l_{s}^{\sim}$.
But
\[
\sum_{j=1}^{t}X_{i}^{m_{ij}-1}\lambda_{j}h_{ij}^{\sim}=\sum_{j\in S_{i}}%
+\sum_{j\notin S_{i}}=X_{i}^{m_{i}-1}\sum_{j\in S_{i}}\lambda_{j}h_{ij}^{\sim
}+\sum_{j\notin S_{i}}X_{i}^{m_{ij}-1}\lambda_{j}h_{ij}^{\sim},
\]
therefore
\[
X_{i}^{m_{i}-1}\sum_{j\in S_{i}}\lambda_{j}h_{ij}^{\sim}=-\sum_{j\notin S_{i}%
}X_{i}^{m_{ij}-1}\lambda_{j}h_{ij}^{\sim}-\sum_{s=1}^{p}X_{i}^{m_{is}}%
h_{is}^{\sim}l_{s}^{\sim}.
\]
Thus $\sum_{j\in S_{i}}\lambda_{j}h_{ij}^{\sim}\in\left\langle X_{i}%
\right\rangle $, which in turn implies that $\sum_{j\in S_{i}}\lambda
_{j}h_{ij}\left(  a\right)  =0$ for every $i$. In particular,
\[
\sum_{j=1}^{t}\lambda_{j}v_{ij}=\sum_{j\in S_{i}}\lambda_{j}h_{ij}\left(
a\right)  =0
\]
for every $i$, which means that $\sum_{j=1}^{t}\lambda_{j}v_{j}=0$ in $k^{n}$.
Using the fact of independence, we conclude that $\lambda_{j}=0$ for all $j$.
Hence $\left\{  \omega_{1}^{\sim},\ldots,\omega_{t}^{\sim}\right\}  $ is a
linearly independent set of vectors and the $R$-module structure of
$H_{1}\left(  x\right)  $ is reduced to its $k$-vector space one (see Lemma
\ref{lemKZ2}). Whence $\dim_{k}\left(  H_{1}\left(  x\right)  \right)  \geq t$.
\end{proof}

For every $j$ we have the well defined vector $f_{j}^{\prime}=\left(
\dfrac{\partial f_{j}}{\partial X_{1}}\left(  a\right)  ,\ldots,\dfrac
{\partial f_{j}}{\partial X_{n}}\left(  a\right)  \right)  \in k^{n}$. Notice
that $v_{j}\neq0$ whenever $f_{j}^{\prime}\neq0$, and in this case
$v_{j}=f_{j}^{\prime}$ and $a^{\left(  1\right)  }\left(  \omega_{j}^{\sim
}\right)  =f_{j}^{\prime}$.

\begin{corollary}
If the Jacobian matrix $J_{a}=\left[  \dfrac{\partial f_{j}}{\partial X_{i}%
}\left(  a\right)  \right]  _{j,i}$ has the rank $t$ then $\dim_{k}\left(
H_{1}\left(  x-a\right)  \right)  \geq t$. If $Y$ is nonsingular at point $a$
then $d_{1}\left(  a\right)  \geq n-d$.
\end{corollary}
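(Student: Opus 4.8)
The plan is to extract the lower bound from the $k$-linear map $a^{(1)}\colon H_{1}(x-a)\to k^{n}$ constructed above, whose values on the homology classes attached to the chosen generators of $\mathfrak{p}$ are exactly the rows of the Jacobian $J_{a}$.

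First I would reduce to the case $a=0\in Y$. The affine substitution $X\mapsto X+a$ is a $k$-algebra automorphism of $P$ sending $\mathfrak{p}=I(Y)$ to $I(Y-a)$, hence inducing an isomorphism $R\cong P/I(Y-a)$ that carries $\operatorname{Kos}(x-a,R)$ onto the Koszul complex of the coordinate tuple of the translated variety $Y-a$, carries $J_{a}$ onto the Jacobian of $Y-a$ at $0$, and preserves dimension. So I may assume $\mathfrak{p}\subseteq\langle X\rangle$, i.e. all notation and lemmas of this subsection apply verbatim with $a=0$.

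Next, for each generator $f_{j}$, written in a standard representation, I obtain a cycle $\omega_{j}$ with $\omega_{j}^{\sim}\in H_{1}(x-a)$, and by Lemma~\ref{lemMinf} together with the remark preceding the Corollary one has $a^{(1)}(\omega_{j}^{\sim})=f_{j}'$, the $j$-th row of $J_{a}$; when $f_{j}'=0$ this is the trivial identity $0=0$. Consequently the image of $a^{(1)}$ contains the $k$-span of all rows of $J_{a}$, a subspace of dimension $\operatorname{rank}(J_{a})=t$. Hence already the restriction of $a^{(1)}$ to the span of the $\omega_{j}^{\sim}$ has rank at least $t$, forcing $\dim_{k}(H_{1}(x-a))\ge t$. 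Equivalently, one may pick $t$ linearly independent rows of $J_{a}$; these are nonzero, hence equal to the vectors $v_{j}$ of the preceding construction, and Lemma~\ref{lemMinfi} yields the same inequality.

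Finally, recall $d_{1}(a)=\dim_{k}(H_{1}(x-a))$. Since $k$ is algebraically closed and $\mathfrak{p}=I(Y)$ is prime, the Jacobian criterion applies: $n-\operatorname{rank}(J_{a})=\dim_{k}(\mathfrak{m}/\mathfrak{m}^{2})$ with $\mathfrak{m}$ the maximal ideal of the local ring $R_{\langle x-a\rangle}$, and $a$ is nonsingular on $Y$ exactly when this equals $\dim(R_{\langle x-a\rangle})=\dim(Y)=d$. Thus at a nonsingular point $\operatorname{rank}(J_{a})=n-d$, and the first part with $t=n-d$ gives $d_{1}(a)\ge n-d$. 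I expect no serious obstacle here: the only non-formal input is the identity $a^{(1)}(\omega_{j}^{\sim})=f_{j}'$, which has already been recorded, and the standard Jacobian criterion for smoothness; if full self-containedness were wanted, one would simply rerun the elementary computation in the proof of Lemma~\ref{lemMinf} to see that $a^{(1)}$ reads off the linear part of each $f_{j}$ at $a$.
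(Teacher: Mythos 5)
Your primary argument is correct and routes around the paper's Lemma~3.4 (\texttt{lemMinfi}): you observe that $a^{(1)}(\omega_j^{\sim}) = f_j'$ for \emph{every} $j$, including those with $f_j' = 0$, so the image of the $k$-linear map $a^{(1)}\colon H_1(x-a) \to k^n$ contains the row space of $J_a$, and a rank count immediately gives $\dim_k\left(H_1(x-a)\right) \ge \operatorname{rank}(J_a) = t$. The paper instead picks $t$ linearly independent gradient rows, deduces linear independence of the corresponding vectors $v_{j_\ell}$, and then invokes Lemma~\texttt{lemMinfi} to lift that independence to the classes $\omega_{j_\ell}^{\sim}$. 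Your route bypasses \texttt{lemMinfi} entirely; the paper's route is framed in terms of the vectors $v_j$, which encode minimal multi-degrees and can be nonzero even when $f_j' = 0$, so \texttt{lemMinfi} is deliberately more general than what this corollary requires (it is reused in the later singular example). For the nonsingular case both of you invoke the standard Jacobian criterion $\operatorname{rank}(J_a) = n - d$.

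One small caveat in your ``equivalent'' route: you assert that a nonzero gradient row $f_j'$ equals the vector $v_j$, reproducing the imprecision in the paper's remark preceding the corollary. In general $f_j'$ is only the coordinate projection of $v_j$ onto the positions $i$ with $m_i = 1$; for instance with $Y = V(X_1 + X_2^2)$ at $a = 0$ one gets $f_1' = (1,0)$ while $v_1 = (1,1)$. The projection relation is still enough to transfer linear independence from the $f_{j_\ell}'$ to the $v_{j_\ell}$, so the paper's proof (and your alternate route) is not actually broken, and your primary argument is unaffected since it never invokes the $v_j$ at all.
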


\begin{proof}
There are linearly independent columns $f_{j_{1}}^{\prime},\ldots,f_{j_{t}%
}^{\prime}$ of the Jacobian matrix. In particular, so are vectors $v_{j_{1}%
},\ldots,v_{j_{t}}$. By Lemma \ref{lemMinfi}, $\left\{  \omega_{j_{1}}^{\sim
},\ldots,\omega_{j_{t}}^{\sim}\right\}  $ is an independent set of vectors
from $H_{1}\left(  x\right)  $, therefore $\dim_{k}\left(  H_{1}\left(
x\right)  \right)  \geq t$. If $Y$ is nonsingular at point $a$ then the rank
of the matrix $J_{a}$ is $n-d$, where $d=\dim\left(  Y\right)  $. It follows
that $\dim_{k}\left(  H_{1}\left(  x\right)  \right)  \geq n-d$.
\end{proof}

\subsection{Examples}

Let $Y\subseteq\mathbb{A}^{3}$ be a variety given by a prime
$\mathfrak{p\subseteq}P=k\left[  X_{1},X_{2},X_{2}\right]  $. For the actions
of $X_{j}$ on $R=P/\mathfrak{p}$ we use the notations $x,y$ and $z$,
respectively. If $a=0\in Y$ then the Koszul complex $\operatorname{Kos}\left(
\left(  x,y,z\right)  ,R\right)  $ looks like that
\[
0\leftarrow R\overset{\partial_{0}}{\longleftarrow}R^{3}\overset{\partial
_{1}}{\longleftarrow}R^{3}\overset{\partial_{2}}{\longleftarrow}R\leftarrow0
\]
with the operators
\[
\partial_{0}=\left[
\begin{array}
[c]{ccc}%
x & y & z
\end{array}
\right]  ,\partial_{1}=\left[
\begin{array}
[c]{ccc}%
-y & -z & 0\\
x & 0 & -z\\
0 & x & y
\end{array}
\right]  ,\partial_{2}=\left[
\begin{array}
[c]{c}%
z\\
-y\\
x
\end{array}
\right]
\]
whose homology groups are denoted by $H_{j}$ and $d_{j}=\dim_{k}\left(
H_{j}\right)  $, $0\leq j\leq3$. Recall that $d_{0}=1$, $d_{3}=0$.

\begin{proposition}
The equalities $\partial_{1}\left(  \partial_{1}^{2}+2xz-y^{2}\right)  =0$ and
$\operatorname{im}\left(  \partial_{1}^{2}+2xz-y^{2}\right)  \oplus
\operatorname{im}\left(  \partial_{2}|k\cdot1\right)  =\operatorname{im}%
\left(  \partial_{2}\right)  $ hold.
\end{proposition}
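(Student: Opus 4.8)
The plan is to reduce both assertions to a single identity of $3\times 3$ matrices over $R$ (in fact already over $P$):
\[
\partial_1^2 + \left(2xz-y^2\right)I_3 = \partial_2\partial_0 ,
\]
where $I_3$ is the identity matrix and $\partial_2\partial_0$ is the rank-one product of the column $\left(z,-y,x\right)^{T}$ with the row $\left(x\ y\ z\right)$. First I would compute $\partial_1^2$ entrywise from the given matrix of $\partial_1$: one obtains the matrix whose rows are $\left(y^2-xz,\,yz,\,z^2\right)$, $\left(-xy,\,-2xz,\,-yz\right)$, $\left(x^2,\,xy,\,y^2-xz\right)$. Comparing with $\partial_2\partial_0$, whose rows are $\left(xz,\,yz,\,z^2\right)$, $\left(-xy,\,-y^2,\,-yz\right)$, $\left(x^2,\,xy,\,xz\right)$, one checks that $\partial_2\partial_0 - \partial_1^2 = \left(2xz-y^2\right)I_3$. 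Writing $Q := \partial_1^2 + 2xz - y^2$, this says $Q = \partial_2\partial_0$, and the first assertion is then immediate: $\partial_1 Q = \partial_1\partial_2\partial_0 = 0$, because $\partial_1\partial_2 = 0$ in the Koszul complex.

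For the second assertion, put $\mathfrak{m} := \operatorname{im}\left(\partial_0\right) = xR + yR + zR$, the maximal ideal of $R$ at the point $a = 0\in Y$ (recall $\mathfrak{p}\subseteq\left\langle X_1,X_2,X_3\right\rangle$, so $R/\mathfrak{m} = k$). Since $Q = \partial_2\partial_0$, we have $\operatorname{im}\left(Q\right) = \partial_2\left(\mathfrak{m}\right)\subseteq\partial_2\left(R\right) = \operatorname{im}\left(\partial_2\right)$, while $\operatorname{im}\left(\partial_2|k\cdot 1\right) = k\cdot\partial_2\left(1\right) = k\cdot\left(z,-y,x\right)^{T}$. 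Now $R = \mathfrak{m}\oplus k\cdot 1$ as $k$-vector spaces: each $f\in R$ is uniquely $f = f(0)\cdot 1 + \left(f - f(0)\cdot 1\right)$ with $f - f(0)\cdot 1\in\mathfrak{m}$, where $f(0)$ denotes the image of $f$ in $R/\mathfrak{m} = k$. Applying the $R$-linear map $\partial_2$ gives the spanning half: $\operatorname{im}\left(\partial_2\right) = \partial_2\left(\mathfrak{m}\right) + \partial_2\left(k\cdot 1\right) = \operatorname{im}\left(Q\right) + \operatorname{im}\left(\partial_2|k\cdot 1\right)$.

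It remains to check directness, i.e. $\operatorname{im}\left(Q\right)\cap\operatorname{im}\left(\partial_2|k\cdot 1\right) = 0$. Since $H_3 = 0$ (equivalently $d_3 = 0$), the map $\partial_2$ is injective; hence if $\partial_2\left(g\right) = \partial_2\left(\lambda\cdot 1\right)$ with $g\in\mathfrak{m}$ and $\lambda\in k$, then $g = \lambda\cdot 1$, and the uniqueness in $R = \mathfrak{m}\oplus k\cdot 1$ forces $\lambda = 0$, so $g = 0$. One can also avoid injectivity: $g-\lambda$ annihilates $x,y,z$, hence all of $\mathfrak{m}$, and as $R = P/\mathfrak{p}$ is a domain with $\mathfrak{m}\neq 0$ — the zero-dimensional case $Y=\{0\}$, $R = k$, being excluded by $d_3 = 0$ — this forces $g = \lambda\cdot 1\in\mathfrak{m}\cap\left(k\cdot 1\right) = 0$. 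Therefore the sum is direct and equals $\operatorname{im}\left(\partial_2\right)$. I expect no genuine obstacle here: the only step demanding care is the $3\times 3$ matrix product, and the only subtlety worth noting is the degenerate point case, which $d_3 = 0$ rules out.
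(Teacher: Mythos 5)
Your proof is correct, and it is essentially the same computation as the paper's, but packaged more cleanly. The paper's argument first verifies $\partial_1(\partial_1^2 + 2xz - y^2) = 0$ via the Cayley--Hamilton theorem applied to the characteristic polynomial of $\partial_1$, and then separately computes the matrix $\partial_1^2 + 2xz - y^2$ and observes that $(\partial_1^2 + 2xz - y^2)g = \partial_2(xg_1 + yg_2 + zg_3)$. You instead notice the single rank-one identity $\partial_1^2 + (2xz - y^2)I_3 = \partial_2\partial_0$, from which both the first assertion (via $\partial_1\partial_2 = 0$ in the Koszul complex) and the inclusion $\operatorname{im}(\partial_1^2 + 2xz - y^2) = \partial_2(\mathfrak{m}) \subseteq \operatorname{im}(\partial_2)$ follow at once; this is tidier and makes the structure visible. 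For the directness of the sum you correctly identify the hypothesis that is actually used: one needs $H_3 = \ker(\partial_2) = 0$ (equivalently $\ker(\partial_2)\subseteq\mathfrak{m}$), which holds because $R$ is a domain and $\mathfrak{m}\neq 0$, and which the paper has already recorded as $d_3 = 0$ just before the proposition. Note that the paper's proof concludes with a reference to ``the fact $H_2 = 0$,'' which plays no role in the argument and is not even true for a general variety $Y\subseteq\mathbb{A}^3$ through the origin (it fails, for instance, for the point $Y=\{0\}$); this is surely a slip for $H_3 = 0$. Your accounting of the degenerate case $Y=\{0\}$, where $R=k$, $\partial_2=0$, and the claim indeed fails, is a welcome sanity check that the paper leaves implicit in the phrase ``recall $d_3=0$.''
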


\begin{proof}
One can easily verify that $Q\left(  t\right)  =t\left(  t^{2}+2xz-y^{2}%
\right)  \in R\left[  t\right]  $ is the characteristic polynomial of
$\partial_{1}$. By Cayley-Hamilton Theorem, we obtain that $Q\left(
\partial_{1}\right)  =0$. In particular, $\operatorname{im}\left(
\partial_{1}^{2}+2xz-y^{2}\right)  \subseteq\ker\left(  \partial_{1}\right)
$. But
\[
\partial_{1}^{2}+2xz-y^{2}=\left[
\begin{array}
[c]{ccc}%
xz & yz & z^{2}\\
-xy & -y^{2} & -yz\\
x^{2} & xy & xz
\end{array}
\right]  ,
\]
which in turn implies that $\left(  \partial_{1}^{2}+2xz-y^{2}\right)
g=\partial_{2}\left(  xg_{1}+yg_{2}+zg_{3}\right)  $ for every $g=\left(
g_{1},g_{2},g_{3}\right)  \in R^{3}$. Thus $\operatorname{im}\left(
\partial_{1}^{2}+2xz-y^{2}\right)  \subseteq\operatorname{im}\left(
\partial_{2}\right)  $. Finally, for every $h\in R$ we have $h=h\left(
0\right)  +xh_{1}+yh_{2}+zh_{3}$ and
\[
\partial_{2}\left(  h\right)  =h\left(  0\right)  \partial_{2}\left(
1\right)  +\partial_{2}\left(  xh_{1}+yh_{2}+zh_{3}\right)  =\left(
\partial_{1}^{2}+2xz-y^{2}\right)  g+h\left(  0\right)  \partial_{2}\left(
1\right)  ,
\]
where $g=\left(  h_{1},h_{2},h_{3}\right)  \in R^{3}$. It remains to use the
fact $H_{2}=0$ and the $k$-linear projection $\partial_{2}\left(  h\right)
\mapsto h\left(  0\right)  \partial_{2}\left(  1\right)  $.
\end{proof}

Thus $g\in\ker\left(  \partial_{1}\right)  -\operatorname{im}\left(
\partial_{2}\right)  $ iff $xg_{1}=zg_{3}$, $xg_{2}=-yg_{3}$, $yg_{1}=-zg_{3}%
$, and
\[
g\neq\lambda\left[
\begin{array}
[c]{c}%
z\\
-y\\
x
\end{array}
\right]  +\left(  2xz-y^{2}\right)  h
\]
for all $\lambda\in k$ and $h\in R^{3}$, that is, the structure of $H_{2}$ is
really complicated. But Lemma \ref{lemAp} turns out to be useful in this
manner. Namely, consider the following hypersurface $Y=\left\{
xy+yz+zx=0\right\}  $, whose singularity at $a=0$ is not integral (see Remark
\ref{remSp}). Then
\[
H_{1}=k\omega^{\sim}\text{ with }\omega=\left[
\begin{array}
[c]{c}%
y+z\\
z\\
0
\end{array}
\right]  .
\]
Indeed, $\partial_{0}\left(  \omega\right)  =x\left(  y+z\right)  +yz=0$ means
that $\omega\in\ker\left(  \partial_{0}\right)  $. Take $g\in\ker\left(
\partial_{0}\right)  $, that is, $xg_{1}+yg_{2}+zg_{3}=0$ in $R$. Then
$X_{1}g_{1}+X_{2}g_{2}+X_{3}g_{3}=\left(  X_{1}\left(  X_{2}+X_{3}\right)
+X_{2}X_{3}\right)  h\in\mathfrak{p}$ or $X_{3}g_{3}=X_{1}\left(  \left(
X_{2}+X_{3}\right)  h-g_{1}\right)  +X_{2}\left(  X_{3}h-g_{2}\right)
\in\left\langle X_{1},X_{2}\right\rangle $ in $P$. Then $g_{3}=X_{1}%
q_{1}+X_{2}q_{2}\in\left\langle X_{1},X_{2}\right\rangle $ and
\begin{align*}
\left(  X_{2}+X_{3}\right)  h-g_{1} &  =X_{3}q_{1}-X_{2}l,\\
X_{3}h-g_{2} &  =X_{3}q_{2}+X_{1}l
\end{align*}
for some $q_{j}$, $l\in P$. It follows that%
\begin{align*}
g &  =\left[
\begin{array}
[c]{c}%
g_{1}\\
g_{2}\\
g_{3}%
\end{array}
\right]  =\left[
\begin{array}
[c]{c}%
\left(  y+z\right)  h+yl-zq_{1}\\
zh-xl-zq_{2}\\
xq_{1}+yq_{2}%
\end{array}
\right]  =h\left[
\begin{array}
[c]{c}%
y+z\\
z\\
0
\end{array}
\right]  +\left[
\begin{array}
[c]{c}%
\left(  -y\right)  \left(  -l\right)  +\left(  -z\right)  q_{1}\\
x\left(  -l\right)  -zq_{2}\\
xq_{1}+yq_{2}%
\end{array}
\right]  \\
&  =h\omega+\partial_{1}\left[
\begin{array}
[c]{c}%
-l\\
q_{1}\\
q_{2}%
\end{array}
\right]  ,
\end{align*}
that is, $g^{\sim}=h^{\sim}\omega^{\sim}=h\left(  0\right)  \omega^{\sim}$
(see Lemma \ref{lemKZ2}). Using Lemma \ref{lemMinf}, we deduce that
$\omega^{\sim}\neq0$ and $H_{1}=k\omega^{\sim}$, that is, $d_{1}=1$. By Lemma
\ref{lemAp}, we have $d_{2}=d_{0}-d_{1}+d_{2}-d_{3}=-i_{Y}\left(  a\right)
=0$.

Another example of a singular (at $a=0$) variety $Y$ is given by the following
generators
\[
f_{1}=X_{1}^{2}-X_{2}(1-X_{1}X_{3})+X_{3}^{3},\quad f_{2}=X_{1}^{3}X_{2}%
+X_{2}+\left(  X_{2}-1\right)  X_{3}^{2},\quad f_{3}=X_{1}^{3}+X_{2}^{2}%
X_{3}+X_{3}^{4}%
\]
in $P$. Notice that $f_{1}$ and $f_{2}$ are minimal generators, $m_{1}%
=m_{3}=2$, $m_{2}=1$, $S_{1}=\left\{  1\right\}  $, $S_{2}=\left\{
1,2\right\}  $, $S_{3}=\left\{  2\right\}  $ and $v_{1}=\left(  1,-1,0\right)
$, $v_{2}=\left(  0,1,-1\right)  $. Note also that these vectors can also be
represented by
\[
v_{1}=\left(  \dfrac{\partial^{2}f_{1}}{\partial X_{1}^{2}}\left(  0\right)
,\dfrac{\partial f_{1}}{\partial X_{2}}\left(  0\right)  ,\dfrac{\partial
^{2}f_{1}}{\partial X_{3}^{2}}\left(  0\right)  \right)  ,v_{2}=\left(
f_{2}\left(  0\right)  ,\dfrac{\partial f_{2}}{\partial X_{2}}\left(
0\right)  ,\dfrac{\partial^{2}f_{2}}{\partial X_{3}^{2}}\left(  0\right)
\right)
\]
up to constant multiplies. Based on Lemma \ref{lemMinfi}, we deduce that
$d_{1}\geq2$. Notice also that $x^{3}-g=0$ in $R=k\left[  x\right]  $, where
$g=-y^{2}z-z^{4}$ (since $f_{3}=0$ in $R$). That means that $x$ is integral
over $k\left[  y,z\right]  $ or $a$ is integral with $i_{Y}\left(  a\right)
=0$ (see Remark \ref{remSp}).

\end{document}